\documentclass[11pt]{amsart} 
\usepackage[bottom=1in]{geometry}
\usepackage{esint} % See geometry.pdf to learn the layout options. There are lots. 
\geometry{letterpaper} % ... or a4paper or a5paper or ... 
\usepackage{graphicx, hyperref, enumitem, MnSymbol, linegoal, amsbsy, amsmath}
\usepackage{indentfirst}

\usepackage[mathscr]{euscript}
\usepackage{todonotes}
\usepackage[english]{babel}
\usepackage{cleveref}
%amssymb,, 

\usepackage{accents}
\newcommand{\dbtilde}[1]{\accentset{\approx}{#1}}

\usepackage[framemethod=tikz]{mdframed}
\mdfsetup{linecolor=blue,backgroundcolor=gray!10,roundcorner=10pt,leftmargin=2pt,innerleftmargin=2pt,innerrightmargin=2pt}

\usepackage{pgf,tikz, caption, float, tcolorbox, pgfplots}

\DeclareGraphicsRule{.tif}{png}{.png}{`convert #1 `dirname #1`/`basename #1 .tif`.png}

\newtheorem{theorem}{Theorem}
\newtheorem{definition}{Definition}
\newcommand{\ssize}{\text{size}\,}

\newtheorem{lemma}[theorem]{Lemma}
\newtheorem{corollary}[theorem]{Corollary}

\newtheorem*{remark}{Remark}
\newtheorem*{notation}{Notation}

\newcommand{\sssize}{\widetilde{\text{size}\,}}
\newcommand{\ave}{\text{ave\,}}

\newcommand{\one}{\mathbf{1}}
\newcommand{\supp}{\text{ supp }}
\newcommand{\dist}{\,\text{dist}}
\newcommand{\rr}{\mathbb}
\newcommand{\so}{\mathbf{S}}
\newcommand{\ttt}{\mathbf{T}}

\newcommand{\ii}{\mathscr}
\newcommand{\ci}{\tilde{\chi}}

\newcommand{\ds}{\displaystyle}

\newtheorem{question*}{Question}
\newtheorem*{main*}{\underline{Induction statement}}

\newcommand{\ic}{\mathcal}

\newcommand{\sgn}{\text{sgn}}

\def\Xint#1{\mathchoice
   {\XXint\displaystyle\textstyle{#1}}%
   {\XXint\textstyle\scriptstyle{#1}}%
   {\XXint\scriptstyle\scriptscriptstyle{#1}}%
   {\XXint\scriptscriptstyle\scriptscriptstyle{#1}}%
   \!\int}
\def\XXint#1#2#3{{\setbox0=\hbox{$#1{#2#3}{\int}$}
     \vcenter{\hbox{$#2#3$}}\kern-.5\wd0}}

\def\aver#1{\Xint-_{#1}}

%comments for myself:
%add sub-subsections for the applications; they will be easier to find; later on, it might be possible to have to remove them
%natural question: could the fractional rank operators be seen as degeneracies of integer rank operators?

\author{Cristina Benea}
\address{Cristina Benea, Universit\'{e} de Nantes, Laboratoire Jean Leray, Nantes 44322, France}
\email{cristina.benea@univ-nantes.fr}

\author[Camil Muscalu]{Camil Muscalu*}
\thanks{$^*$The author is also a Member of the ``Simion Stoilow" Institute of Mathematics of the Romanian Academy}
\address{Camil Muscalu, Department of Mathematics, Cornell University, Ithaca, NY 14853, USA}
\email{camil@math.cornell.edu}

\usepackage{kantlipsum} % for text filler

\setlength{\textwidth}{\paperwidth}
\addtolength{\textwidth}{-1.8in}
\calclayout

\allowdisplaybreaks

 \title{Mixed-norm estimates via the helicoidal method
} 
\begin{document}

\begin{abstract}

We prove multiple vector-valued and mixed-norm estimates for multilinear operators in $\rr R^d$, more precisely for multilinear operators $T_k$ associated to a symbol singular along a $k$-dimensional space and for multilinear variants of the Hardy-Littlewood maximal function. When the dimension $d \geq 2$, the input functions are not necessarily in $L^p(\rr R^d)$ and can instead be elements of mixed-norm spaces $L^{p_1}_{x_1} \ldots L^{p_d}_{x_d}$.

Such a result has interesting consequences especially when $L^\infty$ spaces are involved. Among these, we mention mixed-norm Loomis-Whitney-type inequalities for singular integrals, as well as the boundedness of multilinear operators associated to certain rational symbols. We also present examples of operators that are not susceptible to isotropic rescaling, which only satisfy ``purely mixed-norm estimates" and no classical $L^p$ estimates.

Relying on previous estimates implied by the helicoidal method, we also prove (non-mixed-norm) estimates for generic singular Brascamp-Lieb-type inequalities.
\end{abstract}

\maketitle
\section{Introduction}

The present paper, which is concerned with mixed-norm estimates for various operators and their ensuing applications, is a natural sequel of our earlier work from \cite{vv_BHT}, \cite{quasiBanachHelicoid}, \cite{sparse-hel}. In those articles we developed the \emph{helicoidal method}, a new iterative and extremely efficient technique which provides new paradigms for obtaining multiple vector-valued estimates and sparse domination (with Fefferman-Stein inequalities as byproducts) for vast classes of operators in harmonic analysis. The reader not familiar with our earlier work is referred to the more recent expository paper \cite{expository-hel}.

In \cite{vv_BHT}, we devised a method that allows to deal with multilinear operators acting on a scalar variable $x$ (belonging to a certain base space) and depending on a parameter $w$ (which represents a variable in a measure space $\ii W$). In most cases considered, the operator $T$ acts on functions $f_1, \ldots, f_n$ which depend themselves on the parameter $w$; in fact, they are vector-valued functions with the property that $\ds  \| \|f_j \|_{L^{R_j}_w} \|_{L^{p_j}_x}$ is finite for every $1 \leq j \leq n$. Even from the beginning, the applications we were aiming for compelled us to consider multiple vector-valued spaces endowed with iterated Lebesgue (quasi)norms $\|\cdot\|_{L^{R_j}_w}$ (a more precise definition is given below in \eqref{eq:mixed-norms}). 

We first treated in \cite{vv_BHT} the question of vector-valued extensions in the case when all the $L^{R_j}_w$ spaces were Banach (including $L^\infty$), taking advantage of the dualization procedures available in the Banach setting. Afterwards, in \cite{quasiBanachHelicoid}, we extended the techniques to cover the case of quasi-Banach spaces. The vector-valued estimates can arise as a modification of the initial data: if the Lebesgue exponents match (i.e. $R_j=p_j$), the vector-valued estimate reduces via Fubini to the scalar case; if instead our input functions $f_j(x,w)$ satisfy generic integrability conditions (such as $ \| \|f_j \|_{L^{R_j}_w} \|_{L^{p_j}_x}$ finite), we need to understand how this affects the integrability of the output $T(f_1, \ldots, f_n)$. The underlying principle behind the helicoidal method is that amassing together as much information as possible (which can be done only at a local level) will allow to later redistribute it (globally) in order to reconstruct any desired $\| \|f_j \|_{L^{R_j}_w} \|_{L^{p_j}_x}$ quantities.

The accumulation of information produces a sharp local estimate (see for example Theorem \ref{thm:localization:mvv} in Section \ref{sec:mvv} below), which is key also for the results in the present paper. The redistribution process was initially done independently in each function, according to the possible values of their averages; it turned out that one can instead consider combined averages of the functions, and that led to the connection with sparse domination, weighted estimates and Fefferman-Stein-type inequalities in \cite{sparse-hel}. For finer questions, such as the endpoint or the sharp Fefferman-Stein-type inequality discussed in Section \ref{sec:HL:endpoint}, the sparse domination approach is less efficient and in that situation we resort to our initial strategy.

In order to obtain mixed-norm estimates, we need to revise the procedure of redistribution of information in a way that will enable us to alter the base space by changing the norm it is endowed with. This situation already appeared in \cite{vv_BHT}, where we dealt with mixed-norm estimates $\| \Pi \otimes \Pi \|_{L^p_x L^q_y}$ for bi-parameter paraproducts that could not be easily linearized.

The present article can be regarded as the third part of a more general programme regarding the helicoidal method, whose Part I dealt with multiple vector-valued extensions (Banach \cite{vv_BHT} and quasi-Banach \cite{quasiBanachHelicoid}) and Part II with sparse domination \cite{sparse-hel} for multilinear operators. The current Part III addresses mixed-norm estimates and a subsequent Part IV which is under preparation will be concerned with generic Fefferman-Stein-type inequalities that go beyond the consequences of sparse domination. More specifically, we aim to show that certain classes of multilinear operators are dominated in a very general sense by an appropriate multilinear Hardy-Littlewood maximal function. This is the reason why in Section \ref{sec:HL} we illustrate how the helicoidal method also yields multiple vector-valued and mixed-norm estimates for multilinear maximal operators.

One of the consequences of the main theorem in \cite{quasiBanachHelicoid}\footnote{The proof therein was given in the case $d=1$, but it easily extends to $\rr R^d$ for any $d \geq 1$.}, which is relevant to our discussion here, is the following: 

\begin{theorem}
\label{thm:old:relevant}
Suppose that $\Pi$ is a classical Coifman-Meyer paraproduct of $n$ functions acting on $\rr R^d$; then $\Pi$ is bounded from $\ds L^{p_1}_{\rr R^d}(L^{R_1}) \times \ldots \times L^{p_n}_{\rr R^d}(L^{R_n})$ into $L^p_{\rr R^d}(L^R)$ as long as 
\[
1<p_1, \ldots, p_n \leq \infty, \qquad 1<R_1, \ldots, R_n \leq \infty, \qquad 0<p, R<\infty
\]
and the Lebesgue exponents satisfy the H\"older conditions
\[
\frac{1}{p_1}+\ldots+\frac{1}{p_n}=\frac{1}{p}, \qquad \frac{1}{R_1}+\ldots+\frac{1}{R_n}=\frac{1}{R}.
\]
\end{theorem}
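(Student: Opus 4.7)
The plan is to prove the theorem by induction on the depth of the iterated Lebesgue norms defining the $L^{R_j}$ factors, with the sharp local estimate announced in Section \ref{sec:mvv} doing the main work. The base case, where all $R_j$ are genuine scalar exponents, reduces via standard vector-valued Littlewood-Paley inequalities to the classical Coifman-Meyer theorem in $\rr R^d$. For the inductive step, I would assume the mixed-norm bound at iteration depth $\leq k$ and prove it at depth $k+1$.

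First I would discretize $\Pi$ via standard time-frequency analysis: after a Littlewood-Paley decomposition, $\Pi(f_1,\ldots,f_n)$ is reduced, modulo harmless error terms, to a model sum of the form
\[
\sum_{I \in \mathcal D}\frac{1}{|I|^{(n-1)/2}}\Bigl(\prod_{j=1}^n \langle f_j,\phi_I^j\rangle\Bigr)\phi_I^{n+1},
\]
where $\mathcal D$ denotes the family of dyadic cubes of $\rr R^d$, the $\phi_I^j$ are $L^2$-normalized bumps adapted to $I$, and at least one family is lacunary. Proving the mixed-norm bound for this discretized model is equivalent to the original statement.

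The key tool is the sharp local estimate of Theorem \ref{thm:localization:mvv}, which controls, on every dyadic cube $I_0$, the restricted norm $\|\one_{I_0}\,\Pi(f_1,\ldots,f_n)\|_{L^p_x(L^R)}$ by $|I_0|^{1/p}\prod_{j=1}^n \ssize_{I_0}(f_j)$, with $\ssize_{I_0}(f_j)$ essentially the $L^{p_j}_x(L^{R_j})$-average of $f_j$ on a concentric enlargement of $I_0$. From this I would run an $n$-fold stopping-time decomposition: for each index $j$, select maximal dyadic cubes $I_T$ on which $\ssize_{I_T}(f_j)$ exceeds a dyadic multiple of $\|f_j\|_{L^{p_j}_x(L^{R_j})}/|I_T|^{1/p_j}$. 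Applying the local estimate inside each atom of the joint partition, using disjointness of the atoms at a given stopping-time level together with a John-Nirenberg-type packing bound, and summing a geometric series in the $n$ dyadic thresholds then gives the desired global bound; convergence is guaranteed by the H\"older identity $1/p_1+\ldots+1/p_n=1/p$.

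The main obstacle will be the quasi-Banach regime $p<1$ or $R<1$, allowed since only $p,R>0$ is required globally. Here linearization is unavailable, so I would follow the scheme of \cite{quasiBanachHelicoid}: replace the triangle inequality by $\min(p,R,1)$-sub-additivity, handle the $L^\infty$ indices separately (on those factors the size reduces to the global $L^\infty$ norm, eliminating one stopping-time parameter), and verify that the local estimate is sharp enough to absorb the loss from quasi-triangle inequalities. The inductive hypothesis, applied to the mixed-norm averages appearing inside each $\ssize_{I_T}(f_j)$, closes the argument.
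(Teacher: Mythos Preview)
Your overall strategy is the helicoidal method and matches the route the paper takes (via \cite{quasiBanachHelicoid}, which it cites for this result, and whose scheme is reproduced in Section~\ref{sec:mvv}): induction on the depth of the $L^{R_j}$ spaces, driven by a sharp local estimate and a stopping-time/sparse argument. Two points need correction, however.

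First, you misread Theorem~\ref{thm:localization:mvv}. It is a \emph{scalar, restricted-type} bound on the $(n+1)$-linear form $\Lambda_{\so(R_0)}$, requiring $|f_j|\le\one_{E_j}$ and yielding $\prod_j(\sssize_{R_0}\one_{E_j})^{1-\alpha_j}\,|R_0|$. It does not directly control $\|\one_{I_0}\Pi\|_{L^p(L^R)}$ by a product of $L^{p_j}(L^{R_j})$-type averages; that statement is the \emph{output} of the induction (the estimate tagged ``$\text{loc }m\text{ op}$'' in Section~\ref{sec:mvv}), not its input. The sizes that appear are $L^1$-type maximal averages of $\one_{E_j}$ (or, after mock interpolation, of $\|f_j\|_{L^{R_j}}$), and the exponents $p_j$ enter only when the stopping-time pieces are reassembled. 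Relatedly, the passage from depth $m$ to depth $m+1$ is not ``apply the hypothesis inside $\ssize_{I_T}(f_j)$'': it goes through a restricted-type reduction, the dualization of $L^{q,\infty}$ via $L^\tau$ for small $\tau$ (see \eqref{eq:dualization:weak:Lq}), and Lemma~\ref{lemma:mock:interp} to return to general functions. Your $n$-fold stopping time is closer to the earlier packaging in \cite{vv_BHT}, which is equivalent but does not bypass the restricted-type step.

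Second, your base case should be depth $0$ (scalar Coifman--Meyer), not depth $1$. The depth-$1$ estimate with $R_j=\infty$ allowed is exactly what does \emph{not} follow from ``standard vector-valued Littlewood--Paley inequalities''; the paper explicitly flags the inclusion of $L^\infty$ vector spaces as a feature that earlier methods missed.
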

We emphasize that $R_j$ and $R$ are \emph{vector indices} of arbitrary length and the identities involving them should be understood componentwise.

This result builds up on the well-known theorem of Coifman and Meyer \cite{CoifMeyer-ondelettes}, and it extends prior vector-valued generalizations (for instance, see \cite{extrapolation_multilinear}). Until recently, the vector-valued extentions were excluding $L^\infty$ spaces (they made their appearance in \cite{vv_bht-Prabath}, for the bilinear Hilbert transform operator), which are fundamental to our later applications.

Even for the study of Coifman-Meyer paraproducts, the techniques relevant to our approach are closer to the time-frequency analysis introduced for example in \cite{LaceyThieleBHTp>2} for dealing with the \emph{bilinear Hilbert transform} operator. We recall that a Coifman-Meyer paraproduct is a multilinear operator associated to a symbol singular at the origin. If instead we consider symbols which are singular along a $k$-dimensional space (in $\rr R^d$, we study multipliers that are singular along a $k \cdot d$-dimensional subspace of $\rr R^{dn}$), we encounter \emph{rank-$k$} operators, as they were termed in \cite{multilinearMTT}. With this terminology, paraproducts are rank-$0$ operators, while the bilinear Hilbert transform mentioned above has rank $1$. The results in \cite{multilinearMTT} point out to a relation between $n$, the number of functions considered, and $k$, the rank of the operator: under the assumption that 
\begin{equation}
\label{condition}
0 \leq k < \frac{n+1}{2},
\end{equation}
any $n$-linear, rank-$k$ operator is bounded within a certain range that always includes the ``local $L^2$'' range\footnote{We say that a tuple of Lebesgue exponents $(q_1, \ldots, q_n, q)$ is \emph{locally in $L^2$} if $2 \leq q_1, \ldots, q_n, q \leq \infty$. We say that an operator $T$ is bounded on the local $L^2$ range if $T:L^{p_1} \times \ldots \times L^{p_n} \to L^{p'}$ for all tuples $(p_1, \ldots, p_n, p)$ that are locally in $L^2$. The reason for this denomination lies in the fact that the $n$-linear operator is understood through the associated $(n+1)$-linear form, which is, in the case of $T_k$ operators, symmetric with respect to all its entries.}. Multiple vector-valued extensions of such operators are also available, as we proved in \cite{sparse-hel}. In consequence, a version of Theorem \ref{thm:old:relevant} for rank-$k$ operators holds as well, but the (natural) conditions on the Lebesgue exponents are more involved (see Definition \ref{def:Xi_n, k} and \eqref{def:Range:n,k} for a characterization of the known range). The closed local $L^2$ range is nevertheless included in all these multiple vector-valued estimates, for all $T_k$ operators satisfying \eqref{condition}.
\enskip

The main task of the present article is to prove mixed-norm generalizations of all these results, when every previous $L^s_{\rr R^d}$ base space is replaced by a corresponding mixed-norm space $L^S_{\rr R^d}$, with $S$ a multi-index this time (see subsequent definition \eqref{eq:def:mixed:norm}). This attests to the fact that the helicoidal method offers also a new paradigm for proving mixed-norm estimates, which is suitable for many operators in harmonic analysis. 

%Hopefully this will demonstrate that the helicoidal method offers also a new paradigm for proving mixed norm estimates, and that the method is suitable for many operators in harmonic analysis. 

Before describing in detail our main theorem, we outline some of its most relevant (and sometimes unexpected) consequences:
\begin{itemize}[leftmargin=15pt]
\item the presence inside the mixed-norm $\ds \|\cdot  \|_{L^S}=\| \cdot  \|_{L^{s_1}_{x_1} \ldots L^{s_d}_{x_d}}$ of Lebesgue indices $s_j$ that can be equal to $\infty$ implies that the corresponding input functions can be constant with respect to the $x_j$ variable and hence independent of it altogether. This causes the initial multilinear operator to eventually degenerate into interesting expressions which no longer satisfy the H\"older scaling; in contrast, these new expressions bear a resemblance to Loomis-Whitney (or even Brascamp-Lieb) inequalities, with the added difficulty indicated by the presence of the singular kernel.

For instance, if $K$ is a Calder\'on-Zygmund kernel in $\rr R^3$, we obtain the \emph{singular Loomis-Whitney} inequality degenerating from a three-dimensional bilinear Hilbert transform
\[
\int_{\rr R^3} \big|\int_{\rr R^3}  f_1(x_1-t_1, x_2-t_2) \, f_2(x_2+t_2, x_3+t_3) f_3(x_1, x_3) K(t_1, t_2, t_3) d \vec t \big | d \vec x  \lesssim \big \| f_1 \big\|_{L^{2}(\rr R^2)} \big \| f_2 \big\|_{L^{2}(\rr R^2)} \big \| f_3 \big\|_{L^{2}(\rr R^2)}.
\]
The degenerate multilinear operators exhibit novel modulation-invariant properties; for example, modulating simultaneously the first function in the second coordinate and the second function in the first coordinate will not change the trilinear form considered above. Eventually, these modulation invariants prove to be of no consequence to our approach, which focuses on nondegenerate, full dimensional objects. 

Our method produces simultaneously mixed-norm estimates as well, such as
\[
\big\|\int_{\rr R^3}  f_1(x_1-t_1, x_2-t_2) \, f_2(x_2+t_2, x_3+t_3)  K(t_1, t_2, t_3) d \vec t  \big\|_{L^{s_1}_{\rr R} L^{s_2}_{\rr R} L^{s_3}_{\rr R}} \lesssim \big \| f_1 \big\|_{L^{p_1}_{\rr R} L^{p_2}_{\rr R}} \big \| f_2 \big\|_{L^{q_1}_{\rr R} L^{q_2}_{\rr R}},
\]
where the Lebesgue exponents satisfy
\[
s_1=p_1, \qquad \frac{1}{s_2}=\frac{1}{p_2}+\frac{1}{q_1}, \quad s_3=q_2, \quad 1< p_1, p_2, q_1, q_2 \leq \infty, \quad \frac{2}{3} < s_1, s_2, s_3 < \infty.
\]

\item as we will see later on in Section \ref{sec:applications:involved}, there are very natural examples of multilinear operators of rank $k$ which \underline{\emph{do not}} satisfy any $L^p$ estimates, which instead satisfy purely mixed-norm estimates. This suggests, once more, that boundedness is a matter of using the appropriate norms.

\item lastly, the multiple vector-valued mixed-norm estimate from Theorem \ref{thm-part2} implies the boundedness of certain multilinear operators associated to \emph{rational multipliers}, such as
\[
\frac{m_k(\xi_1, \xi_2, \eta_1, \eta_2)}{\xi_1+\eta_2} \quad \text{    or    }\quad \frac{m_k(\xi_1, \xi_2, \xi_3, \eta_1, \eta_2, \eta_3, \zeta_1, \zeta_2, \zeta_3)}{(\xi_1+\eta_2)(\xi_3+ \zeta_1)}.
\]
Above, $m_k$ denotes a frequency symbol associated to a rank-$k$ operator, as in \eqref{eq:cond-multiplier} below. These, in turn, were motivated by the question of understanding interactions of transversal data: i.e. interactions of wave packets described by functions supported on lower dimensional, complementary subspaces (such as $\xi_1+\eta_2$ and $(\xi_1+\eta_2)(\xi_3+ \zeta_1)$ above). 
\end{itemize}
All these applications will be presented at greater length in Section \ref{sec:applications}.

\vspace{.25 cm}
Now we describe the setting of the $T_k$ operator. To start with, let $\Gamma$ denote a $(k \cdot d)$-dimensional subspace of $\rr R^{dn}$ and let $m_k: \rr R^{dn} \to \rr C$ be any multiplier that decays fast away from $\Gamma$, in the sense that 
\begin{equation}
\label{eq:cond-multiplier}
\vert \partial_\xi^\alpha m_k(\vec \xi_1, \ldots, \vec \xi_{n})   \vert \lesssim \dist((\vec \xi_1, \ldots, \vec \xi_{n}), \Gamma)^{-|\alpha|}
\end{equation}
for sufficiently many derivatives. Then $T_k$ is the $n$-linear operator, initially defined on $(\ic S(\rr R^d))^n$, acting on functions in $\rr R^d$, which has $m_k$ as a symbol:
\begin{equation}
\label{eq:def:T_k}
T_k(f_1, \ldots, f_n)(x)=\int_{\rr R^{dn}} \hat{f}_1(\vec \xi_1) \cdot \ldots \cdot \hat{f}_{n}(\vec \xi_{n}) m_k(\vec \xi_1, \ldots, \vec \xi_{n}) e^{2 \pi i x \cdot \left(\vec \xi_1+ \ldots + \vec \xi_{n} \right)}  d \vec \xi_1 \ldots d \vec \xi_n.
\end{equation}

We require $\Gamma$ to satisfy a certain non-degeneracy condition that involves the variables $\vec \xi_1, \ldots, \vec \xi_n$, but also the sum-variable $\vec \xi_1+ \ldots+ \vec \xi_n$. With this in mind, we consider the subspace $\ds \tilde \Gamma \subset \rr R^{d(n+1)}$ defined by
\[
\tilde \Gamma:= \{ (\vec \xi_1, \ldots, \vec \xi_{n+1}): (\vec \xi_1, \ldots, \vec \xi_{n}) \in \Gamma \text{     and     } \vec \xi_1+ \ldots+ \vec \xi_{n}+\vec \xi_{n+1}= \vec 0 \}
\]
and formulate the non-degeneracy condition as ``$\tilde \Gamma$ is the graph over any $k$ of the variables $\vec \xi_1,\ldots, \vec \xi_n, \vec \xi_{n+1} $". 

Equivalently, $T_k$ can be defined by the $(n+1)$-linear form 
\begin{equation}
\label{eq:def-multilinear-form}
\Lambda_{m_k}(f_1, \ldots, f_{n+1}):= \int_{\{ \vec \xi_1+\ldots + \vec \xi_{n+1}= \vec 0  \}} m_k(\vec \xi_1, \ldots, \vec \xi_{n}) \, \hat{f}_1(\vec \xi_1) \cdot \ldots \cdot \hat{f}_{n+1}(\vec\xi_{n+1}) \, d \vec \xi_1 \ldots d \vec \xi_n. 
\end{equation}
The emphasis will not be put on the multiplier $m_k$ itself, but on the singular set $\Gamma$ and on the decaying property \eqref{eq:cond-multiplier}. 

In certain situations, $T_k$ can also be represented using a classical Calder\'on-Zygmund kernel $K$ acting on $(n-k) d$ variables:
\begin{equation}
\label{eq:def:use:kernel}
T_k(f_1, \ldots, f_{n})(x):= \int_{\rr R^{d (n-k)}} f_1(x+\gamma_1(t)) \cdot \ldots \cdot f_{n}(x+\gamma_n(t)) \, K(t) dt,
\end{equation}
where $\gamma_1, \ldots, \gamma_n: \rr R^{d (n-k)} \to \rr R^{d}$ are generic\footnote{The linear transformations $\gamma_j$ for $1 \leq j \leq n$ produce a number of equations, which should be precisely those describing the subspace $\Gamma$. In asking the linear transformations to be generic, we require the corresponding equations to be linearly independent.} linear transformations involved in describing the singular set $\Gamma$.

We recall a few historical facts about the developement of $T_k$ operators: if $d=1$ and under the assumption that $0 \leq k < \frac{n+1}{2}$, the boundedness of the operators $T_k$ is due to \cite{multilinearMTT}. The case $d=1$, $k=1$, $n=2$, i.e. the bilinear Hilbert transform was first studied in \cite{LaceyThieleBHTp>2}, \cite{initial_BHT_paper}. The case $d \geq 2$ is a result of very similar techniques, following a slightly more careful discretization procedure. In \cite{DemPramThiele-2010} ``fractional rank" operators were considered, although only in the ``local $L^2$" range.

We will prove vector-valued and mixed-norm estimates for the operator $T_k$ in $\rr R^d$, under the same condition that $0 \leq k < \frac{n+1}{2}$. As per usual in our work, the vector spaces taken into account will be general iterated $L^R$ spaces; later on, we will impose a condition on the Lebesgue exponents considered, which is closely related to properties of the $T_k$ operator. If $m \geq 1$ is any positive integer, $R=(r_1, \ldots, r_m)$ an $m$-tuple with $0 < r_j \leq \infty$ for all $1 \leq j \leq m$ and $\{ ( \ii W_j, \Sigma_j, \mu_j ) \}_{1 \leq j \leq m}$ are totally $\sigma$-finite measure spaces, we define the mixed-norm on the product space $\ds ( \ii W, \Sigma, \mu):= (\prod_{j=1}^m \ii W_j, \prod_{j=1}^m  \Sigma_j, \prod_{j=1}^m \mu_j)$ by 
\begin{equation}
\label{eq:mixed-norms}
\|f\|_{L^R(\ii W)}:= \|  \ldots \|f\|_{L^{r_m}_{w_m}} \ldots \|_{L^{r_1}_{w_1}}.
\end{equation}

Due to the H\"older-scaling property, a H\"older condition becomes necessary in order to have estimates such as $T_k : L^{p_1}(\rr R^d) \times \ldots \times L^{p_n}(\rr R^d) \to L^{p'_{n+1}}(\rr R^d)$. On this account, we introduce the following definition:

\begin{definition}
\label{def:Holder-tuple}
We call a \emph{H\"older tuple} any tuple $(p_1, \ldots, p_n, p_{n+1})$ of exponents satisfying
\begin{equation}
\label{eq:Holder-tuple}
\frac{1}{p_1}+\ldots +\frac{1}{p_n}+\frac{1}{p_{n+1}}=1, \quad \text{where     } 1<p_1, \ldots, p_n \leq \infty, \, \, \frac{1}{n}<p'_{n+1}<\infty,
\end{equation}
and $p'_{n+1}$ is the H\"older conjugate of $p_{n+1}$: $\frac{1}{p_{n+1}}+\frac{1}{p'_{n+1}}=1$.
\end{definition}

The range of boundedness of $T_k$ established so far is restricted by several (linear) conditions; introducing some notation will allow us to describe them more succinctly. It should be said that the precise domain of boundedness of $T_k$ is only known in the case $k=0$, when it consists of all H\"older tuples. It was also shown in \cite{Kesler-UBDDsymbols} that there exist multipliers singular along a $k$-dimensional subspace which fail to be bounded if $k\geq \frac{n+3}{2}$. But generally the optimality of the range of boundedness for $T_k$ remains unknown. 

\begin{definition} 
\label{def:domain:boundedness:a_j}
Let $a_1, \ldots, a_{n+1}>0$. We say that a tuple $(q_1, \ldots, q_n, q_{n+1})$ (H\"older or not) is \emph{ $(a_1, \ldots, a_{n+1})$-local} if we have simultaneously
\[
\frac{1}{q_1}<a_1, \ldots, \frac{1}{q_{n+1}}< a_{n+1}.
\]
\end{definition}

In this formalism, a tuple $(q_1, \ldots, q_n, q)$ is locally in $L^2$ if it is $\big(\frac{1}{2}, \ldots, \frac{1}{2} \big)$-local. We do not insist on the difference between strict and non-strict inequalities, but we do point out that the range for $T_k$, which will be described in \eqref{def:Range:n,k}, is open (the inequalities involved are strict) and it contains the closed local $L^2$ range.

We will also study mixed-norm estimates in the spatial variables $x_1, \ldots, x_d$; this is in fact the main task of the paper. If $P:=(p_1, \ldots, p_d)$ is a $d$-tuple of Lebesgue exponents, we write
\begin{equation}
\label{eq:def:mixed:norm}
\|f\|_{L^{P}(\rr R^d)}:= \| \ldots \|f\|_{L^{p_d}_{x_d}} \ldots \|_{L^{p_1}_{x_1}}.
\end{equation}

Since it is highly possible to have matching consecutive Lebesgue exponents (our strategy for dealing with Theorem \ref{thm-part2} consists in reshuffling the indices in order to reduce the overall number of disjoint consecutive exponents), we equally use the following notation:
\[
\|f\|_{L^{P}_{\rr R^d}}=\|f\|_{L^{p_1}_{\rr R^{d_1}} L^{p_2}_{\rr R^{d_2}} \ldots L^{p_m}_{\rr R^{d_m}}},
\]
where $d_1 +\ldots + d_m=d$, $\rr R^d= \rr R^{d_1}\times \ldots \times \rr R^{d_m}$, the first $d_1$ indices are all equal to $p_1$, the next $d_2$ indices to $p_2$, and so on.

\begin{definition}
\label{def:Xi_n, k}
We define $\Theta_{n, k}$ to be the collection of $\binom{n+1}{k}$-tuples of ``interpolation coefficients", i.e. $\Theta_{n, k}$ is composed of tuples of positive numbers $ 0 \leq \theta_{i_1, \ldots, i_k} \leq 1$ indexed after ordered $k$-tuples $(i_1, \ldots, i_k)$ satisfying 
\[
\sum_{1 \leq i_1 < \ldots < i_k \leq n+1} \theta_{i_1, \ldots, i_k}=1.
\]

Next, $\Xi_{n, k}$ is the set of $(n+1)$-tuples $(\alpha_1, \ldots, \alpha_{n+1}) \in \big(0, \frac{1}{2} \big)^{n+1}$ for which there exists a $\vec \theta \in \Theta_{n, k}$ so that 
\begin{equation}
\label{cond:alpha}
\alpha_j:=\sum_{\substack{1 \leq i_1<\ldots < i_k \leq n+1 \\ i_t =j \text{  for some   } 1\leq  t \leq k}} \theta_{i_1, \ldots, i_k},
\end{equation}
for every $1 \leq j \leq n+1$.
\end{definition}

It has beed proved in \cite{multilinearMTT} that the range of the $T_k$ operator, which we simply denote $Range(n, k)$ consists of $(1-\alpha_1, \ldots, 1- \alpha_{n+1})$-local H\"older tuples: if $0 \leq k < \frac{n+1}{2}$,
\begin{align}
\label{def:Range:n,k}
Range(n, k)=\{ (p_1, \ldots, p_n, p_{n+1}) : \, & (p_1, \ldots, p_n, p_{n+1}) \text{ is a } (1-\alpha_1, \ldots, 1- \alpha_{n+1})\text{-local} \\
& \text{ H\"older tuple for some  } (\alpha_1, \ldots, \alpha_{n+1}) \in \Xi_{n, k}\}. \nonumber
\end{align}

Since all the $\alpha_j$ are strictly contained in $\big( 0, \frac{1}{2} \big)$, we can see both the necessity of the condition $0 \leq k < \frac{n+1}{2}$ (since $\ds \sum_{j=1}^{n+1} \alpha_j=k$) and the inclusion in $Range(n, k)$ of the \emph{closed} local $L^2$ range.

Even though the definition of $Range(n, k)$ above is complicated, it is informative of the strategy of the proof: when regarding the $(n+1)$-linear form associated to the $n$-linear $T_k$, there are $k$ degrees of freedom among the $n+1$ possible frequency directions and at some point interpolation is used between the various ways of deciding about these $k$ independent pieces of information.  

Now we can formulate our main theorem concerned with mixed-norm (with respect to the spatial variables) and multiple vector-valued extensions for $T_k$.

\begin{theorem}
\label{thm-part2}
Let $0 \leq k < \frac{n+1}{2}$, $(\alpha_1, \ldots, \alpha_{n+1}) \in \Xi_{n, k}$ and $m \geq 0$ be fixed. Consider any $d$-tuples $P_1, \ldots, P_{n+1}$ and any $m$-tuples $R_1, \ldots, R_{n+1}$ so that $(P_1, \ldots, P_{n+1})$ and $(R_1, \ldots, R_{n+1})$ are, componentwise, $(1-\alpha_1, \ldots, 1-\alpha_{n+1})$-local H\"older tuples. Then $T_k$ is a bounded operator from $\ds  L^{P_1}\big(  \rr R^d; L^{R_1}(\ii W, \mu) \big) \times \ldots \times L^{P_n}\big(  \rr R^d; L^{R_n}(\ii W, \mu) \big)$ to $ L^{ P_{n+1}'}\big(  \rr R^d; L^{{R'}_{n+1}}(\ii W, \mu) \big)$, i.e. we have the estimate
\[
\Big\| T_k(f_1, \ldots, f_n)\big \|_{ L^{ P_{n+1}'}_{\rr R^d}L^{{R'}_{n+1}}_{\ii W}} \lesssim \prod_{j=1}^n \big\| f_j  \big\|_{ L^{ P_j}_{\rr R^d}L^{R_j}_{\ii W}}.
\] 
\end{theorem}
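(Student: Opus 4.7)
The plan is to adapt the helicoidal method so as to peel off one factor of the spatial mixed-norm $L^{P_j}_{\rr R^d}$ at a time. Following the paragraph that introduces the notation $L^{p_1}_{\rr R^{d_1}}\ldots L^{p_m}_{\rr R^{d_m}}$, we first reshuffle the $d$ spatial coordinates so that consecutive blocks of equal exponents are grouped together; since the componentwise $(1-\alpha_1,\ldots,1-\alpha_{n+1})$-local H\"older hypothesis is invariant under coordinate permutations, this costs nothing. We then induct on $m$, the number of distinct consecutive blocks. The base case $m=1$ is precisely the multiple vector-valued $T_k$ estimate established in \cite{sparse-hel}: the spatial norm collapses to a single $L^{p_1}(\rr R^d)$ and the iterated vector-valued space $L^{R_j}(\ii W,\mu)$ is exactly the setting treated there.

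For the inductive step, split $\rr R^d=\rr R^{d_1}\times \rr R^{d-d_1}$, write $x=(x',x'')$, and regard $(x'',w)\in \rr R^{d-d_1}\times \ii W$ as a combined parameter taking values in the iterated (possibly quasi-Banach) space $X_j:=L^{p_2}_{\rr R^{d_2}}\ldots L^{p_m}_{\rr R^{d_m}}(L^{R_j}(\ii W))$. By the inductive hypothesis, applied in the $x''$-variables with $x'$ frozen, $T_k$ already satisfies the desired vector-valued bound $\prod_j X_j\to X'_{n+1}$ pointwise in $x'$. What remains is to produce the outer estimate in $x'\in \rr R^{d_1}$; this we do by running the helicoidal sharp local estimate — the analogue of Theorem \ref{thm:localization:mvv} — in the $\rr R^{d_1}$-direction only. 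Concretely, for every dyadic cube $Q\subset \rr R^{d_1}$ one aims at a bound of the form
\[
\bigl\|\one_Q\, T_k(f_1,\ldots,f_n)\bigr\|_{L^{p'_{n+1}}_{x'}(X'_{n+1})}
\;\lesssim\; |Q|^{1/p'_{n+1}}\,\prod_{j=1}^n \sssize_Q(f_j),
\]
where $\sssize_Q(f_j)$ denotes an averaged $L^{p_j}(X_j)$-size of $f_j$ on dilates of $Q$, in the sense of \cite{vv_BHT, quasiBanachHelicoid}. A Calder\'on--Zygmund stopping-time decomposition of $\rr R^{d_1}$ at thresholds chosen from these sizes then redistributes the local averages into the outer $L^{p_1}_{\rr R^{d_1}}$ norm, closing the induction.

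The main obstacle is the sharp local estimate just displayed, and it is here that two genuine difficulties must be overcome. First, $T_k$ does \emph{not} tensor-factor across $\rr R^{d_1}\times \rr R^{d-d_1}$, because the singular multiplier $m_k$ (or equivalently the Calder\'on--Zygmund kernel $K$) mixes all $d$ spatial coordinates; one must therefore discretize $T_k$ using time-frequency tiles adapted to the product structure $\rr R^{d_1}\times \rr R^{d-d_1}$, and then run the rank-$k$ tree-selection algorithm of \cite{multilinearMTT} in the outer frequency variables only, while carrying the inner ones through as vector-valued data with values in $X_j$. Second, the target space $X'_{n+1}$ is in general quasi-Banach (the H\"older tuple definition allows $p'_{n+1}>1/n$, hence $p'_{n+1}<1$), so the stopping-time argument must be executed within the quasi-Banach helicoidal machinery of \cite{quasiBanachHelicoid}. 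The componentwise $(1-\alpha_1,\ldots,1-\alpha_{n+1})$-local H\"older hypothesis on $(P_1,\ldots,P_{n+1})$ and $(R_1,\ldots,R_{n+1})$ is exactly what ensures that each scalar instance of the sharp local estimate and each redistribution sum is summable, so that the induction on $m$ indeed closes.
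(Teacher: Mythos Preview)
Your inductive framework is in the right spirit --- the paper also inducts, with base case the non-mixed multiple vector-valued result --- but the mechanism you propose for the inductive step contains a genuine gap. The sentence ``By the inductive hypothesis, applied in the $x''$-variables with $x'$ frozen, $T_k$ already satisfies the desired vector-valued bound $\prod_j X_j\to X'_{n+1}$ pointwise in $x'$'' is not meaningful: $T_k$ is an integral operator whose kernel (or multiplier) mixes all $d$ coordinates, so freezing $x'$ does not produce an operator to which the $(m-1)$-block hypothesis applies. Likewise, your plan to ``run the rank-$k$ tree-selection algorithm \ldots\ in the outer frequency variables only, while carrying the inner ones through as vector-valued data'' is not feasible for the same reason: the tiles in the model operator are full $d$-dimensional objects, and there is no outer/inner factorization of the time-frequency analysis.

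What the paper actually does is different and rather subtle. It keeps the full $d$-dimensional tile collection throughout and localizes it to lower-dimensional cubes $R_0\subset\rr R^{d_1}$ only through the \emph{spatial} projection (the collection $\so_{d_1}(R_0)$). The induction runs over decreasing $d_1=d_1(P_1,\ldots,P_n)$, and the key step that increases $d_1$ is an exponent-matching trick: one dualizes the outer $L^{q,\infty}_{\rr R^{d_1}}$ quasinorm through $L^\tau$ for small $\tau$, then applies H\"older in the $x'$-variable to replace $L^\tau_{\rr R^{d_1}}$ by $L^{q_2}_{\rr R^{d_1}}$, where $q_2$ is the exponent of the \emph{second} block. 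This creates new tuples $\hat P_j=(p_j^2,\bar P_j)$ whose first $d_1+d_2$ components now agree, so the inductive hypothesis (for strictly larger $d_1$) applies to the \emph{same} full-dimensional operator. A final stopping time in $\rr R^{d_1}$ with respect to the dualizing set $\tilde E$ converts the resulting local estimate back to the desired exponent $q$. Your proposal is missing this exponent-matching device entirely; without it there is no way to connect the $m$-block problem to the $(m-1)$-block one.
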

We note that the mixed-norm result is new even in the scalar case. As of now, it is not clear if the conclusion should hold for all $d$-tuples $P_1, \ldots, P_{n+1}$ with $(p_1^i, \ldots, p_{n+1}^i) \in Range(n, k)$ for all $1 \leq i \leq d$. In the proof, having  simultaneously all the $(p_1^i, \ldots, p_{n+1}^i)$ being $(1-\alpha_1, \ldots, 1-\alpha_{n+1})$-local H\"older tuples for the same $(\alpha_1, \ldots, \alpha_{n+1}) \in \Xi_{n, k}$ is necessary.

On the other hand, one can see clearly that mixed-norm vector-valued estimates always hold for H\"older tuples in the the closed local $L^2$ range, independently of the depth of the vector space. This is simply because any H\"older tuple which is in the closed local $L^2$ range is also $(1-\alpha_1, \ldots, 1-\alpha_{n+1})$-local, since $0< \alpha_j< \frac{1}{2}$.

For both type of estimates the governing principle consists in the accumulation of very precise information at a local level (the local estimate in Theorem \ref{thm:localization:mvv}) and the various ways of redistributing this information. The former is illustrated by the observation that locally $T_k$ (scalar, multiple vector-valued, or with mixed-norms) is bounded by products of maximal $(1-\alpha_1, \ldots, 1-\alpha_{n+1})$-local averages; this fact has several other consequences, among which we mention sparse domination estimates for $T_k$ and Fefferman-Stein-type inequalities (from \cite{FeffStein-RealHpSpaces}) in the multiple vector-valued mixed-norm setting. These will be implicit in our proofs in Sections \ref{sec:mvv} and \ref{sec:mixed-norm}.

The Fefferman-Stein inequality further illustrates the $(1-\alpha_1, \ldots, 1-\alpha_{n+1})$-local character of $T_k$. It would be difficult to state at this point its most general version without overburdening the notation. If we set aside the weights and the mixed-norms for a moment, the Fefferman-Stein inequality states that for any $(1-\alpha_1, \ldots, 1-\alpha_{n+1})$-local H\"older tuple $(R_1, \ldots, R_{n+1})$, any $(1-\alpha_1, \ldots, 1-\alpha_{n})$-local tuple $(s_1, \ldots, s_n)$ and any $0<q<\infty$
\[
\Big\|  \big\| T_k(f_1, \ldots, f_n) \big\|_{L^{R_{n+1}'}_{\ii W}} \Big\|_{L^q_{\rr R^d}}  \lesssim \Big\|  M_{s_1, \ldots, s_n}( \|f_1(x, \cdot)\|_{L^{R_1}_{\ii W}}, \ldots, \|f_n(x, \cdot)\|_{L^{R_n}_{\ii W}} )   \Big\|_{L^q_{\rr R^d}}
\]
holds, where $M_{s_1, \ldots, s_n}$ is the multi-sublinear maximal operator associated to $s_1, \ldots, s_n$ averages. This, for $s_1=\ldots=s_{n}=1$, was considered in \cite{NewMaxFnMultipleWeights}. In general, it is defined by
\begin{equation}
\label{eq:def-fnc-max-multi}
M_{s_1, \ldots, s_n}(f_1, \ldots, f_n)(x):=\sup_{x \in Q} \prod_{j=1}^n  \big(  \frac{1}{|Q|}  \int_{Q}  |f_j(y)|^{s_j} dy  \big)^\frac{1}{s_j},
\end{equation}
where the supremum runs over all cubes $Q$ in $\rr R^d$. 

In Section \ref{sec:HL} we prove that mixed-norm vector-valued estimates hold also for the operator $M_{s_1, \ldots, s_n}$. Most of the range follows from the classical linear Fefferman-Stein inequality for the Hardy-Littlewood maximal function (this time from \cite{FefStein_vvmaximal}), but Lebesgue exponents equal to $\infty$ are, as a general rule, excluded. Because of that, a multilinear analysis is necessary and that falls precisely under the scope of our methods.

\vskip .5pt

We also obtain a compound result, where the mixed-norms on $\rr R^d$ and the vector-valued norms are arbitrarily intertwined:
\begin{theorem}
\label{thm-part3}
For $0 \leq k < \frac{n+1}{2}$, $(\alpha_1, \ldots, \alpha_{n+1}) \in \Xi_{n, k}$ and all $(P_1^i, \ldots, P_{n+1}^i)$, $(R_1^i, \ldots, R_{n+1}^i)$ being $(1-\alpha_1, \ldots, 1-\alpha_{n+1})$-local H\"older tuples, the operator $T_k$ allows for arbitrary mixed-norm and multiple vector-valued extensions in the sense that
\[
T_k: L^{P^1_1}_{\rr R^{d_1}}L^{R^1_1}_{\ii W_1}\ldots L^{P^m_1}_{\rr R^{d_m}}L^{R^1_m}_{\ii W_m} \times \ldots \times L^{P^1_n}_{\rr R^{d_1}}L^{R^1_n}_{\ii W_1}\ldots L^{P^m_n}_{\rr R^{d_m}}L^{R^m_n}_{\ii W_m} \to L^{(P^1_{n+1})'}_{\rr R^{d_1}}L^{(R^1_{n+1})'}_{\ii W_1}\ldots L^{(P^m_{n+1})'}_{\rr R^{d_m}}L^{(R^m_{n+1})'}_{\ii W_m}.
\]
\end{theorem}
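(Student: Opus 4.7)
The plan is to derive Theorem \ref{thm-part3} from two ingredients: (i) a Fefferman--Stein-type pointwise domination of $T_k$ by a multilinear Hardy--Littlewood maximal function $M_{1-\alpha_1,\ldots,1-\alpha_n}$ in the multiple vector-valued setting, which is a direct consequence of the local estimate of Theorem \ref{thm:localization:mvv}; and (ii) the mixed-norm, multiple vector-valued bounds for $M_{s_1,\ldots,s_n}$ established in Section \ref{sec:HL}. This decouples the problem into a purely operator-theoretic step (control $T_k$ pointwise by $M$) and a harder but already-handled mixed-norm step for the maximal function.

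Concretely, iterating the Fefferman--Stein inequality across the $m$ vector-valued layers $\ii W_1, \ldots, \ii W_m$ should yield the pointwise estimate
\[
\bigl\|T_k(f_1, \ldots, f_n)(x, \cdot)\bigr\|_{L^{(R^1_{n+1})'}_{\ii W_1}\ldots L^{(R^m_{n+1})'}_{\ii W_m}} \lesssim M_{1-\alpha_1,\ldots,1-\alpha_n}\bigl(F_1, \ldots, F_n\bigr)(x),\qquad x \in \rr R^d,
\]
where $F_j(x) := \|f_j(x, \cdot)\|_{L^{R^1_j}_{\ii W_1}\ldots L^{R^m_j}_{\ii W_m}}$. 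Once this is in hand, Theorem \ref{thm-part3} reduces to the spatial mixed-norm estimate
\[
\bigl\|M_{1-\alpha_1,\ldots,1-\alpha_n}(F_1, \ldots, F_n)\bigr\|_{L^{(P^1_{n+1})'}_{\rr R^{d_1}}\ldots L^{(P^m_{n+1})'}_{\rr R^{d_m}}} \lesssim \prod_{j=1}^n \|F_j\|_{L^{P^1_j}_{\rr R^{d_1}} \ldots L^{P^m_j}_{\rr R^{d_m}}},
\]
which is a prototypical instance of the multilinear Hardy--Littlewood bounds proved in Section \ref{sec:HL}. Combining the two displays and substituting the definition of $F_j$ gives the statement.

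The hard part will be justifying the iterated Fefferman--Stein step: it requires the local estimate of Theorem \ref{thm:localization:mvv} to accumulate information simultaneously over all $m$ vector-valued layers. This is exactly what the helicoidal method is engineered to deliver, since the stopping-time constructions underlying Theorem \ref{thm:localization:mvv} compose naturally across layers, provided the same tuple $(1-\alpha_1,\ldots,1-\alpha_{n+1})$ governs every level -- which is precisely the hypothesis of Theorem \ref{thm-part3}. Since each $\alpha_j \in (0, 1/2)$, the $(1-\alpha_j)$-local condition on the exponents forces $p^i_j > 1/(1-\alpha_j) > 1-\alpha_j$, placing $M_{1-\alpha_1,\ldots,1-\alpha_n}$ in its range of boundedness at every stage and closing the argument. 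An equivalent formulation is induction on $m$, using Theorem \ref{thm-part2} as the base case and absorbing, at each step, the outer layers into an abstract vector-valued measure space; the helicoidal framework ensures that this absorption is consistent with the local estimate.
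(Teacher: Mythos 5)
Your central step --- the first display --- claims a \emph{pointwise} domination of $T_k$ by a multilinear maximal function, and this is simply false: $T_k$ is a genuine singular integral (for $n=2$, $k=1$ it is the bilinear Hilbert transform), and singular integrals are not pointwise dominated by maximal averages; already $|Hf(x)|\lesssim Mf(x)$ fails for the linear Hilbert transform $H$. What the helicoidal method delivers, \eqref{eq:qB:FS}, is a Fefferman--Stein inequality in $L^q(\rr R^d)$ \emph{norm}, for a fixed \emph{scalar} outer exponent $q$, as a consequence of sparse domination of the multilinear form; it is not a pointwise bound. Even the norm version does not get you to Theorem \ref{thm-part3}, for two reasons. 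First, the target norm $L^{(P^1_{n+1})'}_{\rr R^{d_1}}L^{(R^1_{n+1})'}_{\ii W_1}\cdots$ \emph{interleaves} spatial and vector-valued layers, while $F_j(x)=\|f_j(x,\cdot)\|_{L^{R^1_j}_{\ii W_1}\cdots L^{R^m_j}_{\ii W_m}}$ groups all the $\ii W$-layers on the inside, so $\|F_j\|_{L^{P^1_j}_{\rr R^{d_1}}\cdots L^{P^m_j}_{\rr R^{d_m}}}$ reorders the layers and is not the same as, nor in general comparable to, the intertwined norm $\|f_j\|_{L^{P^1_j}_{\rr R^{d_1}}L^{R^1_j}_{\ii W_1}\cdots}$; the splitting your reduction relies on is not available. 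Second, the mixed-norm Fefferman--Stein statement that would be required, \eqref{eq:FS-mix}, is itself an output of the inductive machinery of Section \ref{sec:mixed-norm}, so invoking it here is circular.

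Your closing remark --- induction on $m$, absorbing layers into an abstract vector-valued measure space in a manner consistent with the local estimate --- is the right idea and is essentially what Section \ref{sec:compound} does: a double induction over the depth $m$ and over the ``maximal mixing parameter'' $d_1(P^1_1,\ldots,P^1_n)$, built on the local estimate of Theorem \ref{thm:localization:mvv} together with the coupled induction statements $\ii P_{mix}$ and $\ii P^*_{mix}$ of Section \ref{sec:mixed-norm}, with the pure multiple vector-valued result of Theorem \ref{thm-part2} as base case, and the passage between stages handled by dualizing $L^{q,\infty}$ through $L^\tau$ and a spatial stopping time. To repair the proposal, discard the pointwise-domination step entirely and instead interleave the two induction hypotheses layer by layer, as in Sections \ref{sec:mvv}--\ref{sec:mixed-norm}; nothing in the argument passes through a maximal-function majorization of $T_k$.
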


\vskip .3 cm

Although our initial intention was to prove Loomis-Whitney inequalities (as a result of mixed-norm inequalities which include $L^\infty$ estimates), we can also obtain generic Brascamp-Lieb inequalities for singular integrals, as a consequence of sparse domination and classical Brascamp-Lieb inequalities (for the latter, see \cite{Brascamp-Lieb-BCCT_GAFA}, \cite{Brascamp-Lieb-BCCT_MRL}).

For this, we consider $d, n \geq 1$ and for every $1 \leq j \leq n+1$, let $L_j : \rr R^{d} \to \rr R^{d_j}$ be surjective linear maps. Similarly to the definition in \eqref{def:Range:n,k}, we define for $0 \leq k < \frac{n+1}{2}$,
\begin{align}
\label{def:Range:n,k:BL}
Range^{L_1, \ldots, L_{n+1}}&(n, k)=\{ (p_1, \ldots, p_n, p_{n+1}) : \, (p_1, \ldots, p_n, p_{n+1}) \text{ is a } (1-\alpha_1, \ldots, 1- \alpha_{n+1})\text{-local tuple for some  } \\
 &(\alpha_1, \ldots, \alpha_{n+1}) \in \Xi_{n, k}, \quad  d =\sum_{j=1}^{n+1} \frac{d_j}{p_j}, \text{   and   }   \dim V \leq \sum_{j=1}^{n+1} \frac{\dim (L_j(V))}{p_j} \text{      for any subspace    } V \subseteq \rr R^d \} \nonumber
\end{align}

We also call a \emph{Brascamp-Lieb} tuple with respect to the linear, surjective maps $L_1, \ldots, L_n, L_{n+1}$, any tuple $(p_1, \ldots, p_n, p_{n+1})$ of Lebesgue exponents satisfying
\begin{equation}
\label{eq:BL-tuple}
d =\sum_{j=1}^{n+1} \frac{d_j}{p_j}, \quad  1<p_1, \ldots, p_n \leq \infty,   \text{   and   }   \dim V \leq \sum_{j=1}^{n+1} \frac{\dim (L_j(V))}{p_j} \text{      for any subspace    } V \subseteq \rr R^d.
\end{equation}

Thus $Range^{L_1, \ldots, L_{n+1}}(n, k)$ consists precisely of the Brascamp-Lieb tuples which are $(1-\alpha_1, \ldots, 1- \alpha_{n+1})$-local for some $(\alpha_1, \ldots, \alpha_{n+1}) \in \Xi_{n, k}$. In particular, if $L_j= Id_{\rr R^d}$, we recover the H\"older tuples, $Range(n, k)$ and the H\"older scaling.

\begin{theorem}
\label{thm:BL:Lq}
Let $0 \leq k <\frac{n+1}{2}$, and consider $L_j : \rr R^{d} \to \rr R^{d_j}$ surjective linear maps for $1 \leq j \leq n+1$. Then
\begin{equation}
\label{eq:BL:n+1:lin:form}
\big| \int_{\rr R^d}  \int_{\rr R^{d (n-k)}} f_1 \circ L_1 (x+\gamma_1(t)) \cdot \ldots \cdot f_{n} \circ L_n(x+\gamma_n(t)) f_{n+1} \circ L_{n+1}(x) \, K(t) dt dx  \big| \lesssim \prod_{j=1}^{n+1} \|  f_j \|_{L^{p_j}(\rr R^{d_j})},
\end{equation}
for any $(p_1, \ldots,p_n, p_{n+1}) \in Range^{L_1, \ldots, L_{n+1}}(n, k)$, with $p_{n+1}>0$\footnote{This is equivalent to the target space being Banach: $p_{n+1}>0 \Leftrightarrow p_{n+1}' \geq 1$}.

Similarly, in the particular case when $L_{n+1}=Id_{\rr R^d}$, we have
\[
\| T_k(f_1 \circ L_1, \ldots, f_n \circ L_n)  \|_{L^{p'_{n+1}}(\rr R^d)} \lesssim \prod_{j=1}^n \|  f_j \|_{L^{p_j}(\rr R^{d_j})},
\]
whenever $(p_1, \ldots, p_{n}, p_{n+1}) \in Range^{L_1, \ldots, L_{n}, Id_{\rr R^d}}(n, k)$.
\end{theorem}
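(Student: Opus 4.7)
The plan is to combine the sparse form domination for $\Lambda_{m_k}$—obtained via the helicoidal method in \cite{sparse-hel}—with the classical Brascamp-Lieb inequality of \cite{Brascamp-Lieb-BCCT_GAFA}. The pull-back structure $g_j = f_j \circ L_j$ meshes naturally with sparse averaging, and this provides the bridge between the two ingredients.

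Fix $(\alpha_1, \ldots, \alpha_{n+1}) \in \Xi_{n,k}$ witnessing that $(p_1, \ldots, p_{n+1}) \in Range^{L_1, \ldots, L_{n+1}}(n,k)$. Since $1/p_j < 1-\alpha_j$ strictly for every $j$, select auxiliary exponents $q_j$ with $1/(1-\alpha_j) < q_j < p_j$ (keeping $q_j$ finite even when $p_j = \infty$), so that $(q_1, \ldots, q_{n+1})$ remains $(1-\alpha_1, \ldots, 1-\alpha_{n+1})$-local. The sparse form domination for $T_k$ then yields, for arbitrary test functions $g_j$,
\[
|\Lambda_{m_k}(g_1, \ldots, g_{n+1})| \lesssim \sum_{Q \in \ic S} |Q| \prod_{j=1}^{n+1} \langle g_j \rangle_{Q, q_j}
\]
for some sparse family $\ic S$ of cubes in $\rr R^d$. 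Substituting $g_j = f_j \circ L_j$ and applying Fubini on the fibers of each surjective linear $L_j$, we obtain $\langle f_j \circ L_j \rangle_{Q, q_j} \approx \langle f_j \rangle_{L_j(Q), q_j}$, where $L_j(Q) \subset \rr R^{d_j}$ is a parallelepiped comparable to a cube with constants depending only on $L_j$.

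Next, combine the sparseness with pointwise maximal bounds. Let $\{E_Q\}_{Q \in \ic S}$ be pairwise disjoint major subsets satisfying $|E_Q| \gtrsim |Q|$, and denote by $M^{(j)}_{q_j}$ the $q_j$-averaged Hardy-Littlewood maximal operator on $\rr R^{d_j}$. For any $x \in E_Q$ we have $\langle f_j \rangle_{L_j(Q), q_j} \leq (M^{(j)}_{q_j} f_j)(L_j(x))$, hence
\[
\sum_{Q \in \ic S} |Q| \prod_{j=1}^{n+1} \langle f_j \rangle_{L_j(Q), q_j} \lesssim \int_{\rr R^d} \prod_{j=1}^{n+1} (M^{(j)}_{q_j} f_j) \circ L_j(x) \, dx.
\]
The scaling identity $d = \sum d_j/p_j$ and the subspace inequality $\dim V \leq \sum \dim(L_j(V))/p_j$ defining $Range^{L_1, \ldots, L_{n+1}}(n,k)$ are precisely the Brascamp-Lieb hypotheses for the data $(L_j, 1/p_j)$, so \cite{Brascamp-Lieb-BCCT_GAFA} provides
\[
\int_{\rr R^d} \prod_{j=1}^{n+1} h_j \circ L_j \, dx \lesssim \prod_{j=1}^{n+1} \|h_j\|_{L^{p_j}(\rr R^{d_j})}.
\]
Applying this with $h_j = M^{(j)}_{q_j} f_j$ and using the boundedness of $M^{(j)}_{q_j}$ on $L^{p_j}(\rr R^{d_j})$ (valid because $q_j < p_j$, and trivially in the $L^\infty$ case) completes the proof of \eqref{eq:BL:n+1:lin:form}. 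The norm estimate in the particular case $L_{n+1} = Id_{\rr R^d}$ follows from this form estimate by duality, testing against $f_{n+1} \in L^{p_{n+1}}(\rr R^d)$.

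The main obstacle is twofold: first, one must verify that the sparse form domination from \cite{sparse-hel} is available at any auxiliary tuple $(q_1, \ldots, q_{n+1})$ which is strictly $(1-\alpha_1, \ldots, 1-\alpha_{n+1})$-local—this is not quite how the result is phrased there, but follows from re-examining the local estimate underlying it; second, one must confirm that the comparability $\langle f_j \circ L_j \rangle_{Q, q_j} \approx \langle f_j \rangle_{L_j(Q), q_j}$ holds with constants uniform in $Q$, which is a clean consequence of the surjectivity and linearity of $L_j$ together with an affine change of variables.
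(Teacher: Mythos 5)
Your proposal is correct and takes essentially the same route as the paper: sparse/Fefferman--Stein domination of the $(n+1)$-linear form, followed by the observation that averages (equivalently, maximal functions) of $f_j \circ L_j$ on $\rr R^d$ are controlled by averages of $f_j$ on $\rr R^{d_j}$ composed with $L_j$, then the classical Brascamp--Lieb inequality and $L^{p_j}$-boundedness of the $q_j$-maximal operator. The paper invokes the packaged Fefferman--Stein inequality \eqref{eq:qB:FS} and proves the maximal-function transference $M_{s_j}^d(f_j \circ L_j)(x) \lesssim M_{s_j}^{d_j} f_j(L_j x)$ via an explicit change of basis adapted to $\ker L_j$; you reach the same intermediate integral $\int_{\rr R^d}\prod_j (M_{q_j}^{d_j} f_j)\circ L_j$ by working directly from the sparse form and the comparability of localized averages, which is the same idea in slightly different clothing (and you only actually need the one-sided inequality $\langle f_j \circ L_j\rangle_{Q,q_j} \lesssim \langle f_j\rangle_{L_j(Q),q_j}$, not the stated $\approx$).
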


As in the case of the bilinear Hilbert transform operator, the Lebesgue exponent associated to the target space satisfies certain restrictions: we must have, for some $(\alpha_1, \ldots, \alpha_n, \alpha_{n+1}) \in \Xi_{n, k}$,
\[
\frac{1}{p'_{n+1}}<\frac{d_1}{d} (1-\alpha_1)+ \ldots+\frac{d_n}{d} (1-\alpha_n) \leq (1-\alpha_1)+\ldots+(1-\alpha_n)<n-k+\frac{1}{2}.
\]

Questions regarding the boundedness of multilinear singular Brascamp-Lieb-type inequalities have recently gained some attention, in particular in \cite{PolonaThiele-BLkernel} and \cite{BL-biparam-MuscaluZhai}. Specifically, the survey \cite{PolonaThiele-BLkernel} consists of a comprehensive showcase of multilinear Brascamp-Lieb inequalities, most of which are conjectural. This is motivated by the authors' work in \cite{PolonaThiele-BL-cub-structure} and by very interesting multilinear objects appearing in \cite{2D-BHT}, \cite{twisted_paraproduct}, \cite{Durcik-entangled4-lin-form}; all these are particular examples of Braspamp-Lieb-type inequalities, of a different flavour than ours. On the other hand, \cite{BL-biparam-MuscaluZhai} deals with multi-parameter singular Brascamp-Lieb inequalities, of which very little is known so far. We mention that adapting the helicoidal method to the multi-parameter setting has proved to be a difficult task.

At the present time, we can obtain mixed-norm versions of Theorem \ref{thm:BL:Lq} only in certain particular cases, which are consequences of Theorem \ref{thm-part2}. These are presented in Section \ref{sec:applications:involved}.

On the other hand, the method of the proof allows to obtain in the same fashion a vector-valued Brascamp-Lieb inequality for singular integrals:

\begin{theorem}
\label{thm:BL:vv}
Let $0 \leq k < \frac{n+1}{2}$, $(\alpha_1, \ldots, \alpha_{n+1}) \in \Xi_{n, k}$ and $m \geq 0$ be fixed. Let $L_j : \rr R^{d} \to \rr R^{d_j}$ surjective linear maps for $1 \leq j \leq n+1$ and consider any $m$-tuples $R_1, \ldots, R_{n+1}$ so that $(R_1, \ldots, R_{n+1})$ is, componentwise, a $(1-\alpha_1, \ldots, 1-\alpha_{n+1})$-local H\"older tuple. Then if $R'_{n+1}\geq 1$, 
\begin{equation}
\label{eq:BL:n+1:lin:form:vv}
\int_{\rr R^d}   \int_{\ii W} \big|  \int_{\rr R^{d (n-k)}} f_1 ( L_1 (x+\gamma_1(t)), w) \cdot \ldots \cdot f_{n} (L_n(x+\gamma_n(t)), w) f_{n+1}( L_{n+1}(x), w) \, K(t) dt \big|  dw  dx \lesssim \prod_{j=1}^{n+1} \big\|  \| f_j \|_{L^{R_j}_{\ii W}} \big\|_{L^{p_j}(\rr R^{d_j})},
\end{equation}
for any $(p_1, \ldots,p_n, p_{n+1})$ being a $(1-\alpha_1, \ldots, 1-\alpha_{n+1})$-local Brascamp-Lieb tuple associated to the linear maps $L_1, \ldots, L_{n+1}$, with $p_{n+1}>0$.

Similarly, in the particular case when $L_{n+1}=Id_{\rr R^d}$, we have
\[
\| \| T_k(f_1 \circ L_1, \ldots, f_n \circ L_n)\|_{L^{{R'}_{n+1}}_{\ii W}} \|_{L^{p'_{n+1}}(\rr R^d)} \lesssim \prod_{j=1}^n \big\| \| f_j \|_{L^{R_j}_{\ii W}} \big \|_{L^{p_j}(\rr R^{d_j})},
\]
whenever $(p_1, \ldots, p_{n}, p_{n+1})$ is a $(1-\alpha_1, \ldots, 1-\alpha_{n+1})$-local Brascamp-Lieb tuple (associated to the linear maps $L_1, \ldots, L_n, Id_{\rr R^d}$).
\end{theorem}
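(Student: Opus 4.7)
The approach is to reproduce the argument of Theorem~\ref{thm:BL:Lq} with the vector-valued enhancement provided by Theorem~\ref{thm:localization:mvv}. That is, we first sparse-dominate the $(n+1)$-linear form $\Lambda_{m_k}$ evaluated on the compositions $f_j\circ L_j$ (in the helicoidal sense, with $L^{R_j}_{\ii W}$-norms absorbed into the averages), and then we invoke the Bennett--Carbery--Christ--Tao Brascamp--Lieb inequality.

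Concretely, since $p_{n+1}>0$ means the target is Banach, by duality between $L^{p'_{n+1}}_x L^{R'_{n+1}}_w$ and $L^{p_{n+1}}_x L^{R_{n+1}}_w$ the estimate reduces to controlling the $L^1_{x,w}$-norm of $\Lambda_{m_k}\bigl(f_1(L_1\cdot,w),\ldots,f_{n+1}(L_{n+1}\cdot,w)\bigr)$. The multiple vector-valued sparse bound supplied by Theorem~\ref{thm:localization:mvv} (in the spirit of \cite{sparse-hel}) produces a sparse family $\ic S$ of cubes in $\rr R^d$ and exponents $s_j\in\bigl(1/(1-\alpha_j),p_j\bigr)$---such a choice is possible since $(p_1,\ldots,p_{n+1})$ is $(1-\alpha_1,\ldots,1-\alpha_{n+1})$-local---such that this $L^1$-norm is dominated by $\sum_{Q\in \ic S}|Q|\prod_{j=1}^{n+1}\bigl\langle h_j\circ L_j\bigr\rangle_{Q,s_j}$, where $h_j(y):=\|f_j(y,\cdot)\|_{L^{R_j}_{\ii W}}$ lives on $\rr R^{d_j}$ and $\langle\cdot\rangle_{Q,s_j}$ denotes the $L^{s_j}$-average on $Q$.

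Next, using the standard disjoint-set realization of sparse families ($|E_Q|\gtrsim|Q|$, the $E_Q$ disjoint) together with the elementary change-of-variables observation that, for each surjective linear $L_j$, $\langle h_j\circ L_j\rangle_{Q,s_j}\lesssim (M^{d_j}_{s_j} h_j)(L_j x)$ for $x\in Q$ with a shape-adapted, $L^{p_j}$-bounded maximal operator $M^{d_j}_{s_j}$ on $\rr R^{d_j}$, the sparse sum is dominated by
\[
\int_{\rr R^d}\prod_{j=1}^{n+1}(M^{d_j}_{s_j} h_j)(L_j x)\,dx.
\]
The Brascamp--Lieb hypothesis (the scaling $d=\sum d_j/p_j$ together with the subspace dimension inequality) is precisely what the Bennett--Carbery--Christ--Tao inequality \cite{Brascamp-Lieb-BCCT_GAFA} needs in order to conclude that this integral is $\lesssim \prod_j \|M^{d_j}_{s_j} h_j\|_{L^{p_j}(\rr R^{d_j})}\lesssim \prod_j \|h_j\|_{L^{p_j}(\rr R^{d_j})}$, the last inequality using $p_j>s_j$ and the $L^{p_j}$-boundedness of $M^{d_j}_{s_j}$. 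The particular case $L_{n+1}=\mathrm{Id}_{\rr R^d}$ then follows immediately by duality.

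The main obstacle is the sparse-to-maximal passage in the presence of the surjective maps $L_j$: one must check that Hardy--Littlewood averages of $h_j\circ L_j$ over cubes $Q\subset \rr R^d$ are realized as averages of $h_j$ over parallelepipeds $L_j(Q)\subset \rr R^{d_j}$, and that the resulting maximal operators, although taken over a degenerate (highly eccentric) family of shapes depending on $L_j$, remain $L^{p_j}$-bounded with constants depending only on $L_j$. Once this verification is in place, the vector-valued extension is automatic within the helicoidal framework, and the remaining steps parallel the proof of Theorem~\ref{thm:BL:Lq} verbatim.
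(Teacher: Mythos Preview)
Your approach is essentially the same as the paper's: the paper invokes the vector-valued Fefferman--Stein inequality \eqref{eq:qB:FS} (itself a consequence of the sparse domination you cite) to obtain $\int_{\rr R^d}\prod_j M_{s_j}(F_j\circ L_j)(x)\,dx$ with $F_j=\|f_j(\cdot,\cdot)\|_{L^{R_j}_{\ii W}}$, then commutes the maximal function past $L_j$ via \eqref{eq:commute:max:surjective:maps}, and finishes with classical Brascamp--Lieb and the $L^{p_j}$-boundedness of $M^{d_j}_{s_j}$. Your sparse-then-maximal route is the same argument unpacked one level.

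One clarification on the step you flag as the ``main obstacle'': the maximal operator $M^{d_j}_{s_j}$ that appears is \emph{not} over an eccentric family of parallelepipeds. The point, carried out explicitly in the proof of Theorem~\ref{thm:BL:Lq}, is that after choosing an orthonormal basis adapted to $\ker L_j$ and integrating out the $(d-d_j)$ kernel directions (which contribute only a harmless constant because of the fast-decaying weight $\ci_Q$), one is left with an $s_j$-average of $h_j$ over $\rr R^{d_j}$ against a weight centered at a point whose distance to $L_j x$ is at most $C_{L_j}\,\ell(Q)$. Hence the average is controlled by the \emph{standard} Hardy--Littlewood maximal function $M^{d_j}_{s_j}$ over cubes in $\rr R^{d_j}$, with a constant depending only on $L_j$. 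No eccentric-shape maximal theory is needed.
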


\vskip .2 cm

The paper is organized as follows: Section \ref{sec:OperatorT_k} is devoted to the study of the operator $T_k$: in Section \ref{sec:introd:glossary} we discuss in detail the discretization of the operators $T_k$ and certain notions that are fundamental to their understanding. This will allow us to prove multiple vector-valued estimates and Fefferman-Stein inequalities in Section \ref{sec:mvv}. The mixed-norm, vector-valued estimates are treated in Section \ref{sec:mixed-norm}. In Section \ref{sec:HL} we implement the same method for proving mixed-norm vector-valued extensions for the multilinear Hardy-Littlewood maximal function. In Section \ref{sec:applications} we discuss various applications (some of them immediate, some others less so) of Theorem \ref{thm-part2}, including mixed-norm Loomis-Whitney inequalities for singular integral operators and examples of operators that only satisfy mixed-norm estimates. The proof of Theorem \ref{thm:BL:Lq} is sketched in Section \ref{sec:gen:Brascamp-Lieb}.

\subsection*{Acknowledgments}
C. B. was partially supported by PEPS JCJC 2019 and ERC project FAnFArE no. 637510. C. M. was partially supported by a Grant from the Simons Foundation.

\section{A study of the operator $T_k$}
\label{sec:OperatorT_k}

We start by presenting in the following section a general view of the principles and tools used in the analysis of the $T_k$ operator, which can be especially useful for the reader not familiar with the notations and techniques in time-frequency analysis. In that case, we provide a glossary intended to set the stage and facilitate the understanding of the proofs of the vector-valued and mixed-norm estimates in the subsequent sections. A more seasoned reader can choose to skip directly to Section \ref{sec:mixed-norm}.

\subsection{A brief introduction}~\\
\label{sec:introd:glossary}
We do not handle directly the operator $T_k$, but instead treat a certain model operator obtained through a careful discretization procedure, in the spirit of \cite{LaceyThieleBHTp>2}, \cite{multilinearMTT}. The underlying idea is to obtain a Whitney decomposition\footnote{A Whitney collection associated to an open set consists of cubes whose diameter is comparable to the distance from the cube to the boundary of the set. In practice, it is customary to work with collections of dyadic cubes (to be introduced in Definition \ref{def:dyadic-cubes}) because of their lattice structure.} of the region $\rr R^{dn} \setminus \Gamma$, take advantage of the fact that the symbol $m_k$ from \eqref{eq:def:T_k} is morally constant on each Whitney cube, extract the information associated to each of the functions on every cube by using wave packets, and sum the contribution of all the pieces. The last step requires an additional reorganization of the information into almost orthogonal pieces and an order relation proves useful in ensuring that each piece is accounted for only once.

In order to better visualize the Whitney decomposition of the region $\rr R^{dn} \setminus \Gamma$, we appeal to a concrete characterization of $\Gamma$: as a $(k \cdot d)$-dimensional subspace of $\rr R^{dn}$, it can be represented as the (algebraic) kernel of a surjective linear map $A: \rr R^{dn} \to \rr R^{d(n-k)}$ to which we associate a generic $d(n-k) \times (dn)$ matrix still denoted $A$, made up of invertible block matrices  $A_i^j$ with $1 \leq i \leq n-k$ and $1 \leq j \leq n$.

We note that the linear maps $\gamma_j: \rr R^{d(n-k)} \to \rr R^d$ used in the kernel representation \eqref{eq:def:use:kernel} are directly related to the matrices $A_i^j$ above. More precisely, the matrix associated to the linear map $\gamma_j$ is made of transposes of these matrices:
\[
\Big( (A^j_1)^T \,  (A^j_2)^T  \, \ldots  (A^j_{n-k})^T    \big).
\]

The Whitney decomposition of the region $\rr R^{dn} \setminus \Gamma$ can very naturally be understood as a pull-back through $A$ of the classical (paraproduct-like) Whitney decomposition of $\rr R^{d(n-k)} \setminus \{  0\}$. To the latter, we associate a partition of unity (of Littlewood-Paley type) consisting in a finite sum of terms of the form
\[
\sum_{\ell \in \rr Z} \hat \phi_{\ell}^1(\vec \eta_1) \cdot \ldots \cdot \cdot \hat \phi_{\ell}^{n-k}(\vec \eta_{n-k}),
\]
where at least one of the families $\{\hat  \phi_{\ell}^i\}_{\ell \in \rr Z}$ (say $\{ \hat \phi_{\ell}^{i_0} \}_{\ell \in \rr Z}$) is supported away from $0$. This becomes in $\rr R^{dn}$
\[
\sum_{\ell \in \rr Z} \prod_{i=1}^{n-k} \hat \phi_{\ell}^i(A_i^1 \vec \xi _1 + \ldots+ A_i^{n} \vec \xi_n).
\]
When the scale $\ell$ is fixed, the expression above represents a function supported in the region described by
\begin{equation}
\label{eq:pull-back-Whitney}
 \big| A_{i_0}^1 \vec \xi _1 + \ldots+ A_{i_0}^{n} \vec \xi_n \big| \sim 2^{\ell}, \qquad \big| A_i^1 \vec \xi _1 + \ldots+ A_i^{n} \vec \xi_n  \big| \leq 2^{\ell} \text{   when $1 \leq i \leq n-k, i \neq i_0$}.
\end{equation}

This region can be seen as a $2^\ell$-translation in a direction orthogonal to $\Gamma$ of a $2^\ell$-neighbourhood of $\Gamma$. When further decomposing this region into dyadic cubes $Q= Q_1 \times \ldots \times Q_{n}$ at a scale $2^\ell$, there are $k$ independent choices to be made. In order to see this, notice first that every such cube is of the form 
\[
Q= 2^\ell [0, 1]^{dn}+ 2^\ell (\vec \nu_1, \ldots, \vec \nu_n), \qquad \text{where } (\vec \nu_1, \ldots, \vec \nu_n) \in \rr Z^{dn},
\]
so that $Q$ is entirely determined by $\vec \nu_1, \ldots, \vec \nu_n$. But if $(\vec \xi_1, \ldots, \vec \xi_n)$ satisfies the $(n-k)$ vectorial conditions in  \eqref{eq:pull-back-Whitney} and is contained in a cube $Q$:
\[
(\vec \xi_1, \ldots, \vec \xi_n) \in Q_1 \times \ldots \times Q_n = [2^\ell \vec \nu_1, 2^\ell (\vec \nu_1+\vec 1)] \times \ldots \times  [2^\ell \vec \nu_n, 2^\ell (\vec \nu_n+\vec 1)],
\]
we observe that once $k$ of the $\vec \nu_1, \ldots, \vec \nu_n$ are fixed, the others are also uniquely determined. This means in particular that there are precisely $k$ degrees of freedom in determining the exact position of a cube $Q$ which is part of the Whitney decomposition, at scale $2^\ell$.

In summary, we started with a conical region in $\rr R^{d(n-k)} \setminus \{ 0\}$ and obtained a collection $\ii W_{C}$ of Whitney cubes (if they are of sidelength $2^\ell$, the condition \eqref{eq:pull-back-Whitney} ensures that the distance between the cubes and $\Gamma$ is also $\sim 2^\ell$) associated this time to a conical region in $\rr R^{dn} \setminus \Gamma$. There are finitely many similar components to be considered. The Whitney collection $\ii W_{ C}$ lifts up to a Whitney collection for the region $ \{ \vec \xi_1+ \ldots \xi_{n+1}=\vec 0  \} \setminus \tilde \Gamma$ and we are left with understanding the multilinear form 
\[
\sum_{\substack{Q_1 \times \ldots \times Q_n \in \ii W_C \\ Q_{n+1} \sim -(Q_1+\ldots +Q_n)}} \int_{\{ \vec \xi_1+\ldots + \vec \xi_{n+1}= \vec 0  \}}\hat f_1(\vec \xi_1) \cdot \ldots \hat f_{n+1}(\vec \xi_{n+1})\hat  \phi_{Q_1}(\vec \xi_1) \cdot \ldots \cdot \hat \phi_{Q_{n+1}}(\vec \xi_{n+1}) \cdot m_k(\vec \xi_1, \ldots, \vec \xi_{n+1}) d \vec \xi.
\]

Using Fourier series, we can tensorize each factor $\ds \hat  \phi_{Q_1}(\vec \xi_1) \cdot \ldots \cdot \hat \phi_{Q_{n+1}}(\vec \xi_{n+1}) \cdot m_k(\vec \xi_1, \ldots, \vec \xi_{n+1})$: the decaying properties of $m_k(\vec \xi_1, \ldots, \vec \xi_{n+1})$ translates into the fast decay of the Fourier coefficients, which means they can be ignored, while focusing mainly on the geometrical properties of the cubes in the Whitney decomposition. The information for the functions $f_1, \ldots, f_{n+1}$ is captured by wave packets associated to time-frequency tiles, which will be defined shortly; thus, the study of the multilinear form above is reduced to that in \eqref{eq:discretization-1}.

The geometrical properties of the time-frequency tiles (inherited directly from the Whitney collection used in the above decomposition) will be key in the subsequent analysis, which is a $d$-dimentional adaptation of the methods used in \cite{multilinearMTT}. In particular, the $k$ degrees of freedom that we have in deciding the exact position of a Whitney cube will translate into a rank-$k$ property for the collection of multi-tiles associated to the model operator. Before moving forward, we need a few standard notions.  

%Thanks to a classical trick, the cubes in the Whitney decomposition can be considered to belong to the same lattice of shifted dyadic cubes\footnote{A shifted dyadic cube  in $\rr R^d$ is of the type $Q=I_1 \times \ldots \times I_d$, with each $I_j$ of the form $[2^n(k+\alpha_j), 2^n(k+1+\alpha_j)$, where $k, n \in \rr Z$, $\alpha_j \in \{ -\frac{1}{3}, 0 , \frac{1}{3}\}$. For a fixed shift parameter, i. e. a fixed $\vec \alpha \in \{ -\frac{1}{3}, 0 , \frac{1}{3}\} ^d$, we denote the collection of $\vec \alpha$-shifted dyadic cubes by $\ii D_d^{\vec \alpha}$.}, which implies that any two non-disjoint cubes will have the property that one contains the other.

%From here on, we simply have a $d$-dimentional adaptation of the methods used in \cite{multilinearMTT}. The properties of the cubes in the Whitney collection are key to the subsequent analysis; in particular, the $k$ degrees of freedom that we have in deciding the exact position of a Whitney cube will translate into a rank-$k$ property for the collection of multi-tiles associated to the model operator. Before moving forward, we need a few standard notions.

%We study the $(n+1)$-linear form and separate out the information for each of the functions involved with the use of a partition of unity associated to the Whitney decomposition first, and after that using Fourier series decomposition (the regularity property \eqref{eq:cond-multiplier} of the multiplier translates into uniform decaying properties of the Fourier coefficients). 

\begin{definition}
\label{def:dyadic-cubes}
\emph{Dyadic cubes} in $\rr R^d$ are of the type $Q=I_1\times \ldots \times I_d$, where each $I_j$ is a dyadic interval, i.e. of the form $[2^k n, 2^k (n+1))$, where $k, n \in \rr Z$. We denote the collection of dyadic cubes in $\rr R^d$ by $\ii D_d$.

A \emph{shifted dyadic cube} in $\rr R^d$ is of the type $Q=I_1 \times \ldots \times I_d$, with each $I_j$ of the form $[2^n(k+\varrho_j), 2^n(k+1+\varrho_j))$, where $k, n \in \rr Z$, $\varrho_j \in \{ -\frac{1}{3}, 0 , \frac{1}{3}\}$. The collection of shifted dyadic cubes, for a fixed shift $(\varrho_1, \ldots, \varrho_d)$, enjoys the same properties as the collection of dyadic cubes (in particular it is a lattice).
\end{definition}

\begin{definition}
\label{def:tile}
A \emph{tile} is a rectangle $s=R_s \times \omega_s$ in $\rr R^d \times \rr R^d$ of area $1$, where $R_s$ is a dyadic cube in $\rr R^d$ and $\omega_s$ is a shifted dyadic cube in $\rr R^d$.

A \emph{multi-tile} is an $(n+1)$-tuple $s=(s_1, \ldots, s_{n+1})$ so that each $s_j$ is a tile of the form $s_j=R_s \times \omega_{s_j}$ for all $1 \leq j \leq n$; that is, a multi-tile $s$ is an $(n+1)$-tuple of tiles sharing the same spatial cube $R_s$.
\end{definition}

We will often denote either a tile or a multi-tile by $s$; it will be clear from the context whether we are referring to a one-component tile or to a $(n+1)$-components multi-tile. An order relation is also defined for the set of tiles:

\begin{definition}
If $s=R_s \times \omega_s$ and $s'=R_{s'} \times \omega_{s'}$ are two tiles, then we say that $s' < s$ provided $R_{s'} \subsetneq R_s$ and $\omega_s \subseteq 3 \omega_{s'}$. In addition, $s'  \lesssim s$ means that $R_{s'} \subseteq R_s$ and $\omega_s \subseteq C_0 \omega_{s'}$ \footnote{$C_0$ is a fixed, relatively large constant, that ensures the separation of the scales.}.

We write $s' \leq s$ if $s'<s$ or $s'=s$ and $s'\lesssim' s$ if $s'  \lesssim s$ but $s' \nleq s$.
\end{definition}

\begin{definition}
If $s= R_s \times \omega_s$ is a tile, then a \emph{wave packet} associated to it is a smooth function $\phi_s$ having the property that in frequency
\[
\supp \hat{\phi}_s \subseteq \frac{99}{100} \omega_s,
\]
and at the same time $\phi_s$ is $L^2$-adapted to the spatial cube $R_s$ in the sense that
\[
\big \vert \partial^{\alpha} \phi_s(x) \big \vert \lesssim C_{\alpha, M} \frac{1}{|R_s|^{\frac{1}{2}+|\alpha|}} \Big( 1+ \frac{\dist( x, R_s  )  }{|R_s|}   \Big)^{-M} 
\]
for sufficiently many derivatives $\alpha$ and for any $M>0$.
\end{definition}

A notation that will be used frequently is the following:
\begin{definition}
If $R$ is a cube in $\rr R^d$, then 
\[
\ci_R(x):=  \Big( 1+ \frac{\dist( x, R  )  }{|R|}   \Big)^{-100} 
\]
denotes a bump function decaying fast away from $R$. This will often be used for computing averages of function on cubes:
\[
\ave_{R} (f):=  \frac{1}{|R|} \int_{\rr R^d} |f(x)| \cdot \ci_{R}^M(x) dx.
\] 
\end{definition}

Then the multilinear form associated to the multiplier from \eqref{eq:def:T_k} can be approximated by expression of the form
\begin{equation}
\label{eq:discretization-1}
\sum_{ Q= Q_1 \times \ldots \times Q_{n+1}} \sum_{\substack{ R \in \ii D_d\\ |R||Q_j|=1}}  |R|^{- \frac{n-1}{2}} \langle f_1, \phi^1_{R \times Q_1} \rangle \cdot \ldots \cdot  \langle f_{n+1}, \phi^{n+1}_{R \times Q_{n+1}} \rangle,
\end{equation}
where the summation runs over dyadic cubes $Q$ belonging to a finite subcollection of the Whitney decomposition mentioned previously (and hence localized to a cone), and every $\phi^j_{R \times Q_j}, 1 \leq j \leq n+1$ is a wave packet associated to the tile $R \times Q_j$. We want to rewrite this in terms of multi-tiles in a way that still reflects the properties of the Whitney decomposition, which will be expressed by the rank-$k$ property.

\begin{definition}
\label{def:rank-k}
A collection $\so$ of multi-tiles is said to have \emph{rank $k$} if for any two tiles $s, s' \in \so$ the following conditions hold:
\begin{enumerate}[label=(\roman*) \hspace{-4pt}]
\setlength\itemsep{.5em}
\item \label{cond1} any $k$ frequency cubes determine the remaining ones: if $1 \leq i_1 < \ldots < i_k \leq n+1$ and if $\omega_{s_{i_t}}=\omega_{s'_{i_t}}$ for all $1 \leq t \leq k$, then $\omega_{s_j} =\omega_{s'_j}$ for all $1 \leq j \leq n+1$.
\item \label{cond2} if $1 \leq i_1 < \ldots < i_k \leq n+1$ are so that $s'_{i_t} \leq s_{i_t}$ for all $1 \leq t \leq k$, then $s'_j \lesssim s_j$ for all $1 \leq j \leq n+1$.

\item \label{cond3} if $1 \leq i_1 < \ldots < i_k \leq n+1$ are so that $s'_{i_t} \leq s_{i_t}$ for all $1 \leq t \leq k$, and moreover $|R_{s'}| \ll |R_S|$, then there exist at least two distinct indices $j_1, j_2 \in \{1, \ldots, n+1 \} \setminus \{i_1, \ldots, i_k \}$ so that $s'_{j_1} \lesssim ' s_{j_1}$ and $s'_{j_2} \lesssim ' s_{j_2}$.
\end{enumerate}
\end{definition}

With these notions, the model operator in \eqref{eq:discretization-1} can be rewritten as 
\begin{equation}
\label{def:discretized-op}
T(f_1, \ldots, f_n)(x):=\sum_{ s \in \so }  |R_s|^{- \frac{n-1}{2}} \langle f_1, \phi^1_{s_1} \rangle \cdot \ldots \cdot  \langle f_{n}, \phi^{n}_{s_n} \rangle \,  \phi^{n+1}_{s_{n+1}}(x), 
\end{equation}
where $\so$ is a finite rank-$k$ collection. From now on, this constitutes the object of our study and it is for this discretized operator that we prove multiple vector-valued and mixed-norm estimates.

%Another important notion is that of a \emph{tree}; the model operator associated to a tree structure is nothing else but a classical multilinear Calder\'on-Zygmund operator. Some of the essential steps in the study of $T_k$ will be taken for granted (in particular, the analysis of the orthogonality between trees), but considering the localization of the operators to a tree will give a better understanding of the overall local estimate in \todo{add}, which is the very foundation of the mixed-norm and vector-valued extensions for the $T_k$ operators.

Another important notion is that of a \emph{tree}; the model operator associated to a tree structure is nothing else but a classical multilinear Calder\'on-Zygmund operator.

\begin{definition}
\label{def:tree}
For any $1 \leq j \leq n+1$, a \emph{$j$-tree} with top $s_\ttt= (R_\ttt \times \omega_{\ttt_1}, \ldots, R_\ttt \times \omega_{\ttt_{n+1}})$ is a collection $\ttt$ of multi-tiles such that $s_j \lesssim s_{\ttt_j}$ for all $s \in \ttt$.

A tree $\ttt$ is called \emph{$j$-overlapping} if $s_j \leq s_{\ttt_j}$ for all $s \in \ttt$, and \emph{$j$-lacunary} if $s_j \lesssim' s_{\ttt_j}$ for all $s \in \ttt$. We call $\ttt$ a \emph{$(i_1, \ldots,  i_k)$-tree} if $\ttt$ is an $i_t$-tree for all $1 \leq t \leq k$.
\end{definition} 

Trees (in particular lacunary trees) are suitable for measuring how much mass is concentrated on a collection:

\begin{definition}
For any collection of multi-tiles $\so$ and any index $1 \leq j \leq n+1$, the size of the sequence $\ds \big(\langle f, \phi_{s_j}^j  \rangle \big)_{s \in \so}$ is defined as
\begin{equation}
\label{eq:def-size}
\ssize_{\so} \big(\langle f, \phi_{s_j}^j \rangle_{s \in \so} \big) :=\sup_{\substack{\ttt \subseteq \so \\ \ttt \, j \text{-lacunary tree}}} \, \, \big( \frac{1}{\vert R_\ttt \vert} \sum_{s \in \ttt} \vert \langle f, \phi_{s_j}^j \rangle \vert^2  \big)^{\frac{1}{2}}.
\end{equation}

We also define a ``spatial size" by 
\begin{equation}
\label{eq:def-ssize}
\sssize_{\so}(f):=\sup_{s \in \so} \frac{1}{\vert R_s \vert} \int_{\rr R^d} \vert f(x) \vert \cdot \ci_{R_s}^M(x) dx=  \sup_{s \in \so} \ave_{R_s} (f),
\end{equation}
for a sufficiently large fixed constant $M$.
\end{definition}

It is a well-known consequence of the John-Nirenberg inequality (and $L^1(\rr R^d) \mapsto L^{1, \infty}(\rr R^d)$ boundedness of Calder\'on-Zygmund operators) that 
\[
\ssize_{\so} \big(\langle f, \phi_{s_j}^j \rangle_{s \in \so} \big) \lesssim_M \sssize_{\so}(f),
\]
for any Schwartz function $f \in \ic S(\rr R^d)$. So the time-frequency $\ssize_{\so} \big(\langle f, \phi_{s_j}^j \rangle_{s \in \so} \big)$ is controlled by the spatial $ \sssize_{\so}(f)$.

Innate to the helicoidal method is the spatial localization strategy, which results in improved local estimates (that are not available globally). That amounts to converting space-frequency information into entirely spatial information, and on that account we need a local notion of size. If $R_0$ is a dyadic cube in $\rr R^d$, then 
\[
\so(R_0):= \{ s \in \so: s=R_s \times \omega_s \text{   and     } R_s \subseteq R_0  \}.
\]

Alternatively, we retain the spatial information of the collection $\so$, which is located inside $R_0$:
\begin{equation}
\label{eq:Def:local:info}
\ii R_d(R_0, \so):= \{ Q \in \ii D_d : Q \subset R_0 \text{   and there exists $s=R_s \times \omega_s \in \so$ so that $R_s=Q$} \} \cup \{ R_0 \}.
\end{equation}
This is the same as the projection onto the space component of the collection $\so(R_0)$, to which we add the dyadic cube $R_0$ itself.

In order to simplify the notation (it should be clear from the context what the collection $\so$ is), we will often denote 
\[
\sssize_{R_0}(f):=\sup_{R \in \ii R_d(R_0, \so)} \Big( \frac{1}{\vert R\vert} \int_{\rr R^d} \vert f(x) \vert \cdot \ci_{R}^M(x) dx \Big)= \sup_{R \in \ii R_d(R_0, \so)} \ave_{R}(f)
\]
which is the maximum between $\sssize_{\so \left(R_0\right)}(f)$ and $\ave_{R_0}(f)$, the average of $f$ on $R_0$. For any Lebesgue exponent $0 < p < \infty$, $\ave^p_{R_0}(f)$ and $\sssize^p_{R_0}(f)$ denote $L^p$-normalized averages and sizes, respectively:
\[
\ave^p_{R_0}(f):= \big( \ave_{R_0}|f|^p \big)^\frac{1}{p}, \qquad \sssize^p_{R_0}(f):= \big( \sssize_{R_0}|f|^p \big)^\frac{1}{p}.
\]

Later on, in Section \ref{sec:mixed-norm}, we will consider localizations of collections of tiles (or simply of dyadic cubes) associated to lower dimensional cubes. The argument is constructed in such a manner that the lower dimensional sets correspond to projections of the spatial $d$-dimensional ones onto the first components. More exactly, let $1 \leq d' \leq d$, $\tilde R$ a dyadic cube in $\rr R^{d'}$ and $\so$ a collection of multi-tiles; then we denote by $\so_{d'}(\tilde R)$ the subcollection 
\begin{align}
\label{eq:def:proj:lower:S}
\so_{d'}(\tilde R):=  \{ s=R_s \times \omega_s \in \so :  R_s=R' \times Q  \text{ where }  R' \subseteq \tilde R  \text{ and } Q  \text{ is some dyadic cube}& \\
  \text{  in $\rr R^{d-d'}$ of the same sidelength as $R_s$} & \}. \nonumber
\end{align}

In a similar way, we denote by $\ii R_{d'}(\tilde R, \so)$ the projection onto the first $d'$ coordinates of the spatial information in $\so$:
\begin{equation*}
\ii R_{d'}(\tilde R, \so):=\{ \tilde R \} \cup \{ R' \in \ii D_{d'} : R' \subset \tilde R \text{ and $ \exists s=R_s \times \omega_s \in \so$ with $R_s=R' \times Q$ for some $Q \in \ii D_{d-d'}$} \} 
\end{equation*}

Then $\sssize_{\so_{d'}(\tilde R)} f$ (sometimes simply denoted $\sssize_{\tilde R} f$) will represent the size with respect to the collection $\so_{d'}(\tilde R)$ of a function defined on $\rr R^{d'}$:
\begin{equation}
\label{eq:Def:size:R_{d'}}
\sssize_{\so_{d'}(\tilde R)} f:= \sup_{R' \in \ii R_{d'}(\tilde R, \so)} \Big( \frac{1}{\vert R' \vert} \int_{\rr R^{d'}} \vert f(x') \vert \cdot \ci_{R}^M(x') dx' \Big)= \sup_{R' \in \ii R_{d'}(\tilde R, \so)} \ave_{R'} (f).
\end{equation}

With these notions and observations, we are now ready to devote ourselves to the proof of Theorem \ref{thm-part2}.

\subsection{The local estimate}~\\
\label{sec:T_k:local:est}
The local estimate is indeed central to our proof of the mixed-norm and vector-valued estimates; once we establish it, the strategy relies on various stopping-times (which can be performed in various ways) applied to even more localized objects. But the analysis is always performed on the model operator associated to a certain subcollection of tiles and the way we choose to define the subcollection depends on a solid understanding of how the operator itself behaves and localizes. This is a somewhat a posteriori procedure: understanding the potential outcome allows for a reorganization of the input data.

For this reason we do include a proof of the local estimate for $T_k$, expressed in the study of the associated $(n+1)$-linear form $\Lambda_{m_k}$. Aside from a small (yet essential) technical point, the ideas are the same as in the one-dimensional case which was presented in \cite{sparse-hel}.

Commonly, the generic collection $\so$ of multi-tiles will be decomposed into subcollections endowed with more structure: the trees that we carefully defined previously. Then on one hand we need to assess how much the operator can concentrate onto each tree, and on the other we need to establish some orthogonality between the trees in order to be able to sum up their contributions. 

Although the trees are not outright disjoint (and thus their contribution is not outright summable), we can expect them to be \emph{$j$-disjoint}. This basically means that either two trees have their $j$-components disjoint in frequency, or they are disjoint in space; overall we obtain objects that are orthogonal in the $j$th direction, and that becomes instrumental in summing up the pieces. 
%(through a stopping time)

These two principles, of decomposition onto orthogonal pieces and their precise evaluation, are summarized in the following two lemmas.

\begin{lemma}[The tree estimate]
\label{lemma:tree:est}
Let $\ttt$ be an $(i_1, \ldots, i_k)$-tree for some $1 \leq i_1 < \ldots <i_k \leq n+1$; then 
\[
\vert \Lambda_{\ttt}(f_1, \ldots, f_{n+1}) \vert := \big \vert \sum_{s \in \ttt}  |R_s|^{- \frac{n-1}{2}} \langle f_1, \phi^1_{s_1} \rangle \cdot \ldots \cdot  \langle f_{n+1}, \phi^{n+1}_{s_{n+1}} \rangle \big \vert \lesssim \prod_{j=1}^{n+1} \ssize_{\ttt} (\langle f_j, \phi_{s_j}^j  \rangle) \cdot |R_{\ttt}|.
\]
\end{lemma}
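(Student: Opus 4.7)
\vspace{.2cm}

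\noindent\textbf{Proof proposal.} The plan is to bound $|\Lambda_\ttt|$ by exploiting two ``lacunary'' directions (extracted from the rank-$k$ hypothesis) via Cauchy--Schwarz, and to control the remaining factors by uniform estimates on normalized inner products. Denote $a_s^j := |R_s|^{-1/2} |\langle f_j, \phi_{s_j}^j\rangle|$; since $|R_s|^{-(n-1)/2} = |R_s| \cdot |R_s|^{-(n+1)/2}$, we rewrite
\[
|\Lambda_{\ttt}(f_1, \ldots, f_{n+1})| \leq \sum_{s \in \ttt} |R_s| \prod_{j=1}^{n+1} a_s^j.
\]
I would first note the elementary but crucial inequality that for every multi-tile $s \in \ttt$ and every $1 \leq j \leq n+1$, the singleton $\{s\}$ is trivially a $j$-lacunary tree, so directly from the definition \eqref{eq:def-size},
\[
a_s^j \leq \ssize_\ttt(\langle f_j, \phi_{s_j}^j\rangle).
\]

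Next, I would invoke the rank-$k$ condition to extract two lacunary directions outside $\{i_1,\dots,i_k\}$. First, the sub-collection of multi-tiles $s \in \ttt$ with $R_s = R_\ttt$ consists of $O(1)$ elements (their frequency components are pairwise disjoint up to the tolerance of $\lesssim$), and on this piece the estimate is immediate from $a_s^j \lesssim \ssize_\ttt$. For $s \in \ttt$ with $|R_s| \ll |R_\ttt|$, property \ref{cond3} of Definition \ref{def:rank-k} applied to the pair $(s, s_\ttt)$ produces indices $j_1(s), j_2(s) \in \{1,\dots,n+1\} \setminus \{i_1,\dots,i_k\}$ with $s_{j_\ell(s)} \lesssim' s_{\ttt_{j_\ell(s)}}$. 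Splitting $\ttt$ into $O(1)$ sub-trees according to the pair $(j_1, j_2)$, I may assume $\ttt$ itself is simultaneously $j_1$-lacunary and $j_2$-lacunary for some fixed $j_1, j_2 \notin \{i_1, \dots, i_k\}$.

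With this setup, I would apply Cauchy--Schwarz in the $j_1, j_2$ slots: writing $B_s := \prod_{j \neq j_1, j_2} a_s^j$,
\[
\sum_s |R_s|\, a_s^{j_1}\, a_s^{j_2}\, B_s \leq \Big(\sum_s |R_s|(a_s^{j_1})^2\Big)^{1/2} \Big(\sum_s |R_s|(a_s^{j_2})^2\Big)^{1/2} \cdot \sup_{s \in \ttt} B_s.
\]
Because $\ttt$ is itself a $j_\ell$-lacunary tree ($\ell = 1,2$), the definition of size yields
\[
\sum_{s \in \ttt} |R_s| (a_s^{j_\ell})^2 = \sum_{s \in \ttt} |\langle f_{j_\ell}, \phi_{s_{j_\ell}}^{j_\ell}\rangle|^2 \leq \ssize_\ttt(\langle f_{j_\ell}, \phi_{s_{j_\ell}}^{j_\ell}\rangle)^2 \cdot |R_\ttt|,
\]
while $\sup_{s \in \ttt} B_s \leq \prod_{j \neq j_1, j_2} \ssize_\ttt(\langle f_j, \phi_{s_j}^j\rangle)$ by the singleton observation above. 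Multiplying these bounds gives exactly the advertised estimate.

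I expect the main subtlety to be the bookkeeping of Step 2: carefully isolating the finitely many ``top-scale'' multi-tiles with $R_s = R_\ttt$ and decomposing the remainder into a bounded number of sub-trees indexed by the pair $(j_1, j_2)$. Everything else is Cauchy--Schwarz and the definition of $\ssize$; no deeper Bessel-type orthogonality beyond what is encoded in \eqref{eq:def-size} is needed, precisely because the $\ssize$ was designed to absorb that content.
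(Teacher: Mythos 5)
Your strategy --- Cauchy--Schwarz in two lacunary directions supplied by the rank-$k$ structure, with uniform (singleton) bounds on the remaining $n-1$ factors --- is the standard argument; the paper does not reproduce the proof here but refers to \cite{multilinearMTT} and \cite{sparse-hel}. The reduction to $\sum_{s\in\ttt}|R_s|\prod_j a_s^j$, the control of each $a_s^j$ by $\ssize_\ttt$ (realizing $\{s\}$ as a $j$-lacunary tree whose top has spatial cube $R_s$ and a slightly shifted frequency component), the $O(1)$ treatment of tiles at the top scale, and the final Cauchy--Schwarz computation are all correct.

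There is a gap, however, in the appeal to Definition~\ref{def:rank-k}(iii). Its hypothesis requires the overlapping relation $s_{i_t}\le s_{\ttt_{i_t}}$ for all $1\le t\le k$, whereas being an $(i_1,\dots,i_k)$-tree only gives $s_{i_t}\lesssim s_{\ttt_{i_t}}$ --- precisely the gap between a $j$-tree and a $j$-overlapping tree in Definition~\ref{def:tree}. Since each tile of $\ttt$ is either $i_t$-overlapping or $i_t$-lacunary relative to the top, you should first split $\ttt$ into $2^k$ sub-trees by this dichotomy. The all-overlapping piece is exactly the case your argument covers; pieces lacunary in two or more of the $i_t$'s are also fine (Cauchy--Schwarz there instead). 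But a piece lacunary in exactly one $i_{t_0}$ hands you a single lacunary direction and no evident second: property (iii) with the set $\{i_1,\dots,i_k\}$ is unavailable because its hypothesis fails, and swapping $i_{t_0}$ for another index $j^*$ would require $s_{j^*}\le s_{\ttt_{j^*}}$, which the tree hypothesis does not furnish. In the same vein, Definition~\ref{def:rank-k} governs pairs of multi-tiles lying in $\so$, while you invoke it for $(s,s_\ttt)$ without arranging for the top to lie in $\so$. These are exactly the pieces of bookkeeping the references handle --- in effect by proving the estimate for $(i_1,\dots,i_k)$-\emph{overlapping} trees, the case actually produced by the selection algorithm --- and your sketch silently identifies $\lesssim$ with $\le$ at precisely the point where the rank-$k$ combinatorics is doing the real work.
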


\begin{lemma}[Decomposition lemma]
\label{lemma:decomposition:lemma}
Let $\so_j$ be a collection of $j$-tiles such that $\ssize_{\so_j} \big(\langle f, \phi_{s_j}^j \rangle_{s \in \so_j} \big) \leq \lambda$. Then there exists a decomposition $\so_j=\so'_j \sqcup \so''_j$ with $\ssize_{\so'_j} \big(\langle f, \phi_{s_j}^j \rangle_{s \in \so_j} \big) \leq \frac{\lambda}{2}$ while $\so''_j$ can be written as a union of disjoint trees $\ds \so''_j =\bigcup_{T \in \ii F_\lambda} T$ so that 
\begin{equation}
\label{eq:energy-trees-non:loc}
\sum_{T \in \ii F_{\lambda}} |R_T| \lesssim \lambda^{-2} \, \|f_j\|_2^2.
\end{equation}
Furthermore, if $R_0$ is a dyadic cube and if all the tiles in $\so_j$ have their spatial component contained in $R_0$, i.e. $R_{s_j} \subseteq R_0$ for all $s_j \in \so_j$, then 
\begin{equation}
\label{eq:energy-trees}
\sum_{T \in \ii F_{\lambda}} |R_T| \lesssim \lambda^{-2} \, \|f_j \cdot \ci_{R_0}\|_2^2.
\end{equation}
\end{lemma}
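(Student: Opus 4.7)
The plan is to use the standard size selection algorithm from time-frequency analysis, iterated until the size has been halved, and to produce at each step a tree with the prescribed spatial top, packing together lacunary and overlapping trees sharing the same top.

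\textbf{Step 1 (The selection algorithm).} First I would iteratively select subtrees from $\so_j$ as follows. So long as the current collection still has size strictly larger than $\lambda/2$, by the definition of size in \eqref{eq:def-size} there exists a $j$-lacunary tree $T_{\text{lac}} \subseteq \so_j$ with top $s_T$ such that
\[
\Big( \frac{1}{|R_T|} \sum_{s \in T_{\text{lac}}} |\langle f_j, \phi_{s_j}^j \rangle|^2 \Big)^{1/2} > \frac{\lambda}{2}.
\]
I would then complete this to a maximal $j$-tree $T$ with the same top by adjoining all multi-tiles $s \in \so_j$ with $s_j \leq s_{T_j}$ (so that $T = T_{\text{lac}} \cup T_{\text{ov}}$, with $T_{\text{ov}}$ a $j$-overlapping tree), remove $T$ from the collection, and repeat. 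This produces a disjoint collection $\ii F_\lambda$ of trees whose union is $\so''_j$ and whose complement $\so'_j$ has size at most $\lambda/2$, establishing the first two assertions. One should also, as is standard, choose tops to be minimal (with the smallest possible $|R_T|$) so that different lacunary tops are pairwise incomparable in the order relation on tiles.

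\textbf{Step 2 (The Bessel-type energy bound, global version).} The main point is to prove \eqref{eq:energy-trees-non:loc}. By construction,
\[
\sum_{T \in \ii F_\lambda} |R_T| \leq \frac{4}{\lambda^2}\sum_{T \in \ii F_\lambda} \sum_{s \in T_{\text{lac}}} |\langle f_j, \phi_{s_j}^j \rangle|^2 = \frac{4}{\lambda^2} \Big\| \sum_{T}\sum_{s \in T_{\text{lac}}} \langle f_j, \phi_{s_j}^j \rangle\, \phi_{s_j}^j \Big\|_2 \cdot \text{orthogonality correction},
\]
so what one really needs is the almost-orthogonality statement
\[
\sum_{T \in \ii F_\lambda} \sum_{s \in T_{\text{lac}}} |\langle f_j, \phi_{s_j}^j \rangle|^2 \lesssim \|f_j\|_2^2.
\]
This is the classical Bessel inequality for time-frequency wave packets: the tiles $\{s_j : s \in T_{\text{lac}},\, T \in \ii F_\lambda\}$ enjoy the key separation property that if two lacunary tiles $s_j, s'_j$ belong to different selected trees and are comparable in frequency, then their spatial cubes are disjoint (otherwise the larger one would have been absorbed into a tree containing the other by the maximality/minimality of the top selection, contradicting disjointness of $\ii F_\lambda$). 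This standard argument yields the inequality above either by a $TT^*$ computation using the Schwartz decay of the wave packets, or by appealing to the John-Nirenberg-type BMO argument relating size and energy already invoked in the excerpt. Either route yields \eqref{eq:energy-trees-non:loc}.

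\textbf{Step 3 (Localization to $R_0$).} For \eqref{eq:energy-trees} I would exploit that every tile $s \in \so_j$ has $R_s \subseteq R_0$, so $\phi_{s_j}^j$ is $L^2$-adapted to a subcube of $R_0$. Splitting $f_j = f_j \cdot \mathbf{1}_{R_0} + \sum_{k \geq 1} f_j \cdot \mathbf{1}_{2^k R_0 \setminus 2^{k-1} R_0}$ and using the Schwartz decay of $\phi_{s_j}^j$ (which provides a factor $2^{-kN}$ in each annulus for arbitrarily large $N$), the contribution from the pieces outside $R_0$ is absorbed into $\|f_j \cdot \ci_{R_0}\|_2^2$ after summing a convergent geometric series in $k$. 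Equivalently, one replaces $f_j$ by $f_j \cdot \ci_{R_0}$ throughout Step 2 and bounds
\[
\sum_{T \in \ii F_\lambda} \sum_{s \in T_{\text{lac}}} |\langle f_j, \phi_{s_j}^j \rangle|^2 \lesssim \|f_j \cdot \ci_{R_0}\|_2^2
\]
by the same Bessel argument, since the wave packets' spatial envelopes are now dominated by $\ci_{R_0}$.

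\textbf{Main obstacle.} The genuinely delicate point is Step 2, namely verifying that the selected lacunary trees' tiles form a Bessel system. This rests entirely on the stopping-time producing trees whose tops are appropriately ``disjoint'' (in frequency at comparable spatial scales, or in space at comparable frequency scales); care must be taken in the minimality convention for the tops so that the geometric separation needed for the almost-orthogonality of $\{\phi_{s_j}^j\}$ is actually inherited. The adaptation to $\rr R^d$ changes only the combinatorics of the index set, not the underlying orthogonality mechanism.
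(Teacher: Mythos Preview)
Your plan is essentially the standard size/energy decomposition argument, and it matches what the paper has in mind: the paper does not give a self-contained proof here but refers to \cite{multilinearMTT} and \cite{sparse-hel} for the details, so your Steps~1--3 are the right skeleton.

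There is one point you underplay, and it is exactly the one the paper singles out as nontrivial. You write that ``the adaptation to $\rr R^d$ changes only the combinatorics of the index set, not the underlying orthogonality mechanism.'' The paper disagrees: it explicitly flags Lemma~\ref{lemma:decomposition:lemma} as \emph{the only place} where the $d$-dimensional analysis differs from $d=1$. The issue is your ``minimality convention for the tops'' in Step~1. In one dimension, lacunarity gives a definite direction (above or below the top frequency), and one orders the selection of trees accordingly so that the resulting lacunary trees are $j$-disjoint. In $\rr R^d$, lacunarity of $s$ in a tree $\ttt$ only says that \emph{some} coordinate $1\le i\le d$ satisfies $(\omega_{\ttt_j})_i\cap(3\omega_{s_j})_i=\emptyset$; different tiles in the same lacunary tree may witness this in different coordinates. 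The paper's remedy is to first split the lacunary tiles according to which coordinate $i$ realizes the separation (using a lexicographic convention to avoid double counting), and then run the one-dimensional ordering argument within each coordinate class to obtain $j$-disjointness. Without this reorganization, the geometric separation you assert in Step~2 (``if two lacunary tiles from different selected trees are comparable in frequency, their spatial cubes are disjoint'') is not automatic in $\rr R^d$. So your ``main obstacle'' paragraph identifies the right issue, but the resolution in higher dimensions requires this extra coordinate-wise bookkeeping rather than a direct transplant of the $d=1$ stopping time.
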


If the trees had mutually disjoint supports in frequency, the inequality \eqref{eq:energy-trees-non:loc} would represent a genuine Bessel inequality; since that is not always the case, we need to exploit the disjointness in space instead. In what follows, the local estimate \eqref{eq:energy-trees} will be more relevant for us; the spatial restriction, expected to produce a localization of the input functions as well, increases the level of technicality. Proofs of the lemmas above can be found in \cite{multilinearMTT}, \cite{sparse-hel}.

We should also emphasize that Lemma \ref{lemma:decomposition:lemma} is the only point where the one-dimensional analysis of $T_k$ is different from the $d$-dimensional one. With the purpose of ensuring the $j$-disjointness of the lacunary trees in the collection $\ii F_\lambda$ above, an ordering of the trees selected by the algorithm setting up $\ii F_\lambda$ is useful. In higher dimensions, all we can say about a $j$-lacunary tree $\ttt$ (as presented in Definition \ref{def:tree}) and a tile $s \in \ttt$ is that there exists at least one coordinate $1 \leq i \leq d$ for which the intervals $(\omega_{\ttt_j})_i$ and $(3 \omega_{s_j})_i$ do not intersect: $(\omega_{\ttt_j})_i \cap (3 \omega_{s_j})_i= \emptyset$. The trees will be re-organized according to these specific directions, using lexicographic ordering to avoid redundancies. Then the $j$-disjointness of the trees that are all lacunary in the $i$th coordinate follows from the usual arguments.

\enskip

With the technical tools available, we are ready to state and prove the localization result:

\begin{theorem}
\label{thm:localization:mvv}
Let $R_0$ be a dyadic cube in $\rr R^d$, $\so$ a finite rank-$k$ collection of multi-tiles, $E_1, \ldots, E_{n+1} \subset \rr R^d$ measurable sets and $f_1, \ldots, f_{n+1}: \rr R^d \to \rr C$ functions with the property that $\vert f_j(x) \vert \leq \one_{E_j}(x)$ for all $1 \leq j \leq n+1$. For any $\ds (\alpha_1, \ldots, \alpha_{n+1}) \in \Xi_{n,k}$, we have 
\begin{equation}
\label{eq:scalar-multilinear-form-localization}
\big \vert \Lambda_{\so (R_0)}(f_1, \ldots, f_{n+1}) \big \vert \lesssim \prod_{j=1}^{n+1} \big( \sssize_{R_0} \, \one_{E_j}  \big)^{1- \alpha_j} \cdot |R_0|.
\end{equation}
\begin{proof}

We start by applying the decomposition Lemma \ref{lemma:decomposition:lemma} iteratively to the collection $\so(R_0)$ in the following way: for every $1 \leq j \leq n+1$, we have
\begin{equation}
\label{eq:exampl:decomp}
\so(R_0):= \bigcup_{\ell _j \in \rr Z} \bigcup_{T \in \ii F_{\ell_j}} T,
\end{equation}
where every $T \in  \ii F_{\ell_j}$ is a $j$-tree with $\ssize_{T}(\langle f_j, \phi_{s_j}^j  \rangle_{s \in T}) \sim 2^{- \ell_j}$ and $\ds \sum_{T \in \ii F_{\ell_j}} |R_T| \lesssim 2^{2 \ell_j} \, \|f_j \cdot \ci_{R_0}\|_2^2$. This decomposition is performed simultaneously for each of the functions $f_j$, and the contribution of $\Lambda_{\ii F_{\ell_1} \cap \ldots \cap \ii F_{\ell_{n+1}}}(f_1, \ldots, f_{n+1})$ will need to be carefully assessed.

In consequence $\so(R_0)$ can be written as
\[
\so(R_0)= \bigcup_{\ell_1, \ldots, \ell_{n+1}} \, \bigcup_{T_1 \in \ii F_{\ell_1}} \ldots \bigcup_{T_{n+1} \in \ii F_{\ell_{n+1}}} \big( T_1 \cap \ldots \cap  T_{n+1} \big),
\]
and we have for all $1 \leq j \leq n+1$
\[
 \ssize_{T_1 \cap \ldots \cap  T_{n+1}}(\langle f_j, \phi_{s_j}^j  \rangle_{s \in T}) \leq 2^{- \ell_j} \lesssim \sssize_{R_0} \one_{E_j}.
\]
Then 
\begin{equation}
\label{eq:multilin-step1}
\vert  \Lambda_{\so(R_0)}(f_1, \ldots, f_{n+1})\vert  \lesssim \sum_{\ell_1, \ldots, \ell_{n+1} \in \rr Z} \, 2^{- \ell_1} \cdot \ldots \cdot 2^{- \ell_{n+1}} \sum_{T_1 \in \ii F_{\ell_1}} \ldots   \sum_{T_{n+1} \in \ii F_{\ell_{n+1}}} |R_{T_1\cap \ldots \cap T_{n+1}}|,
\end{equation}
and the difficulty resides in estimating $\ds \sum_{T_1 \in \ii F_{\ell_1}} \ldots   \sum_{T_{n+1} \in \ii F_{\ell_{n+1}}} |R_{T_1\cap \ldots \cap T_{n+1}}|$. Here we recall that $\so(R_0)$ is a rank-$k$ collection of multi-tiles: hence once $k$ parameters are fixed, everything else is settled. We start by choosing $k$ indices $\{i_1, \ldots, i_k\}$, and we denote $\{ i_{k+1}, \ldots, i_{n+1} \}$ the remaining ones. Then  
\[
 \sum_{T_1 \in \ii F_{\ell_1}} \ldots   \sum_{T_{n+1} \in \ii F_{\ell_{n+1}}} |R_{T_1\cap \ldots \cap T_{n+1}}|=  \sum_{T_{i_1} \in \ii F_{\ell_{i_1}}} \ldots   \sum_{T_{i_k} \in \ii F_{\ell_{i_k}}}  \sum_{T_{i_{k+1}} \in \ii F_{\ell_{i_{k+1}}}} \ldots \sum_{T_{i_{n+1}} \in \ii F_{\ell_{i_{n+1}}}}|R_{T_{i_1}} \cap \ldots \cap R_{T_{i_k}} \cap R_{T_{i_{k+1}}} \cap \ldots \cap R_{T_{i_{n+1}}}|.
\]

Let $j \in \{ i_{k+1}, \ldots, i_{n+1} \}$. As mentioned above, when estimating the contributions of spatial supports of trees in $\ii F_{\ell_j}$ it is enough to consider the $j$-disjoint trees. Our next claim is that whenever $T_j$ and $T'_j \in \ii F_{\ell_j}$ are $j$-disjoint satisfying
\begin{equation}
\label{eq:intersections:trees}
T_j \cap \big( T_{i_1} \cap \ldots \cap T_{i_k} \big) \neq \emptyset, \qquad T'_j \cap \big( T_{i_1} \cap \ldots \cap T_{i_k} \big) \neq \emptyset,
\end{equation}
we necessarily have $R_{T_j} \cap R_{T'_j}= \emptyset$. This is, in short, due to the fact that condition \eqref{eq:intersections:trees} together with the rank-$k$ property of the collection of tiles implies a certain overlap in frequency; that in turn forces the trees $T_j$ and $T'_j$ to be disjoint in space.

As a result of the above observation,  
\[
\sum_{ T_{j} \in \ii F_{\ell_j}} \sum_{ T_{i_1} \in \ii F_{\ell_{i_1}}} \ldots   \sum_{T_{i_k} \in \ii F_{\ell_{i_k}}} |R_{T_j} \cap R_{T_{i_1}} \cap \ldots \cap R_{T_{i_k}}|  \lesssim \sum_{T_{i_1} \in \ii F_{\ell_{i_1}}} \ldots   \sum_{T_{i_k} \in \ii F_{\ell_{i_k}}} |R_{T_{i_1}} \cap \ldots \cap R_{T_{i_k}}|
\]
and consequently
\[
\sum_{T_1 \in \ii F_{\ell_1}} \ldots   \sum_{T_{n+1} \in \ii F_{\ell_{n+1}}}  |R_{T_1\cap \ldots \cap T_{n+1}}| \lesssim \sum_{T_{i_1} \in \ii F_{\ell_{i_1}}} \ldots   \sum_{T_{i_k} \in \ii F_{\ell_{i_k}}} |R_{T_{i_1}} \cap \ldots \cap R_{T_{i_k}}|
\]
for every ordered $k$-tuple $(i_1, \ldots, i_k)$.
 
This leads to the estimate
\[
\sum_{T_1 \in \ii F_{\ell_1}} \ldots   \sum_{T_{n+1} \in \ii F_{\ell_{n+1}}}  |R_{T_1\cap \ldots \cap T_{n+1}}| \lesssim \prod_{1 \leq i_1 < \ldots< i_k \leq n+1} \Big( \sum_{T_{i_1} \in \ii F_{\ell_{i_1}}} \ldots   \sum_{T_{i_k} \in \ii F_{\ell_{i_k}}} |R_{T_{i_1}} \cap \ldots \cap R_{T_{i_k}}| \Big)^{\theta_{i_1, \ldots, i_k}},
\] 
where $0 \leq \theta_{i_1, \ldots, i_k} \leq 1$ are interpolation coefficients: $\ds \sum_{1 \leq i_1 < \ldots< i_k \leq i_{k}}\theta_{i_1, \ldots, i_k}=1$.
 
If instead only $(k-1)$ parameters are fixed, that will not be sufficient for fully determining the position of the trees: if $T_{i_1}, \ldots, T_{i_{k-1}}$ are fixed, then the trees $T_{i_k} \in \ii F_{\ell_{i_k}}$ will not necessarily have mutually disjoint spatial tops. However, using \eqref{eq:energy-trees} we can estimate the contribution of the tree tops by
\begin{align*}
\sum_{\substack{T_{i_k} \in \ii F_{\ell_{i_k}} \\  R_{T_{i_k} \subseteq R_{T_{i_1}} \cap \ldots \cap R_{T_{i_{k-1}}}  }}} |R_{T_{i_1}} \cap \ldots \cap R_{T_{i_k}}| & \lesssim 2^{2 \,\ell_{i_k}}  \|f_{i_k} \cdot \ci_{R_{T_{i_1}} \cap \ldots \cap R_{T_{i_{k-1}}}} \|_2^2 \\
&\lesssim  2^{2 \,\ell_{i_k}} \big( \sssize_{R_{T_{i_1}} \cap \ldots \cap R_{T_{i_{k-1}}}} \one_{E_{i_k}} \big) \cdot |R_{T_{i_1}} \cap \ldots \cap R_{T_{i_{k-1}}}|.
\end{align*}
 
After a re-ordering of the tree tops and an iterative application of the inequality above, we obtain
\[
 \sum_{T_{i_1} \in \ii F_{\ell_{i_1}}} \ldots   \sum_{T_{i_k} \in \ii F_{\ell_{i_k}}} |R_{T_{i_1}} \cap \ldots \cap R_{T_{i_k}}|  \lesssim \prod_{t=1}^k \, 2^{2 \, \ell_{i_t}} \cdot \big( \sssize_{R_0} \one_{E_{i_t}} \big) \cdot |R_0|.
\]

Returning to the $(n+1)$-linear form $\Lambda_{\so(R_0)}$, we have as a consequence of \eqref{eq:multilin-step1} the inequality
 \begin{align*}
\vert  \Lambda_{\so(R_0)}(f_1, \ldots, f_{n+1})\vert & \lesssim \sum_{\ell_1, \ldots, \ell_{n+1} \in \rr Z} 2^{- \ell_1} \cdot \ldots \cdot 2^{- \ell_{n+1}} \prod_{1 \leq i_1 < \ldots< i_k \leq n+1} \Big( \prod_{t=1}^k \, 2^{2 \, \ell_{i_t}} \cdot \sssize_{R_0} \one_{E_{i_t}}  \cdot |R_0|  \Big)^{\theta_{i_1, \ldots, i_k}}  \\
&\lesssim  \sum_{\ell_1, \ldots, \ell_{n+1} \in \rr Z}  2^{- \ell_1 \left( 1 -2 \,\alpha_1\right)} \cdot \ldots \cdot 2^{- \ell_{n+1} \left( 1 -2\, \alpha_{n+1}\right)} \prod_{j=1}^{n+1} \big( \sssize_{R_0} \one_{E_{j}} \big)^{\alpha_j}  \cdot |R_0|, 
 \end{align*}
 where for any $1 \leq j \leq n+1$, $\alpha_j$ is given by
\[
\alpha_j:=\sum_{\substack{1 \leq i_1<\ldots < i_k \leq n+1 \\ i_t =j \text{  for some   } 1\leq  t \leq k}} \theta_{i_1, \ldots, i_k},
\] 
i.e. by condition \eqref{cond:alpha} in the definition of $\Xi_{n,k}$. Since $2^{- \ell_j} \lesssim  \sssize_{R_0} \one_{E_j}$, the geometric series above converge if and only if all $\alpha_j \in \big(0, \frac{1}{2}\big)$, yielding 
\[
\vert  \Lambda_{\so(R_0)}(f_1, \ldots, f_{n+1})\vert  \lesssim \prod_{j=1}^n \big( \sssize_{R_0} \, \one_{E_j}  \big)^{1- \alpha_j} \cdot |R_0|.
\]
\end{proof}
\end{theorem}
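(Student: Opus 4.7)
The plan is to combine the tree estimate (Lemma~\ref{lemma:tree:est}) with the decomposition Lemma~\ref{lemma:decomposition:lemma}, the latter applied once to each of the $n+1$ inputs. Concretely, for every $1 \leq j \leq n+1$ I would partition $\so(R_0)$ into families $\ii F_{\ell_j}$ consisting of $j$-lacunary trees on which the $j$-size is $\sim 2^{-\ell_j}$, with the localized Bessel bound \eqref{eq:energy-trees} giving
\[
\sum_{T \in \ii F_{\ell_j}}|R_T| \lesssim 2^{2\ell_j}\|f_j \cdot \ci_{R_0}\|_2^2 \lesssim 2^{2\ell_j}\,\sssize_{R_0}(\one_{E_j})\cdot |R_0|,
\]
together with the a priori cutoff $2^{-\ell_j} \lesssim \sssize_{R_0}(\one_{E_j})$. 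Intersecting the $n+1$ decompositions and applying the tree lemma on each $T_1 \cap \cdots \cap T_{n+1}$ reduces matters to controlling
\[
\sum_{\ell_1,\ldots,\ell_{n+1}}2^{-\ell_1}\cdots 2^{-\ell_{n+1}} \sum_{T_1 \in \ii F_{\ell_1}}\cdots\sum_{T_{n+1}\in \ii F_{\ell_{n+1}}}|R_{T_1}\cap\cdots\cap R_{T_{n+1}}|.
\]

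The heart of the argument lies in using the rank-$k$ hypothesis to control the inner sum. For any choice of indices $1\leq i_1<\cdots<i_k\leq n+1$, I would fix trees $T_{i_1},\ldots,T_{i_k}$ and argue that the rank-$k$ condition combined with $j$-lacunarity forces the $T_j$ (for $j\notin\{i_1,\ldots,i_k\}$) to have pairwise disjoint spatial tops whenever they meet $R_{T_{i_1}}\cap\cdots\cap R_{T_{i_k}}$: two such lacunary trees would, by part~\ref{cond1} of Definition~\ref{def:rank-k} applied through the already-fixed indices, have overlapping $j$-th frequency components, which together with lacunarity forbids spatial overlap. This collapses the outer sums to $\sum_{T_{i_1}}\cdots\sum_{T_{i_k}}|R_{T_{i_1}}\cap\cdots\cap R_{T_{i_k}}|$, which I would estimate by iteratively applying \eqref{eq:energy-trees} $k$ times, obtaining $\prod_{t=1}^{k}2^{2\ell_{i_t}}\sssize_{R_0}(\one_{E_{i_t}})\cdot |R_0|$. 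Taking a geometric mean over all $\binom{n+1}{k}$ such bounds, weighted by an interpolation vector $\vec\theta\in\Theta_{n,k}$, and invoking \eqref{cond:alpha} produces the factor $\prod_j 2^{2\alpha_j\ell_j}\sssize_{R_0}(\one_{E_j})^{\alpha_j}\cdot|R_0|$.

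The last step is summing the geometric series in each $\ell_j$: the series $\sum_{\ell_j}2^{-(1-2\alpha_j)\ell_j}$ converges at the lower end provided $\alpha_j<\tfrac{1}{2}$, and using the cutoff $2^{-\ell_j}\lesssim \sssize_{R_0}(\one_{E_j})$ one picks up $\sssize_{R_0}(\one_{E_j})^{1-2\alpha_j}$, which combined with the interpolation factor yields the claimed $\sssize_{R_0}(\one_{E_j})^{1-\alpha_j}$. This also pinpoints why the range $\alpha_j\in(0,\tfrac{1}{2})$ is the correct one, i.e.\ exactly that built into the definition of $\Xi_{n,k}$.

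The main obstacle I anticipate is the combinatorial step in the second paragraph: rigorously translating the rank-$k$ relation into spatial disjointness of any other $j$-lacunary trees that overlap a fixed common intersection $R_{T_{i_1}}\cap\cdots\cap R_{T_{i_k}}$. In the multidimensional setting there is a further subtlety flagged after Lemma~\ref{lemma:decomposition:lemma}: $j$-lacunarity of a tree is only witnessed in \emph{some} coordinate direction, so I would first pre-split each $\ii F_{\ell_j}$ according to the witnessing coordinate and use lexicographic ordering on the tree tops before invoking the disjointness argument, a step that has no analogue in dimension one.
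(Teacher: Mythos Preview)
Your outline is correct and mirrors the paper's proof essentially step for step: decompose $\so(R_0)$ in each index via Lemma~\ref{lemma:decomposition:lemma}, apply the tree estimate on the intersections, use the rank-$k$ property to collapse the $(n+1)$-fold sum of top measures to a $k$-fold one, estimate the latter by iterating the localized Bessel bound \eqref{eq:energy-trees}, take a $\vec\theta$-weighted geometric mean, and sum the geometric series using $\alpha_j<\tfrac12$. One small terminological point: the trees produced by Lemma~\ref{lemma:decomposition:lemma} are $j$-\emph{disjoint} (not necessarily $j$-lacunary), and it is $j$-disjointness---combined with the frequency overlap forced by the rank-$k$ condition through the fixed indices $i_1,\ldots,i_k$---that yields the spatial disjointness you need; the argument otherwise goes through exactly as you describe.
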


\subsection{The multiple vector-valued estimate}~\\
\label{sec:mvv}
The multiple vector-valued result for $T_k$ is identical to the one-dimensional one presented in \cite{sparse-hel}. Besides the fact that $T_k$ allows for vector-valued extensions, of equal importance is that these extensions satisfy the same type of local estimates as those in Theorem \ref{thm:localization:mvv} (which allows to run the algorithm again and to obtain extensions for iterated $L^R$ spaces). We have the following result:

\begin{theorem}
\label{thm-main-vvv-discretized}
Let $\so$ be a finite rank-$k$ family of multi-tiles and $(\alpha_1, \ldots, \alpha_{n+1})$ a fixed tuple in $\Xi_{n, k}$. The discretized operator from \eqref{def:discretized-op}, defined by
\begin{equation*}
T(f_1, \ldots, f_n)(x):=\sum_{ s \in \so }  |R_s|^{- \frac{n-1}{2}} \langle f_1, \phi^1_{s_1} \rangle \cdot \ldots \cdot  \langle f_{n}, \phi^{n}_{s_n} \rangle \,  \phi^{n+1}_{s_{n+1}}(x), 
\end{equation*}
admits multiple vector-valued extensions 
\[
T: L^{p_1}\big(  \rr R^d; L^{R_1}(\ii W, \mu) \big) \times \ldots \times L^{p_n}\big(  \rr R^d; L^{R_n}(\ii W, \mu) \big)  \to L^{p_{n+1}'}\big(  \rr R^d; L^{{R'}_{n+1}}(\ii W, \mu) \big)
\]
for any $m$-tuples $R_1, \ldots, R_{n+1}$, and any Lebesgue exponents $p_1, \ldots, p_{n+1}$ so that $(p_1, \ldots, p_{n+1})$ and $(R_1, \ldots, R_{n+1})$ are $(1-\alpha_1, \ldots, 1-\alpha_{n+1})$-local H\"older tuples.

Moreover, a sparse domination result in $L^q_{\rr R^d}$ involving local $(1-\alpha_1, \ldots, \frac{1}{q}-\alpha_{n+1})$ averages is available: for any $0< q < \infty$, any $(1-\alpha_1, \ldots, \frac{1}{q}-\alpha_{n+1})$-local tuple $(s_1, \ldots, s_{n+1})$, any vector-valued functions $f_1, \ldots, f_{n}$ with $\|f_j(x, \cdot)\|_{L^{R_j}_{\ii W}}$ locally integrable and for any $q$-integrable function $v$, there exists a sparse collection $\ic S$ of dyadic cubes in $\rr R^{d}$ depending on the functions $f_1, \ldots, f_{n}$ and on $v$, on the collection $\so$ and on the Lebesgue exponents so that 
\begin{align*}
\big\|\|T(f_1, \ldots, f_n)\|_{L^{R_{n+1}'}_{\ii W}} \cdot v \big\|_{L^{q}_{\rr R^d}}^q \lesssim \sum_{Q \in \ic S} \prod_{j=1}^{n} \big( \frac{1}{|Q|} \int_{\rr R^d} \|f_j(x, \cdot)  \|_{L^{R_j}_{\ii W}}^{s_j} \cdot \ci_{Q} dx  \big)^\frac{q}{s_j} \big( \frac{1}{|Q|} \int_{\rr R^d} |v(x)|^{s_{n+1}} \cdot \ci_{Q} dx  \big)^\frac{q}{s_{n+1}}  |Q|.
\end{align*}
\end{theorem}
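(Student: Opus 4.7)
The proof rests on a vector-valued analogue of the local estimate from Theorem \ref{thm:localization:mvv}, together with the standard passage from local to global via stopping times. The key local statement to establish is that for any dyadic cube $R_0$, any measurable sets $E_1, \ldots, E_{n+1} \subset \rr R^d$, and vector-valued functions $f_j : \rr R^d \times \ii W \to \rr C$ with $\|f_j(x,\cdot)\|_{L^{R_j}_{\ii W}} \leq \one_{E_j}(x)$,
\[
\int_{\ii W} |\Lambda_{\so(R_0)}(f_1(\cdot,w), \ldots, f_{n+1}(\cdot,w))| \, d\mu(w) \lesssim \prod_{j=1}^{n+1}\bigl(\sssize_{R_0} \one_{E_j}\bigr)^{1-\alpha_j} \cdot |R_0|.
\]
I would prove this by induction on the depth $m$ of the iterated $L^{R_j}$ norms. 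The base case $m=0$ is exactly Theorem \ref{thm:localization:mvv}. For the inductive step, one peels off the outermost layer $L^{r_j^{(1)}}_{w_1}$: stop on the maximal dyadic subcubes $R \subseteq R_0$ on which $\ave^{r_j^{(1)}}_R\bigl(\|f_j(\cdot,w')\|_{L^{R_j^{(2)}}_{w'}}\bigr)$ first exceeds a fixed multiple of $\sssize_{R_0}\one_{E_j}$, write each $E_j$ as a disjoint union over the resulting selected cubes, apply Fubini in $w_1$, and invoke the inductive hypothesis at depth $m-1$ on each piece. The componentwise H\"older condition on $(R_1, \ldots, R_{n+1})$ ensures that the geometric series summing the contributions across selected scales converges.

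Once the local vector-valued estimate is established, both the global vector-valued bound and the sparse domination follow by standard Calder\'on-Zygmund-type stopping-time arguments in the spirit of \cite{sparse-hel}. For the global estimate, the usual restricted weak-type reduction combined with the local inequality and Marcinkiewicz-style multilinear interpolation yields the desired bound for any $(1-\alpha_1, \ldots, 1-\alpha_{n+1})$-local H\"older tuple $(p_1, \ldots, p_{n+1})$. For the sparse domination in $L^q$, I would build the collection $\ic S$ by running the stopping rule simultaneously on $\ave^{s_j}_R\bigl(\|f_j\|_{L^{R_j}_{\ii W}}\bigr)$ for $1 \leq j \leq n$ and on $\ave^{s_{n+1}}_R(|v|)$: stop on the maximal dyadic subcubes where any of these averages first jumps by a fixed factor. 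The resulting collection is sparse by the standard Carleson packing argument, and on each $Q \in \ic S$ the local estimate applied to the restricted pieces of the $E_j$ yields exactly
\[
\sum_{Q \in \ic S} \prod_{j=1}^{n} \ave^{s_j}_Q\bigl(\|f_j\|_{L^{R_j}_{\ii W}}\bigr)^q \cdot \ave^{s_{n+1}}_Q(|v|)^q \cdot |Q|,
\]
which is the claimed sparse bound. The $(1-\alpha_1, \ldots, \tfrac{1}{q}-\alpha_{n+1})$-local condition on $(s_1, \ldots, s_{n+1})$ is precisely what guarantees $\tfrac{1}{s_j} < 1-\alpha_j$ for $1 \leq j \leq n$ and $\tfrac{1}{s_{n+1}} < \tfrac{1}{q}-\alpha_{n+1}$, which is exactly what is needed to dominate $(\sssize_{R_0}\one_{E_j})^{1-\alpha_j}$ by the corresponding $s_j$-normalized average on $Q$.

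The main technical obstacle is the bookkeeping in the inductive step for the local vector-valued estimate. The stopping times for the $(n+1)$ different functions must be coordinated so that the product structure $\prod_{j}(\sssize_{R_0}\one_{E_j})^{1-\alpha_j}$ is preserved when peeling one layer of the iterated norms; this forces independent stopping on each $f_j$, followed by recombination via a multilinear H\"older step in which the componentwise H\"older identity for $(R_1, \ldots, R_{n+1})$ is used in an essential way. Without this identity the sum over selected scales would fail to converge, which is the structural reason the theorem requires $(R_1, \ldots, R_{n+1})$ to be a $(1-\alpha_1, \ldots, 1-\alpha_{n+1})$-local H\"older tuple, rather than merely a tuple in $Range(n,k)$.
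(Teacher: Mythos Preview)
Your overall architecture---induction on the depth $m$, with a sharp local estimate at the base and stopping-time/sparse machinery on top---matches the paper's. But the inductive step as you describe it has a genuine gap.

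You propose to ``apply Fubini in $w_1$, and invoke the inductive hypothesis at depth $m-1$ on each piece.'' The problem is that once you freeze $w_1$, the functions $g_j(x,w') := f_j(x,w_1,w')$ are no longer restricted-type: you only know $\|\,\|g_j(x,\cdot)\|_{L^{\bar R_j}_{w'}}\,\|_{L^{r_j^{1}}_{w_1}} \le \one_{E_j}(x)$, not $\|g_j(x,\cdot)\|_{L^{\bar R_j}_{w'}} \le \one_{E_j}(x)$. So the restricted-type inductive hypothesis at depth $m-1$ does not apply. Your stopping time on ``$\ave^{r_j^{(1)}}_R\bigl(\|f_j(\cdot,w')\|_{L^{R_j^{(2)}}_{w'}}\bigr)$'' does not repair this: that quantity is still a function of $w_1$, so it cannot drive a spatial stopping time. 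This is exactly the obstacle the helicoidal method is built to overcome.

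The paper's resolution is to insert an intermediate statement between \eqref{eq:qB:restrcted:type:loc} and the next level: the \emph{operatorial} local estimate ($\text{loc }m\text{ op}$), in which the depth-$m$ operator is applied to \emph{general} (not restricted-type) functions and the bound displays the dependence of the operator norm on the localizing sets $F_j$. This is obtained from \eqref{eq:qB:restrcted:type:loc} via mock interpolation (Lemma~\ref{lemma:mock:interp}) and sparse domination (Lemma~\ref{lemma:local->sparse->subadditive}). Only then does the Fubini/H\"older step work: one matches the outer $L^{q}_{\rr R^d}$ exponent to the outermost vector exponent $(r_{n+1}^1)'$ via the weak-$L^q$ dualization through $L^\tau$ in \eqref{eq:dualization:weak:Lq}, swaps $L^{(r_{n+1}^1)'}_x L^{(r_{n+1}^1)'}_{w_1}$ by Fubini, and applies ($\text{loc }m\text{ op}$) for each fixed $w_1$ to the non-restricted functions $f_{j,w_1}$. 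The exponents in the operator norm then recombine with the $L^{r_j^1}_{w_1}$ integration to rebuild $\|f_j\|_{L^{R_j}}$ and the size of $\one_{E_j}$.

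A secondary point: your local estimate is phrased as $\int_{\ii W}|\Lambda_{\so(R_0)}(\cdots)|\,d\mu$, i.e.\ via the $(n+1)$-linear form paired against $f_{n+1}$ with $L^1_w$. This is only equivalent to the operator formulation when $L^{R'_{n+1}}$ is Banach; the theorem allows $R'_{n+1}$ componentwise below $1$, and there the form does not recover $\|T(f_1,\ldots,f_n)\|_{L^{R'_{n+1}}_{\ii W}}$ by duality. The paper's \eqref{eq:qB:restrcted:type:loc} is stated directly for the operator in $L^q_{\rr R^d}L^{R'_{n+1}}_{\ii W}$ for arbitrary $0<q<\infty$, which is what the induction actually needs.
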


The multiple vector-valued estimate is based on a sharp local estimate (provided by Theorem \ref{thm:localization:mvv} in Section \ref{sec:T_k:local:est}), coupled with an inductive argument depending on $m$, the ``depth" of the vector-valued estimate. The inductive argument involves several interconnected statements, which will be presented after bringing attention to two useful technical results from \cite{sparse-hel}. First we recall a lemma reminiscent of interpolation that provides a way to pass from localized results for restricted-type functions to localized results for general functions, and secondly, a result about sparse domination for operators satisfying local maximal estimates. 

\begin{lemma}[Similar to Proposition 14 in \cite{sparse-hel}]
\label{lemma:mock:interp}
Let $R_1, \ldots, R_n, R_{n+1}'$ be $m$-tuples as before, $0< q <\infty$ a Lebesgue exponent and $Q_0$ a fixed dyadic cube in $\rr R^d$. Consider $T$ a $n$-(sub)linear vector-valued operator associated to some collection of tiles $\rr P$, for which we have
\begin{align}
\label{eq:mock-interp-restr}
 \big\| \| T_{\rr P(Q_0)}( f_1, \ldots,  f_{n}) \|_{L^{R'_{n+1}}} \cdot \one_{\tilde E_{n+1}} \big\|_{L^q_{\rr R^d}}  \lesssim \prod_{j=1}^{n} \big( \sssize_{Q_0} \one_{E_j}    \big)^{\beta_j} \cdot \big( \sssize_{Q_0} \one_{\tilde E_{n+1}}    \big)^{\beta_{n+1}} \cdot \vert Q_0 \vert^\frac{1}{q}
\end{align}
for any measurable sets $E_1, \ldots, E_n, \tilde E_{n+1} \subseteq \rr R^d$, any vector-valued functions $ f_1, \ldots,  f_{n}$ satisfying $\| f_j (x, \cdot)\|_{L^{R_j}_{\ii W}} \leq \one_{E_j}$ for all $1 \leq j \leq n$, and for $v$ any locally $q$-integrable function so that $|v(x)| \leq \one_{\tilde E_{n+1}}$. Then the localized strong-type estimate
\begin{align*}
\big\|  \| T_{\rr P(Q_0)}(f_1, \ldots, f_{n})\|_{L^{R'_{n+1}}} \cdot v\|_{L^q(\rr R^d)}  \lesssim \prod_{j=1}^{n} \big( \sssize_{Q_0}^{s_j} \|  f_j  (x, \cdot)\|_{L^{R_j}_{\ii W}}  \big) \cdot  \big( \sssize_{Q_0}^{s_{n+1}} (v)  \big)  \cdot \vert Q_0 \vert^\frac{1}{q},
\end{align*}
also holds for any vector-valued functions $ f_1, \ldots, f_{n}$, any locally $q$-integrable function $v$, and any $(\beta_1, \ldots, \beta_{n+1})$-local tuple $(s_1, \ldots, s_{n+1})$ of Lebesgue exponents.

We note that the implicit constant is of the order $\ds O\big(\prod_{j=1}^{n+1} (s_j \, \beta_j -1)^{-1} \big)$.
\end{lemma}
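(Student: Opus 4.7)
The strategy is a dyadic level-set decomposition in each input function, reducing the desired strong-type estimate to the restricted-type hypothesis \eqref{eq:mock-interp-restr}. Set $g_j(x) := \|f_j(x,\cdot)\|_{L^{R_j}_{\ii W}}$ for $1\le j\le n$ and $g_{n+1}(x) := |v(x)|$, and decompose each function in $x$ along the level sets
\[
E^j_{\ell_j} := \bigl\{x : 2^{\ell_j} \leq g_j(x) < 2^{\ell_j+1}\bigr\}, \qquad \ell_j \in \rr Z.
\]
The normalized pieces $2^{-\ell_j} f_j \cdot \one_{E^j_{\ell_j}}$ and $2^{-\ell_{n+1}} v \cdot \one_{E^{n+1}_{\ell_{n+1}}}$ are bounded (in $L^{R_j}_{\ii W}$-norm, respectively pointwise) by the indicator of their level set, which is exactly the input format of \eqref{eq:mock-interp-restr}. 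By $n$-sublinearity of $T$, the quasi-triangle inequality in the mixed-norm $L^{R'_{n+1}}_{\ii W} L^q_{\rr R^d}$, and applying \eqref{eq:mock-interp-restr} to every tuple of normalized pieces, the left-hand side of the desired conclusion is dominated by
\[
|Q_0|^{1/q} \cdot \prod_{j=1}^{n+1} \sum_{\ell_j \in \rr Z} 2^{\ell_j}\, \bigl(\sssize_{Q_0} \one_{E^j_{\ell_j}}\bigr)^{\beta_j}.
\]

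Each one-parameter sum must then be controlled by $\sssize_{Q_0}^{s_j}(g_j)$. Setting $\mu_j := \sssize_{Q_0}^{s_j}(g_j)$, the key input is the double bound
\[
\sssize_{Q_0} \one_{E^j_{\ell_j}} \leq \min\!\bigl(1,\; 2^{-s_j \ell_j}\, \mu_j^{s_j}\bigr),
\]
where the second inequality is a Chebyshev estimate on each admissible cube $R \in \ii R_d(Q_0, \rr P)$, i.e.\ $\ave_R(\one_{E^j_{\ell_j}}) \leq 2^{-s_j\ell_j} \ave_R(g_j^{s_j})$, followed by taking suprema. Splitting the sum at the threshold $2^{\ell_j} \sim \mu_j$ and summing the two geometric series,
\[
\sum_{\ell_j\in\rr Z} 2^{\ell_j} (\sssize_{Q_0} \one_{E^j_{\ell_j}})^{\beta_j} \lesssim \sum_{2^{\ell_j}\leq \mu_j} 2^{\ell_j} + \mu_j^{s_j\beta_j}\sum_{2^{\ell_j}>\mu_j} 2^{\ell_j(1-s_j\beta_j)} \lesssim \frac{\mu_j}{s_j\beta_j - 1},
\]
the tail being convergent \emph{precisely} because $(s_1, \ldots, s_{n+1})$ is $(\beta_1, \ldots, \beta_{n+1})$-local, i.e.\ $s_j\beta_j > 1$ for every $j$. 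Multiplying these one-variable bounds over $j$ yields the claim with the advertised constant $\ds O\bigl(\prod_{j=1}^{n+1}(s_j\beta_j-1)^{-1}\bigr)$.

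The principal care required is in the sublinearity step when $q < 1$ or when a component of some $R_j$ has exponent below one: there the standard $L^q$ (resp.\ mixed-norm) triangle inequality must be replaced by its $q_0$-analogue for some $q_0 \in (0,1]$, which converts each outer sum $\sum_{\ell_j}(\cdots)$ into $\bigl(\sum_{\ell_j} (\cdots)^{q_0}\bigr)^{1/q_0}$ of the same dyadic type. Both resulting tails remain geometric under the same locality condition $s_j\beta_j > 1$, so the scaling in the $\mu_j$ — and hence the final bound — is unaffected; only the constants change quantitatively. This is the $\rr R^d$-vector-valued version of the argument in Proposition 14 of \cite{sparse-hel}, and proceeds along the same lines.
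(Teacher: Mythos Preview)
Your proposal is correct and follows essentially the same approach as the paper, which defers to Proposition~14 of \cite{sparse-hel}: a dyadic level-set decomposition of each input, application of the restricted-type hypothesis to the normalized pieces, and summation of the resulting geometric series under the locality condition $s_j\beta_j>1$. Your final paragraph on handling the lack of subadditivity by passing to a small power matches the paper's remark about replacing the norm by $\|\cdot\|^\tau$ for $\tau$ small enough.
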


For a proof, we refer to  Proposition 14 \cite{sparse-hel}; subadditivity was a requirement there, but that can be easily achieved by considering instead $\| \cdot  \|_{L^q_{\rr R^d}L^{R'_{n+1}}_{\ii W}}^\tau$ for $\tau$ small enough. Since the analysis on each of the functions is done independently, raising the expression in \eqref{eq:mock-interp-restr} to power $\tau$ will not affect the final result.

On the other hand, for the next Lemma \ref{lemma:local->sparse->subadditive} subadditivity and a good pairing of the Lebesgue exponents is important. Before stating it, we need to introduce a formal definition for the notion of sparseness, that should be understood as a Carleson condition for the sequence $\{ \ell(Q)\}_{Q \in \ic S}$:

\begin{definition}
\label{def-sparse-disj}
Let $0< \eta <1$. A collection $\ic S$ of dyadic cubes in $\rr R^d$ is called \emph{$\eta$-sparse} if one can choose pairwise disjoint measurable sets $E_Q \subseteq Q$ with $\vert E_Q \vert \geq \eta \vert Q \vert$ for all $Q \in \ic S$. 

We call \emph{sparse} any collection which is $\eta$-sparse for some $\eta \in (0, 1)$.

\end{definition}

\begin{remark}
%\label{rmk:sparse:hierarchy}
In order to prove that a given collection of dyadic cubes $\ic S$ is $\eta$-sparse, it is enough to check that for each $Q \in \ic S$ we have
\begin{equation}
\label{eq:def:sparse:child}
\sum_{P \in ch_{\ic S}(Q)} \vert P \vert \leq (1-\eta) \vert Q \vert,
\end{equation}
where $ch_{\ic S}(Q)$ is the collection of direct descendants of $Q$ in $\ic S$ (i.e. the maximal elements of $\ic S$ that are strictly contained in $Q$). This allows to organize the collection $\ic S$ into a hierarchy and hence to construct it step by step; the condition \eqref{eq:def:sparse:child} is easily verifiable when moving from a generation to the next. If \eqref{eq:def:sparse:child} holds, then we simply define $\ds E_Q:= Q \setminus \bigcup_{P \in ch_{\ic S}(Q)} P$.
\end{remark}

\begin{lemma}[Similar to Proposition 13 in \cite{sparse-hel}]
\label{lemma:local->sparse->subadditive}
Let $R_1, \ldots, R_n, R_{n+1}'$ be $m$-tuples as before, $0< q < \infty$ a Lebesgue exponent so that $ \| \cdot \|_{L^{R'_{n+1}}_{\ii W}}^q$ and $\| \cdot\|_q^q$ are subadditive. Let $T$ be a $n$-(sub)linear operator determined by a collection $\rr P$ of multi-tiles, which satisfies the multiple vector-valued local estimate: there exist $s_1, \ldots, s_{n+1} \in (0, \infty)$ so that for any dyadic cube $R_0$ in $\rr R^d$,
\[
\big\|  \|T_{\rr P(R_0)}( f_1, \ldots, f_n)\|_{L^{R'_{n+1}}_{\ii W}} \cdot v \big\|_{q}^q \lesssim \prod_{j=1}^n  \big( \sssize_{\rr P(R_0)}^{s_j} \|  f_j (x, \cdot) \|_{L^{R_j}_{\ii W}}\big)^q \cdot \big( \sssize_{\rr P(R_0)}^{s_{n+1}} v  \big)^q \cdot |R_0|.
\]
Then there exists a sparse family $\ic S$ of dyadic cubes, depending on the functions $f_1, \ldots, f_n, v$ and the Lebesgue exponents $s_1, \ldots , s_{n+1}, q$ so that 
\begin{align*}
\big\|  \|T_{\rr P}( f_1, \ldots, f_n)\|_{L^{R'_{n+1}}_{\ii W}} \cdot v \big\|_{q}^q \lesssim \sum_{Q \in \ic S} \prod_{j=1}^n \big( \frac{1}{|Q|} \int_{\rr R^d} \| f_j (x, \cdot) \|^{s_j}_{L^{R_j}_{\ii W}} \cdot \ci_{Q}^{M-1} dx \big)^{\frac{q}{s_j}} \cdot  \big( \frac{1}{|Q|} \int_{\rr R^d} \vert v \vert^{s_{n+1}} \cdot \ci_{Q}^{M-1} dx \big)^{\frac{q}{s_{n+1}}} \cdot |Q|.
\end{align*}
\end{lemma}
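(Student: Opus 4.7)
The plan is to run a Calder\'on--Zygmund-type stopping--time construction tailored to the averages appearing on the right-hand side. I would build $\ic S$ recursively, starting from a tower of top cubes exhausting $\rr R^d$; given $Q\in\ic S$, the direct descendants $ch_{\ic S}(Q)$ are the maximal dyadic cubes $P\subsetneq Q$ for which at least one of the stopping inequalities
\[
\Big(\frac{1}{|P|}\int_{\rr R^d}\|f_j(x,\cdot)\|_{L^{R_j}_{\ii W}}^{s_j}\,\ci_{P}^{M-1}\,dx\Big)^{1/s_j} > C\,\Big(\frac{1}{|Q|}\int_{\rr R^d}\|f_j(x,\cdot)\|_{L^{R_j}_{\ii W}}^{s_j}\,\ci_{Q}^{M-1}\,dx\Big)^{1/s_j}\qquad(1\le j\le n),
\]
or the analogous inequality with $|v|^{s_{n+1}}$ in place of $\|f_j\|^{s_j}$, is met. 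A weak $(1,1)$ estimate for the bumpy Hardy--Littlewood maximal function applied to each $\|f_j(x,\cdot)\|^{s_j}_{L^{R_j}_{\ii W}}$ and to $|v|^{s_{n+1}}$ bounds $\sum_{P\in ch_{\ic S}(Q)}|P|$ by $(n+1)\,C^{-\min_j s_j}|Q|$; choosing $C$ large furnishes sparseness via the criterion \eqref{eq:def:sparse:child}.

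With $\ic S$ in hand, every multi-tile $s\in\rr P$ admits a unique selector $Q(s)\in\ic S$, defined as the minimal element of $\ic S$ containing $R_s$, so that $\rr P=\bigsqcup_{Q\in\ic S}\rr P_Q$ with $\rr P_Q:=\{s\in\rr P:Q(s)=Q\}$. Since $\|\cdot\|_{L^{R'_{n+1}}_{\ii W}}^q$ and $\|\cdot\|_q^q$ are subadditive by hypothesis, the linearity of $T$ in each function gives
\[
\big\|\,\|T_{\rr P}(f_1,\ldots,f_n)\|_{L^{R'_{n+1}}_{\ii W}}\cdot v\,\big\|_q^q\;\le\;\sum_{Q\in\ic S}\big\|\,\|T_{\rr P_Q}(f_1,\ldots,f_n)\|_{L^{R'_{n+1}}_{\ii W}}\cdot v\,\big\|_q^q,
\]
to which I would apply the hypothesized local estimate on each subcollection $\rr P_Q\subseteq\rr P(Q)$.

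The crux of the argument is to identify the sizes $\sssize^{s_j}_{\rr P_Q}\|f_j(x,\cdot)\|_{L^{R_j}_{\ii W}}$ and $\sssize^{s_{n+1}}_{\rr P_Q}v$ with the bumpy $Q$-averages on the right-hand side. Any cube $R$ entering the supremum that defines $\sssize_{\rr P_Q}$ is either $Q$ itself or equals $R_s$ for some $s\in\rr P_Q$; in the latter case $R_s\subseteq Q$ and $R_s$ is not contained in any $P\in ch_{\ic S}(Q)$, so $R_s$ is either disjoint from the union of the children or properly contains some of them. In both situations, had $R_s$ satisfied the stopping condition, the maximality built into the selection of $ch_{\ic S}(Q)$ would have forced $R_s$ (rather than its descendants) to be a child of $Q$; therefore the stopping inequality fails at $R_s$, yielding
\[
\ave^{s_j}_{R_s}\|f_j(x,\cdot)\|_{L^{R_j}_{\ii W}}\;\lesssim\;\Big(\frac{1}{|Q|}\int_{\rr R^d}\|f_j(x,\cdot)\|_{L^{R_j}_{\ii W}}^{s_j}\,\ci_{Q}^{M-1}\,dx\Big)^{1/s_j}
\]
and a parallel bound for $v$. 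Plugging these pointwise bounds into the local estimate and summing over $Q\in\ic S$ produces exactly the sparse expression claimed. The main obstacle I anticipate is the book-keeping around the tails of the bumps $\ci^{M}_{R_s}$: when $R_s$ is of scale comparable to $Q$ its tail extends outside $Q$, but since the stopping-time is phrased in terms of $\ci^{M-1}_Q$-weighted averages and the sizes use the stronger $\ci^M_{R_s}$, a routine decomposition of $\ci^M_{R_s}$ into geometric annuli around $Q$ together with an absolutely convergent geometric sum absorbs one power of decay, which is precisely the reason the statement features $\ci^{M-1}_Q$ rather than $\ci^M_Q$.
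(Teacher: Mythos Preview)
Your proposal is correct and follows essentially the same approach as the paper (which refers to Proposition~13 in \cite{sparse-hel} and reproduces exactly this stopping-time construction later, in the proof of Theorem~\ref{thm:main-mixed-discretized}): build $\ic S$ by selecting, inside each current cube, the maximal dyadic subcubes where one of the $s_j$-averages jumps by a factor $C$, assign to each $Q\in\ic S$ the tiles whose spatial cube lies in $Q$ but in no descendant, use subadditivity to split the $L^q$-norm, and control the sizes on $\rr P_Q$ by the $Q$-averages because the stopping condition must fail at every relevant $R_s$. Your remark about absorbing one power of decay in the bumps, accounting for the $\ci_Q^{M-1}$ on the right-hand side, is also precisely the mechanism the paper uses.
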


\vskip .25 cm

\begin{proof}[Proof of Theorem \ref{thm-main-vvv-discretized} - a synthesis]

All the Lebesgue exponents appearing in this proof satisfy the hypotheses of Theorem \ref{thm-main-vvv-discretized}: $(p_1, \ldots, p_{n+1})$ and $(R_1, \ldots, R_{n+1})$ are $(1-\alpha_1, \ldots, 1-\alpha_{n+1})$-local H\"older tuples for a fixed $\ds (\alpha_1, \ldots, \alpha_{n+1}) \in \Xi_{n, k}$.

We list the main statements used in the induction procedure without elaborating on their proofs, which can be found in our previous works \cite{sparse-hel}, \cite{quasiBanachHelicoid}, \cite{vv_BHT}. Especially relevant here is \cite{sparse-hel}, where the one-dimensional equivalent of Theorem \ref{thm-main-vvv-discretized} was proved.

\begin{enumerate}[label=(\roman*), ref=\roman*, leftmargin=-3pt]
\setlength\itemsep{.5em}
\item \label{qB:loc:m:restr} for any dyadic cube $R_0$ in $\rr R^d$, any measurable sets $E_1, \ldots, E_{n+1}$ in $\rr R^{d}$ and any vector- valued function $f_1, \ldots, f_{n}$ satisfying $\|f_j(x, \cdot)\|_{L^{R_j}_{\ii W}} \leq \one_{E_j}(x)$ a.e., we have for all $0 < q < \infty$
\begin{align}
\label{eq:qB:restrcted:type:loc}
\tag{$\text{loc  } \, m \, \text{  restr}$}
\big\|\|T_{\so(R_0)}(f_1, \ldots, f_{n})\|_{L^{R_{n+1}'}_{\ii W}} \cdot \one_{E_{n+1}} \big\|_{L^{q}_{\rr R^d}} \lesssim \prod_{j=1}^{n} \big( \sssize_{R_0} \one_{E_j} \big)^{1 - \alpha_j -\epsilon}  \big( \sssize_{R_0} \one_{E_{n+1}} \big)^{\frac{1}{q} - \alpha_{n+1} -\epsilon} |R_0|^\frac{1}{q}.
\end{align}

The depth $m+1$ estimate ($\text{loc  } \, m+1 \, \text{  restr}$) will be deduced from a non-restricted version of the depth $m$ one, after we make sure there is a match between the Lebesgue exponents involved. That is achieved by considering weak-type estimates and by dualizing the $L^{q, \infty}$ quasinorm through an $L^\tau$ space, for $\tau$ suitable. This dualization is based on the observation that, provided $\tau<q$\footnote{A somewhat similar identity (which we used in \cite{quasiBanachHelicoid}) is true for $\tau \geq q$, except that one needs to remove a minor subset of $E$:
\begin{equation*}
\label{eq:dualization:weak:Lq:major:subset}
\|F\|_{q, \infty} \sim \sup_{\substack{E \subset \rr R^d \\ |E|=1}} \inf_{\substack{E' \subset E \\ |E'|>\frac{1}{2}}} \| F \cdot \one_{E'} \|_\tau.
\end{equation*}}, 
\begin{equation}
\label{eq:dualization:weak:Lq}
\|F\|_{q, \infty} \sim \sup_{\substack{E \subset \rr R^d \\ |E|=1}} \| F \cdot \one_{E} \|_\tau.
\end{equation}

\vskip .5 pt

\item  \label{qB:loc:m} once we have \eqref{eq:qB:restrcted:type:loc}, we can resort to Lemma \ref{lemma:mock:interp} in order to get a corresponding result for general vector-valued functions:
for any dyadic cube $R_0$ in $\rr R^d$, any vector-valued function $f_1, \ldots, f_{n}$ with $\|f_j(x, \cdot)\|_{L^{R_j}_{\ii W}}$ locally integrable and any locally $q$-integrable function $v$, we have 
\begin{align}
\label{eq:qB:loc}
\tag{$\,\text{loc } \, m \, $}
\big\|\|T_{\so(R_0)}(f_1, \ldots, f_{n})\|_{L^{R_{n+1}'}_{\ii W}} \cdot v \big\|_{L^{q}_{\rr R^d}} \lesssim \prod_{j=1}^{n} \big( \sssize_{R_0}^{s_j} \|f_j(x, \cdot)\|_{L^{R_j}_{\ii W}} \big)  \big( \sssize_{R_0}^{s_{n+1}} v \big)|R_0|^\frac{1}{q},
\end{align}
provided $(s_1, \ldots, s_n, s_{n+1})$ is $(1- \alpha_1, \ldots, 1- \alpha_n, \frac{1}{q}-\alpha_{n+1})$-local.

\item \label{qB: sparse}  we use Lemma \ref{lemma:local->sparse->subadditive} to deduce a sparse estimate: if $s_1, \ldots, s_{n}, s_{n+1}$ are as above, $q, R'_{n+1}$ so that $ \| \cdot \|_{L^{R'_{n+1}}_{\ii W}}^q$ and $\| \cdot\|_q^q$ are sub-additive, then for any vector-valued functions $f_1, \ldots, f_{n}$ (with $\|f_j(x, \cdot)\|_{L^{R_j}_{\ii W}}$ locally integrable) and for any $q$-integrable function $v$, there exists a sparse collection $\ic S$ of dyadic cubes in $\rr R^{d}$ depending on the functions $f_1, \ldots, f_{n}$ and on $v$, on the collection $\so(R_0)$ and on the Lebesgue exponents so that 
\begin{align}
\label{eq:qB:lsparse}
\tag{sparse $m$}
\big\|\|T_{\so(R_0)}(f_1, \ldots, f_n)\|_{L^{R_{n+1}'}_{\ii W}} \cdot v \big\|_{L^{q}_{\rr R^d}}^q \lesssim \sum_{Q \in \so} \prod_{j=1}^{n} \big( \frac{1}{|Q|} \int_{\rr R^d} \|f_j(x, \cdot)  \|_{L^{R_j}_{\ii W}}^{s_j} \cdot \ci_{Q} dx  \big)^\frac{q}{s_j} \big( \frac{1}{|Q|} \int_{\rr R^d} |v(x)|^{s_{n+1}} \cdot \ci_{Q} dx  \big)^\frac{q}{s_{n+1}}  |Q|.
\end{align}

Furthermore, the result remains valid after removing the sub-additivity conditions for $ \| \cdot \|_{L^{R'_{n+1}}_{\ii W}}^q$ and $\| \cdot\|_q^q$; this is explained in Proposition 20 of \cite{sparse-hel}.

\item \label{qB:vv:local} if we localize the result above, we obtain for any dyadic cube $R_0$ in $\rr R^d$, any measurable sets $F_1, \ldots, F_{n+1}$ in $\rr R^d$ and any H\"older tuple $(p_1,\ldots, p_{n+1})$ the following 
\begin{align}
\label{eq:qB:vv:loc}
\tag{$\text{ loc  } \, m \, \text{ op }$}
\big\|\|T_{\so(R_0)}(f_1 \cdot \one_{F_1}, \ldots, f_{n} \cdot \one_{F_n})\|_{L^{R_{n+1}'}_{\ii W}} \cdot \one_{F_{n+1}} \big\|_{L^{{p_{n+1}'}}_{\rr R^d}} & \lesssim \prod_{j=1}^{n} \big( \sssize_{R_0} \one_{F_j} \big)^{\frac{1}{p_j'}-\alpha_j - \epsilon}  \big( \sssize_{R_0}  \one_{F_{n+1}}  \big) \prod_{j=1}^{n+1} \big \| f_j \cdot \ci_{R_0}  \big \|_{L^{p_j}_{\rr R^d} L^{R_j}_{\ii W}}. \nonumber
\end{align}

\item \label{qB:mvv:m} this implies in particular (by setting $F_j= \rr R^d$ for example) the multiple vector-valued result of depth $m$:
\begin{align}
\label{eq:qB:VV}
\tag{VV-$m$}
\big\|\|T_{\so(R_0)}(f_1, \ldots, f_{n})\|_{L^{R_{n+1}'}_{\ii W}}\big\|_{L^{{p_{n+1}'}}_{\rr R^d}} \lesssim \prod_{j=1}^{n} \big \| f_j  \big \|_{L^{p_j}_{\rr R^d} L^{R_j}_{\ii W}}.
\end{align}

\item the sparse estimate \eqref{eq:qB:lsparse} implies a Fefferman-Stein-type inequality: for any $0<q<\infty$
\begin{align}
\label{eq:qB:FS}
\tag{FS}
\big\|\|T_{\so(R_0)}(f_1, \ldots, f_{n})\|_{L^{R_{n+1}'}_{\ii W}} \big\|_{L^{q}}  \lesssim \big\|  M_{s_1, \ldots, s_{n}} (\|f_1(x, \cdot)  \|_{L^{R_1}_{\ii W}}, \ldots, \|f_{n}(x, \cdot)  \|_{L^{R_{n}}_{\ii W}})\big\|_{L^{q}},
\end{align}
and also the weighted Fefferman-Stein-type inequality
\begin{align}
\label{eq:qB:wFS}
\tag{w FS}
\big\|\|T_{\so(R_0)}(f_1, \ldots, f_{n})\|_{L^{R_{n+1}'}_{\ii W}} \big\|_{L^{q}_{w}}  \lesssim \big\|  M_{s_1, \ldots, s_{n}} (\|f_1(x, \cdot)  \|_{L^{R_1}_{\ii W}}, \ldots, \|f_{n}(x, \cdot)  \|_{L^{R_{n}}_{\ii W}})\big\|_{L^{q}_{w}},
\end{align}
under the condition that the weight is in a special class: $w \in RH_{\frac{s_{n+1}}{q}}$\footnote{i.e. if the weight $w$ satisfies for all cubes $Q$ in $\rr R^d$ the reverse H\"older inequality $\ds \big(\aver{Q} w^\frac{s_{n+1}}{q} \big)^\frac{q}{s_{n+1}} \lesssim  \aver{Q} w$.}.
\end{enumerate}

In order to close the induction argument, one would need to prove that $( \text{loc $m$  op})$ implies (loc $m+1$ restr). For that, we refer the interested reader to \cite{sparse-hel}, \cite{quasiBanachHelicoid}.   
\end{proof}

\subsection{The mixed-norm, multiple vector-valued estimate}~\\
\label{sec:mixed-norm}
The strategy for obtaining mixed-norm estimates is slightly different from the one for the multiple vector-valued extensions. It has its roots in the ad hoc approach we used for obtaining mixed-norm estimates for $\Pi \otimes \Pi$ in \cite{vv_BHT}, which was further developped in \cite{mixed-norm-square-function} for proving a mixed-norm, multiple vector-valued reversed Littlewood-Paley inequality.

As before, we are reduced to proving mixed-norm, vector-valued estimates for the associated model operator:
\begin{theorem}
\label{thm:main-mixed-discretized}
Let $\so$ be a finite rank-$k$ family of multi-tiles and $(\alpha_1, \ldots, \alpha_{n+1)}$ a fixed tuple in $\Xi_{n, k}$.  Then $T$, the discretized operator associated to the family $\so$ defined in \eqref{def:discretized-op}, admits the mixed-norm, multiple vector-valued extensions of depth $m$
\[
T: L^{P_1}\big(  \rr R^d; L^{R_1}(\ii W, \mu) \big) \times \ldots \times L^{P_n}\big(  \rr R^d; L^{R_n}(\ii W, \mu) \big)  \to L^{P_{n+1}'}\big(  \rr R^d; L^{{R'}_{n+1}}(\ii W, \mu) \big),
\]
for any $d$-tuples $P_1, \ldots,P_{n+1}$,  $m$-tuples $R_1, \ldots, R_{n+1}$ of Lebesgue exponents so that $(P_1, \ldots, P_{n+1})$ and $(R_1, \ldots, R_{n+1})$ are $(1-\alpha_1, \ldots, 1-\alpha_{n+1})$-local H\"older tuples.
\end{theorem}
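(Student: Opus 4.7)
The plan is to prove Theorem \ref{thm:main-mixed-discretized} by induction on $m$, the number of distinct spatial blocks in the mixed-norm structure, so that for each $j$
\[
\|f\|_{L^{P_j}_{\rr R^d}} = \|f\|_{L^{p_j^1}_{\rr R^{d_1}} L^{p_j^2}_{\rr R^{d_2}} \ldots L^{p_j^m}_{\rr R^{d_m}}}.
\]
The base case $m=1$ (every $P_j$ a constant tuple) collapses to \eqref{eq:qB:VV} of Theorem \ref{thm-main-vvv-discretized}. For the inductive step I would split $\rr R^d = \rr R^{d_1} \times \rr R^{d-d_1}$ and absorb the inner mixed-norm $L^{p_j^2}_{\rr R^{d_2}}\cdots L^{p_j^m}_{\rr R^{d_m}} L^{R_j}_{\ii W}$ (which has only $m-1$ spatial blocks, so the inductive hypothesis covers it) into an enlarged vector-valued target space $Y_j$. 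What remains is then a one-block mixed-norm statement with outer norm $L^{p_j^1}_{\rr R^{d_1}}$ and vector-valued target $Y_j$; by the inductive scheme of Theorem \ref{thm-main-vvv-discretized} this reduces to establishing an appropriate localized restricted-type estimate.

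The technical heart of the inductive step is a mixed-norm analogue of \eqref{eq:qB:restrcted:type:loc}, formulated on $\rr R^{d_1}$ using the lower-dimensional projections $\so_{d_1}(\tilde R_0)$ introduced in \eqref{eq:def:proj:lower:S}: for any dyadic cube $\tilde R_0 \subset \rr R^{d_1}$, any measurable sets $E_1,\ldots,E_{n+1} \subset \rr R^{d_1}$, and any inputs satisfying $\|f_j(x_1,\cdot)\|_{Y_j} \leq \one_{E_j}(x_1)$, I would aim for
\[
\Bigl\| \bigl\|T_{\so_{d_1}(\tilde R_0)}(f_1,\ldots,f_n)\bigr\|_{Y_{n+1}'} \cdot \one_{E_{n+1}} \Bigr\|_{L^{q}_{\rr R^{d_1}}} \lesssim \prod_{j=1}^{n+1} \bigl(\sssize_{\so_{d_1}(\tilde R_0)} \one_{E_j}\bigr)^{\beta_j} \, |\tilde R_0|^{1/q},
\]
with $(\beta_1,\ldots,\beta_{n+1})$ slightly interior to $(1-\alpha_1,\ldots,\tfrac{1}{q}-\alpha_{n+1})$-local. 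The way to produce this bound is to run a helicoidal stopping-time in the $\rr R^{d_1}$ variable on $\tilde R_0$, decomposing it into sub-cubes $\tilde R$ where each projected size $\sssize_{\so_{d_1}(\tilde R)} \one_{E_j}$ is accurately controlled, and on each $\tilde R$ to invoke the inductive hypothesis on $\rr R^{d-d_1}$ to convert the $Y_{n+1}'$ norm into a scalar statement on $\rr R^d$ to which the localization Theorem \ref{thm:localization:mvv} applies directly.

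Once the localized restricted-type estimate is available, Lemma \ref{lemma:mock:interp} upgrades it to general inputs and general outer test functions $v$, and Lemma \ref{lemma:local->sparse->subadditive}, applied in the outer variable $x_1 \in \rr R^{d_1}$, produces a sparse domination which sums to the global estimate
\[
\bigl\|T(f_1,\ldots,f_n)\bigr\|_{L^{P'_{n+1}}_{\rr R^d} L^{R'_{n+1}}_{\ii W}} \lesssim \prod_{j=1}^n \|f_j\|_{L^{P_j}_{\rr R^d} L^{R_j}_{\ii W}},
\]
closing the induction. I expect the main obstacle to be the tension between the genuinely $d$-dimensional nature of the wave packets $\phi^j_{s_j}$ and the block structure of the mixed norm: $T$ is not of tensor form, so one cannot simply freeze the variables $x_2,\ldots,x_d$ and reduce to an operator on $\rr R^{d_1}$. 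The resolution is to exploit instead the tensor structure of the \emph{spatial} dyadic cubes $R_s = R_s^{(1)} \times R_s^{(2)}$ and to verify that each projected subcollection $\so_{d_1}(\tilde R)$ retains the rank-$k$ property of Definition \ref{def:rank-k}, so that Theorem \ref{thm:localization:mvv} remains usable at every level of the stopping-time construction and the sizes produced in the outer $\rr R^{d_1}$ pass correctly into the inner mixed-norm bound supplied by the inductive hypothesis.
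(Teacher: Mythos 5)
Your overall shape is right---decreasing induction over the degree of mixing (equivalently over the number of blocks $m$), localized restricted-type estimate formulated with the projected collections $\so_{d_1}(\tilde R_0)$, upgrade via Lemma~\ref{lemma:mock:interp} and sparse domination via Lemma~\ref{lemma:local->sparse->subadditive}---and you correctly identify the central obstruction (the wave packets are genuinely $d$-dimensional, so $T$ does not restrict to an operator on $\rr R^{d-d_1}$ for fixed $x_1$). But the mechanism you propose for the inductive step does not resolve that obstruction, and this is exactly where the paper's proof does something you are missing.

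Specifically, you propose to run a stopping-time in $\rr R^{d_1}$ and then ``on each $\tilde R$ to invoke the inductive hypothesis on $\rr R^{d-d_1}$ to convert the $Y'_{n+1}$ norm into a scalar statement on $\rr R^d$ to which [Theorem~\ref{thm:localization:mvv}] applies directly.'' There is no operator on $\rr R^{d-d_1}$ to which an inductive hypothesis could be applied: since $T$ is not of tensor form, the inner norm cannot be peeled off pointwise in $x_1$. Likewise the idea of ``absorbing the inner mixed-norm into an enlarged vector-valued target $Y_j$'' and then running the vector-valued scheme of Theorem~\ref{thm-main-vvv-discretized} fails at the same place, because that scheme requires the extra directions to be genuine parameters, not spatial variables acted upon by the wave packets. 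The paper's resolution is entirely different. It sets up two mutually inducting statements, $\ii P_{mix}(P_1,\ldots,P_n;q)$ (the restricted-type estimate you wrote, on $L^q_{\rr R^{d_1}}$ with the inner $L^{\bar P'_{n+1}}_{\rr R^{d-d_1}}L^{R'_{n+1}}_{\ii W}$ norm) and $\ii P^*_{mix}(P_1,\ldots,P_n)$ (a localized operator-norm estimate in the \emph{full} $L^{P_j}_{\rr R^d}L^{R_j}_{\ii W}$ scales, with operator norm controlled by $\bar d_1$-dimensional sizes). The derivation $\ii P^*_{mix}(\hat P)\Rightarrow\ii P_{mix}(P;q)$ goes through a dualization of the weak $L^{q,\infty}_{\rr R^{d_1}}$ quasinorm via a very small $L^\tau$, followed by a localized H\"older inequality that emits an $L^{\tau_q^2}_{\rr R^{d_1}}$ factor and leaves $L^{q_2}_{\rr R^{d_1}}L^{\bar Q}_{\rr R^{d-d_1}}L^{R'_{n+1}}_{\ii W}$ behind; since the first two blocks of $(q_2,\bar Q)$ and of $\hat P_j=(p_j^2,\bar P_j)$ now coincide, this is a \emph{less mixed} tuple to which the inductive statement $\ii P^*_{mix}(\hat P)$ applies. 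A final stopping-time (the sparse construction depending only on $\sssize_{\so_Q}\one_{\tilde E}$) then corrects the exponent mismatch $\tau\to q$. None of steps (i) dualization through $L^\tau$, (ii) the $L^{q_2}$ H\"older separation merging the first two blocks, (iii) the companion statement $\ii P^*_{mix}$, or (iv) the exponent-fixing stopping time appear in your proposal, and without them the inductive step does not close.
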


A certain type of induction will be performed, having as starting point the result for classical $L^p$ spaces in $\rr R^d$, when $P_j=(p_j^1, \ldots, p_j^d)$ is precisely $(p_j, \ldots, p_j)$ (which is the multiple vector-valued estimate from Theorem \ref{thm-main-vvv-discretized}). The result for general $d$-tuples $P_j$ is deduced, via the trick suggested by the dualization in \eqref{eq:dualization:weak:Lq}, from similar statements for $d$-tuples $\hat P_j$ which are, in a certain  sense, less mixed. This will be made precise shortly. Unfortunately, the notation gets quite complicated when dealing with the general case. We try to simplify it in order to focus the attention onto the ideas behind the proof, but in doing so we will slightly alter the notation that was used in the paper up to this point. 

\begin{notation} \normalfont Here we try to coordinate the notation that will be used in the analysis of the mixed-norm estimates.
\begin{itemize}[leftmargin=2 em]
\item We will alternatively denote by $\|f\|_{L^R_{\ii W}}$ the mixed-norm space $\|f\|_{L^R(\ii W)}$ defined in \eqref{eq:mixed-norms}.
%\begin{itemize}[leftmargin=6.25 em]
%\begin{itemize}[leftmargin=*]
\item For the tuples $P_{1}:=(p_{1}^1, \ldots, p_{1}^d), \ldots, P_{n}:=(p_{n}^1, \ldots, p_{n}^d)$ of Lebesgue exponents, we denote by 
\[
d_1=d_1 (P_1, \ldots, P_n)
\]
the maximal first matching index: the first $d_1$ components of each of the $P_j$ are the same. Similarly, $\ds d_2= d_2 (P_1, \ldots, P_n)$ denotes the maximal second matching index: if $d_1<d$, then the components from positions $d_1+1$ to $d_1+d_2$ are all equal, for each of the $P_j$.

In this situation, if $d_1=d_1 (P_1, \ldots, P_n)$ and $d_2=d_2 (P_1, \ldots, P_n)$, we write
\[
L_{\rr R^d}^{P_1}= L^{p_1^1}_{\rr R^{d_1}}L^{p_1^2}_{\rr R^{d_2}}L_{\rr R^{d-d_1-d_2}}^{\tilde P_1}=L^{p_1^1}_{\rr R^{d_1}}L_{\rr R^{d-d_1}}^{\bar P_1},
\]
and similarly for the other indices:
\[
L_{\rr R^d}^{P_2}= L^{p_2^1}_{\rr R^{d_1}}L^{p_2^2}_{\rr R^{d_2}}L_{\rr R^{d-d_1-d_2}}^{\tilde P_2}=L^{p_2^1}_{\rr R^{d_1}}L_{\rr R^{d-d_1}}^{\bar P_2},\ldots, L_{\rr R^d}^{P_n}= L^{p_n^1}_{\rr R^{d_1}}L^{p_n^2}_{\rr R^{d_2}}L_{\rr R^{d-d_1-d_2}}^{\tilde P_n}=L^{p_n^1}_{\rr R^{d_1}}L_{\rr R^{d-d_1}}^{\bar P_n}.
\]

%A variable $x \in \rr R^d$ will be written indistinguishably as
%\[
%x=(x_1, \bar x) \in \rr R^{d_1} \times \rr R^{d-d_1} \text{      or as     }x=(x_1, x_2,  \tilde x) \in \rr R^{d_1} \times \rr R^{d_2} \times \rr R^{d-d_1-d_2},
%\] 
%depending on the situation.

A variable $x \in \rr R^d$ will be written as $\ds x=(x_1, \bar x) \in \rr R^{d_1} \times \rr R^{d-d_1}$ and $x_1 \in \rr R^{d_1}$ as $\ds x_1=(\bar x_1, \tilde x_1) \in \rr R^{\bar d_1} \times \rr R^{d_1-\bar d_1}$.

\item In the case when $p_j^1=p_j^2= \ldots=p_j^d$ for all $1 \leq j \leq n$, we have $d_1 (P_1, \ldots, P_n)=d$ and the space $L_{\rr R^d}^{P_j}$ is simply the classical Lebesgue space $L_{\rr R^d}^{p_j}$.
\item Also, for a $d$-tuple $Q$ whose first $d_1$ components all coincide (and likewise for the next $d_2$ components) we write
\begin{equation}
\label{eq:cond:iteration:Q}
L_{\rr R^d}^{Q}= L^{q_1}_{\rr R^{d_1}}L^{q_2}_{\rr R^{d_2}}L_{\rr R^{d-d_1-d_2}}^{\tilde Q}=L^{q_1}_{\rr R^{d_1}}L_{\rr R^{d-d_1}}^{\bar Q}.
\end{equation}

\end{itemize}

\end{notation}

\begin{proof}[Proof of Theorem \ref{thm:main-mixed-discretized}]

Again, all the Lebesgue exponents appearing in this proof satisfy the hypotheses of Theorem \ref{thm:main-mixed-discretized}: $(P_1, \ldots, P_{n+1})$ and $(R_1, \ldots, R_{n+1})$ are $(1-\alpha_1, \ldots, 1-\alpha_{n+1})$-local H\"older tuples for a fixed $\ds (\alpha_1, \ldots, \alpha_{n+1}) \in \Xi_{n, k}$.

As mentioned previously, the proof makes use of induction. There are two main statements that complement each other and which allow us to run the induction argument: the $\ii P_{mix}(P_1, \ldots, P_n; q)$ and $\ii P^*_{mix}(P_1, \ldots, P_n)$ bellow. The first one is a local mixed-norm restricted-type estimate, which has the advantage of allowing to accumulate as much information as possible, and the second one is an interpretation of the former in the form of a local multilinear operator satisfying the usual H\"older scaling, with an operator norm depending on the lower dimensional sets to which it is restricted.

Besides these, there are other transitional statements to which we will resort later in the proof. One of them will be an upgrade of the restricted-type estimate $\ii P_{mix}(P_1, \ldots, P_n; q)$ to general functions (achieved through ``mock interpolation", Lemma \ref{lemma:mock:interp}), and another a sparse domination result (which is a consequence of Lemma \ref{lemma:local->sparse->subadditive}).

We will not be touching on the Lebesgue exponents in the multiple vector-valued estimates, so in what follows we simply assume that the H\"older tuple $(R_1, \ldots, R_n, R_{n+1})$ is fixed. If preferred, one can altogether disregard the vector-valued extension and focus on the proof of the mixed-norm estimate.

Next, the following statements will be proved inductively:
\begin{itemize}[leftmargin=*]
\item[(1)]
for the $(1-\alpha_1, \ldots, 1-\alpha_{n+1})$-local H\"older tuple $(P_1, \ldots, P_n, P_{n+1})$, let $d_1=d_1(P_1, \ldots, P_n)$; consider $E_1, \ldots, E_n, E_{n+1} \subset \rr R^{d_1}$ measurable sets and $R_0 \in \ii D_{d_1}$ an arbitrary dyadic cube in $\rr R^{d_1}$; then for any multiple vector-valued functions $ f_1, \ldots, f_n$ satisfying for a. e. $x_1 \in \rr R^{d_1}$ $ \|  f_j(x_1, \cdot) \|_{L^{\bar P_j}_{\rr R^{d-d_1}} L^{R_j}_{\ii W} } \leq \one_{E_j}(x_1)$ and for any $0<q< \infty$, we have
\vskip 0.1cm
\begin{equation}
\label{eq:induction:mixed:restricted:type}
\tag*{$\pmb{\underline{\ii P_{mix}(P_1, \ldots, P_n; q)}}$}
\big\| \| T_{\so_{d_1} (R_0)} ( f_1, \ldots,  f_n) \|_{ L^{\overline{( P_{n+1})'}}_{\rr R^{d-d_1}} L^{R_{n+1}'}_{\ii W}} \cdot \one_{E_{n+1}}   \big\|_{L^q_{\rr R^{d_1}}} \lesssim  \prod_{j=1}^n \big( \sssize_{ \so_{d_1} (R_0)} \one_{E_j}  \big)^{1- \alpha_j- \epsilon} \big( \sssize_{\so_{d_1} (R_0)} \one_{E_{n+1}}  \big)^{\frac{1}{q}- \alpha_{n+1}- \epsilon} |R_0|^\frac{1}{q}.
\end{equation}
The collection $\so_{d_1}(R_0)$ associated to the lower dimensional cube $R_0 \subset \rr R^{d_1}$ was defined in \eqref{eq:def:proj:lower:S}; the sizes on the right hand side of the expression above correspond to functions defined on $\rr R^{d_1}$.

%The reader will notice that the statement above does not depend on the first $d_1$ components of the $P_j$s, but only on the following ones.
\enskip

\item[(2)] for the $(1-\alpha_1, \ldots, 1-\alpha_{n+1})$-local H\"older tuple $(P_1, \ldots, P_n, P_{n+1})$ with $d_1=d_1(P_1, \ldots, P_n)$, let $1 \leq \bar d_1 \leq d_1$ and consider $F_1, F_2, \ldots, F_n, F_{n+1} \subset \rr R^{\bar d_1}$ measurable subsets, and $\bar R_0$ an arbitrary dyadic cube in $\rr R^{\bar d_1}$; then for any multiple vector-valued functions $ f_1, \ldots, f_n$, we have
\enskip
\begin{align}
\label{eq:induction:mixed:projections}
\tag*{$\pmb{\underline{\ii P^*_{mix}(P_1, \ldots, P_n)}}$}
&\big\|T_{\so_{\bar d_1} (\bar R_0)} ( f_1 \cdot \one_{F_1}, \ldots, f_n \cdot \one_{F_n}) \cdot \one_{F_{n+1}}   \big\|_{L^{P_{n+1}'}_{\rr R^d} L^{R_{n+1}'}_{\ii W}} \\ 
&\qquad \qquad \lesssim   \prod_{j=1}^n \big( \sssize_{\so_{\bar d_1} (\bar R_0)} \one_{F_j}  \big)^{1- \alpha_j-\frac{1}{p_j^1}- \epsilon} \big( \sssize_{\so_{\bar d_1} (\bar R_0)} \one_{F_{n+1}}  \big)^{\frac{1}{(p_{n+1}^1)'}- \alpha_{n+1}- \epsilon}  \cdot \prod_{j=1}^n \big\|  f_j \cdot \ci_{\bar R_0} \big\|_{L^{P_j}_{\rr R^{d}} L^{R_j}_{\ii W}}. \nonumber
\end{align}
In this case we can considered $\bar R_0, F_1, \ldots, F_{n+1} \subset \rr R^{\bar d_1}$ to be fixed and regard 
\[ 
(f_1, \ldots, f_n) \mapsto T_{\so_{\bar d_1} (\bar R_0)} ( f_1 \cdot \one_{F_1}, \ldots, f_n \cdot \one_{F_n})(\bar x_1, \bar {\bar x}, w) \cdot \one_{F_{n+1}}(\bar x_1) 
\]
as a multiple vector-valued operator which maps $L^{P_1}_{\rr R^{d}} L^{R_1}_{\ii W} \times \ldots \times L^{P_n}_{\rr R^{d}} L^{R_n}_{\ii W}$ into $L^{P_{n+1}'}_{\rr R^{d}} L^{R_{n+1}'}_{\ii W}$ with an operator norm bounded above by 
\[
\prod_{j=1}^n \big( \sssize_{\so_{\bar d_1} (\bar R_0)} \one_{F_j}  \big)^{1- \alpha_j-\frac{1}{p_j^1}- \epsilon} \big( \sssize_{\so_{\bar d_1} (\bar R_0)} \one_{F_{n+1}}  \big)^{\frac{1}{(p_{n+1}^1)'}- \alpha_{n+1}- \epsilon}.
\]
Of course, the estimate is valid only if $(P_1, \ldots, P_n, P_{n+1})$ and $(R_1, \ldots, R_n, R_{n+1})$ are $(1-\alpha_1, \ldots, 1-\alpha_{n+1})$-local H\"older tuples.
\end{itemize}

The induction is run over decreasing values of $d_1=d_1(P_1, \ldots, P_n)$, the maximal first matching index of the tuples $P_1, \ldots, P_n$; note that as a consequence of the H\"older conditions \eqref{eq:Holder-tuple}, the first $d_1$ components of the tuple $P_{n+1}'$ are also going to be equal. 

The case when $d_1(P_1, \ldots, P_n)=d$ makes precisely the object of Theorem \ref{thm-main-vvv-discretized}, so we know that Theorem \ref{thm:main-mixed-discretized} holds in this particular situation. As a by-product of the proof, $\ii P_{mix}(P_1, \ldots, P_n; q)$ also holds in that case; $\ii P^*_{mix}(P_1, \ldots, P_n)$ on the other hand can be deduced (for the moment) only if $\bar d_1=d_1=d$.

The two statements $\ii P_{mix}(P_1, \ldots, P_n; q)$ and $\ii P^*_{mix}(P_1, \ldots, P_n)$ are mutually dependent; we will prove first that $\ii P_{mix}(P_1, \ldots, P_n;q) \Rightarrow \ii P^*_{mix}(P_1, \ldots, P_n)$ for the same fixed tuple $(P_1, \ldots, P_n)$ and afterwards we will prove that $\ii P^*_{mix}(\hat P_1, \ldots, \hat P_n) \Rightarrow \ii P_{mix}(P_1, \ldots, P_n'; q)$ where $(\hat P_1, \ldots, \hat P_n)$ is a tuple with $d_1(\hat P_1, \ldots, \hat P_n) \geq d_1(P_1, \ldots, P_n)$.

\vskip .3 cm
Now we prove how $\ii P_{mix}(P_1, \ldots, P_n; q)$ implies $\ii P^*_{mix}(P_1, \ldots, P_n)$. First, we use mock interpolation (Lemma \ref{lemma:mock:interp}) in order to formulate a version of $\ii P_{mix}(P_1, \ldots, P_n; q)$ for general functions. In short, $\ii P_{mix}(P_1, \ldots, P_n; q)$ implies
\vspace{1mm}
\begin{equation}
\tag{loc $(\bar P_1, \ldots, \bar P_n; q)$}
\big\| \| T_{\so_{d_1} (R_0)} ( f_1, \ldots,  f_n) \|_{ L^{\overline{( P_{n+1})'}}_{\rr R^{d-d_1}} L^{R_{n+1}'}_{\ii W}} \cdot v  \big\|_{L^q_{\rr R^{d_1}}} \lesssim  \prod_{j=1}^n \big( \sssize^{s_j}_{ \so_{d_1} (R_0)} \|  f_j(x_1, \cdot) \|_{L^{\bar P_j}_{\rr R^{d-d_1}} L^{R_j}_{\ii W} } \big) \big( \sssize^{s_{n+1}}_{\so_{d_1} (R_0)} v(x_1)  \big) |R_0|^\frac{1}{q}
\end{equation}
for any $\ds (1-\alpha_1, \ldots, 1-\alpha_n, \frac{1}{q}-\alpha_{n+1})$-local tuple $\ds (s_1, \ldots, s_n, s_{n+1})$ and any locally $\rr R^{d_1}$-integrable functions  $\ds \|  f_j(x_1, \cdot) \|_{L^{\bar P_j}_{\rr R^{d-d_1}} L^{R_j}_{\ii W} }$, $|v(x_1)|^q$. The implicit constant depends on the distance between $\frac{1}{s_j}$ and $1-\alpha_j$, so the bounds are not uniform. 

%Once we have such as estimate holing for any dyadic cube $R \subset \rr R^{d_1}$, we can deduce a sparse domination result: there exists a sparse collections $\ic S_{\so_{d_1}(R_0)}$ of dyadic cubes in $\rr R^{d_1}$ depending as usual on the locally integrable functions $\|  f_j(x_1, \cdot) \|_{L^{\bar P_j}_{\rr R^{d-d_1}} L^{R_j}_{\ii W} }$, $|v(x_1)|q$, on the collection $\so_{d_1}(R_0)$ and on the Lebesgue exponents involved such that
%\begin{align*}
%& \big\|   \|T_{\so_{d_1}(R_0)} ( f_1, \ldots,  f_n)\|_{L^{\overline{( P_{n+1})'}}_{\rr R^{d-d_1}} L^{R_{n+1}'}_{\ii W}} \cdot v\|_{L^{q}_{\rr R^{d_1}}}^{q} \lesssim \\
%& \quad \sum_{Q \in \ic S_{\so_{d_1}(R_0)}} \prod_{j=1}^n \Big(  \frac{1}{|Q|}  \int_{ \rr R^{d_1}} \|  f_j(x_1, \cdot) \|^{s_j}_{L^{\bar P_j}_{\rr R^{d-d_1}} L^{R_j}_{\ii W} } \cdot \ci_{Q}(x_1) dx_1    \Big)^{\frac{q}{s_j}} \Big(  \frac{1}{|Q|}  \int_{ \rr R^{d_1}} |v(x_1)|^{s_{n+1}} \cdot \ci_{Q}(x_1) dx_1    \Big)^{\frac{q}{s_{n+1}}} |Q|. 
%\end{align*}
%In particular, the dyadic cubes $Q \in \ic S_{\so_{d_1}(R_0)}$ are also supported in $R_0$.

Once we have such an estimate holing for any dyadic cube $R_0 \subset \rr R^{d_1}$, we can deduce a sparse domination result for the operator associated to the generic collection $\so$: there exists a sparse collections $\ic S_{\so}$ of dyadic cubes in $\rr R^{d_1}$ depending as usual on the locally integrable functions $\|  f_j(x_1, \cdot) \|_{L^{\bar P_j}_{\rr R^{d-d_1}} L^{R_j}_{\ii W} }$, $|v(x_1)|^q$, on the collection of multi-tiles $\so$ and on the Lebesgue exponents involved such that
\begin{align}
\label{eq:sparse-mix}
\tag{sparse $(\bar P_1, \ldots, \bar P_n; q)$}
& \big\|   \|T_{\so} ( f_1, \ldots,  f_n)\|_{L^{\overline{( P_{n+1})'}}_{\rr R^{d-d_1}} L^{R_{n+1}'}_{\ii W}} \cdot v\|_{L^{q}_{\rr R^{d_1}}}^{q} \lesssim \\
 \quad \sum_{Q \in \ic S_{\so}} \prod_{j=1}^n \Big(  \frac{1}{|Q|}  \int_{ \rr R^{d_1}} \|  f_j(x_1, \cdot)  \|^{s_j}_{L^{\bar P_j}_{\rr R^{d-d_1}} L^{R_j}_{\ii W} }& \cdot \ci_{Q}(x_1) dx_1    \Big)^{\frac{q}{s_j}}  \Big(  \frac{1}{|Q|}  \int_{ \rr R^{d_1}} |v(x_1)|^{s_{n+1}} \cdot \ci_{Q}(x_1) dx_1    \Big)^{\frac{q}{s_{n+1}}} |Q|.  \nonumber
\end{align}
As one can see by carefully looking at the proof of Proposition 13 in \cite{sparse-hel}, the dyadic cubes $Q \in \ic S_{\so}$ depend on the collection $\so$ and inherit some of its properties.

From here we can deduce a Fefferman-Stein type inequality for mixed-norm estimates: for any $0<q < \infty$
\vspace{2mm}
\begin{equation}
\label{eq:FS-mix}
\tag{FS-mix $(\bar P_1, \ldots, \bar P_n; q)$}
\big\|   \|T_{\so} ( f_1, \ldots,  f_n)\|_{L^{\overline{( P_{n+1})'}}_{\rr R^{d-d_1}} L^{R_{n+1}'}_{\ii W}} \|_{L^{q}_{\rr R^{d_1}}}^{q} \lesssim \big\|M_{s_1, \ldots, s_n}(  \|  f_1( , \cdot) \|_{L^{\bar P_1}_{\rr R^{d-d_1}} L^{R_1}_{\ii W}}, \ldots,  \|  f_n( , \cdot) \|_{L^{\bar P_n}_{\rr R^{d-d_1}} L^{R_n}_{\ii W}} )    \big\|_{L^q_{\rr R^{d_1}}}.
\end{equation}
A weighted version can also be formulated, as in \eqref{eq:qB:wFS}, under the same RH condition on the weight.

It is the sparse result \eqref{eq:sparse-mix} that we are going to apply to $T_{\so_{\bar d_1} (\bar R_0)} ( f_1 \cdot \one_{F_1}, \ldots, f_n \cdot \one_{F_n}) \cdot \one_{F_{n+1}}$. The H\"older tuple $(P_1, \ldots, P_n, P_{n+1})$ is as before, $\bar d_1$ is an integer comprised between $1$ and $d_1=d_1(P_1, \ldots, P_n)$, $\bar R_0$ is an arbitrary cube in $\rr R^{\bar d_1}$ and $F_1, \ldots, F_{n+1}$ are measurable sets in $\rr R^{\bar d_1}$. Then there exists a sparse collection depending on all these, for which we have 
\begin{align*}
& \big\|   \|T_{\so_{\bar d_1}(\bar R_0)} ( f_1 \cdot \one_{F_1}, \ldots,  f_n \cdot \one_{F_n})\|_{L^{\overline{( P_{n+1})'}}_{\rr R^{d-d_1}} L^{R_{n+1}'}_{\ii W}} \cdot \one_{F_{n+1}}\|_{L^{(p_{n+1}^1)'}_{\rr R^{d_1}}}^{(p_{n+1}^1)'} \lesssim \\
& \quad \sum_{Q \in \ic S_{\so_{\bar d_1}(\bar R_0)}} \prod_{j=1}^n \Big(  \frac{1}{|Q|}  \int_{ \rr R^{d_1}} \|  f_j(\bar x_1, \tilde x_1, \cdot) \|^{s_j}_{L^{\bar P_j}_{\rr R^{d-d_1}} L^{R_j}_{\ii W} }\cdot \one_{F_j}(\bar x_1) \cdot \ci_{Q} d x_1 \Big)^{\frac{(p_{n+1}^1)'}{s_j}} \Big(  \frac{1}{|Q|}  \int_{ \rr R^{d_1}} \one_{F_{n+1}}(\bar x_1) \cdot \ci_{Q} d  x_1  \Big)^{\frac{(p_{n+1}^1)'}{s_{n+1}}} |Q|. 
\end{align*}

%{\fontsize{10}{10}\begin{align*}
%& \big\|   \|T_{\so_{\bar d_1}(\bar R_0)} ( f_1 \cdot \one_{F_1}, \ldots,  f_n \cdot \one_{F_n})\|_{L^{\overline{( P_{n+1})'}}_{\rr R^{d-d_1}} L^{R_{n+1}'}_{\ii W}} \cdot \one_{F_{n+1}}\|_{L^{(p_{n+1}^1)'}_{\rr R^{d_1}}}^{(p_{n+1}^1)'} \lesssim \\
%& \quad \sum_{Q \in \ic S_{\so_{\bar d_1}(\bar R_0)}} \prod_{j=1}^n \Big(  \frac{1}{|Q|}  \int_{ \rr R^{d_1}} \|  f_j(\bar x_1, \tilde x_1, \cdot) \|^{s_j}_{L^{\bar P_j}_{\rr R^{d-d_1}} L^{R_j}_{\ii W} }\cdot \one_{F_j}(\bar x_1) \cdot \ci_{Q}(\bar x_1, \tilde x_1) d x_1 \Big)^{\frac{(p_{n+1}^1)'}{s_j}} \Big(  \frac{1}{|Q|}  \int_{ \rr R^{d_1}} \one_{F_{n+1}}(\bar x_1) \cdot \ci_{Q}(\bar x_1, \tilde x_1) d  x_1  \Big)^{\frac{(p_{n+1}^1)'}{s_{n+1}}} |Q|. 
%\end{align*}}

We take $s_j$ so that $\frac{1}{s_j}=1-\alpha_j - \epsilon$\footnote{The $\epsilon$ denotes a small error term and it can actually change from one line to the other.} for $1 \leq j \leq n$ and $\frac{1}{s_{n+1}}=\frac{1}{(p_{n+1}^1)'}- \alpha_{n+1}-\epsilon$ and use H\"older's inequality for the estimate

\begin{align*}
&\Big(  \frac{1}{|Q|}  \int_{ \rr R^{d_1}} \|  f_j(\bar x_1, \tilde x_1, \cdot) \|^{s_j}_{L^{\bar P_j}_{\rr R^{d-d_1}} L^{R_j}_{\ii W} }\cdot \one_{F_j}(\bar x_1) \cdot \ci_{Q}(\bar x_1, \tilde x_1)  d x_1 \Big)^{\frac{(p_{n+1}^1)'}{s_j}} \\
& \lesssim \Big(  \frac{1}{|Q|}  \int_{ \rr R^{d_1}}\one_{F_j}(\bar x_1) \cdot \ci_{Q}(\bar x_1, \tilde x_1)  d x_1 \Big)^{(p_{n+1}^1)' \left(1- \alpha_j- \frac{1}{p_j^1}- \epsilon \right)}  \cdot \Big(  \frac{1}{|Q|}  \int_{ \rr R^{d_1}} \|  f_j(\bar x_1, \tilde x_1, \cdot) \|^{p_j^1-\epsilon}_{L^{\bar P_j}_{\rr R^{d-d_1}} L^{R_j}_{\ii W} }\cdot \ci_{Q}(\bar x_1, \tilde x_1)  d x_1 \Big)^{\frac{(p_{n+1}^1)'}{p^1_j- \epsilon}}. 
\end{align*}

As mentioned before, the sparse collection inherits some of the properties of the collection $\so_{\bar d_1}(\bar R_0)$ itself; so in particular every $d_1$-dimensional dyadic cube $Q \in \ic S_{\so_{\bar d_1}(\bar R_0)}$ is of the form 
\[
Q= \bar R \times \tilde Q,
\] 
where $\bar R \subseteq \bar R_0$ and $\tilde Q$ is a $(d_1 - \bar d_1)$-dimensional dyadic cube of the same sidelength as $\bar R$. From this we can infer that 
\[
\Big(  \frac{1}{|Q|}  \int_{ \rr R^{d_1}}\one_{F_j}(\bar x_1) \cdot \ci_{Q}(\bar x_1, \tilde x_1)  d x_1 \Big) \lesssim \big( \sssize_{\so_{\bar d_1} (\bar R_0)} \one_{F_j}  \big)
\]
for any $1 \leq j \leq n+1$.
%and likewise
%\[
%\Big(  \frac{1}{|Q|}  \int_{ \rr R^{d_1}}\one_{F_{n+1}}(\bar x_1) \cdot \ci_{Q}(\bar x_1, \tilde x_1)  d x_1 \Big) \lesssim \big( \sssize_{\so_{\bar d_1} (\bar R_0)} \one_{F_{n+1}}  \big).
%\]

Then $\ii P^*_{mix}(P_1, \ldots, P_n)$ follows once we show that 
\begin{align*}
\sum_{Q \in \ic S_{\so_{\bar d_1}(\bar R_0)}} \prod_{j=1}^n \Big(  \frac{1}{|Q|}  \int_{ \rr R^{d_1}} \|  f_j(x_1, \cdot) \|^{p_j^1-\epsilon}_{L^{\bar P_j}_{\rr R^{d-d_1}} L^{R_j}_{\ii W} }\cdot \ci_{Q}( x_1)  d x_1 \Big)^{\frac{(p_{n+1}^1)'}{p^1_j- \epsilon}} |Q| \lesssim \prod_{j=1}^n \big\|  f_j \cdot \ci_{\bar R_0} \big\|_{L^{P_j}_{\rr R^{d}} L^{R_j}_{\ii W}}^{(p_{n+1}^1)'}.
\end{align*}

Using the sparse properties of the collection $\ic S_{\so_{\bar d_1}(\bar R_0)}$, we have 
\begin{align*}
&\sum_{Q \in \ic S_{\so_{\bar d_1}(\bar R_0)}} \prod_{j=1}^n \Big(  \frac{1}{|Q|}  \int_{ \rr R^{d_1}} \|  f_j(x_1, \cdot) \|^{p_j^1-\epsilon}_{L^{\bar P_j}_{\rr R^{d-d_1}} L^{R_j}_{\ii W} }\cdot \ci_{Q}( x_1)  d x_1 \Big)^{\frac{(p_{n+1}^1)'}{p^1_j- \epsilon}} |Q| \\
& \lesssim \int_{\rr R^{d_1}} \big|    \prod_{j=1}^n    M_{p_j^1-\epsilon} \big(  \|  f_j(x_1, \cdot) \|_{L^{\bar P_j}_{\rr R^{d-d_1}} L^{R_j}_{\ii W} }\cdot \ci_{\bar R_0}( \bar x_1)  \big)    \big|^{(p_{n+1}^1)'} d x_1.
\end{align*}

The $L^{p_j^1} \mapsto L^{p_j^1}$ boundedness of $M_{p_j^1-\epsilon}$ and H\"older's inequality implies the desired inequality.

\vskip .5pt

Next, we prove that a weak version of $\ii P_{mix}(P_1, \ldots, P_n; q)$ follows from the assumption that $\ii P^*_{mix}(\hat P_1, \ldots, \hat P_n)$ holds for all tuples $(\hat P_1, \ldots, \hat P_n)$ with 
\[\hat d_1:= d_1(\hat P_1, \ldots, \hat P_n, \hat Q) \geq d_1:=d_1( P_1, \ldots,  P_n, Q).
\]

We consider the $(1-\alpha_1, \ldots, 1-\alpha_{n+1})$-local H\"older tuple $(P_1, \ldots, P_n, P_{n+1})$ and now we resort to the notation introduced at the beginning of the section. Let $R$ a fixed dyadic cube in $\rr R^{d_1}$, $E_1, \ldots, E_n, E_{n+1}$ measurable sets in $\rr R^{d_1}$ and $ f_1, \ldots, f_n$ multiple vector-valued functions such that $ \|f_j(x_1, \cdot) \|_{L^{\bar P_j}_{\rr R^{d-d_1}} L^{R_j}_{\ii W} } \leq \one_{E_j}(x_1)$ for a. e. $x_1 \in \rr R^{d_1}$. We want to show that
\begin{align*}
\big\|T_{\so_{d_1}( R)} ( f_1, \ldots, f_n ) \cdot \one_{E_{n+1}}   \big\|_{L^{q, \infty}_{\rr R^{d_1}} L^{\overline{( P_{n+1})'}}_{\rr R^{d-d_1}} L^{R_{n+1}'}_{\ii W}} \lesssim    \prod_{j=1}^n \big( \sssize_{ \so_{d_1} (R)} \one_{E_j}  \big)^{1- \alpha_j- \epsilon} \big( \sssize_{\so_{d_1} (R)} \one_{E_{n+1}}  \big)^{\frac{1}{q}- \alpha_{n+1}- \epsilon} |R|^\frac{1}{q}
\end{align*}
holds. The strong version follows from a very basic interpolation result: $\ds \|F\|_{q} \lesssim (\| F \|_{q_0, \infty})^{1-\theta} (\| F \|_{q_1, \infty})^{\theta}$ whenever $\frac{1}{q}=\frac{1-\theta}{q_0}+\frac{\theta}{q_1}$.

In order to ease the notation, we write $\bar Q$ for $\overline{( P_{n+1})'}$:
\[
\frac{1}{p_1^j}+\ldots+\frac{1}{p_n^j}=\frac{1}{q_j} \qquad \text{for all     } d_1+1 \leq j \leq d,
\]
and in consequence
\begin{equation}
\label{eq:change:notation}
L^{( P_{n+1})'}_{\rr R^d}=L^{(p_{n+1}^1)'}_{\rr R^{d_1}} L^{\overline{( P_{n+1})'}}_{\rr R^{d-d_1}}=L^{(p_{n+1}^1)'}_{\rr R^{d_1}} L^{\bar Q}_{\rr R^{d-d_1}}= L^{(p_{n+1}^1)'}_{\rr R^{d_1}} L^{q_2}_{\rr R^{d_2}} L^{\tilde Q}_{\rr R^{d-d_1-d_2}}.
\end{equation}

Per usual, we dualize the $\| \cdot \|_{L^{q, \infty}_{\rr R^{d_1}}}$ quasi-norm through an $L^\tau_{\rr R^{d_1}}$ space, for $\tau$ sufficiently small (the smallness assumption $\tau< q, (p_{n+1}^1)', q_j, (r^\ell_{n+1})'$ for all $1 \leq \ell \leq m,  d_1+1 \leq j \leq d$ will also ensure the sub-additivity of $\| \cdot \|_{L^\tau_{\rr R^{d_1}} L^{\bar Q}_{\rr R^{d-d_1}} L^{R_{n+1}'}_{\ii W}}^\tau$); we have 
\[
\big\|T_{\so_{d_1}( R)} ( f_1, \ldots, f_n ) \cdot \one_{E_{n+1}}   \big\|_{L^{q, \infty}_{\rr R^{d_1}} L^{\bar Q}_{\rr R^{d-d_1}} L^{R_{n+1}'}_{\ii W}} = \sup_{\substack{\tilde E \subset \rr R^{d_1} \\ |\tilde E|=1}} \big\|T_{\so_{d_1}(R)} (f_1, \ldots,  f_n ) \cdot \one_{E_{n+1}}  \cdot \one_{\tilde E} \big\|_{L^\tau_{\rr R^{d_1}} L^{\bar Q}_{\rr R^{d-d_1}} L^{R_{n+1}'}_{\ii W}}
\]
and it will be sufficient to prove for any $\tilde E \subset \rr R^{d_1}$ with $|\tilde E|=1$ that 
\begin{equation}
\label{eq:local:size:dualization:tau}
\big\|T_{\so_{d_1} (R)} ( f_1, \ldots,  f_n ) \cdot \one_{E_{n+1}}  \cdot \one_{\tilde E} \big\|_{L^\tau_{\rr R^{d_1}} L^{\bar Q}_{\rr R^{d-d_1}} L^{R_{n+1}'}_{\ii W}} \lesssim    \prod_{j=1}^n \big( \sssize_{ \so_{d_1}(R)} \one_{E_j}  \big)^{1- \alpha_j- \epsilon} \big( \sssize_{\so_{d_1}(R)} \one_{E_{n+1}}  \big)^{\frac{1}{q}- \alpha_{n+1}- \epsilon} |R|^\frac{1}{q}.
\end{equation}

Note that the set $\tilde E$ does not make an appearance on the right hand side of the expression above; that is reasonable considering the assumption $|\tilde E|=1$. First we notice that, since $\tau<q_2$, we have 
\begin{equation}
\label{eq:Holder-local:tau}
\big\|T_{\so_{d_1}( R)} ( f_1, \ldots, f_n ) \cdot \one_{E_{n+1}}  \cdot \one_{\tilde E} \big\|_{L^\tau_{\rr R^{d_1}} L^{\bar Q}_{\rr R^{d-d_1}} L^{R_{n+1}'}_{\ii W}}  \lesssim \big\|T_{\so_{d_1} ( R)} (f_1, \ldots, f_n ) \cdot \one_{E_{n+1}}  \cdot \one_{\tilde E} \big\|_{L^{q_2}_{\rr R^{d_1}} L^{\bar Q}_{\rr R^{d-d_1}} L^{R_{n+1}'}_{\ii W}}  \cdot \| \one_{E_{n+1}\cap \tilde E}  \cdot \ci_{R}  \|_{L^{\tau_q^2}_{\rr R^{d_1}}},
\end{equation}
where $\frac{1}{\tau}= \frac{1}{q_2}+\frac{1}{\tau_q^2}$. A full justification of the presence of the decaying factor $\ci_{R}(x)$ in the last term requires some work since it represents an improvement over H\"older's inequality. Its proof reduces to a certain technical tool (i.e. another dyadic annuli decomposition around $R$), and it is carefully explained for example in \cite{quasiBanachHelicoid}, Lemma 26. 

%which we further simply denote $L_{\rr R^d}^{\hat Q}$.

We are making appear the $L^{q_2}$ norm on the right hand side because now
\[
L^{q_2}_{\rr R^{d_1}}L_{\rr R^{d-d_1}}^{\bar Q}= L^{q_2}_{\rr R^{d_1}}L^{q_2}_{\rr R^{d_2}}L_{\rr R^{d-d_1-d_2}}^{\tilde Q},
\]
and the first $d_1+d_2$ components of $\hat Q=(q_2, q_2, \bar Q)$ coincide. 

Since $d_2=d_2(P_1, \ldots, P_n)$, the first $d_1+d_2$ components of the newly labeled $\hat P_j:= (p_j^2, \bar P_j)$ also coincide and 
\[
L_{\rr R^d}^{\hat P_j} := L^{p_j^2}_{\rr R^{d_1}}L^{p_j^2}_{\rr R^{d_2}}L_{\rr R^{d-d_1-d_2}}^{\tilde P_j}=L^{p_j^2}_{\rr R^{d_1}}L_{\rr R^{d-d_1}}^{\bar P_j},
\]
for all $1 \leq j \leq n$. Moreover, because of \eqref{eq:change:notation}, $(\hat P_1, \ldots, \hat P_n, (\hat Q)')$ is a H\"older $d$-tuple (still $(1-\alpha_1, \ldots, 1 - \alpha_{n+1})$- local). By the induction hypotheses, we know that $\ii P^*_{mix}(\hat P_1, \ldots, \hat P_n)$ is true:
\begin{align*}
\big\|T_{\so_{d_1} (R)} ( f_1 \cdot \one_{E_1}, \ldots, f_n \cdot \one_{E_n}) \cdot \one_{E_{n+1} \cap \tilde E}   \big\|_{L^{\hat Q}_{\rr R^d} L^{R_{n+1}'}_{\ii W}} \lesssim &  \prod_{j=1}^n \big( \sssize_{ \so_{d_1}( R)} \one_{E_j}  \big)^{1- \alpha_j-\frac{1}{p_j^2}- \epsilon} \big( \sssize_{\so_{d_1}( R)} \one_{E_{n+1} \cap \tilde E}  \big)^{\frac{1}{q_2}- \alpha_{n+1}- \epsilon}  \\
& \cdot \prod_{j=1}^n \big\|f_j \cdot \ci_{ R} \big\|_{L^{\hat P_j}_{\rr R^{d}} L^{R_j}_{\ii W}}.
\end{align*}   

If we take a careful look at the expression in the last line, we notice that for every $1 \leq j \leq n$,
\[
\big\| f_j \cdot \ci_{ R} \big\|_{L^{\hat P_j}_{\rr R^{d}} L^{R_j}_{\ii W}}= \big\| f_j \cdot \ci_{ R} \big\|_{L^{p_j^2}_{\rr R^{d_1}}L^{\bar  P_j}_{\rr R^{d-d_1}} L^{R_j}_{\ii W}} \leq \big\| \one_{E_j} \cdot \ci_{R} \big\|_{L^{p_j^2}_{\rr R^{d_1}}}.
\]

This implies that 
\begin{align*}
\prod_{j=1}^n \big\| f_j \cdot \ci_{ R} \big\|_{L^{\hat P_j}(\rr R^{d}) L^{R_j}(\ii W)}& \lesssim \prod_{j=1}^n  \big\| \one_{E_j} \cdot \ci_{R} \big\|_{L^{p_j^2}(\rr R^{d_1})} \\
 & = \prod_{j=1}^n \frac{ \| \one_{E_j} \cdot \ci_{R} \|_{L^{p_j^2}(\rr R^{d_1})} }{|R|^{1/{p_j^2}}} \cdot |R|^{\frac{1}{q_2}} \lesssim 
 \prod_{j=1}^n \big( \sssize_{\so_{d_1}( R)} \one_{E_j}  \big)^\frac{1}{p_2^j}  \cdot |R|^{\frac{1}{q_2}} .
\end{align*}

The assumptions $ \|f_j(x_1, \cdot) \|_{L^{\bar P_j}_{\rr R^{d-d_1}} L^{R_j}_{\ii W} } \leq \one_{E_j}(x_1)$ for a. e. $x_1 \in \rr R^{d_1}$ imply in particular that $ f_j(x_1, \bar x, w)=  f_j(x_1, \bar x, w) \cdot \one_{E_j}(x_1)$ for all $1 \leq j \leq n$, so the expression above is indeed an estimate for the previous $T_{\so_{d_1}( R)} ( f_1, \ldots, f_n ) \cdot \one_{E_{n+1}}  \cdot \one_{\tilde E}$.

Indeed, we have just obtained 
\[
\big\|T_{\so_{d_1}( R)} (f_1, \ldots, f_n ) \cdot \one_{E_{n+1} \cap \tilde E}  \|_{L^{\hat Q}_{\rr R^d} L^{R_{n+1}'}_{\ii W}} \lesssim  \prod_{j=1}^n \big( \sssize_{ \so_{d_1}( R)} \one_{E_j}  \big)^{1- \alpha_j - \epsilon} \big( \sssize_{\so_{d_1}( R)} \one_{E_{n+1} \cap \tilde E}  \big)^{\frac{1}{q_2}- \alpha_{n+1}- \epsilon}  \cdot |R|^{\frac{1}{q_2}}.
\]

Together with the H\"older's inequality application in \eqref{eq:Holder-local:tau} and the obvious estimate 
\[
 \| \one_{E_{n+1}\cap \tilde E}  \cdot \ci_{R}  \|_{L^{\tau_q^2}_{\rr R^{d_1}}} \lesssim |R|^\frac{1}{\tau_q^2} \,  \big( \sssize_{\so_{d_1}( R)} \one_{E_{n+1} \cap \tilde E}  \big)^\frac{1}{\tau_q^2},
\]
we deduce
\begin{align}
\label{eq:local:but:with:tau}
& \big\|T_{\so_{d_1} (R)} ( f_1, \ldots, f_n ) \cdot \one_{E_{n+1}}  \cdot \one_{\tilde E} \big\|_{L^\tau_{\rr R^{d_1}} L^{\bar Q}_{\rr R^{d-d_1}} L^{R_{n+1}'}_{\ii W}} \\
&\qquad  \lesssim \prod_{j=1}^n \big( \sssize_{ \so_{d_1}( R)} \one_{E_j}  \big)^{1- \alpha_j - \epsilon} \big( \sssize_{\so_{d_1}( R)} \one_{E_{n+1} \cap \tilde E}  \big)^{\frac{1}{\tau}- \alpha_{n+1}-\epsilon}  \cdot |R|^{\frac{1}{\tau}}. \nonumber
\end{align}

This is similar, but not quite the same as the estimate \eqref{eq:local:size:dualization:tau} that we wanted to prove; in order to obtain the latter, we need to run an extra stopping time that will convert $$ \big( \sssize_{\so_{d_1}( R)} \one_{E_{n+1} \cap \tilde E}  \big)^{\frac{1}{\tau}- \alpha_{n+1}-\epsilon}  \cdot |R|^{\frac{1}{\tau}}$$  into $$ \big( \sssize_{\so_{d_1}( R_0)} \one_{E_{n+1} \cap \tilde E}  \big)^{\frac{1}{q}- \alpha_{n+1}-\epsilon}  \cdot |R_0|^{\frac{1}{q}}.$$

Let $R_0$ be a dyadic cube in $\rr R^{d_1}$; we know that the estimate \eqref{eq:local:size:dualization:tau} above holds for all dyadic cubes $R$ contained in $R_0$; moreover, $\| \cdot \|_{L^\tau_{\rr R^{d_1}} L^{\bar Q}_{\rr R^{d-d_1}} L^{R_{n+1}'}_{\ii W}}^\tau$ and $\| \cdot \|_\tau^\tau$ are sub-additive, so we can use the result in Lemma \ref{lemma:local->sparse->subadditive}: we perform a stopping time, but only with respect to $\ds  \sssize_{\so_{d_1}( R)} \one_{\tilde E}$. That is, we construct a sparse collection of dyadic cubes $\ds \ic S_{R_0}=\bigcup_{k \geq 0} \ic S_k$ contained in $R_0$ in the following way:
\begin{itemize}
\item $\ic S_0$ will consist of all ``relevant" maximal dyadic cubes contained in $R_0$
\item once $\ic S_k$ is established, then for every $Q_0 \in \ic S_k$ we define as descendants of $Q_0$ in $\ic S_{R_0}$ (hence the elements in $\ic S_{k+1}$) those relevant maximal dyadic cubes contained in $Q_0$ satisfying
\[
\frac{1}{|Q|} \int_{\rr R^{d_1}} \one_{\tilde E}(x_1) \, \ci_{Q}^M(x_1) dx_1 > C \, \frac{1}{|Q_0|} \int_{\rr R^{d_1}} \one_{\tilde E} (x_1) \, \ci_{Q_0}^{M-1}(x_1) dx_1.
\]
\item for $C$ large enough, we have that 
\[
\sum_{\substack{Q \in \ic S_{k+1} \\ Q \subset Q_0}} |Q| \leq \frac{1}{2} |Q_0|.
\]
\item to every $Q \in \ic S_{R_0}$, we associate a unique collection of tiles $\so_{Q}$  which consists of tiles $s= R_s \times \omega_s \in \so(R_0)$ with $R_s= I_1 \times \ldots \times I_d$ so that $I_1 \times \ldots \times I_{d_1}$ is contained in $Q$ and is not contained in any descendant of $Q$ inside the sparse collection $\ic S_{R_0}$
\item the last condition implies in particular that 
\[
\sssize_{\so_Q} \one_{\tilde E} \lesssim  \frac{1}{|Q|} \int_{\rr R^{d_1}} \one_{\tilde E} (x_1) \, \ci_{Q}^{M-1}(x_1) dx_1
\]
\item we simply note that for any $1 \leq j \leq n+1$,
\[
\sssize_{\so_Q} \one_{E_j} \lesssim  \sssize_{\so_{d_1}(R_0)} \one_{E_j}. 
\]
\end{itemize}

We will denote $\ds \frac{1}{\tau_{q}}:= \frac{1}{\tau}-\frac{1}{q}$, which is a positive Lebesgue exponent since $\tau<q$.

Thanks to the presence of subadditivity, we have 
\begin{align*}
&\big\|T_{\so_{d_1}(R_0)} ( f_1, \ldots, f_n ) \cdot \one_{E_{n+1}}  \cdot \one_{\tilde E} \big\|^\tau_{L^\tau_{\rr R^{d_1}} L^{\bar Q}_{\rr R^{d-d_1}} L^{R_{n+1}'}_{\ii W}}  \lesssim \sum_{Q \in \ic S_{R_0}} \big\|T_{\so_Q} ( f_1, \ldots, f_n ) \cdot \one_{E_{n+1}}  \cdot \one_{\tilde E} \big\|^\tau_{L^\tau_{\rr R^{d_1}} L^{\bar Q}_{\rr R^{d-d_1}} L^{R_{n+1}'}_{\ii W}}  \\
&\lesssim \sum_{Q \in \ic S_{R_0}} \prod_{j=1}^n \big( \sssize_{ \so_Q} \one_{E_j}  \big)^{\tau(1- \alpha_j - \epsilon)} \big( \sssize_{\so_Q} \one_{E_{n+1}} \big)^{\tau(\frac{1}{q}- \alpha_{n+1}-\epsilon)}  \big( \sssize_{\so_Q} \one_{\tilde E}  \big)^{\tau( \frac{1}{\tau}-\frac{1}{q}+\epsilon)}  \cdot |Q| \\
&\lesssim \prod_{j=1}^n \big( \sssize_{\so_{d_1}(R_0)} \one_{E_j}  \big)^{\tau(1- \alpha_j - \epsilon)} \big( \sssize_{\so_{d_1}(R_0)} \one_{E_{n+1}} \big)^{\tau(\frac{1}{q}- \alpha_{n+1}-\epsilon)} \sum_{Q \in \ic S_{R_0}} \big(  \frac{1}{|Q|} \int_{\rr R^{d_1}} \one_{\tilde E}  \, \ci_{Q}^{M-1} dx_1 \big)^{\frac{\tau}{\tau_{q}}(1+\epsilon)} |Q|.
\end{align*}

As usual, the $\epsilon$ above denotes a small loss whose exact value will be ignored; what is important is that it allows us to gain some integrability, as we shall shortly see. 

It remains to estimate the expression 
\[
\sum_{Q \in \ic S_{R_0}} \big(  \frac{1}{|Q|} \int_{\rr R^{d_1}} \one_{\tilde E} (x_1) \, \ci_{Q}^{M-1}(x_1) dx_1 \big)^{\frac{\tau}{\tau_{q}}(1+\epsilon)} |Q|, 
\]
which we rewrite as 
\[
\sum_{Q \in \ic S_{R_0}} \Big( \big(  \frac{1}{|Q|} \int_{\rr R^{d_1}} \one_{\tilde E} (x_1) \, \ci_{Q}^{M-1}(x_1) dx_1 \big)^{(1+\epsilon)} |Q|\Big)^{\frac{\tau}{\tau_{q}}} \cdot |Q|^\frac{\tau}{q}.
\]

Next we use H\"older's inequality ( which we can because $\frac{\tau}{q}+\frac{\tau}{\tau_{q}}=1$), in order to majorize it by
\[
\Big(\sum_{Q \in \ic S_{R_0}} \big(  \frac{1}{|Q|} \int_{\rr R^{d_1}} \one_{\tilde E} (x_1) \, \ci_{Q}^{M-1}(x_1) dx_1 \big)^{(1+\epsilon)} |Q| \Big)^{\frac{\tau}{\tau_{q}}} \cdot \Big(  \sum_{Q \in \ic S_{R_0}} |Q| \Big)^{\frac{\tau}{q}}.
\]

The sparse property of the collection $\ic S_{R_0}$ yields 
\[
 \sum_{Q \in \ic S_{R_0}} |Q| \lesssim |R_0|,
\]
while on the other hand the same sparseness property implies 
\[
\sum_{Q \in \ic S_{R_0}} \big(  \frac{1}{|Q|} \int_{\rr R^{d_1}} \one_{\tilde E} (x_1) \, \ci_{Q}^{M-1}(x_1) dx_1 \big)^{(1+\epsilon)} |Q|  \lesssim \int_{\rr R^{d_1}} M_{\frac{1}{1+\epsilon}} \one_{\tilde E} (x_1) dx_1 \lesssim \|\one_{\tilde E}\|_1 \lesssim 1.
\]

Even though we initially lose an $\epsilon$, it eventually helps us in determining that
\[
 \big(  \frac{1}{|Q|} \int_{\rr R^{d_1}} \one_{\tilde E} (x_1) \, \ci_{Q}^{M-1}(x_1) dx_1 \big)^{(1+\epsilon)} \lesssim \inf_{y \in Q} M_{\frac{1}{1+\epsilon}} \one_{\tilde E} (y).
\]
This is a key point in the proof, because $M_{\frac{1}{1+\epsilon}}$ is $L^1$-integrable, while the regular Hardy-Littlewood maximal function is not.   

We obtain in the end
\[
\big\|T_{\so_{d_1}(R_0)} ( f_1, \ldots, f_n ) \cdot \one_{E_{n+1} \cap \tilde E} \big\|^\tau_{L^\tau_{\rr R^{d_1}} L^{\bar Q}_{\rr R^{d-d_1}} L^{R_{n+1}'}_{\ii W}}  \lesssim \prod_{j=1}^n \big( \sssize_{\so_{d_1}(R_0)} \one_{E_j}  \big)^{\tau(1- \alpha_j - \epsilon)} \big( \sssize_{\so_{d_1}(R_0)} \one_{E_{n+1}} \big)^{\tau(\frac{1}{q}- \alpha_{n+1}-\epsilon)} |R_0|^{\frac{\tau}{q}}.
\]

We don't necessarily need to use sparse domination for proving any of the above estimates; for example, in \cite{quasiBanachHelicoid}, we proved such estimates by considering separately several cases ($q \geq q_2$, $q <q_2$) and by taking into account whether we have or not the sought-after subadditivity property. In certain situations, an extra stopping time was necessary, which is precisely the type of stopping times that we used in \cite{vv_BHT}, \cite{myphdthesis}, \cite{quasiBanachHelicoid}, when we were unaware of the concealed spatial sparse structure in the stopping times. With the help of Lemma \ref{lemma:local->sparse->subadditive}, under subadditivity conditions, the fact that local estimates imply sparse domination can be formulated as a general principle, independent of the properties of the operator. Our former approach consisted in decomposing all the involved functions with respect to all possible averages (i.e. a decomposition depending on the level sets of the maximal operator associated to each function), and in summing back all the pieces. In Section \ref{sec:HL:endpoint} we will resort again to this approach, in order to treat the endpoint case (which can also be formulated using sparse domination) and in order to obtain sharp Fefferman-Stein-type inequalities for the multiple vector-valued multilinear Hardy-Littlewood maximal function.
\end{proof}

\subsection{Some finishing remarks}
\label{sec:finishing}

We finish the proof of our main result with some closing remarks concerning the compound result and alternative approaches to obtaining it.

\subsubsection{The compound result}~\\
\label{sec:compound}
We mention a few words about the proof of the estimate
\begin{equation}
\label{eq:mixed-vv:compound}
T_k: L^{P^1_1}_{\rr R^{d_1}}L^{R^1_1}_{\ii W_1}\ldots L^{P^m_1}_{\rr R^{d_m}}L^{R^1_m}_{\ii W_m} \times \ldots \times L^{P^1_n}_{\rr R^{d_1}}L^{R^1_n}_{\ii W_1}\ldots L^{P^m_n}_{\rr R^{d_m}}L^{R^m_n}_{\ii W_m} \to L^{(P^1_{n+1})'}_{\rr R^{d_1}}L^{(R^1_{n+1})'}_{\ii W_1}\ldots L^{(P^m_{n+1})'}_{\rr R^{d_m}}L^{(R^m_{n+1})'}_{\ii W_m}.
\end{equation}
in Theorem \ref{thm-part3}. It relies on a composition of the techniques employed in Sections \ref{sec:mvv} and \ref{sec:mixed-norm}. The strategy consists in an induction argument, performed with respect to the complexity of the mixed-norm vector-valued spaces considered (that is, over the parameter $m$ appearing above which denotes the number of vector spaces appearing), and with respect to the ``maximal mixing parameter" for the mixed-norm spaces involved.

We assume the estimate 
\[
T_k: L^{\tilde P^1_1}_{\rr R^{\tilde d_1}}L^{\tilde R^1_1}_{\ii{\tilde W}_1}\ldots L^{\tilde P^{m-1}_1}_{\rr R^{\tilde d_{m-1}}}L^{\tilde R^1_{m-1}}_{\ii {\tilde W}_{m-1}} \times \ldots \times L^{\tilde P^1_n}_{\rr R^{\tilde d_1}}L^{\tilde R^1_n}_{\ii{\tilde W}_1}\ldots L^{\tilde P^{m-1}_n}_{\rr R^{\tilde d_{m-1}}}L^{\tilde R^{m-1}_n}_{\ii {\tilde W}_m} \to L^{(\tilde P^1_{n+1})'}_{\rr R^{\tilde d_1}}L^{(\tilde R^1_{n+1})'}_{\ii{\tilde W}_1}\ldots L^{(\tilde P^{m-1}_{n+1})'}_{\rr R^{\tilde d_{m-1}}}L^{(\tilde R^{m-1}_{n+1})'}_{\ii {\tilde W}_{m-1}}
\]
and wish to prove \eqref{eq:mixed-vv:compound}. First we prove it in the particular case when we have classical Lebesgue norms in the $x_1$ variable, i.e.
\[
P_j^1=(p_j, \ldots, p_j) \quad \text{for all       } 1 \leq j \leq n. 
\]
In this situation it suffices to show multiple vector-valued extensions, and we do so by using localization, sharp estimates and H\"older's inequality in order to pass from a depth $m_1-1$ vector-valued result to a $m_1$ one.

Afterwards we consider also the mixed-norm estimate, which is proved by decreasing induction over the maximal mixing parameter $d_1(P_1^1, \ldots, P_n^1)$, as in Section \ref{sec:mixed-norm}. The difficulty relies in understanding localizations to lower-dimensional sets, but this is precisely what we did in the proof of Theorem \ref{thm:main-mixed-discretized}.

\subsubsection{On weighted estimates}~\\
\label{sec:weighted}
We would like to make a few observations about weighted estimates as well. First, the sparse estimates appearing in the proof of Theorem \ref{thm-part2} will imply weighted estimates, with weights that are closely related to the multilinear operator $M_{s_1,\ldots, s_n }$ (see \cite{sparse-hel}). More exactly, we have
\begin{equation*}
T_k: L^{q_1}\big(  \rr R; L^{R_1}(\ii W, \mu) \big)(w_1^{q_1}) \times \ldots \times L^{q_n}\big(  \rr R; L^{R_n}(\ii W, \mu) \big)(w_n^{q_n})  \to L^{q}\big(  \rr R; L^{{R'}_{n+1}}(\ii W, \mu) \big)(w^{q}),
\end{equation*}
where $w=w_1 \cdot \ldots \cdot w_n$ and the vector weight $\vec w=(w_1^{q_1}, \ldots, w_n^{q_n})$ satisfies the condition
\begin{equation}
\label{eq:joi-weit-con}
\sup_{Q} \big( \aver{Q} w^{s_{n+1}} \big)^\frac{1}{s_{n+1}} \, \prod_{j=1}^n \Big(  \aver{Q} w_j^{- \frac{s_j \, q_j}{q_j-s_j}} \Big)^{\frac{1}{s_j}-\frac{1}{q_j}}<+\infty.
\end{equation}
The  supremum above runs over all cubes in $\rr R^d$.

As noticed in \cite{Bas-extrapolation}, if one of the $q_j=\infty$, then $\|f_j\|_{L^{q_j}(w_j)}$ should be understood as $\| f_j \cdot w_j \|_{\infty}$, and in such a case the contribution of $w_j$ to the expression \eqref{eq:joi-weit-con} reduces to $(\aver{Q} w_j^{-s_j})^\frac{1}{s_j}$. The work on multilinear extrapolation in \cite{Bas-extrapolation} is similar and up to some point extends that of \cite{martell-kangwei-mulilinear-weights-extrapolation} in order to include $L^\infty$ spaces.

We also mention that in the case of mixed-norm extensions, weighted estimates for ``tensor weights" are available. See for example \cite{mixed-norm-square-function}.

Even though extrapolation methods would imply weighted estimates and even mixed-norm estimates, they require weighted estimates to start with. And these weighted estimates are normally difficult to achieve; they would still require a time-frequency analysis at least as involved as that for the initial operator (which is precisely what we do). For that reason we chose to rely on our previously developed helicoidal method. In particular, it makes apparent the similarity between the scalar case and multiple vector-valued or mixed-norm extensions and thanks to the sparse domination results involved it automatically produces sharp\footnote{ \emph{Sharp} in the spirit of the weighted estimates of the $A_2$ conjecture. The weighted results for $T_k$ involve classes of weights that are closely related to $Range(n,k)$ (which, we recall, is not known to be the optimal range), and the weighted operator norm produced by sparse domination should be regarded as a refined qualitative estimate.} weighted estimates, a feature that cannot be reproduced by multilinear extrapolation.
\textit{•}

Lastly, the necessity of the joint weight condition became apparent since the initial multilinear extrapolation \cite{extrap-BHT} was not recovering all the vector-valued extensions obtained through the helicoidal method.

%In particular, it makes apparent the similarity between the scalar case and multiple vector-valued or mixed-norm extensions: the local estimates are always of the same type, but for proving them some work is required.

\section{A multilinear Hardy-Littlewood operator}
\label{sec:HL}

Because of its natural appearance in the multilinear Fefferman-Stein inequality (see \eqref{eq:FS-mix}, \eqref{eq:qB:FS}, \eqref{eq:qB:wFS} stating that multilinear operators such as $T_k$ are dominated in any (weighted) $L^q$ norm by multilinear versions of the Hardy-Littlewood maximal function), we present a self-contained study using the helicoidal method of multiple vector-valued and mixed-norm extensions for the multilinear Hardy-Littlewood maximal function. Some of the results in this section are probably known or can be obtained through different methods; our goal is simply to illustrate how the localization works in this context.

If $s_1, \ldots, s_n$ are any positive Lebesgue exponents, we define for any locally integrable functions $f_1, \ldots, f_n$ on $\rr R^d$ the multi-sublinear operator
\begin{equation}
\label{eq:def:HL:multi}
M_{s_1, \ldots, s_n}(f_1, \ldots, f_n)(x):= \sup_{\substack{R \ni x } } \prod_{j=1}^n \Big(\frac{1}{|R|}  \int_{\rr R^d} |f_j(y)|^{s_j} \cdot \ci_{R}(y)  dy \Big)^\frac{1}{s_j},
\end{equation}
where the supremum runs over cubes in $\rr R^d$. Usual tricks and limiting arguments allow us to consider a finite dyadic version of $M_{s_1, \ldots, s_n}$, when the supremum is taken over $\ii R_d$, a finite subcollection of dyadic cubes in $\rr R^d$. The bump function in the definition can of course be replaced by the characteristic function of the cube, but the current notation will preserve the resemblance between sizes (as defined in \eqref{eq:def-ssize}) and the Hardy-Littlewood maximal function.

For vector-valued functions $f_j(x, w)$, and under some mild regularity conditions, the vector-valued extension is defined by a similar formula:
\begin{equation}
\label{eq:def:HL-vv}
M_{s_1, \ldots, s_n}(f_1, \ldots, f_n)(x, w):= \sup_{\substack{R \ni x } } \prod_{j=1}^n  \Big( \frac{1}{|R|} \int_{\rr R^d} |f_j(y, w)|^{s_j} \cdot \ci_{R}(y)  dy \Big)^\frac{1}{s_j}.
\end{equation}
We are interested in understanding if $M_{s_1, \ldots, s_n}(f_1, \ldots, f_n)(x,w)$ is contained in a certain $L^Q_{\rr R^d}L^R_{\ii W}$ space given that the input functions satisfy $f_j \in L^{P_j}_{\rr R^d} L^{R_j}_{\ii W}$ for all $1 \leq j \leq n$. This is indeed the case, as well will see in the Theorem \ref{thm:HL} bellow.

A simple but important observation is that the (sub)multilinear Hardy-Littlewood function is bounded pointwise by the product of (sub)linear Hardy-Littlewood functions:
\begin{equation}
\label{eq:pointwise:HL}
M_{s_1, \ldots, s_n}(f_1, \ldots, f_n)(x, w) \leq  \prod_{j=1}^n M_{s_j}(f_j(\cdot, w))(x).
\end{equation}

So the boundedness of $M_{s_1, \ldots, s_n}$, even in the mixed-norm multiple vector-valued setting, is implied by a mixed-norm H\"older inequality and a mixed-norm version of the classical Fefferman-Stein inequality \cite{FefStein_vvmaximal} for the Hardy-Littlewood maximal function. There is one notable exception to this reasoning, namely when $L^\infty$ spaces are involved non-trivially\footnote{The case when $R=\infty$ and all the $R_j=\infty$ is easily reduced to mixed-norm estimates for $M_{s_1, \ldots, s_n}$.} in the $L^{P_j}_{\rr R^d} L^{R_j}_{\ii W}$ mixed-norms: it is known that $L^\infty( \ell^r) \mapsto L^\infty( \ell^r)$ extensions for the maximal function fail if $1<r < \infty$. As a result, estimates such as 
\[
\big\| M_{s_1, s_2}(f_1, f_2)   \big\|_{L^2_{\rr R^d} L^{R}_{\ii W}} \lesssim \big\| f_1  \big\|_{L^{\infty}_{\rr R^d} L^{R_1}_{\ii W}} \cdot  \big\| f_2  \big\|_{L^{2}_{\rr R^d} L^{R_2}_{\ii W}},
\]
which we do obtain, are representative for the multilinear setting and their proof requires a joint analysis of the multilinear object (one cannot just tensorize the information at a global level). 

Since we are especially interested in this paper in input functions that are fully or partially (in the case of data in mixed-norm spaces) in $L^\infty$ spaces, we will provide a proof of mixed-norm multiple vector-valued extensions that hold in the whole $(\frac{1}{s_1}, \ldots, \frac{1}{s_n})$-local range.

The method yields in particular an alternative proof for the classical Fefferman-Stein inequality \cite{FefStein_vvmaximal}, the vector-valued extension for the Hardy-Littlewood maximal function. The endpoint result requires a refinement of the methods presented, and it will be discussed in the next Section \ref{sec:HL:endpoint}.

%\enskip
 \subsection{Mixed-norm and multiple vector-valued estimates for the multi-linear Hardy-Littlewood function}
\begin{theorem}
\label{thm:HL}
Consider the $d$-tuples $P_1=(p_1^1, ..., p_1^d), \ldots, P_n=(p_n^1, ..., p_n^d)$ and the $m$-tuples $R_1=(r_1^1, \ldots, r_1^m), \ldots, R_n=(r_n^1, \ldots, r_n^m)$. If $(P_1, \ldots, P_n)$ and $(R_1, \ldots, R_n)$ are $(\frac{1}{s_1}, \ldots, \frac{1}{s_n})$-local (componentwise) and $(P_1, \ldots, P_n, P_{n+1})$, $(R_1, \ldots, R_n, R_{n+1})$ are H\"older tuples, then the multilinear Hardy-Littlewood maximal function satisfies the mixed-norm, multiple vector-valued estimates
\begin{equation}
\label{eq:mixed-vv:HL}
\big\| M_{s_1, \ldots, s_n}(f_1, \ldots, f_n)   \big\|_{L^{P_{n+1}'}_{\rr R^d} L^{R_{n+1}'}_{\ii W}} \lesssim \prod_{j=1}^n \big\| f_j  \big\|_{L^{P_j}_{\rr R^d} L^{R_j}_{\ii W}}.
\end{equation}
\end{theorem}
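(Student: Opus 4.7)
The plan is to follow the helicoidal induction of Sections \ref{sec:mvv}--\ref{sec:mixed-norm} with the discretized $T_k$ replaced by a dyadic variant of $M_{s_1,\ldots,s_n}$ in which the supremum is taken over a finite collection of dyadic cubes. Since there is no rank-$k$ constraint here, the local estimate that drives the induction is considerably simpler than Theorem \ref{thm:localization:mvv}, and the full $(\frac{1}{s_1},\ldots,\frac{1}{s_n})$-local Hölder range is directly accessible without any interpolation among degrees of freedom.

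The scalar base case I would use is the bound
\[
\int_{\rr R^d} M^{R_0}_{s_1, \ldots, s_n}(f_1\one_{E_1}, \ldots, f_n\one_{E_n})(x)\cdot \one_{E_{n+1}}(x)\,dx \lesssim \prod_{j=1}^{n} \bigl(\sssize_{R_0}\one_{E_j}\bigr)^{1/s_j}\cdot \bigl(\sssize_{R_0}\one_{E_{n+1}}\bigr)\cdot |R_0|,
\]
where $M^{R_0}_{s_1, \ldots, s_n}$ denotes the restriction of the supremum to dyadic cubes inside $R_0$. This is immediate from the observation that $\frac{1}{|R|}\int_R\one_{E_j}\le \sssize_{R_0}\one_{E_j}$ for every $R\in \ii R_d(R_0)$, followed by integration of $\one_{E_{n+1}}$ over $R_0$. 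Mock interpolation (Lemma \ref{lemma:mock:interp}) then upgrades this to a sharp local estimate for general functions valid for any $(\frac{1}{s_1}, \ldots, \frac{1}{s_n}, 1)$-local tuple of exponents, and Lemma \ref{lemma:local->sparse->subadditive} converts that into a sparse domination.

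With these ingredients in hand, the proof of Theorem \ref{thm-main-vvv-discretized} applies verbatim and yields the multiple vector-valued extension in the isotropic case $P_j = (p_j, \ldots, p_j)$. The fully mixed-norm statement \eqref{eq:mixed-vv:HL} is then obtained through the two-statement induction $\ii P_{mix}, \ii P^*_{mix}$ of Section \ref{sec:mixed-norm}, run over decreasing values of the maximal matching index $d_1(P_1, \ldots, P_n)$, with the $L^{q,\infty}$ dualization \eqref{eq:dualization:weak:Lq} and a stopping time in $\sssize\one_{\tilde E}$; all of these steps are operator-agnostic and depend only on the local estimate above together with the sub-additivity that one obtains by choosing the dualizing exponent $\tau$ small enough.

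The main obstacle is the presence of $L^\infty$ coordinates in one or more of the spaces $L^{P_j}_{\rr R^d}$ or $L^{R_j}_{\ii W}$. Away from such endpoints the pointwise bound \eqref{eq:pointwise:HL} reduces the theorem to an iterated application of the classical linear Fefferman--Stein inequality; at the $L^\infty$ endpoints this reduction breaks down because $M : L^\infty(\ell^r) \not\to L^\infty(\ell^r)$ for $1<r<\infty$, which is exactly why a genuinely joint multilinear local analysis is needed. The helicoidal induction above provides precisely this, allowing exponents to be freely redistributed across indices through the Hölder conditions rather than handled one coordinate at a time.
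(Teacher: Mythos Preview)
Your proposal is correct and follows essentially the same helicoidal induction as the paper. The one procedural difference is that the paper first \emph{linearizes} the maximal operator via a measurable stopping-time function $\kappa:\rr R^d\times\ii W\to\rr Z$, writing
\[
M_{s_1,\ldots,s_n}^{\kappa,R_0}(f_1,\ldots,f_n)(x,w)=\sum_{R\subseteq R_0}\prod_{j=1}^n(\ave_R^{s_j}f_j(\cdot,w))\cdot\one_R(x)\cdot\one_{\{2^{\kappa(x,w)}=\ell(R)\}},
\]
so that for each $(x,w)$ exactly one term survives; this gives the scalar local estimate $\|M^{\kappa,R_0}(f_1,\ldots,f_n)\cdot v\|_{L^q}\lesssim\prod_j(\sssize_{R_0}^{s_j}f_j)(\sssize_{R_0}^q v)|R_0|^{1/q}$ directly for \emph{general} functions and all $0<q<\infty$, without first passing through restricted type and mock interpolation. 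Your variant---bounding the localized supremum pointwise by $\prod_j(\sssize_{R_0}\one_{E_j})^{1/s_j}$ for restricted-type inputs and then applying Lemma~\ref{lemma:mock:interp}---works equally well (the splitting over sparse subcollections still subadditivizes because $\max(a,b)^q\le a^q+b^q$), and from that point on the induction statements \eqref{eq:HL:mvv:restr}, \eqref{eq:HL:VV-m-op}, and their mixed-norm counterparts $\ii P_{mix}^{HL}$, $\ii P_{mix}^{*,HL}$ are exactly the ones the paper records.
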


\begin{proof}[Proof of Theorem \ref{thm:HL} - sketch]
We will use our usual strategy: after getting hold of a sharp local estimate, the information will be redistributed in order to obtain the multiple vector-valued and the mixed-norm estimate. Because of that, we only highlight the local estimates and the induction statements, without insisting on the technical details, which are not that different from those in Sections \ref{sec:mvv} and \ref{sec:mixed-norm} or those used in our previous works \cite{vv_BHT}, \cite{quasiBanachHelicoid}, \cite{sparse-hel}, \cite{expository-hel}. 

In order to prove \eqref{eq:mixed-vv:HL}, we use the linearized version of the maximal function which is defined with the aid of a stopping-time function $\kappa: \rr R^d \times \ii W \to \rr Z$. One should think of the stopping-time function as identifying the dyadic cube containing $x$, and where the maximum is achieved in the expression \eqref{eq:def:HL-vv}. Because we are working in the dyadic setting, the cube $R$ (with $x \in R$) can be identified simply by its sidelength $\ell (R)$.

We will prove for any measurable function $\kappa: \rr R^d \times \ii W \to \rr Z$ that the \emph{multilinear} operator defined by
\begin{equation}
\label{def:lin:HL}
M_{s_1, \ldots, s_n}^\kappa (f_1, \ldots, f_n)(x, w):= \sum_{R \in \ii R_d}  \prod_{j=1}^n \Big( \frac{1}{|R|}  \int_{\rr R^d} |f_j(y, w)|^{s_j} \cdot \ci_{R}(y)  dy \Big)^\frac{1}{s_j} \cdot \one_R(x) \cdot \one_{\{ 2^{\kappa(x, w)}= \ell(R)\}}
\end{equation}
satisfies the estimate \eqref{eq:mixed-vv:HL}, with the implicit constant independent of the stopping-time function $\kappa$.

Of course, the proof of the general multiple vector-valued and mixed-norm multiple vector-valued result will require much sharper estimates, which can be formulated locally using maximal $s_j$-averages, i.e. using $s_j$-sizes. For any dyadic cube $R_0 \subset \rr R^d$, we will denote the localized version of $M_{s_1, \ldots, s_n}^\kappa$ by
\[
M_{s_1, \ldots, s_n}^{\kappa, R_0} (f_1, \ldots, f_n)(x, w):= \sum_{\substack{R \in \ii R_d \\ R \subseteq R_0}}  \prod_{j=1}^n (\ave_{R}^{s_j}f_j(\cdot, w)) \cdot \one_R(x) \cdot \one_{\{ 2^{\kappa(x, w)}= \ell(R)\}}.
\]
For the multiple vector-valued case, we have the following statements (which are to be proved via induction over the depth of multiple vector-valued estimate):

\begin{itemize}[leftmargin=1.5 em]
\item[i)]\underline{ \textit{the restricted-type estimate}}: let $\kappa : \rr R^d \times \ii W \to \rr Z$ be an arbitrary measurable function, $R_0$ a dyadic cube in $\rr R^d$ and $(R_1, \ldots, R_n, R_{n+1})$ a H\"older tuple so that $(R_1, \ldots, R_n)$ is $(\frac{1}{s_1}, \ldots, \frac{1}{s_n})$-local; then for any $0< q < \infty$, any measurable sets $E_1, \ldots, E_n, E_{n+1} \subset \rr R^d$ and any vector-valued functions $f_1, \ldots, f_n$ satisfying $\| f_j(x, \cdot) \|_{L^{R_j}_{\ii W}}\leq \one_{E_j}(x)$ a.e. for all $1 \leq j \leq n$, we have 
\begin{equation}
\label{eq:HL:mvv:restr}
\tag{loc $m$ restr}
\big\|  \|M_{s_1, \ldots, s_n}^{\kappa, R_0} (f_1, \ldots, f_n)\|_{L^{R_{n+1}'}_{\ii W}} \cdot \one_{E_{n+1}}\big\|_{L^q_{\rr R^d}} \lesssim \prod_{j=1}^n \big( \sssize_{R_0} \one_{E_j} \big)^{\frac{1}{s_j}-\epsilon} \cdot  \big( \sssize_{R_0} \one_{E_{n+1}} \big)^{\frac{1}{q}-\epsilon} \cdot |R_0|^\frac{1}{q}, 
\end{equation}
with the implicit constant independent of $\kappa, R_0$ and the functions involved.

\item[ii)] \underline{ \textit{ the sharp local vector-valued estimate}} with a display of the operator norm: again $\kappa : \rr R^d \times \ii W \to \rr Z$ is an arbitrary measurable function, $R_0$ a dyadic cube in $\rr R^d$, $(R_1, \ldots, R_n, R_{n+1})$ is a H\"older tuple with $(R_1, \ldots, R_n)$ being $(\frac{1}{s_1}, \ldots, \frac{1}{s_n})$-local and $F_1, \ldots, F_{n+1}$ are measurable sets in $\rr R^d$. Then for any H\"older tuple $(p_1, \ldots, p_n, p_{n+1})$ with  $(p_1, \ldots, p_n)$ being $(\frac{1}{s_1}, \ldots, \frac{1}{s_n})$-local, we have 
\begin{align}
\label{eq:HL:VV-m-op}
\tag{VV-$m$ op}
&\big\|  M_{s_1, \ldots, s_n}^{\kappa, R_0}(f_1 \cdot \one_{F_1}, \ldots, f_n \cdot \one_{F_n}) \cdot \one_{F_{n+1}} \big\|_{L^{p_{n+1}'}_{\rr R^d} L^{R_{n+1}'}_{\ii W}} \\  \lesssim  \prod_{j=1}^n \big( & \sssize_{R_0} \one_{F_j}  \big)^{\frac{1}{s_j}-\frac{1}{p_j}-\epsilon} \cdot  \big( \sssize_{R_0} \one_{E_{n+1}} \big)^{\frac{1}{{p_{n+1}'}}-\epsilon} 
\cdot  \prod_{j=1}^n  \| f_j \cdot \ci_{R_0}  \|_{L^{p_j}_{\rr R^d} L^{R_j}} \nonumber.
\end{align}
\end{itemize}

As before, \eqref{eq:HL:VV-m-op} is a consequence of \eqref{eq:HL:mvv:restr}; the passage from restricted-type functions to general functions can be done, for example, via ``mock-interpolation" Lemma \ref{lemma:mock:interp}, and a sparse estimate can also be formulated. Then a weak version of (loc $m+1$ restr) follows via dualization of $L^{q, \infty}$ through $L^{(r_{n+1}^{1})'}$ from \eqref{eq:HL:VV-m-op} and an application of H\"older's inequality.

It remains to take care of the ``initialization step", which represents the foundation of the argument. That corresponds to the depth-$0$ (i.e. scalar) estimate: given $\kappa : \rr R^d \to \rr Z$ a measurable function, $R_0$ a dyadic cube in $\rr R^d$, $f_1, \ldots, f_n$ and $v$ locally integrable functions and $q$ a Lebesgue exponent between $0$ and $\infty$, we aim to show
%, $E_1, \ldots, E_n, E_{n+1} \subset \rr R^d$ measurable sets and $f_1, \ldots, f_n$ restricted-type functions satisfying for all $1 \leq j \leq n$ $f_j(x) \leq \one_{E_j}(x)$ a.e., we have for any $0<q< \infty$
\[
\big\| M_{s_1, \ldots, s_n}^{\kappa, R_0} (f_1, \ldots, f_n) \cdot v \big\|_{L^q_{\rr R^d}} \lesssim \prod_{j=1}^n \big( \sssize_{R_0}^{s_j} f_j \big) \cdot  \big( \sssize_{R_0}^q v \big) \cdot |R_0|^\frac{1}{q}.
\]
In the scalar case, no information is lost through localization: we obtain the local estimate directly for general locally integrable functions. The proof is quite direct and it uses the fact that for every $x \in \rr R^d$, the stopping-time function $\kappa$ identifies at most one cube contained in $R_0$ where the supremum is attained. More exactly, 
\begin{align*}
\big\| M_{s_1, \ldots, s_n}^{\kappa, R_0} (f_1, \ldots, f_n) \cdot v \big\|_{L^q_{\rr R^d}} & = \big( \int_{\rr R^d} \big| \big( \sum_{\substack{R \in \ii R_d \\ R \subseteq R_0}}  \prod_{j=1}^n (\ave_{R}^{s_j}f_j \cdot \one_R(x) \cdot \one_{\{ 2^{\kappa(x)}= \ell(R)\}}  \big) \cdot v(x) \big|^q dx \big)^\frac{1}{q} \\
&\leq  \prod_{j=1}^n \sup_{\substack{R \in \ii R_d \\ R \subseteq R_0}}(\ave_{R}^{s_j}f_j) \cdot \big( \int_{\rr R^d} \big| \big( \sum_{\substack{R \in \ii R_d \\ R \subseteq R_0}} \one_R(x) \cdot \one_{\{ 2^{\kappa(x)}= \ell(R)\}}  \big) \cdot v(x) \big|^q dx \big)^\frac{1}{q}
\end{align*}

As mentioned before, for every $x \in \rr R^d$ there is at most one term in the summation
\[
\sum_{\substack{R \in \ii R_d \\ R \subseteq R_0}} \one_R(x) \cdot \one_{\{ 2^{\kappa(x)}= \ell(R)\}} 
\]
so we can simply bound it by $\one_{R_0}(x)$. On the other hand the sizes are precisely maximal averages and so 
\begin{align*}
\big\| M_{s_1, \ldots, s_n}^{\kappa, R_0} (f_1, \ldots, f_n) \cdot v \big\|_{L^q_{\rr R^d}} & \lesssim \prod_{j=1}^n (\sssize_{R_0}^{s_j}f_j) \cdot \big( \int_{\rr R^d} \big|\one_{R_0}(x) \cdot v(x) \big|^q dx \big)^\frac{1}{q} \\
& \lesssim  \prod_{j=1}^n (\sssize_{R_0}^{s_j}f_j) \cdot  \big( \sssize_{R_0}^q v \big) \cdot |R_0|^\frac{1}{q}.
\end{align*}

For the vector-valued estimates however we need to ``change the measure space" and use the scalar estimate; that is easier to do if we work with restricted-type functions and invoke interpolation. 

In what concerns the mixed-norm estimates, for proving them we appeal to the same ideas as previously seen in Section \ref{sec:mixed-norm}. For that, we need to make sense of the localization of $M_{s_1, \ldots, s_n}$ to lower dimensional dyadic cubes. Let $1 \leq d' \leq d$ and $\tilde R$ be a dyadic cube in $\rr R^{d'}$; for any collection $\ii R_d$ of dyadic cubes in $\rr R^d$, we denote by $\ii R_{d'}(\tilde R)$ the subcollection of dyadic cubes whose projection onto the first $d'$ coordinates is contained in $\tilde R$:
\begin{equation*}
\ii R_{d'}(\tilde R):= \{ R \in \ii R_d: R= R' \times Q \text{ where $ R' \subseteq \tilde R$ and $Q$ is some dyadic cube in $\rr R^{d-d'}$} \}.
\end{equation*}

Then for any measurable function $\kappa: \rr R^d \times \ii W \to \rr Z$,  $M_{s_1, \ldots, s_n}^{\kappa, \tilde R}$ denotes
\[
M_{s_1, \ldots, s_n}^{\kappa, \tilde R} (f_1, \ldots, f_n)(x, w):= \sum_{\substack{R \in \ii R_{d'} (\tilde R) }}  \prod_{j=1}^n (\ave_{R}^{s_j}f_j(\cdot, w)) \cdot \one_R(x) \cdot \one_{\{ 2^{\kappa(x, w)}= \ell(R)\}}(x).
\]

In proving the mixed-norm multiple vector-valued estimate the induction is run over decreasing values of $d_1= d_1(P_1, \ldots, P_n)$, the maximal first matching index associated to the $(\frac{1}{s_1}, \ldots, \frac{1}{s_n})$-local tuple $(P_1, \ldots, P_n)$. We have just proved the initialization step, corresponding to $d_1=d$ and it remains to prove the following statements:
\enskip

\begin{itemize}[leftmargin=*]
\item[i)]\underline{\emph{a restricted-type estimate for the mixed-norm setting}}: let $\kappa : \rr R^d \times \ii W \to \rr Z$ be an arbitrary measurable function, $R_0$ a dyadic cube in $\rr R^{d_1}$ and $(R_1, \ldots, R_n, R_{n+1})$ a H\"older tuple so that $(R_1, \ldots, R_n)$ is $(\frac{1}{s_1}, \ldots, \frac{1}{s_n})$-local; let $E_1, \ldots, E_n, E_{n+1} \subset \rr R^{d_1}$ be measurable sets and $f_1, \ldots, f_n$ vector-valued functions satisfying $\| f_j(x_1, \cdot , \cdot) \|_{L^{\bar P_j}_{\rr R^{d-d_1}}L^{R_j}_{\ii W}}\leq \one_{E_j}(x_1)$ a.e. for all $1 \leq j \leq n$; then for any $0< q < \infty$, we have 
\begin{align}
\label{eq:HL:mix:rest}
\tag{$\ii P_{mix}^{HL}(P_1, \ldots,P_n; q )$}
\big\|  \|M_{s_1, \ldots, s_n}^{\kappa, R_0} (f_1, \ldots, f_n)\|_{L^{\bar P_{n+1}'}_{\rr R^{d-d_1}}L^{R_{n+1}'}_{\ii W}} \cdot \one_{E_{n+1}}\big\|_{L^q_{\rr R^{d_1}}} \lesssim \prod_{j=1}^n \big( \sssize_{R_0} \one_{E_j} \big)^{\frac{1}{s_j}-\epsilon} \cdot  \big( \sssize_{R_0} \one_{E_{n+1}} \big)^{\frac{1}{q}-\epsilon} \cdot |R_0|^\frac{1}{q},
\end{align}
where $P_{n+1}$ is a $d$-tuple for which $\frac{1}{P_1}+ \ldots+ \frac{1}{P_n}=\frac{1}{P_{n+1}'}$.
\item[ii)] \underline{ \textit{ the sharp local mixed-norm estimate}}, where this time the localization occurs on lower dimensional sets: let $\kappa : \rr R^d \times \ii W \to \rr Z$ be an arbitrary measurable function,  $(R_1, \ldots, R_n, R_{n+1})$ is as before a H\"older tuple so that $(R_1, \ldots, R_n)$ is $(\frac{1}{s_1}, \ldots, \frac{1}{s_n})$-local, and $1 \leq \bar d_1 \leq d_1=d_1(P_1, \ldots, P_n)$; let $\bar R$ be a dyadic cube in $\rr R^{\bar d_1}$, and $F_1, \ldots, F_n, F_{n+1}$ measurable sets in $\rr R^{\bar d_1}$; then if $P_{n+1}$ is a $d$-tuple satisfying $\frac{1}{P_1}+ \ldots+ \frac{1}{P_n}=\frac{1}{P_{n+1}'}$, we have
\begin{align}
\label{eq:HL:mix:op}
\tag{$\ii P_{mix}^{*,HL}(P_1, \ldots,P_n)$}
& \big\|  \|M_{s_1, \ldots, s_n}^{\kappa, \bar R} (f_1 \cdot \one_{F_1}, \ldots, f_n \cdot \one_{F_n})\|_{L^{\bar P_{n+1}'}_{\rr R^{d-d_1}}L^{R_{n+1}'}_{\ii W}} \cdot \one_{F_{n+1}}\big\|_{L^{(p^1_{n+1})'}_{\rr R^{d_1}}} \lesssim \\  \lesssim \prod_{j=1}^n \big( \sssize_{\bar R} \one_{F_j} & \big)^{\frac{1}{s_j}-\frac{1}{p_j}-\epsilon} \cdot  \big( \sssize_{\bar R} \one_{F_{n+1}} \big)^{\frac{1}{(p^1_{n+1})'}-\epsilon}  \cdot \prod_{j=1}^n  \big\| \| f_j  \|_{L^{\bar P_j}_{\rr R^{d-d_1}}L^{R_j}_{\ii W}} \cdot \ci_{\bar R} \big \|_{L^{p_j^1}_{\rr R^{d- d_1}}}\nonumber.
\end{align}
\end{itemize}

The estimate \ref{eq:HL:mix:op} is implied by \ref{eq:HL:mix:rest}, as usual; subsequently, as presented in Section \ref{sec:mixed-norm}, \ref{eq:HL:mix:rest} is implied by estimates $\ii P_{mix}^{*,HL}(\hat P_1, \ldots,\hat P_n)$ for $(\frac{1}{s_1}, \ldots, \frac{1}{s_n})$-local tuples $(\hat P_1, \ldots,\hat P_n)$ with $d_1(\hat P_1, \ldots,\hat P_n) \geq d_1( P_1, \ldots, P_n)$.
\end{proof}

\vskip .4 cm

\subsection{The endpoint and Fefferman-Stein inequalities}~\\
\label{sec:HL:endpoint}
As a consequence of the proof of Theorem \ref{thm:HL}, we also obtain Fefferman-Stein-type inequalities, i.e. we can prove that our operator of study is bounded by a suitable maximal function. The above estimates and arguments imply, for all $0 < q < \infty$, any H\"older tuple $(R_1, \ldots, R_n, R_{n+1})$ with $(R_1, \ldots, R_n)$ being $(\frac{1}{s_1}, \ldots, \frac{1}{s_n})$-local, the vector-valued Fefferman-Stein inequality
\begin{equation}
\label{eq:FeffeStein}
\big\|  \|M_{s_1, \ldots, s_n}(f_1, \ldots, f_n)\|_{L^{R_{n+1}'}_{\ii W}} \big\|_{L^q_{\rr R^d}} \lesssim \big\|  M_{s_1+\epsilon, \ldots, s_n+\epsilon}( \|f_1\|_{L^{R_1}_{\ii W}} , \ldots,\| f_n\|_{L^{R_n}_{\ii W}}  ) \big\|_{L^q_{\rr R^d}}.
\end{equation}
The $\epsilon$ in the statement denotes a small positive number, and the implicit constant depends on it.

This Fefferman-Stein-type inequality is to be compared with the one resulting from \eqref{eq:pointwise:HL}:
\[
\big\|  \|M_{s_1, \ldots, s_n}(f_1, \ldots, f_n)\|_{L^{R_{n+1}'}_{\ii W}} \big\|_{L^q_{\rr R^d}} \lesssim \big\| \prod_{j=1}^n  M_{s_j}( \|f_j\|_{L^{R_j}_{\ii W}}) \big\|_{L^q_{\rr R^d}},
\]
which also holds pointwise. If some of the tuples $R_j$ contain $\infty$, $\ds M_{s_j}( \|f_j\|_{L^{R_j}_{\ii W}})$ is not automatically in $L^{p_j}_{\rr R^d}$ even though $\|f_j(x, \cdot)\|_{L^{R_j}_{\ii W}}$ is. So in spite of the $\epsilon$-loss, \eqref{eq:FeffeStein} will, as a minimum, imply the boundedness of the vector-valued extension for $M_{s_1, \ldots, s_n}$.

In proving \eqref{eq:FeffeStein}, we are losing some information in the exponents of the averages every time we use Lemma \ref{lemma:mock:interp} for passing from restricted-type functions to general functions; using the sparse domination estimate straightaway also produces a small loss. It is possible to refine the arguments presented above (essentially following the ideas in Section 7.5 of \cite{sparse-hel}) so at to obtain for all $0 < q < \infty$, under the same assumptions on $R_1, \ldots, R_n$, the more precise inequality
\begin{equation}
\label{eq:FeffeSteinForFeffStein}
\big\|  \|M_{s_1, \ldots, s_n}(f_1, \ldots, f_n)\|_{L^{R_{n+1}'}_{\ii W}} \cdot v \big\|_{L^q_{\rr R^d}} \lesssim \big\|  M_{s_1, \ldots, s_n}( \|f_1\|_{L^{R_1}_{\ii W}} , \ldots,\| f_n\|_{L^{R_n}_{\ii W}}  ) \cdot v \big\|_{L^q_{\rr R^d}},
\end{equation}
provided $v^q$ is an $A_\infty$ weight (or equivalently, provided it satisfies a certain reversed H\"older inequality)

%The implicit constant will depend on the characteristic of the weight $v^q$, but we do not keep track of that; the details are left as an exercise to the interested reader.

A mixed-norm version can also be formulated. The idea of course will be to avoid  restricted-type functions and Lemma \ref{lemma:mock:interp}, and to obtain $L^{s_j}$ maximal averages at every step in the induction argument. In short, the multiple vector-valued mixed-norm estimate in \ref{eq:HL:mix:rest} will be replaced by \eqref{eq:est:s_j:averages} below. Through various types of stopping times, this improved local estimate will imply the Fefferman-Stein inequality \eqref{eq:FeffeSteinForFeffStein}, but also the endpoint result of Theorem \ref{thm:HL} (see Corollary \ref{cor:weak-type}). At this point, the sparse estimate is less effective than the stopping times used initially in \cite{vv_BHT}, both for obtaining the endpoint result or for proving \eqref{eq:est:s_j:averages} itself.

The mixed-norm, multiple vector-valued local estimate central to our analysis is the following:

\begin{theorem}\label{thm:local:s_j:averages} For any $0<q<\infty$, $R_0$ dyadic cube in $\rr R^{d_1}$, $E_{n+1} \subseteq \rr  R^{d_1}$ measurable set, $(R_1, \ldots, R_n, R_{n+1})$, $(P_1, \ldots, P_n, P_{n+1})$ H\"older-tuples so that $(R_1, \ldots, R_n)$, $(P_1, \ldots, P_n)$ are $(\frac{1}{s_1}, \ldots, \frac{1}{s_n})$-local, we have
\begin{align}
\label{eq:est:s_j:averages}
\big\|  \|M_{s_1, \ldots, s_n}^{\kappa, R_0} (f_1, \ldots, f_n)\|_{L^{\bar P_{n+1}'}_{\rr R^{d-d_1}}L^{R_{n+1}'}_{\ii W}} \cdot \one_{E_{n+1}}\big\|_{L^q_{\rr R^{d_1}}} \lesssim \prod_{j=1}^n \big( \sssize^{s_j}_{R_0} \| f_j \|_{L^{\bar P_j}_{\rr R^{d-d_1}}L^{R_j}_{\ii W}}\big) \cdot  \big( \sssize_{R_0} \one_{E_{n+1}} \big)^{\frac{1}{q}} \cdot |R_0|^\frac{1}{q}.
\end{align}
\end{theorem}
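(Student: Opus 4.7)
\emph{Plan.} The proof will follow the blueprint of Theorem \ref{thm:main-mixed-discretized}, running induction on decreasing values of the maximal first matching index $d_1=d_1(P_1,\ldots,P_n)$, but with the sharp $L^{s_j}$-averages $\sssize^{s_j}_{R_0}\|f_j\|_{L^{\bar P_j}_{\rr R^{d-d_1}}L^{R_j}_{\ii W}}$ preserved at every step rather than the $L^{s_j+\epsilon}$-averages that would be produced by Lemma \ref{lemma:mock:interp}. Paired with \eqref{eq:est:s_j:averages} is the following operator-norm localization, which feeds the lower-dimensional dualization:
\begin{align*}
&\big\|\,\|M_{s_1,\ldots,s_n}^{\kappa,\bar R}(f_1\cdot\one_{F_1},\ldots,f_n\cdot\one_{F_n})\|_{L^{\bar P_{n+1}'}_{\rr R^{d-d_1}}L^{R_{n+1}'}_{\ii W}}\cdot\one_{F_{n+1}}\big\|_{L^{(p_{n+1}^1)'}_{\rr R^{\bar d_1}}} \\
&\qquad\lesssim\prod_{j=1}^n\big(\sssize_{\bar R}\one_{F_j}\big)^{\frac{1}{s_j}-\frac{1}{p_j^1}}\big(\sssize_{\bar R}\one_{F_{n+1}}\big)^{\frac{1}{(p_{n+1}^1)'}}\prod_{j=1}^n\big\|\,\|f_j\|_{L^{\bar P_j}_{\rr R^{d-d_1}}L^{R_j}_{\ii W}}\cdot\ci_{\bar R}\big\|_{L^{p_j^1}_{\rr R^{\bar d_1}}}.
\end{align*}
This companion follows from \eqref{eq:est:s_j:averages} by the iterated H\"older trick that converts $\sssize_{R_0}^{s_j}$-averages into $L^{p_j^1}$-norms, using $s_j<p_j^1$ guaranteed by the $(\tfrac{1}{s_1},\ldots,\tfrac{1}{s_n})$-locality of $(P_1,\ldots,P_n)$; conversely, \eqref{eq:est:s_j:averages} at a lower matching index is deduced from it via the dualization described below.

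The base case $d_1=d$ has no interior mixed-norm, so the target is
\[
\big\|\,\|M^{\kappa,R_0}_{s_1,\ldots,s_n}(f_1,\ldots,f_n)\|_{L^{R_{n+1}'}_{\ii W}}\cdot\one_{E_{n+1}}\big\|_{L^q_{\rr R^d}}\lesssim\prod_{j=1}^n\sssize_{R_0}^{s_j}\|f_j\|_{L^{R_j}_{\ii W}}\cdot\big(\sssize_{R_0}\one_{E_{n+1}}\big)^{\frac{1}{q}}|R_0|^{\frac{1}{q}}.
\]
For each fixed $w\in\ii W$ the scalar initialization step already established in the proof of Theorem \ref{thm:HL}, applied with stopping-time $\kappa(\cdot,w)$ and $v=\one_{E_{n+1}}$, delivers the analogous inequality with $\sssize_{R_0}^{s_j}f_j(\cdot,w)$ on the right. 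Taking $L^{R_{n+1}'}_{\ii W}$-norms and interchanging with the maximal $L^{s_j}$-averages via Minkowski's inequality pushes the $L^{R_j}_{\ii W}$-norm inside the average; this is legitimate because the $(\tfrac{1}{s_j})$-locality of $R_j$ gives $s_j\leq r_j^\ell$ componentwise, so an iterated application of Minkowski yields the desired bound.

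For the inductive step at $d_1<d$, set $q_2:=(p_{n+1}^2)'$, the next common exponent of $\bar P_{n+1}'$, and $\hat P_j:=(p_j^2,\bar P_j)$, whose first matching index satisfies $d_1(\hat P_1,\ldots,\hat P_n)\geq d_1+d_2>d_1$. Dualize $L^{q,\infty}_{\rr R^{d_1}}$ through an $L^\tau_{\rr R^{d_1}}$ space for $\tau$ small enough to guarantee subadditivity, and split as in \eqref{eq:Holder-local:tau}:
\[
\|\cdot\|_{L^\tau_{\rr R^{d_1}}L^{\bar P_{n+1}'}_{\rr R^{d-d_1}}L^{R_{n+1}'}_{\ii W}}\lesssim\|\cdot\|_{L^{q_2}_{\rr R^{d_1}}L^{\bar P_{n+1}'}_{\rr R^{d-d_1}}L^{R_{n+1}'}_{\ii W}}\cdot\|\one_{E_{n+1}}\ci_{R_0}\|_{L^{\tau_{q_2}}_{\rr R^{d_1}}},
\]
with $\tfrac{1}{\tau}=\tfrac{1}{q_2}+\tfrac{1}{\tau_{q_2}}$. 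The first factor is of the type to which the inductive hypothesis for $(\hat P_1,\ldots,\hat P_n)$ applies (with $q_2$ as the outer exponent), producing $\prod_j\sssize_{R_0}^{s_j}\|f_j\|_{L^{\bar P_j}_{\rr R^{d-d_1}}L^{R_j}_{\ii W}}\cdot(\sssize_{R_0}\one_{E_{n+1}})^{1/q_2}|R_0|^{1/q_2}$. A dualization-removal stopping time, identical to the sparse construction $\ic S_{R_0}$ from the end of the proof of Theorem \ref{thm:main-mixed-discretized} and keyed on $\sssize_R\one_{E_{n+1}}$, upgrades the $\tau$-level dualization to a genuine $L^q$ bound: the pointwise inequality $(\aver{Q}\one_{E_{n+1}})^{1+\epsilon}\lesssim\inf_Q M_{1/(1+\epsilon)}\one_{E_{n+1}}$ permits summation over the sparse family and converts the combined factors $(\sssize\one_{E_{n+1}})^{1/q_2}|R_0|^{1/q_2}\cdot\|\one_{E_{n+1}}\ci_{R_0}\|_{L^{\tau_{q_2}}}$ into the desired $(\sssize_{R_0}\one_{E_{n+1}})^{1/q}|R_0|^{1/q}$.

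The main obstacle is preserving the sharpness of the $L^{s_j}$-averages. Using Lemma \ref{lemma:mock:interp} on a restricted-type version of \eqref{eq:est:s_j:averages}, as done in Section \ref{sec:mixed-norm}, would produce $L^{s_j+\epsilon}$-averages, incompatible with the endpoint Fefferman-Stein inequality \eqref{eq:FeffeSteinForFeffStein} announced in Section \ref{sec:HL:endpoint}. The remedy, inspired by \cite{vv_BHT}, is to perform the passage from restricted-type to general data by hand via dyadic level-set decompositions: set $G_j:=\|f_j\|_{L^{\bar P_j}_{\rr R^{d-d_1}}L^{R_j}_{\ii W}}$ and write $G_j=\sum_{\ell_j\in\rr Z}G_j\cdot\one_{A_{\ell_j}}$ with $A_{\ell_j}=\{x_1\in\rr R^{d_1}:2^{\ell_j}\leq M_{s_j}G_j(x_1)<2^{\ell_j+1}\}$; insert these decompositions into the restricted-type counterpart of \eqref{eq:est:s_j:averages} (itself proved by the scheme above, for which the $\epsilon$-loss \emph{is} permitted), and sum the multiple geometric series. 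The strict inequalities in the $(\tfrac{1}{s_1},\ldots,\tfrac{1}{s_n})$-locality of both $(R_1,\ldots,R_n)$ and $(P_1,\ldots,P_n)$ furnish the exponent gap required for convergence, and the sums telescope into the exact $L^{s_j}$-average $\sssize_{R_0}^{s_j}G_j$ on the right-hand side of \eqref{eq:est:s_j:averages}.
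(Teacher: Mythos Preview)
Your base case ($d_1=d$, i.e.\ the depth-$m$ vector-valued estimate with no inner mixed norms) does not go through. The scalar bound you invoke is
\[
\big\|M^{\kappa(\cdot,w),R_0}_{s_1,\ldots,s_n}\big(f_1(\cdot,w),\ldots,f_n(\cdot,w)\big)\cdot\one_{E_{n+1}}\big\|_{L^q_x}\lesssim\prod_{j}\sssize^{s_j}_{R_0}|f_j(\cdot,w)|\cdot(\sssize_{R_0}\one_{E_{n+1}})^{1/q}|R_0|^{1/q},
\]
and you then want to take $L^{R'_{n+1}}_w$ of both sides and ``push the $L^{R_j}_w$-norm inside the average via Minkowski''. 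But $\sssize^{s_j}_{R_0}|f_j(\cdot,w)|$ is a \emph{supremum} over cubes $R\in\ii R_d(R_0)$, and the inequality $\|\sup_R a_R(w)\|_{L^{R_j}_w}\le\sup_R\|a_R(w)\|_{L^{R_j}_w}$ is false in general. Equivalently, working pointwise in $x$: the cube $R(x,w)$ selected by the stopping time depends on $w$ (since $\kappa$ does), so Minkowski cannot be applied to $\ave^{s_j}_{R(x,w)}|f_j(\cdot,w)|$ in the $w$-variable. The failure of $M:L^\infty(\ell^r)\to L^\infty(\ell^r)$ is exactly this obstruction; the vector-valued endpoint genuinely requires an induction on the depth $m$ rather than a one-shot reduction to the scalar case.

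The paper addresses this by replacing your companion estimate (with characteristic functions $\one_{F_j}$) by a \emph{weighted} companion (loc-$m$ op): one fixes positive functions $v_j$ on $\rr R^{d}$ and proves
\[
\big\|\|M^{R_0}_{s_1,\ldots,s_n}(f_1 v_1,\ldots,f_n v_n)\|_{L^{\bar R'}_{\ii W}}\one_E\big\|_{L^{r'}}\lesssim\prod_j(\sssize^{s_j}_{R_0}v_j)^{1-s_j/r_j}(\sssize_{R_0}\one_E)^{1/r'}\prod_j\big\|\|f_j\|_{L^{\bar R_j}_{\ii W}}\ci_{R_0}\big\|_{L^{r_j}(v_j^{s_j})}.
\]
To pass from depth $m$ to $m+1$ one sets $v_j(x)=\|f_j(x,\cdot)\|_{L^{R_j}_{\ii W}}$, writes $f_j(x,w_1,\bar w)=\tilde f_{j,w_1}(x,\bar w)\cdot v_j(x)$, applies the depth-$m$ weighted companion for each $w_1$, and then uses the identity
\[
\big\|\,\|\tilde f_{j,w_1}\|_{L^{\bar R_j}_{\overline{\ii W}}}\cdot v_j^{s_j/r_j}\ci_{R_0}\big\|_{L^{r_j}_x L^{r_j}_{w_1}}=\big\|\|f_j\|_{L^{R_j}_{\ii W}}\ci_{R_0}\big\|_{L^{s_j}}^{s_j/r_j}
\]
together with a stopping time as in Corollary \ref{cor:weak-type} and the summation \eqref{eq:straightforward:sum}. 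This weighted mechanism is what preserves the exact $L^{s_j}$-averages; your companion with $\one_{F_j}$ is precisely the formulation that, via Lemma \ref{lemma:mock:interp}, produces the $\epsilon$-loss you are trying to avoid, and the level-set decomposition you sketch at the end does not recover sharpness from a restricted-type input that already carries $(\sssize\one_{E_j})^{1/s_j-\epsilon}$.
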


If there are no mixed-norms involved, the local estimate would read simply as
\begin{align}
\label{eq:HL-local-q}
\big\|  \|M_{s_1, \ldots, s_n}^{\kappa, R_0} (f_1, \ldots, f_n)\|_{L^{R_{n+1}'}_{\ii W}} \cdot \one_{E_{n+1}}\big\|_{L^q_{\rr R^{d}}} \lesssim \prod_{j=1}^n \big( \sssize^{s_j}_{R_0} \| f_j \|_{L^{R_j}_{\ii W}}\big) \cdot  \big( \sssize_{R_0} \one_{E_{n+1}} \big)^{\frac{1}{q}} \cdot |R_0|^\frac{1}{q}.
\end{align}

We note that the characteristic function $\one_{E_{n+1}}$ in \eqref{eq:est:s_j:averages} can be replaced by a locally $q$-integrable function $v$ (for example, by using Lemma \ref{lemma:mock:interp}), but that will alter the last average:
\begin{align*}
\big\|  \|M_{s_1, \ldots, s_n}^{\kappa, R_0} (f_1, \ldots, f_n)\|_{L^{\bar P_{n+1}'}_{\rr R^{d-d_1}}L^{R_{n+1}'}_{\ii W}} \cdot v \big\|_{L^q_{\rr R^{d_1}}} \lesssim \prod_{j=1}^n \big( \sssize^{s_j}_{R_0} \| f_j \|_{L^{\bar P_j}_{\rr R^{d-d_1}}L^{R_j}_{\ii W}}\big) \cdot  \big( \sssize^{q+\epsilon}_{R_0} v \big) \cdot |R_0|^\frac{1}{q}.
\end{align*}
This accounts for the $A_\infty$ condition in the estimate \eqref{eq:FeffeSteinForFeffStein}.

Upon using a stopping time argument, local estimates such as \eqref{eq:HL-local-q} and \eqref{eq:est:s_j:averages} imply a weak-type boundedness result. Since the stopping time will be useful later on, we present the reasoning, suppressing form here on the dependence of $M_{s_1, \ldots, s_n}$ on the stopping time function $\kappa$.

\begin{corollary}
\label{cor:weak-type}
If $(s_1, \ldots, s_n, s)$, $(P_1, \ldots, P_n, P_{n+1})$, and $(R_1, \ldots, R_n, R)$ are H\"older tuples with $(R_1, \ldots, R_n)$, $(P_1, \ldots, P_n)$ being $(\frac{1}{s_1}, \ldots, \frac{1}{s_n})$-local, we have
\begin{equation}
\label{eq:weak-type:HL}
\big\|  \|M_{s_1, \ldots, s_n} (f_1, \ldots, f_n)\|_{L^{R'}_{\ii W}} \big\|_{L^{s', \infty}_{\rr R^{d}}} \lesssim \prod_{j=1}^n \| f_j \|_{L^{s_j}_{\rr R^{d}} L^{R_j}_{\ii W}}
\end{equation}
and similarly 
\[
\big\|  \|M_{s_1, \ldots, s_n} (f_1, \ldots, f_n)\|_{ L^{\bar P_{n+1}'}_{\rr R^{d-d_1}}L^{R'}_{\ii W}} \big\|_{L^{s', \infty}_{\rr R^{d_1}}} \lesssim \prod_{j=1}^n  \| f_j \|_{L^{s_j}_{\rr R^{d_1}} L^{\bar P_j}_{\rr R^{d-d_1}}L^{R_j}_{\ii W}}.
\]
\begin{proof}

We will be proving that local estimates such as \eqref{eq:HL-local-q}, holding for arbitrary dyadic cubes $R_0$ and arbitrary measurable sets $E_{n+1}$ in $\rr R^d$, imply the weak-type boundedness result in \eqref{eq:weak-type:HL}. This applies in the same manner to mixed-norm estimates and it goes beyond the particular context of the operator $M_{s_1, \ldots, s_n}$.

For simplicity, we assume that $n=2$ and consider $(R_1, R_2, R)$ to be a H\"older tuple. Based on the dualization result in \eqref{eq:dualization:weak:Lq}, it will be enough to prove for any set of finite measure $E \subset \rr R^d$ the estimate
\begin{align*}
\big\|  \|M_{s_1, s_2} (f_1, f_2)\|_{L^{R'}_{\ii W}} \cdot \one_E \big\|_{L^q_{\rr R^{d}}} \lesssim  \| f_1 \|_{L^{s_1}_{\rr R^{d}} L^{R_1}_{\ii W}} \| f_2 \|_{L^{s_2}_{\rr R^{d}} L^{R_2}_{\ii W}} \cdot |E|^{\frac{1}{q}-\frac{1}{s'}}.
\end{align*}
Following \eqref{eq:dualization:weak:Lq}, it is sufficient to consider $q<s'$, but when dealing with multiple vector-valued or mixed-norm estimates, we pick a Lebesgue exponent $q$ small enough so at to have $\big\| \|\cdot   \|_{L^{R'}_{\ii W}} \big \|_{L^q_{\rr R^{d}}}^q$ subadditive. Then we decompose the collection $\ii R_d$ (the one defining the discretized operator $M_{s_1, s_2}$) into subcollections according to the actual values of the sizes: for $n_1 \in \rr Z$, the collection $\ii R_{n_1}$ consists of maximal dyadic cubes $Q_1$ in $\rr R^d$ with the property that 
\[
2^{-n_1} \leq \big( \frac{1}{|Q_1|} \int_{\rr R^d}    \| f_1(x, \cdot ) \|^{s_1}_{L^{R_1}_{\ii W}}  \cdot \ci_{Q_1} dx \big)^{\frac{1}{s_1}} \leq 2^{-n_1+1},
\]
and which contain some of the relevant cubes in the initial collection $\ii R_d$. Then $\ii R_d(Q_1)$ will consist of the dyadic cubes in $\ii R_d$ which have not been yet selected by the stopping time in another $\ii R_d(\bar Q_1)$ for some $\bar Q_1 \in \ii R_{\bar n_1}$, with $\bar n_1 \leq n_1$; this ensures a compatibility of the stopping time and additionally it implies the estimate
\[
\sssize^{s_1}_{\ii R_d(Q_1)}  \| f_1\|_{L^{R_1}_{\ii W}}  \lesssim 2^{-n_1}.
\]
The dyadic cubes in $\ii R_{n_1}$ are mutually disjoint and moreover we have $$\sum_{Q_1 \in \ii R_{n_1}}|Q_1| \lesssim 2^{n_1 s_1}  \| f_1 \|^{s_1}_{L^{s_1}_{\rr R^{d}} L^{R_1}_{\ii W}}.$$

The families $\ii R_{n_2}$ are defined identically, and they correspond to $s_2$-averages of $ \| f_2(x, \cdot ) \|_{L^{R_2}_{\ii W}}$. Finally, $\ii R_{n_3}$ consists of maximal dyadic cubes $Q_3$ containing relevant dyadic cubes in $\ii R_d$, with
\[
2^{-n_3} \leq  \frac{1}{|Q_3|} \int_{\rr R^d}   \one_{E}  \cdot \ci_{Q_3} dx \leq 2^{-n_3+1}.
\]
In this case, $2^{-n_3} \leq 1$ and $\ds \sum_{Q_3 \in \ii R_{n_3}}|Q_3| \lesssim 2^{n_3}|E|$.

Using the subadditivity of $\big\| \|\cdot   \|_{L^{R'}_{\ii W}} \big \|_{L^q_{\rr R^{d}}}^q$ and the local estimate \eqref{eq:HL-local-q}, we have 
\begin{align*}
&\big\|  \|M_{s_1, s_2} (f_1, f_2)\|_{L^{R'}_{\ii W}} \cdot \one_E \big\|_{L^q_{\rr R^{d}}}^q \lesssim \sum_{n_1, n_2, n_3} \sum_{Q_j \in \ii R_{n_j}} \big\|  \|M^{Q_1 \cap Q_2 \cap Q_3}_{s_1, s_2} (f_1, f_2)\|_{L^{R'}_{\ii W}} \cdot \one_E \big\|_{L^q_{\rr R^{d}}}^q \\
&\qquad \lesssim \sum_{\substack{n_1, n_2, n_3 \\ 2^{-n_3} \leq 1 }} 2^{-n_1 q}  2^{-n_2 q} 2^{-n_3} \min(2^{n_1 s_1}  \| f_1 \|^{s_1}_{L^{s_1}_{\rr R^{d}} L^{R_1}_{\ii W}}, 2^{n_2 s_2}  \| f_2 \|^{s_1}_{L^{s_2}_{\rr R^{d}} L^{R_2}_{\ii W}}, 2^{n_3} |E|).
\end{align*}

One can show straightforwardly, upon considering all the possible cases, that 
\begin{equation}
\label{eq:straightforward:sum}
 \sum_{\substack{n_1, n_2, n_3 \\ 2^{-n_3} \leq S_0 }} 2^{-n_1 q}  2^{-n_2 q} 2^{-n_3} \min(2^{n_1 s_1}  A_1, 2^{n_2 s_2} A_2, 2^{n_3} A_3) \lesssim A_1^{\frac{q}{s_1}} A_2^{\frac{q}{s_2}} A_3^{1-\frac{q}{s_1}-\frac{q}{s_2}}  S_0^{\frac{q}{s_1}+\frac{q}{s_2}}.
\end{equation}
Without having an upper bound on $2^{-n_3}$ and $q<s'$, the quantity above would not be summable. This is also the reason why in Theorem \ref{thm:local:s_j:averages} at least one of the functions in the associated $(n+1)$-linear form needs to be of restricted-type/ a characteristic function.

Since $(s_1, s_2, s)$ is a H\"older tuple, the above considerations imply 
\begin{align*}
\big\|  \|M_{s_1, s_2} (f_1, f_2)\|_{L^{R'}_{\ii W}} \cdot \one_E \big\|_{L^q_{\rr R^{d}}}^q \lesssim \| f_1 \|^{q}_{L^{s_1}_{\rr R^{d}} L^{R_1}_{\ii W}} \| f_2 \|^{q}_{L^{s_2}_{\rr R^{d}} L^{R_2}_{\ii W}} |E|^{1- \frac{q}{s'}},
\end{align*}
and thus the weak-type estimate \eqref{eq:weak-type:HL}.
\end{proof}
\end{corollary}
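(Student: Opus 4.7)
The plan is to deduce the weak-type bound from the local estimate of Theorem~\ref{thm:local:s_j:averages} by dualising the $L^{s',\infty}$ quasi-norm through a suitable $L^q$ space. Fix $q<s'$ small enough that $\bigl\|\|\cdot\|_{L^{R'}_{\ii W}}\bigr\|_{L^q_{\rr R^d}}^q$ is subadditive. Using the identity \eqref{eq:dualization:weak:Lq}, the weak-type estimate \eqref{eq:weak-type:HL} is equivalent to showing, for every measurable set $E\subset\rr R^d$ with $0<|E|<\infty$,
\[
\bigl\|\,\|M_{s_1,\ldots,s_n}(f_1,\ldots,f_n)\|_{L^{R'}_{\ii W}}\cdot\one_E\bigr\|_{L^q_{\rr R^d}}\lesssim \prod_{j=1}^n\|f_j\|_{L^{s_j}_{\rr R^d}L^{R_j}_{\ii W}}\cdot |E|^{\frac{1}{q}-\frac{1}{s'}}.
\]

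The next step is a joint stopping-time decomposition. For each $1\leq j\leq n$ and each $n_j\in\rr Z$, I would let $\ii R_{n_j}$ be the collection of maximal dyadic cubes $Q$ (that are not already contained in a cube selected at an earlier stage) such that
\[
2^{-n_j}\leq \Bigl(\aver{Q}\|f_j(y,\cdot)\|^{s_j}_{L^{R_j}_{\ii W}}\,\ci_Q(y)\,dy\Bigr)^{1/s_j}\leq 2^{-n_j+1}.
\]
Analogously, for $n_{n+1}\geq 0$, I would define $\ii R_{n_{n+1}}$ using the size of $\one_E$ (note $2^{-n_{n+1}}\leq 1$ because $\one_E\leq 1$). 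The families are pairwise disjoint within a single level and satisfy Kolmogorov-type mass bounds
\[
\sum_{Q\in\ii R_{n_j}}|Q|\lesssim 2^{n_j s_j}\|f_j\|_{L^{s_j}L^{R_j}}^{s_j},\qquad \sum_{Q\in\ii R_{n_{n+1}}}|Q|\lesssim 2^{n_{n+1}}|E|.
\]
Then, exploiting subadditivity and applying the localised estimate \eqref{eq:HL-local-q} on each intersection $Q_1\cap\cdots\cap Q_{n+1}$ with $Q_j\in\ii R_{n_j}$, the left-hand side is controlled by
\[
\sum_{\substack{n_1,\ldots,n_n\in\rr Z\\ n_{n+1}\geq 0}}2^{-q(n_1+\cdots+n_n+n_{n+1})}\min\bigl(2^{n_1 s_1}A_1,\ldots,2^{n_n s_n}A_n,\,2^{n_{n+1}}|E|\bigr),
\]
where $A_j:=\|f_j\|_{L^{s_j}L^{R_j}}^{s_j}$.

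The main obstacle, and the heart of the argument, is to sum this $(n+1)$-fold series. I would split the sum according to which of the $n+1$ quantities realises the minimum. In each branch, one index is eliminated (bounded by the value of its factor), and the remaining $n$ indices yield geometric series which converge precisely because $q<s'$, i.e.\ $q/s_1+\cdots+q/s_n<1$. The constraint $n_{n+1}\geq 0$ is decisive here: without it the $n_{n+1}$-sum would diverge at the endpoint, which is precisely why no strong-type bound is available. Careful bookkeeping gives
\[
\sum(\cdots)\lesssim \prod_{j=1}^n A_j^{q/s_j}\cdot|E|^{1-q/s'}=\prod_{j=1}^n\|f_j\|_{L^{s_j}L^{R_j}}^{q}\cdot|E|^{1-q/s'},
\]
which, after extracting the $q$-th root, is exactly the required localised inequality. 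By the dualisation identity this yields \eqref{eq:weak-type:HL}.

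The mixed-norm counterpart follows the same blueprint. One dualises the outer $L^{s',\infty}_{\rr R^{d_1}}$ quasi-norm through $L^q_{\rr R^{d_1}}$ for $q<s'$ sufficiently small that $\bigl\|\,\|\cdot\|_{L^{\overline{P_{n+1}'}}_{\rr R^{d-d_1}}L^{R'}_{\ii W}}\bigr\|_{L^q_{\rr R^{d_1}}}^q$ is subadditive, runs the stopping-time construction on $\rr R^{d_1}$ with respect to the $s_j$-averages of $\|f_j(x_1,\cdot)\|_{L^{\bar P_j}_{\rr R^{d-d_1}}L^{R_j}_{\ii W}}$ (using that $(P_1,\ldots,P_n)$ is $(1/s_1,\ldots,1/s_n)$-local so these functions are locally $s_j$-integrable on $\rr R^{d_1}$), and invokes the mixed-norm local estimate \eqref{eq:est:s_j:averages} of Theorem~\ref{thm:local:s_j:averages} on each atom. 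The summation step is identical, and the resulting bound is the claimed mixed-norm weak-type inequality.
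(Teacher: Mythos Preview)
Your approach is essentially identical to the paper's: dualise $L^{s',\infty}$ through a small $L^q$, run a stopping-time decomposition in each function and in $\one_E$, apply the local estimate \eqref{eq:HL-local-q} on the resulting pieces, and sum. There is, however, a bookkeeping slip in your displayed sum: when you raise \eqref{eq:HL-local-q} to the power $q$, the size of $\one_E$ enters with exponent $1$, not $q$, so the correct prefactor is $2^{-q(n_1+\cdots+n_n)}\cdot 2^{-n_{n+1}}$ rather than $2^{-q(n_1+\cdots+n_n+n_{n+1})}$. This matters for the summation: with your exponent the $n_{n+1}$-sum (in the branch where the minimum is $2^{n_{n+1}}|E|$) only converges when $q>\bigl(1+\tfrac{1}{s'}\bigr)^{-1}$, which can conflict with the requirement that $q$ be small enough to make $\|\,\|\cdot\|_{L^{R'}_{\ii W}}\|_{L^q}^q$ subadditive. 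With the corrected exponent the sum converges for every $0<q<s'$ and yields exactly $\prod_j A_j^{q/s_j}\,|E|^{1-q/s'}$, matching your claimed outcome; this is the computation recorded in the paper as \eqref{eq:straightforward:sum}.
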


This weak-type result can also be phrased using sparse domination: for $q$ sufficiently small, the same local estimate \eqref{eq:HL-local-q} implies the existence of a sparse collection $\ic S$ (depending on the functions and the Lebesgue exponents) for which 
\[
\big\|  \|M_{s_1, \ldots, s_n} (f_1, \ldots, f_n)\|_{L^{R_{n+1}'}_{\ii W}} \cdot \one_{E_{n+1}}\big\|_{L^q_{\rr R^{d}}}^q \lesssim \sum_{Q \in \ic S} \prod_{j=1}^n \big( \ave^{s_j}_{Q} \| f_j \|_{L^{R_j}_{\ii W}}\big)^q \cdot  \big( \ave_Q \one_{E_{n+1}} \big) \cdot |Q|.
\]
One can show that a sparse (multi-linear) operator (as the one in the right hand side of the above expression) satisfies a $(s_1, \ldots, s_n, s')$ weak-type estimate. The proof would involve a certain decomposition which, in its essence, is not that different from the proof of Corollary \ref{cor:weak-type}. So even though sparse domination, through its formalism, can sometimes provide a shorter route, the underlying arguments are similar.

Conversely, one can see that the reasoning in Corollary \ref{cor:weak-type} applies identically to sparse operators or sparse forms, which upon localization satisfy local estimates such as \eqref{eq:HL-local-q} (for any $q$ sufficiently small) right away; so this provides another proof of the fact that sparse forms satisfy also a weak-type estimate.

Finally, we highlight the main ideas involved in the proof of the local estimate \eqref{eq:est:s_j:averages}. Once again, we leave out some technicalities which are specific to the helicoidal method, and which are pervasive in \cite{vv_BHT}, \cite{quasiBanachHelicoid}, \cite{sparse-hel}, \cite{expository-hel}.
\vspace{ .2 cm}
\begin{proof}[Proof of Theorem \ref{thm:local:s_j:averages} - sketch]

We only emphasize the main ideas in the case $n=2$, suppressing as before in the notation the dependence on the stopping time function $\kappa$. We are using mostly the same notation as in the proof of Theorem \ref{thm:HL}, with the exception of the H\"older tuples $(R_1, R_2, R_3)$ and $(P_1, P_2, P_3)$ which now become $(R_1, R_2, R)$ and $(P_1, P_2, P)$ respectively. The restricted-type local estimate \eqref{eq:HL:mvv:restr} will be replaced by
\begin{equation}
\label{eq:HL:mvv:weak}
\tag{loc $m$}
\big\|  \|M_{s_1, s_2}^{R_0} (f_1, f_2)\|_{L^{R'}_{\ii W}} \cdot \one_{E}\big\|_{L^q_{\rr R^d}} \lesssim  \big( \sssize^{s_1}_{R_0} \| f_1 \|_{L^{R_1}_{\ii W}}\big)  \big( \sssize^{s_2}_{R_0} \| f_2 \|_{L^{R_2}_{\ii W}}\big)  \big( \sssize_{R_0} \one_{E} \big)^{\frac{1}{q}}  \cdot |R_0|^\frac{1}{q}.
\end{equation}
This is proved, upon induction on the depth $m$ of the multiple vector-valued space, with the use of a counterpart inductive statement which replaces the previous \eqref{eq:HL:VV-m-op}. To start with, we fix $R_0$ a dyadic cube in $\rr R^d$; $v_1$ and $v_2$ are fixed, positive, locally $L^{s_j}$-integrable functions. Then we claim that
\begin{align}
\label{eq:HL:VV-m-op:weak}
\tag{loc-$m$ op}
\big\| \| M_{s_1, s_2}^{ R_0}(f_1 \cdot v_1, f_2 \cdot v_2) \|_{L^{\bar R'}_{\ii W}} \cdot \one_{E} \big\|_{L^{r'}_{\rr R^d}}  \lesssim \big( \sssize^{s_1}_{R_0} v_1\big)^{1-\frac{s_1}{r_1}}  \big( \sssize^{s_2}_{R_0} v_2\big)^{1-\frac{s_2}{r_2}} &\\
  \cdot  \big( \sssize_{R_0} \one_{E} \big)^{\frac{1}{r}} \cdot \big \| \| f_1 \|_{L^{\bar R_1}_{\ii W}} \cdot \ci_{R_0} \big \|_{L^{r_1}(v_1^{s_1})}  \big \| \| f_2 \|_{L^{\bar R_2}_{\ii W}} \cdot \ci_{R_0} \big \|_{L^{r_2}(v_2^{s_2})}& \nonumber.
\end{align}

One can see that the statement is more involved than the previous \eqref{eq:HL:VV-m-op}, when we were working with restricted-type functions. Not only will the operator norm depend on the functions $v_1$ and $v_2$, but these weights will also be paired with the functions $f_1$ and $f_2$ respectively. This pairing is nevertheless natural and its role is to preserve the depth $(m+1)$ information, while performing an analysis at the depth $m$ level. 

Per usual, the idea is to prove $\eqref{eq:HL:mvv:weak} \Rightarrow$ a weak version of \eqref{eq:HL:VV-m-op:weak}, and similarly, that $\eqref{eq:HL:VV-m-op:weak} \Rightarrow$ a weak version of (loc-$m+1$). While for the latter there is no issue in noticing that the weak-type estimate will imply the strong-type estimates, things are more technical for showing that \eqref{eq:HL:VV-m-op:weak} follows by multilinear interpolation from its weak-type counterpart. As one would expect, there are in fact no issues and that is because the operator norm in \eqref{eq:HL:VV-m-op:weak} involves fixed quantities with exponents depending linearly on $\frac{1}{r_1}, \frac{1}{r_2}$; for more precision, one can invoke the concrete interpolation result in \cite{multilinear-Marcink-constant}. 

First we prove $\eqref{eq:HL:mvv:weak} \Rightarrow$ \eqref{eq:HL:VV-m-op:weak}-weak. In estimating $\big\| \| M_{s_1, s_2}^{ R_0}(f_1 \cdot v_1, f_2 \cdot v_2) \|_{L^{\bar R'}_{\ii W}} \cdot \one_{E} \big\|_{L^{r', \infty}_{\rr R^d}}$, we dualize through a space $L^\tau$, with $\tau$ small enough; then we perform a stopping time decomposition as in the proof of Corollary \ref{cor:weak-type}, depending on the level sets of averages of $\| f_1 \|_{L^{\bar R_1}_{\ii W}} \cdot v_1^{\frac{s_1}{r_1}}, \| f_2 \|_{L^{\bar R_2}_{\ii W}} \cdot v_2^{\frac{s_2}{r_2}}$ and $\one_{E} \cdot \one_{\tilde F}$ respectively. In short, we have for $\tilde F \subset \rr R^d$ of finite measure
\begin{align*}
&\big\|  \|M^{R_0}_{s_1, s_2} (f_1 \cdot v_1, f_2 \cdot v_2)\|_{L^{R'}_{\ii W}} \cdot \one_{E\cap \tilde F} \big\|_{L^\tau_{\rr R^{d}}}^\tau \lesssim \sum_{n_1, n_2, n_3} \sum_{Q_j \in \ii R_{n_j}} \big\|  \|M^{Q_1 \cap Q_2 \cap Q_3}_{s_1, s_2} (f_1 \cdot v_1, f_2 \cdot v_2)\|_{L^{R'}_{\ii W}} \cdot \one_{E \cap \tilde F} \big\|_{L^\tau_{\rr R^{d}}}^\tau \\
& \lesssim \sum_{\substack{n_1, n_2, n_3 \\ 2^{-n_3} \leq \sssize_{R_0} \one_E }} \big( \sssize^{t_1}_{R_0} v_1^{\frac{s_1}{t_1}}\big)^{\tau}  \big( \sssize^{t_2}_{R_0} v_2^{\frac{s_2}{t_2}}\big)^{\tau} 2^{-n_1 \tau}    2^{-n_2 \tau} 2^{-n_3} \\
&\qquad \qquad \qquad \qquad \cdot \min(2^{n_1 r_1}  \big \|  \|f_1  \|_{L^{R_1}_{\ii W}} v_1^{\frac{s_1}{r_1}}  \ci_{R_0} \|^{r_1}_{L^{r_1}_{\rr R^{d}} }, 2^{n_2 r_2}  \big \|  \|f_2  \|_{L^{R_2}_{\ii W}} v_2^{\frac{s_2}{r_2}}  \ci_{R_0} \|^{r_2}_{L^{r_2}_{\rr R^{d}} },  2^{n_3} |\tilde F|).
\end{align*}

Above, the Lebesgue exponents $t_1$ and $t_2$ are so that $1=\frac{s_1}{r_1}+\frac{s_1}{t_1}=\frac{s_2}{r_2}+\frac{s_2}{t_2}$. Then, by \eqref{eq:straightforward:sum} and after re-writing the exponents, we obtain the weak version of \eqref{eq:HL:VV-m-op:weak}.

Now we want to deduce the depth $(m+1)$ weak-result $(\text{loc } m+1 )$using the depth $m$ \eqref{eq:HL:VV-m-op:weak}. We recall the notation: $(R_1, R_2, R)$ represents a H\"older tuple, with $R_1=(r_1, \bar R_1), R_2=(r_2, \bar R_2), R=(r, \bar R)$ being length-$(m+1)$ vector indices. After dualization through a space $L^\tau$ with $\tau$ small enough, we aim to show that any set $\tilde F$ of finite measure, 
\begin{equation}
\label{eq:weak:depth:m+1}
\big\|  \|M_{s_1, s_2}^{R_0} (f_1, f_2)\|_{L^{R'}_{\ii W}} \cdot \one_{E \cap \tilde F} \big\|_{L^{\tau}_{\rr R^d}} \lesssim  \big( \sssize^{s_1}_{R_0} \| f_1 \|_{L^{R_1}_{\ii W}}\big)  \big( \sssize^{s_2}_{R_0} \| f_2 \|_{L^{R_2}_{\ii W}}\big)  \big( \sssize_{R_0} \one_{E} \big)^{\frac{1}{q}}  \cdot |R_0|^\frac{1}{q} |\tilde F|^{\frac{1}{\tau} - \frac{1}{q}}.
\end{equation}

First, we will deduce a similar result with ill-matched exponents; this in turn will be fixed by using an extra stopping time. We had to deal with a similar issue in Sections \ref{sec:mvv} and \ref{sec:mixed-norm}; there, we used sparse domination, which caused another $\epsilon$-loss in the average of the last function. Since $\tau$ was small enough, we can use H\"older's inequality (with $\frac{1}{\tau}=\frac{1}{r'}+\frac{1}{\tau_r}$) and reduce the depth of the vector-valued space: 
\begin{align*}
\big\|  \|M_{s_1, s_2}^{R_0} (f_1, f_2)\|_{L^{R'}_{\ii W}} \cdot \one_{E \cap \tilde F} \big\|_{L^{\tau}_{\rr R^d}} &\lesssim \big\|  \|M_{s_1, s_2}^{R_0} (f_1, f_2)\|_{L^{R'}_{\ii W}} \cdot \one_{E \cap \tilde F} \big\|_{L^{r'}_{\rr R^d}} \cdot \|\one_{E \cap \tilde F} \cdot \one_{R_0} \|_{L^{\tau_r}_{\rr R^d}} \\
& \lesssim\big\| \big\|  \|M_{s_1, s_2}^{R_0} (f_{1,w_1}, f_{2,w_1})\|_{L^{\bar R'}_{\ii {\overline W}}} \cdot \one_{E \cap \tilde F} \big\|_{L^{r'}_{\rr R^d}} \big\|_{L^{r'}_{\ii W_1}} \cdot \|\one_{E \cap \tilde F} \cdot \one_{R_0} \|_{L^{\tau_r}_{\rr R^d}}.
\end{align*}

For $w_1 \in \ii W_1$ fixed, the vector-valued functions $f_{1,w_1}$ and $f_{2,w_1}$ take values in the depth-$m$ vector-valued space $ \ii {\bar W}$; we apply the depth $m$ result  after rewriting
\[
f_{j,w_1}(x, \bar w)= f_j(x, w_1, \bar w)= \tilde f_{j,w_1}(x, \bar w) \cdot \| f_j(x, \cdot) \|_{L^{R_j}_{\ii W}}.
\]
So the weights $v_j(x)$ are precisely the $L^{R_j}_{\ii W}$-norms of the functions $f_j(x, w_1, \bar w)$. From \eqref{eq:HL:VV-m-op:weak}, we get
\begin{align*}
\big\|  \|M_{s_1, s_2}^{R_0} (f_{1,w_1}, f_{2,w_1})\|_{L^{\bar R'}_{\ii {\overline{W}}}} \cdot \one_{E \cap \tilde F} \big\|_{L^{r'}_{\rr R^d}} \lesssim \big( \sssize^{s_1}_{R_0} v_1\big)^{1-\frac{s_1}{r_1}}  \big( \sssize^{s_2}_{R_0} v_2\big)^{1-\frac{s_2}{r_2}}   \big( \sssize_{R_0} \one_{E \cap \tilde F} \big)^{\frac{1}{r'}} & \\
 \cdot \big \| \| \tilde f_{1, w_1} \|_{L^{\bar R_1}_{\overline{\ii W}}} \cdot \ci_{R_0} \big \|_{L^{r_1}(v_1^{s_1})}  \big \| \| \tilde f_{2, w_1} \|_{L^{\bar R_2}_{\ii {\overline W}}} \cdot \ci_{R_0} \big \|_{L^{r_2}(v_2^{s_2})}.&
\end{align*}

Now we can integrate in the variable $w_1$, use H\"older's inequality, the fact that
\[
 \big\| \big \| \| \tilde f_{j, w_1} \|_{L^{\bar R_j}_{\overline{\ii W}}} \cdot \ci_{R_0} \big \|_{L^{r_j}(v_j^{s_j})} \big\|_{L^{r_j}_{w_1}}=\big \|  \| \tilde f_{j} \|_{L^{ R_j}_{\ii W}}      \cdot v_j^{\frac{s_j}{r_j}} \cdot \ci_{R_0} \big\|_{L^{r_j}}= \big \|  \| f_j \|_{L^{R_j}_{\ii W}} \cdot \ci_{R_0}  \|_{L^{s_j}}^{\frac{s_j}{r_j}}
 \]
to eventually deduce that 
\begin{align}
\label{eq:ill-fitted}
& \big\|  \|M_{s_1, s_2}^{R_0} (f_1, f_2)\|_{L^{R'}_{\ii W}} \cdot \one_{E \cap \tilde F} \big\|_{L^{\tau}_{\rr R^d}} \lesssim \big( \sssize^{s_1}_{R_0} v_1\big)  \big( \sssize^{s_2}_{R_0} v_2\big) \big( \sssize_{R_0} \one_{E \cap \tilde F} \big)^{\frac{1}{r'}+\frac{1}{\tau_r}} |R_0|^\frac{1}{\tau}.
\end{align}

This is not precisely what we wanted (this is similar to the situation in \eqref{eq:local:but:with:tau}), but we notice that \eqref{eq:ill-fitted} still holds if we replace $R_0$ by any other dyadic cube. For every $n_3$ with $\ds 2^{-n_3} \lesssim \sssize_{R_0} \one_E$, we define $\ii R_{n_3}(R_0)$ to be the collection of maximal dyadic cubes $Q$ contained inside $R_0$ so that 
\[
2^{-n_3} \leq  \frac{1}{|Q|} \int_{\rr R^d}   \one_{E \cap \tilde F}  \cdot \ci_{Q} dx \leq 2^{-n_3+1}
\]
and notice that $$ \sum_{Q \in \ii R_{n_3}(R_0)} |Q| \leq \min(|R_0|, 2^{n_3} |\tilde F|) \lesssim |R_0|^{\frac{\tau}{q}} (2^{n_3} |\tilde F|)^{1-\frac{\tau}{q}}.$$ Making use of the subadditivity of $\| \cdot \|_{\tau}^\tau$ and of $\big\| \|\cdot   \|_{L^{R'}_{\ii W}} \big \|_{L^{\tau}_{\rr R^{d}}}^\tau$, we have
\begin{align*}
& \big\|  \|M_{s_1, s_2}^{R_0} (f_1, f_2)\|_{L^{R'}_{\ii W}} \cdot \one_{E \cap \tilde F} \big\|_{L^{\tau}_{\rr R^d}}^\tau \lesssim \sum_{n_3:\, 2^{-n_3} \lesssim \left( \sssize_{R_0} \one_E \right)}   \sum_{Q \in \ii R_{n_3}(R_0) } \big\|  \|M_{s_1, s_2}^{Q} (f_1, f_2)\|_{L^{R}_{\ii W}} \cdot \one_{E \cap \tilde F} \big\|_{L^{\tau}_{\rr R^d}}^\tau \\
&\lesssim \sum_{n_3:\, 2^{-n_3} \lesssim \left( \sssize_{R_0} \one_E \right) }  \big( \sssize^{s_1}_{R_0} v_1\big)^\tau   \big( \sssize^{s_2}_{R_0} v_2\big)^\tau 2^{-n_3}  \min(|R_0|, 2^{n_3} |\tilde F|)  \\
&\lesssim  \big( \sssize^{s_1}_{R_0} \| f_1 \|_{L^{R_1}_{\ii W}}\big)^\tau  \big( \sssize^{s_2}_{R_0} \| f_2 \|_{L^{R_2}_{\ii W}}\big)^\tau  \big( \sssize_{R_0} \one_{E} \big)^{\frac{\tau}{q}}  \cdot |R_0|^\frac{\tau}{q} |\tilde F|^{1 - \frac{\tau}{q}},
\end{align*}
which is \eqref{eq:weak:depth:m+1} raised to the power $\tau$.

The mixed-norm estimate which completes the proof of \eqref{eq:est:s_j:averages} and thus of Theorem \ref{thm:local:s_j:averages} is proved in a very similar manner. We only list the main inductive arguments (still in the case $n=2$), since no new techniques are necessary: \eqref{eq:HL:mix:rest} will be replaced by 
\begin{align}
\label{eq:HL:mix:weak:averages}
\tag{$\ii P_{mix}^{HL}(P_1, P_2; q )$}
\big\|  \|M_{s_1, s_2}^{ R_0} (f_1, f_2)\|_{L^{\bar P'}_{\rr R^{d-d_1}}L^{R'}_{\ii W}} \cdot \one_{E}\big\|_{L^q_{\rr R^{d_1}}} \lesssim \big( \sssize_{R_0}^{s_1} \big\| \| f_1  \|_{L^{\overline P_1}_{\rr R^{d-d_1}}L^{R_j}_{\ii W}} \big)  \big( \sssize_{R_0}^{s_2} \big\| \| f_2  \|_{L^{\overline P_2}_{\rr R^{d-d_1}}L^{R_j}_{\ii W}} \big) \cdot  \big( \sssize_{R_0} \one_{E} \big)^{\frac{1}{q}} \cdot |R_0|^\frac{1}{q}.
\end{align}

Next, if $1 \leq \bar d_1 \leq d_1 \leq d$ and if $Q$ is a dyadic cube in $\rr R^{\bar d_1}$, $\bar E \subseteq \rr R^{\bar d_1}$ is a measurable set and $v_1$ and $v_2$ are positive functions defined on $\rr R^{d_1}$, we have for any H\"older tuple $(p_1, p_2, p)$ with $s_j<p_j$
\begin{align}
\label{eq:HL:mix:op:weak}
\tag{$\ii P_{mix}^{*,HL}(P_1, P_2)$}
 \big\|  \|M_{s_1,s_2}^{Q} (f_1 \cdot v_1,  f_2 \cdot v_2)\|_{L^{\overline P'}_{\rr R^{d-d_1}}L^{R'}_{\ii W}} \cdot \one_{\overline E}\big\|_{L^{p'}_{\rr R^{d_1}}}   \lesssim \prod_{j=1}^2 \big( \sssize_{Q}^{s_j} v_j \big)^{1- \frac{s_j}{p_j}} \cdot \big( \sssize_{Q} \one_{\overline E} \big)^{\frac{1}{p'}}  \cdot \prod_{j=1}^2  \big\| \| f_j  \|_{L^{\overline P_j}_{\rr R^{d-d_1}}L^{R_j}_{\ii W}} \cdot \ci_{Q} \big \|_{L^{p_j}(v_j^{s_j})} \nonumber.
\end{align}

The two statements \eqref{eq:HL:mix:weak:averages} and \eqref{eq:HL:mix:op:weak} are proved by induction (run over decreasing length of first matching indices), just like we did before in Section \ref{sec:mixed-norm}.
\end{proof}

\section{Applications}
\label{sec:applications}

Our main Theorem \ref{thm-part2} concerning mixed-norm and vector-valued estimates for $T_k$ operators has very interesting applications, which we differentiate into two categories: \begin{itemize}
\item[(i)] immediate or direct consequences: they are restatements of Theorem \ref{thm-part2} in certain particular situations.
\item[(ii)] some less immediate applications, which become conspicuous once the questions they answer are being formulated; among these, we should mention mixed-norm Loomis-Whitney-type inequalities (we are grateful to Jonathan Bennett for asking the related question of boundedness of singular Brascamp-Lieb operators in February 2016), some examples of operators which only admit purely mixed-norm estimates, as well as multilinear operators associated to rational symbols (which were brought to our attention by Mihaela Ifrim and Daniel T\u{a}taru).
\end{itemize}

\subsection{Immediate consequences}~\\
\label{sec:applications:immediate}

The range of boundedness of $T_k$ operators can be described through a number of inequalities, which arise from the rank-$k$ condition. The same type of inequalities are naturally inherited by the mixed-norm, vector-valued extensions. We will present a few examples that will hopefully help to better visualize this range of boundedness from Theorem \ref{thm-part2}.

We recall that $k=0$ corresponds to the well-known paraproduct case, and $T_1$ is the bilinear Hilbert transform (if $n=2$) or its $n$-linear generalization (which, from the point of view of the analysis, is not very different). The simplest $T_k$ operator that goes beyond these classical examples corresponds to $k=2$ and $n=4$ (due to the condition $0 \leq k < \frac{n+1}{2}$): i.e. $T_2$ is a $4$-linear operator associated to a multiplier which is singular along a $2d$-subspace in $\rr R^{4d}$. Of course, $n$-linear variants, for $n \geq 4$ can be considered.

In all these situations, and more generally, for any $n$-linear, rank-$k$ operator satisfying $0 \leq k < \frac{n+1}{2}$, local $L^2$ estimates are always available, for all types of mixed-norm, vector-valued extensions. This was mentioned before in the context of vector-valued extensions, but the same is true for mixed-norm estimates.

Because the same examples will be used in the next Section \ref{sec:applications:involved} with the purpose of illustrating our more elaborate applications, we take a closer look at a few particular cases, corresponding to $4$-linear operators ($n=4$) in dimension $4$ (i.e. $d=4$). This particular choice for the $n$ and $d$ parameters enables us to present operators more complex than the bilinear Hilbert transform, while keeping the presentation reasonable. In what follows, $K$ will always denote a Calder\'on-Zygmund kernel in the corresponding $\rr R^{d(n-k)}$ space, whose Fourier transform coincides with a bounded function away from the origin in $\rr R^{d(n-k)}$.

\begin{itemize}[leftmargin=*]
\item[(i)] if $k=0$, the operator $T_{0, \rr R^4}$ is simply given by 
\[
T_{0, \rr R^4}(f_1, f_2, f_3, f_4)(x)= \int_{\rr R^{16}} f_1(x-t_1) \, f_2(x-t_2)\, f_3(x-t_3)\, f_4(x-t_4)\, K(t_1, t_2, t_3, t_4) d \vec t.
\]
In frequency, the corresponding multiplier is $\ds \hat K(\xi, \eta, \zeta, \gamma)$, singular at the origin.

Operators of rank $0$ are bounded on the whole admissible range, and so are their mixed-norm and vector-valued extensions, as recorded in Corollary \ref{cor:paraprod:mixed-norm:vv} bellow. But this is the only case in which the range of boundedness can be easily described.
\vspace{.2 cm}
\item[(ii)] when $k=1$, $T_{1, \rr R^4}(f_1, f_2, f_3, f_4)(x)$ becomes the integral
\[
\int_{\rr R^{12}} f_1(x+\alpha_1 t +\tilde{\alpha}_1 s+ \widehat \alpha_1 u)  f_2(x+\alpha_2 t +\tilde{\alpha}_2 s+ \widehat \alpha_2 u)  f_3(x+\alpha_3 t +\tilde{\alpha}_3 s+ \widehat \alpha_3 u)  f_4(x+\alpha_4 t +\tilde{\alpha}_4 s+ \widehat \alpha_4 u) K(t, s, u) dt ds du,
\]
with associated multiplier 
\[
 \hat K (-(\alpha_1 \xi + \alpha_2 \eta +\alpha_3 \zeta+\alpha_4 \gamma), -(\tilde \alpha_1 \xi + \tilde \alpha_2 \eta +\tilde \alpha_3 \zeta+\tilde \alpha_4 \gamma), -(\widehat \alpha_1 \xi + \widehat \alpha_2 \eta +\widehat \alpha_3 \zeta+\widehat \alpha_4 \gamma)).
 \]
The parameters $\alpha_j, \tilde \alpha_j, \widehat \alpha_j$ (for $1 \leq j \leq 4$) are real numbers satisfying a certain non-degeneracy condition, so that the equations in $\rr R^4$
\[
\begin{cases}
\alpha_1 \xi+ \alpha_2 \eta + \alpha_3 \zeta +\alpha_4 \gamma=0,\\
\tilde \alpha_1 \xi+ \tilde \alpha_2 \eta + \tilde \alpha_3 \zeta + \tilde\alpha_4 \gamma=0,\\
\widehat \alpha_1 \xi+ \widehat \alpha_2 \eta +\widehat \alpha_3 \zeta +\widehat \alpha_4 \gamma=0
\end{cases}
\] 
describe precisely the subspace $\Gamma \subset \rr R^{16}$ (which is of dimension $k \cdot d=4$).

In comparison, a bilinear operator in $\rr R^4$ describing a symbol singular along a $4$-dimensional space would be
\[
BHT_{\rr R^4}(f, g)(x)=\int_{\rr R^4} f(x-t) g(x+t) K(t) dt.
\]

\item[(ii)] finally, in the case $k=2$, the rank $2$ operator has (in certain cases) the representation
\[
T_{2, \rr R^4}(f_1, f_2, f_3, f_4)(x)=\int_{\rr R^8}f_1(x+\alpha_1 t +\tilde{\alpha}_1 s)  f_2(x+\alpha_2 t +\tilde{\alpha}_2 s)  f_3(x+\alpha_3 t +\tilde{\alpha}_3 s)  f_4(x+\alpha_4 t +\tilde{\alpha}_4 s) K(t, s) dt ds
\]
and the frequency symbol is 
\begin{equation}
\label{eq:symbol-T2}
\hat K(-(\alpha_1 \xi + \alpha_2 \eta +\alpha_3 \zeta+\alpha_4 \gamma), -(\tilde \alpha_1 \xi + \tilde \alpha_2 \eta +\tilde \alpha_3 \zeta+\tilde \alpha_4 \gamma)).
\end{equation}

As before, $\alpha_j, \tilde \alpha_j$ (for $1 \leq j \leq 4$) are real numbers so that 
\[
\begin{cases}
\alpha_1 \xi+ \alpha_2 \eta + \alpha_3 \zeta +\alpha_4 \gamma=0,\\
\tilde \alpha_1 \xi+ \tilde \alpha_2 \eta + \tilde \alpha_3 \zeta + \tilde\alpha_4 \gamma=0
\end{cases}
\] 
describe the subspace $\Gamma \subset \rr R^{16}$, of dimension $8$ (which is $k \cdot d$) in $\rr R^{16}$.

Notice that the symbol in \eqref{eq:symbol-T2} is constant along affine subspaces 
\[
\begin{cases}
\alpha_1 \xi+ \alpha_2 \eta + \alpha_3 \zeta +\alpha_4 \gamma=v_1,\\
\tilde \alpha_1 \xi+ \tilde \alpha_2 \eta + \tilde \alpha_3 \zeta + \tilde\alpha_4 \gamma=v_2,
\end{cases}
\] 
while general multipliers as those presented on Theorem \ref{thm-part2} are assumed to decay away from $\Gamma$. It is for this reason that the kernel representations of the $T_k$ operators above are mostly connotative: they only describe particular situations. 
\end{itemize}

We present some particular cases that follow directly from Theorem \ref{thm-part2}: 

\begin{corollary}
\label{cor:paraprod:mixed-norm:vv}
For paraproducts, corresponding to the case $k=0$, we have $$\Pi : L_{\rr R^d}^{P_1}(L^{Q_1}) \times \ldots \times L_{\rr R^d}^{P_n}(L^{Q_n}) \to L_{\rr R^d}^P(L^Q)$$ estimates over the maximal possible range, which is described component-wise by the inequalities
\[
1<P_1, Q_1, \ldots, P_n, Q_n \leq \infty, \qquad \frac{1}{n}<P, Q<\infty
\]
and by the component-wise H\"older conditions
\[
\frac{1}{P_1}+\ldots+\frac{1}{P_n}=\frac{1}{P}, \qquad \frac{1}{Q_1}+\ldots+\frac{1}{Q_n}=\frac{1}{Q}.
\] 
\end{corollary}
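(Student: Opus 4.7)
The plan is to obtain this corollary as a direct specialization of Theorem \ref{thm-part2} to the rank $k=0$ case, since Coifman-Meyer paraproducts are precisely the $T_k$ operators with $k=0$: their multipliers are smooth away from the origin $\{0\} \subset \rr R^{dn}$ (a $0$-dimensional subspace), so the Whitney decomposition described in Section \ref{sec:introd:glossary} produces a collection of multi-tiles of rank $0$, no degrees of freedom remaining once a frequency cube is positioned relative to the singularity.

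Second, I would unpack Definition \ref{def:Xi_n, k} when $k=0$. The set $\Theta_{n, 0}$ consists of a single $\binom{n+1}{0}=1$-tuple whose unique entry (indexed by the empty ordered $0$-tuple) equals $1$, and then formula \eqref{cond:alpha} forces $\alpha_j = 0$ for every $j$, since no ordered $0$-tuple can contain any index $j$. Consequently, the $(1-\alpha_1, \ldots, 1-\alpha_{n+1})$-local H\"older condition from Theorem \ref{thm-part2} collapses to: $(P_1, \ldots, P_{n+1})$ is a H\"older tuple with $\frac{1}{P_j} < 1$ componentwise, i.e.\ $P_j \in (1, \infty]$, and analogously for $(R_1, \ldots, R_{n+1})$, i.e.\ $Q_j \in (1, \infty]$. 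The H\"older relations then automatically force $P$ and $Q$ into $(\frac{1}{n}, \infty)$ componentwise (since $\frac{1}{P} = \sum_{j=1}^n \frac{1}{P_j} < n$ componentwise), producing exactly the range advertised in the corollary.

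The main (and essentially only) technicality is that $\Xi_{n, k}$ was defined as a subset of $(0, \frac{1}{2})^{n+1}$, whereas the reduction above uses the boundary point $\alpha_j = 0$. I would resolve this by inspecting the proof of the localization estimate in Theorem \ref{thm:localization:mvv}: the geometric series $\sum_{\ell_j} 2^{-\ell_j(1-2\alpha_j)}$ requires only $\alpha_j < \frac{1}{2}$ for convergence, so the local estimate holds verbatim at $\alpha_j = 0$, and every subsequent step in the proof of Theorem \ref{thm-part2} (the restricted-type to general passage via Lemma \ref{lemma:mock:interp}, the sparse domination via Lemma \ref{lemma:local->sparse->subadditive}, the weak-$L^q$ dualization, and the mixed-norm induction of Section \ref{sec:mixed-norm}) goes through without modification. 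Alternatively, one may simply pick $\alpha_j = \epsilon > 0$ arbitrarily small and invoke the openness of the $(1-\alpha, \ldots)$-local H\"older condition, which gives $P_j > \frac{1}{1-\epsilon}$ and $Q_j > \frac{1}{1-\epsilon}$; letting $\epsilon \to 0$ recovers $P_j, Q_j > 1$. Either way, no new time-frequency analysis is needed: the sharp local estimate, the multiple vector-valued extensions, and the mixed-norm extensions are all already provided by Theorems \ref{thm-main-vvv-discretized} and \ref{thm:main-mixed-discretized}, yielding
\[
\big\| \Pi(f_1, \ldots, f_n) \big\|_{L^{P}_{\rr R^d} L^{Q}} \lesssim \prod_{j=1}^n \| f_j \|_{L^{P_j}_{\rr R^d} L^{Q_j}}
\]
throughout the claimed range.
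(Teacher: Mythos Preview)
Your approach is correct and matches the paper's: the corollary is listed under ``Immediate consequences'' and is stated as following directly from Theorem \ref{thm-part2} specialized to $k=0$. Your unpacking of Definition \ref{def:Xi_n, k} and your first resolution of the $\alpha_j=0$ boundary issue (tracing through Theorem \ref{thm:localization:mvv} and noting that only $\alpha_j<\tfrac12$ is used) are exactly right.

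One small correction: your alternative argument, ``pick $\alpha_j=\epsilon>0$ and let $\epsilon\to 0$'', does not work as written. For $k=0$ the construction in Definition \ref{def:Xi_n, k} forces $\alpha_j=0$ identically (the sum in \eqref{cond:alpha} is empty), so there is no tuple $(\epsilon,\ldots,\epsilon)\in\Xi_{n,0}$ to pick. The only way to make an $\epsilon$-argument rigorous is the one you already gave first: observe that the proofs of Theorems \ref{thm:localization:mvv}, \ref{thm-main-vvv-discretized}, and \ref{thm:main-mixed-discretized} never use the strict inequality $\alpha_j>0$, only $\alpha_j<\tfrac12$, so the entire machinery runs unchanged at $\alpha_j=0$. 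Drop the alternative and your argument is complete.
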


Our next example consists or mixed-norm estimates for the bilinear Hilbert transform and for the $4$-linear $T_1$ operator in $\rr R^4$:

\begin{corollary}
The bilinear Hilbert transform satisfies the estimates
\[
BHT_{\rr R^4}: L^p_{x_1}L^2_{x_2} L^\infty_{x_3} L^2_{x_4} \times L^q_{x_1} L_{x_2}^\infty L_{x_3}^2 L_{x_4}^2 \to L^r_{x_1}L^2_{x_2}L^2_{x_3} L_{x_4}^1
\]
as long as 
\[
1< p, q \leq \infty, \quad \frac{2}{3} < r <\infty, \qquad \frac{1}{p}+\frac{1}{q}=\frac{1}{r}.
\]
Similarly, the target space can be $L^2_{x_1}L^r_{x_2}L^2_{x_3} L_{x_4}^1$, $L^2_{x_1}L^2_{x_2}L^r_{x_3} L_{x_4}^1$ or $L^1_{x_1}L^2_{x_2}L^2_{x_3} L_{x_4}^r$ (with the corresponding reshuffling of the domain of definition).

The $4$-linear $T_1$ operator in $\rr R^4$ satisfies the estimates
\[
T_{1, \rr R^4}: L^{p_1}_{x_1}L^2_{x_2} L^\infty_{x_3} L^2_{x_4} \times L^{p_2}_{x_1}L^\infty_{x_2} L^\infty_{x_3} L^2_{x_4} \times L^{p_3}_{x_1}L^2_{x_2} L^\infty_{x_3} L^2_{x_4} \times L^{p_4}_{x_1}L^\infty_{x_2} L^2_{x_3} L^\infty_{x_4} \to L^p_{x_1}L^1_{x_2} L^2_{x_3} L^\frac{2}{3}_{x_4}
\]
for all H\"older indices $p_1, p_2, p_3, p_4, p$ so that
\[
1< p_1, p_2, p_3, p_4 \leq \infty, \quad \frac{2}{7} < p <\infty, \qquad \frac{1}{p_1}+\frac{1}{p_2}+\frac{1}{p_2}+\frac{1}{p_4}=\frac{1}{p}.
\]
Similarly, the target space can be $L^1_{x_1}L^p_{x_2} L^2_{x_3} L^\frac{2}{3}_{x_4}$, $L^2_{x_1}L^1_{x_2} L^p_{x_3} L^\frac{2}{3}_{x_4}$, $L^\frac{2}{3}_{x_1}L^1_{x_2} L^2_{x_3} L^p_{x_4}$, etc (with the corresponding reshuffling of the domain of definition).
\end{corollary}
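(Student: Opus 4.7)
The plan is to derive both families of estimates as direct instances of Theorem~\ref{thm-part2} specialized to vector-valued depth $m=0$. Concretely, $BHT_{\rr R^4}$ is the $T_k$ operator of \eqref{eq:def:T_k} with $n=2$, $k=1$, $d=4$, while $T_{1,\rr R^4}$ corresponds to $n=4$, $k=1$, $d=4$; in both cases the rank constraint $0\le k<(n+1)/2$ holds. The task reduces to checking, for each stated tuple of mixed-norm Lebesgue exponents, the two hypotheses of Theorem~\ref{thm-part2}: the componentwise H\"older identity, and the componentwise $(1-\alpha_1,\ldots,1-\alpha_{n+1})$-locality for some $\vec\alpha\in\Xi_{n,1}$.

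For $BHT_{\rr R^4}$, I would set $P_1=(p,2,\infty,2)$, $P_2=(q,\infty,2,2)$, $P_3=(r',2,2,\infty)$ so that $L^{P_3'}$ matches the target, and then verify $\sum_{j=1}^3 1/P_j^{(i)}=1$ coordinate by coordinate: the $x_1$ case is just the standing hypothesis $1/p+1/q=1/r$, while the other three are direct arithmetic. The analogous bookkeeping for $T_{1,\rr R^4}$ uses $P_5^{(4)}=-2$ (the quasi-Banach H\"older conjugate of $2/3$) and goes through identically.

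The main step is to exhibit $\vec\alpha\in\Xi_{n,1}=\{\vec\alpha\in(0,1/2)^{n+1}:\sum_j\alpha_j=1\}$ so that $1/P_j^{(i)}<1-\alpha_j$ for every $i,j$. The observation that makes the argument clean is that in every coordinate $x_i$ with $i\ge 2$ the exponents are drawn from $\{2,\infty\}$ together with their H\"older duals (either $\{2,1\}$ or $\{2,2/3\}$), so $1/P_j^{(i)}\in\{0,1/2\}$ or is $\le 0$. Hence the locality inequality in the non-first coordinates is either vacuous or collapses to $\alpha_j<1/2$, which is already built into $\Xi_{n,1}$. The binding constraints therefore come only from $x_1$: one needs $\alpha_j<1-1/p_j$ on each input index and $\alpha_{n+1}<1/r$ (respectively $1/p$) on the target index.

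The resulting one-dimensional feasibility question---existence of $\vec\alpha\in(0,1/2)^{n+1}$ summing to $1$ and lying strictly below the prescribed upper bounds---is a short linear program. For $BHT_{\rr R^4}$ it reduces to $\min(1/2,1-1/p)+\min(1/2,1-1/q)+\min(1/2,1/r)>1$, which is implied by $1<p,q\le\infty$ together with $r>2/3$ (the extremal case being $p=q=4/3$, $r=2/3$, where equality is attained). For $T_{1,\rr R^4}$ the extremal configuration is the symmetric choice $p_1=\ldots=p_4=8/7$, yielding $4\cdot(1-7/8)+1/2=1$, so that $p>2/7$ is precisely the margin required. Once a valid $\vec\alpha$ has been produced, Theorem~\ref{thm-part2} with $m=0$ gives the stated mixed-norm bounds, and the ``similarly'' variants in which the $L^r$ (resp.\ $L^p$) factor is moved to a different coordinate follow from the identical argument applied to the reshuffled tuples, since nothing in the above analysis privileges $x_1$. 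The main obstacle, and essentially the only non-routine ingredient, is this final combinatorial check matching the advertised exponent ranges with the existence region of an admissible $\vec\alpha\in\Xi_{n,1}$.
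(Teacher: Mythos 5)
Your proposal is correct and takes precisely the route the paper intends: the corollary is listed under ``immediate consequences'' and is obtained exactly by specializing Theorem~\ref{thm-part2} with $m=0$, checking the componentwise H\"older identities, and then producing $\vec\alpha\in\Xi_{n,1}=\{\vec\alpha\in(0,1/2)^{n+1}:\sum\alpha_j=1\}$ via the feasibility inequality $\sum_j\min(1-1/p_j^1,1/2)+\min(1/(p_{n+1}^1)',1/2)>1$, noting that the coordinates $x_2,x_3,x_4$ contribute only exponents in $\{0,1/2\}$ (or $\leq 0$ after dualizing the quasi-Banach target), which makes their locality constraints collapse to the built-in requirement $\alpha_j<1/2$. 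Your verification of feasibility for $T_{1,\mathbb R^4}$ explicitly treats only the symmetric extremal configuration $p_1=\cdots=p_4=8/7$; a complete argument would also dispatch the asymmetric cases where some $p_j\geq 2$, but there $\min(1-1/p_j,1/2)=1/2$ makes the inequality even easier, so this is only a matter of exposition and not a logical gap.
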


We remark that the condition for the target space weakens with the number of functions considered. This was noticeable in the paraproduct case already: for an $n$-linear paraproduct, the Lebesgue exponent representing the target space had to satisfy $\frac{1}{n}<p$.

Now we consider the case of $4$-linear $T_2$ operators (associated to $k=2$) in $\rr R^4$.

\begin{corollary} We have
\[
T_{2, \rr R^4}:L^{p_1}_{x_1} L^2_{x_2} L^\infty_{x_3} L^2_{x_4} \times L^{p_2}_{x_1}L^\infty_{x_2} L^\infty_{x_3} L^2_{x_4} \times L^{p_3}_{x_1}L^2_{x_2} L^\infty_{x_3} L^2_{x_4} \times L^{p_4}_{x_1}L^\infty_{x_2} L^2_{x_3} L^\infty_{x_4} \to L^p_{x_1}L^1_{x_2} L^2_{x_3} L^{\frac{2}{3}}_{x_4}
\]
for all H\"older indices $p_1, p_2, p_3, p_4, p$ satisfying the usual conditions
\begin{equation}
\label{eq:cond1:T2}
1< p_1, p_2, p_3, p_4 \leq \infty, \quad \frac{2}{5} < p <\infty, \qquad \frac{1}{p_1}+\frac{1}{p_2}+\frac{1}{p_2}+\frac{1}{p_4}=\frac{1}{p},
\end{equation}
and additionally, for any $1\leq i_1 < i_2<i_3 \leq 4$,
\begin{equation}
\label{eq:cond2:T2}
\frac{1}{p_{i_1}}+\frac{1}{p_{i_3}}<\frac{3}{2}, \qquad \frac{1}{p_{i_1}}+\frac{1}{p_{i_2}}+\frac{1}{p_{i_3}}<2.
\end{equation}

Similarly, the target space can be $L^1_{x_1}L^p_{x_2} L^2_{x_3} L^\frac{2}{3}_{x_4}$, $L^2_{x_1}L^1_{x_2} L^p_{x_3} L^\frac{2}{3}_{x_4}$, $L^\frac{2}{3}_{x_1}L^1_{x_2} L^2_{x_3} L^p_{x_4}$, etc (with the corresponding reshuffling of the domain of definition).
\end{corollary}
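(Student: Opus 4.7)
The plan is to apply Theorem~\ref{thm-part2} in the setting $d = n = 4$, $k = 2$. Reading off the domain and codomain, I extract the five $4$-tuples of Lebesgue exponents: $P_1 = (p_1, 2, \infty, 2)$, $P_2 = (p_2, \infty, \infty, 2)$, $P_3 = (p_3, 2, \infty, 2)$, $P_4 = (p_4, \infty, 2, \infty)$, and $P_5' = (p, 1, 2, \tfrac{2}{3})$. The hypothesis of the theorem requires that, componentwise, the tuples $(P_1, \ldots, P_5)$ form a $(1-\alpha_1, \ldots, 1-\alpha_5)$-local H\"older tuple for a single $\vec\alpha \in \Xi_{4, 2}$. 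The H\"older identity in coordinate $i = 1$ is exactly \eqref{eq:cond1:T2}; for $i \in \{2, 3, 4\}$ it is a direct arithmetic check after passing to duals (for instance, $\tfrac{2}{3}' = -2$ gives $\tfrac{1}{2} + \tfrac{1}{2} + \tfrac{1}{2} + 0 + (-\tfrac{1}{2}) = 1$ in coordinate~$4$).

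Turning to the locality condition, every $\vec\alpha \in \Xi_{4, 2}$ automatically belongs to $(0, \tfrac{1}{2})^5$. In coordinates $i \in \{2, 3, 4\}$ each reciprocal $1/p_j^i$ is either $0$, $\tfrac{1}{2}$, or negative (in the dual of the quasi-Banach slot), and the inequality $1/p_j^i < 1 - \alpha_j$ is therefore satisfied for free. The entire locality burden thus concentrates on coordinate $i = 1$: I need to produce $\vec\alpha \in \Xi_{4, 2}$ with $\alpha_j < 1 - 1/p_j$ for $j = 1, \ldots, 4$ and $\alpha_5 < 1/p$, which is precisely the assertion $(p_1, p_2, p_3, p_4, p') \in Range(4, 2)$.

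The hard part is the linear-programming feasibility of this last condition. Writing $\beta_j := \min(1 - 1/p_j, \tfrac{1}{2})$ for $j \leq 4$ and $\beta_5 := \min(1/p, \tfrac{1}{2})$, and recalling that $\Xi_{4, 2}$ consists of $\vec\alpha \in (0, \tfrac{1}{2})^5$ with $\sum_j \alpha_j = 2$ realizable through some $\vec\theta \in \Theta_{4, 2}$, a short LP argument shows that every such $\vec\alpha$ is in fact realizable (the marginal bounds being the only obstructions), so feasibility reduces to $\beta_j > 0$ for each $j$ together with $\sum_j \beta_j > 2$. Splitting according to $A := \{j \in \{1, \ldots, 4\} : p_j < 2\}$ and invoking the H\"older identity $\sum_{j=1}^{4} 1/p_j = 1/p$, the sum condition unpacks to $\sum_{j \in A} 1/p_j < (|A| + 1)/2$: for $|A| = 4$ this is exactly the lower bound $p > \tfrac{2}{5}$, while for $|A| \in \{2, 3\}$ it matches the pair and triple inequalities \eqref{eq:cond2:T2}; the positivity $\beta_j > 0$ encodes $p_j > 1$ and $p > 0$ in \eqref{eq:cond1:T2}. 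Once a feasible $\vec\alpha$ is produced, Theorem~\ref{thm-part2} delivers the estimate, and the three alternative placements of $L^p$ in the $x_2$, $x_3$, or $x_4$ slot are handled by the same argument, since the roles of $x_2, x_3, x_4$ enter symmetrically into the above analysis.
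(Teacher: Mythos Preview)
Your argument is correct and is precisely the paper's intended one: the corollary is presented in Section~\ref{sec:applications:immediate} as an immediate instantiation of Theorem~\ref{thm-part2} without proof, and you have supplied the verification that the componentwise H\"older and locality hypotheses hold, correctly reducing everything to the single constraint $(p_1,\dots,p_4,p')\in Range(4,2)$ and unpacking that via the hypersimplex description of $\Xi_{4,2}$. One small remark: your reading of \eqref{eq:cond2:T2} as a pair condition ranging over \emph{all} pairs $\{i,j\}\subset\{1,2,3,4\}$ is what is actually needed for the feasibility argument (and surely what the authors intend), even though the quantifier as written literally attaches the two-term inequality to ordered triples.
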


It can be seen that the range of boundedness of the operator $T_2$ is more difficult to describe than the one for $T_1$; this is because the extra condition \eqref{eq:cond2:T2} does not follow immediately from \eqref{eq:cond1:T2}.

We record also the following particular estimates, which will be needed later on:

\begin{corollary}\label{cor:applicationT2:olnyPs} For any $\frac{1}{2}<p<\infty$, we have 
\[
T_{2, \rr R^4}:L^{\infty}_{x_1} L^{3p}_{x_2} L^{3p}_{x_3} L^{3p}_{x_4} \times L^{3p}_{x_1}L^\infty_{x_2} L^{3p}_{x_3} L^{3p}_{x_4} \times L^{3p}_{x_1}L^{3p}_{x_2} L^\infty_{x_3} L^{3p}_{x_4} \times L^{3p}_{x_1}L^{3p}_{x_2} L^{3p}_{x_3} L^\infty_{x_4} \to L^p_{x_1}L^p_{x_2} L^p_{x_3} L^{p}_{x_4}
\]
and also 
\[
T_{2, \rr R^4}:L^{\infty}_{x_1} L^{2p}_{x_2} L^{2p}_{x_3} L^{\infty}_{x_4} \times L^{\infty}_{x_1}L^\infty_{x_2} L^{2p}_{x_3} L^{2p}_{x_4} \times L^{2p}_{x_1}L^{\infty}_{x_2} L^\infty_{x_3} L^{2p}_{x_4} \times L^{2p}_{x_1}L^{2p}_{x_2} L^{\infty}_{x_3} L^\infty_{x_4} \to L^p_{x_1}L^p_{x_2} L^p_{x_3} L^{p}_{x_4}.
\]
\end{corollary}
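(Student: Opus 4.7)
The plan is to derive both estimates as direct consequences of Theorem \ref{thm-part2} applied to the $4$-linear, rank-$2$ operator $T_{2,\mathbb R^4}$. What needs to be checked in each case is that the $4$-tuple of Lebesgue exponents on the source (together with the target) can be organized componentwise as a $(1-\alpha_1,\ldots,1-\alpha_5)$-local H\"older tuple for some common $(\alpha_1,\ldots,\alpha_5)\in\Xi_{4,2}$.

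First, I would check the H\"older scaling coordinate by coordinate. For the first estimate, set $P_1=(\infty,3p,3p,3p)$, $P_2=(3p,\infty,3p,3p)$, $P_3=(3p,3p,\infty,3p)$, $P_4=(3p,3p,3p,\infty)$ and $P_5=(p',p',p',p')$ (where $P_5$ is the H\"older conjugate of the target $(p,p,p,p)$). At every coordinate exactly three of the $P_j$ contribute $\tfrac1{3p}$, one contributes $0$, and the last contributes $\tfrac1{p'}$, summing to $\tfrac1p+\tfrac1{p'}=1$. The second estimate is treated identically: with $P_1=(\infty,2p,2p,\infty)$, $P_2=(\infty,\infty,2p,2p)$, $P_3=(2p,\infty,\infty,2p)$, $P_4=(2p,2p,\infty,\infty)$, each coordinate has two $\tfrac1{2p}$ contributions plus $\tfrac1{p'}$, which again totals $1$.

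Next, I would exhibit a suitable $(\alpha_1,\ldots,\alpha_5)\in\Xi_{4,2}$. Exploiting the symmetry among the first four indices, I look for interpolation coefficients of the form $\theta_{i,5}=\beta$ for $i\in\{1,2,3,4\}$ and $\theta_{i,j}=\gamma$ for $1\le i<j\le 4$. The normalization in Definition~\ref{def:Xi_n, k} reads $6\gamma+4\beta=1$, and the induced $\alpha$'s are $\alpha_1=\alpha_2=\alpha_3=\alpha_4=3\gamma+\beta=\tfrac12-\beta$ and $\alpha_5=4\beta$. Picking any $\beta\in\bigl(0,\min(\tfrac18,\tfrac1{4p})\bigr)$ (which is non-empty for every $p>\tfrac12$) guarantees $\alpha_j\in(0,\tfrac12)$ and, crucially, $\alpha_5<\tfrac1p$.

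Finally, I verify the locality: at each coordinate we have $\tfrac1{p_j^i}\in\{0,\tfrac1{3p}\}$ (or $\{0,\tfrac1{2p}\}$ in the second case) for $j=1,\ldots,4$, which is strictly below $1-\alpha_j=\tfrac12+\beta<1-\tfrac1{3p}$ as soon as $\beta<\tfrac12-\tfrac1{3p}$ (automatic for $p>\tfrac23$; for $\tfrac12<p\le\tfrac23$ the bound $\beta<\tfrac1{4p}$ is already more restrictive). For the fifth slot, $\tfrac1{p_5^i}=1-\tfrac1p<1-\alpha_5$ is exactly the condition $\alpha_5<\tfrac1p$ ensured by the choice of $\beta$. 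Theorem \ref{thm-part2} (with trivial vector space, $m=0$) then delivers both estimates. There is no real obstacle here beyond bookkeeping; the only point requiring slight care is making sure that the same single $(\alpha_1,\ldots,\alpha_5)$ works simultaneously across all four spatial coordinates, which is handled by the symmetric choice above.
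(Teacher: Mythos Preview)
Your overall strategy---apply Theorem~\ref{thm-part2} with $n=4$, $k=2$, $d=4$ and a symmetric choice of $(\alpha_1,\dots,\alpha_5)\in\Xi_{4,2}$---is exactly what the paper has in mind (the corollary sits in the ``immediate consequences'' section with no further argument). Your H\"older verification is correct, and your parametrisation $\alpha_1=\cdots=\alpha_4=\tfrac12-\beta$, $\alpha_5=4\beta$ (coming from $\theta_{i5}=\beta$, $\theta_{ij}=\gamma=\tfrac{1-4\beta}{6}$ for $i<j\le 4$) is a valid and natural way to produce points of $\Xi_{4,2}$.

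The gap is in your locality check. What you must verify is $\tfrac{1}{3p}<1-\alpha_j=\tfrac12+\beta$, i.e.\ a \emph{lower} bound $\beta>\tfrac{1}{3p}-\tfrac12$; the remark that ``the bound $\beta<\tfrac{1}{4p}$ is already more restrictive'' is beside the point, since that is an upper bound. For $\tfrac12<p\le\tfrac23$ you therefore need $\tfrac{1}{3p}-\tfrac12<\beta<\tfrac18$, which is empty once $p\le\tfrac{8}{15}$. Worse, this is not an artifact of the symmetric ansatz: any $(\alpha_1,\dots,\alpha_5)\in\Xi_{4,2}$ must satisfy $\alpha_j<1-\tfrac{1}{3p}$ for $j\le 4$ and $\alpha_5<\tfrac12$ with $\sum\alpha_j=2$, forcing $4\bigl(1-\tfrac{1}{3p}\bigr)+\tfrac12>2$, i.e.\ $p>\tfrac{8}{15}$. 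The analogous computation for the second estimate (replace $3p$ by $2p$) gives the threshold $p>\tfrac45$. Thus a direct application of Theorem~\ref{thm-part2} only yields the first estimate for $p>\tfrac{8}{15}$ and the second for $p>\tfrac45$; the full range $p>\tfrac12$ stated in the corollary does not follow from your argument (nor, as written, immediately from the theorem).
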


\enskip
\subsection{More involved applications}~\\
\label{sec:applications:involved}

There are other interesting consequences of our results in Theorem \ref{thm-part2}. We list them below, with an emphasis on illustrating the principles behind them; this will imply that oftentimes we will not write the most general statement.

\begin{enumerate}[leftmargin=*]
\setlength{\leftmargin}{-50pt}
\setlength\itemsep{1em}

\item \textbf{(mixed-norm) Loomis-Whitney inequalities}
\enskip

Suppose we have four functions $f_1, f_2, f_3, f_4$ in $\rr R^4$; we consider the simplest $4$-linear map, which amounts to taking the product of the $4$ functions:
\begin{equation}
\label{eq:product}\tag{H}
(f_1, f_2, f_3, f_4) \mapsto f_1 \cdot f_2 \cdot f_3 \cdot f_4.
\end{equation}

H\"older's inequality applied repeatedly in each of the four variables will imply that the product satisfies
\begin{equation}
\label{eq:Holder-mix}\tag{m H}
L^{p_1}_{x_1}L^{q_1}_{x_2}L^{s_1}_{x_3}L^{t_1}_{x_4} \times L^{p_2}_{x_1}L^{q_2}_{x_2}L^{s_2}_{x_3}L^{t_2}_{x_4} \times L^{p_3}_{x_1}L^{q_3}_{x_2}L^{s_3}_{x_3}L^{t_3}_{x_4} \times L^{p_4}_{x_1}L^{q_4}_{x_2}L^{s_4}_{x_3}L^{t_4}_{x_4} \to L^{p}_{x_1}L^{q}_{x_2}L^{s}_{x_3}L^{t}_{x_4}
\end{equation}
estimates as long as the indices are in the interval $(0,+\infty ]$ and verify the usual\footnote{These are the so-called H\"older conditions.} homogeneity conditions
\[
\frac{1}{p_1}+\frac{1}{p_2}+\frac{1}{p_3}+\frac{1}{p_4}=\frac{1}{p}, \quad \frac{1}{q_1}+\frac{1}{q_2}+\frac{1}{q_3}+\frac{1}{q_4}=\frac{1}{q}, \quad , \frac{1}{s_1}+\frac{1}{s_2}+\frac{1}{s_3}+\frac{1}{s_4}=\frac{1}{s}, \quad \frac{1}{t_1}+\frac{1}{t_2}+\frac{1}{t_3}+\frac{1}{t_4}=\frac{1}{t}. 
\]

In particular, for every $0<p\leq \infty$ the product maps 
\begin{equation}
\label{eq:Holder-particular-1}\tag{H1}
L^\infty_{x_1} L^{3p}_{x_2}L^{3p}_{x_3}L^{3p}_{x_4} \times L^{3p}_{x_1}L^\infty_{x_2} L^{3p}_{x_3} L^{3p}_{x_4} \times L^{3p}_{x_1} L^{3p}_{x_2} L^\infty_{x_3} L^{3p}_{x_4} \times L^{3p}_{x_1}L^{3p}_{x_2}L^{3p}_{x_3}L^{\infty}_{x_4} \to L^p_{\rr R^4}
\end{equation}
and also
\begin{equation}
\label{eq:Holder-particular-2}\tag{H2}
L^\infty_{x_1} L^{2p}_{x_2}L^{2p}_{x_3}L^{\infty}_{x_4} \times L^{\infty}_{x_1}L^\infty_{x_2} L^{2p}_{x_3} L^{2p}_{x_4} \times L^{2p}_{x_1} L^{\infty}_{x_2} L^\infty_{x_3} L^{2p}_{x_4} \times L^{2p}_{x_1}L^{2p}_{x_2}L^{\infty}_{x_3}L^{\infty}_{x_4}\to L^p_{\rr R^4}.
\end{equation} 
If we consider functions $f_1, f_2, f_3$ and $f_4$ so that for every $1 \leq j \leq 4$, $f_j$ is independent of the $x_j$ variable, then \eqref{eq:Holder-particular-1} and \eqref{eq:Holder-particular-2} degenerate into the estimates
\begin{equation}
\label{eq:LW:particular-1}\tag{$\tilde H1$}
L^{3p}_{\rr R^3} \times L^{3p}_{\rr R^3} \times L^{3p}_{\rr R^3} \times L^{3p}_{\rr R^3} \to L^{p}_{\rr R^4}
\end{equation}
and respectively
\begin{equation}
\label{eq:LW:particular-2}\tag{$\tilde H2$}
L^{2p} L^{2p}L^{\infty} \times L^\infty L^{2p} L^{2p} \times  L^{2p} L^\infty L^{2p} \times L^{2p} L^{2p}L^{\infty}\to L^{p}_{\rr R^4}.
\end{equation}

This is simply because the $L^\infty$ norm of $f_j$ with respect to the $x_j$ variable becomes irrelevant.

An alternative way to phrase this is the following: given $4$ functions of $3$ variables $\big( f_j(x) \big)_{j=1}^4$, we can produce $4$ functions of $4$ variables $\big( g_j(x) \big)_{j=1}^4$ through the composition 
\[
g_j(x)=f_j \circ P_j(x), \quad x \in \rr R^4,
\]
where each $\ds P_j : \rr R^4 \to \rr R^3$ is the linear projection that ``forgets" the $x_j$ variable; more precisely, 
\[
P_1(x_1, x_2, x_3, x_4)=(x_2, x_3, x_4), \quad P_2(x_1, x_2, x_3, x_4)=(x_1, x_3, x_4),
\]
\[
P_3(x_1, x_2, x_3, x_4)=(x_1, x_2, x_4), \quad P_4(x_1, x_2, x_3, x_4)=(x_1, x_2, x_3). 
\]

Hence \eqref{eq:LW:particular-1} and \eqref{eq:LW:particular-2}, which are relevant to our functions $f_j$ depending only on three variables, are simple consequences of \eqref{eq:Holder-particular-1} and \eqref{eq:Holder-particular-2} respectively, applied to the associated functions $g_j$ in $\rr R^4$. Estimates such as \eqref{eq:LW:particular-1} are called \emph{Loomis-Whitney} in the literature (see for example \cite{LoomisWhitney-initial}). The classical inequality in $\rr R^4$ reads as
\[
|\int_{\rr R^4} f_1(x_2, x_3, x_4) \cdot f_2(x_1, x_3, x_4) \cdot f_3(x_1, x_2, x_4) \cdot f_4(x_1, x_2, x_3) d x|\lesssim \|f_1\|_{L^3_{\rr R^3}}  \|f_2\|_{L^3_{\rr R^3}}  \|f_3\|_{L^3_{\rr R^3}}  \|f_4\|_{L^3_{\rr R^3}}
\]
and it is clearly \eqref{eq:LW:particular-1} in the particular situation $p=1$.

It is not difficult to see that this mechanism of producing estimates for functions depending on three variables from estimates for functions of $4$ variables is quite generic: it can be applied to any $4$-linear map in $\rr R^4$ as long as it satisfies the necessary mixed-norm estimates involving $L^\infty$ spaces. For instance, the estimates \eqref{eq:Holder-particular-1} and \eqref{eq:Holder-particular-2} for $4$-linear operators of type $T_k$ for $k=0, 1, 2$ are true as long as $p > \frac{1}{2}$, on account of Corollary \ref{cor:applicationT2:olnyPs}. If now $T$ denotes any of these operators $T_k$ (for $k=0,1,2$), then it similarly defines another $4$-linear operator $\tilde T$ that is this time acting on functions in $\rr R^3$ by the formula
\begin{equation}
\label{eq:def:LW} \tag{LW}
\tilde T(f_1, f_2, f_3, f_4)(x)=T(f_1 \circ P_1, f_2 \circ P_2, f_3 \circ P_3, f_4 \circ P_4)(x) \quad \text{for all    } x \in \rr R^4.
\end{equation}
So $\tilde T$ is an operator that takes as input $4$ functions in $\rr R^3$ and yields as output one function in $\rr R^4$. As before, the mixed-norm estimates \eqref{eq:Holder-particular-1} and \eqref{eq:Holder-particular-2} applied to $T$ (which are immediate consequences of out main Theorem \ref{thm-part2}) imply the corresponding estimates \eqref{eq:LW:particular-1} and \eqref{eq:LW:particular-2} that are available for $\tilde T$. We thus record the following:

\begin{corollary}
\label{eq:cor:LW}
For every $k=0, 1, 2$ and for any $\frac{1}{2}<p<\infty$, the operators $\tilde T_k$ defined above in \eqref{eq:def:LW} map $\ds L^{3p}_{\rr R^3} \times L^{3p}_{\rr R^3} \times L^{3p}_{\rr R^3} \times L^{3p}_{\rr R^3} \to L^{p}_{\rr R^4}$and also $\ds L^{2p}L^{2p}L^\infty \times L^\infty L^{2p} L^{2p} \times L^{2p}L^\infty L^{2p} \times L^{2p} L^{2p} L^\infty \to L^{p}_{\rr R^4}$.
\end{corollary}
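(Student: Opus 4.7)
The proof amounts to unwinding the definition \eqref{eq:def:LW} and invoking Corollary \ref{cor:applicationT2:olnyPs}. The plan is as follows. Fix $k \in \{0,1,2\}$, $p > \frac{1}{2}$, and functions $f_j : \rr R^3 \to \rr C$ for $1 \leq j \leq 4$; set $g_j := f_j \circ P_j : \rr R^4 \to \rr C$. By construction $\tilde T_k(f_1,\ldots,f_4) = T_k(g_1,\ldots,g_4)$. The key observation is that $g_j$ is constant in the $x_j$ variable (this is precisely what ``$P_j$ forgets $x_j$'' means), so for any mixed-norm of the form $\|\cdot\|_{L^\infty_{x_j} Y}$ with $Y$ a (mixed-)norm on functions of the remaining three variables, one has
\[
\|g_j\|_{L^\infty_{x_j} Y} = \|f_j\|_{Y}.
\]

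For the first claimed estimate, apply the first bound of Corollary \ref{cor:applicationT2:olnyPs} to $(g_1, g_2, g_3, g_4)$. The $j$-th input norm on $\rr R^4$ places an $L^\infty$ precisely in the $x_j$-slot; by the collapse above it reduces to a norm on $\rr R^3$ whose three remaining exponents all equal $3p$, so $\|g_j\|_{L^\infty_{x_j}L^{3p}_{\text{rest}}} = \|f_j\|_{L^{3p}(\rr R^3)}$. This immediately yields $\tilde T_k: L^{3p}(\rr R^3)^{4} \to L^p(\rr R^4)$.

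For the second estimate, apply the second bound of Corollary \ref{cor:applicationT2:olnyPs}. Here each $L^\infty$-slot in the $j$-th input factor includes the $x_j$ direction (a straightforward factor-by-factor inspection), so again the four $\rr R^4$-norms collapse to $\rr R^3$-norms: for instance $\|g_1\|_{L^\infty_{x_1}L^{2p}_{x_2}L^{2p}_{x_3}L^\infty_{x_4}} = \|f_1\|_{L^{2p}L^{2p}L^\infty}$, $\|g_2\|_{L^\infty_{x_1}L^\infty_{x_2}L^{2p}_{x_3}L^{2p}_{x_4}} = \|f_2\|_{L^\infty L^{2p} L^{2p}}$, and analogously for $j=3,4$. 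Stringing these identifications produces exactly the mixed-norm Loomis-Whitney bound claimed.

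There is no substantive obstacle beyond Corollary \ref{cor:applicationT2:olnyPs} itself, which is where the helicoidal machinery is used; once its $\rr R^4$-estimates are in hand the deduction is a direct reading off of the definitions. The only thing to verify is the alignment between the $L^\infty$-slots of each input norm and the direction along which the corresponding $f_j$ is constant, and this alignment is built into the choice of the projections $P_j$ and into the way Corollary \ref{cor:applicationT2:olnyPs} is stated.
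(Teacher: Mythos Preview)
Your proof is correct and follows essentially the same route as the paper: invoke the mixed-norm estimates of Corollary~\ref{cor:applicationT2:olnyPs} (which the paper notes hold for all $T_k$, $k=0,1,2$) applied to $g_j=f_j\circ P_j$, then use that $g_j$ is constant in $x_j$ so the $L^\infty_{x_j}$ slot collapses. One minor remark: your general identity $\|g_j\|_{L^\infty_{x_j}Y}=\|f_j\|_Y$ is phrased with $L^\infty_{x_j}$ outermost, whereas in the actual applications the $L^\infty$ sits at an interior position of the mixed norm; the collapse is still valid (since the function is constant in that variable the $\sup_{x_j}$ is trivial wherever it occurs), and your worked examples show you are using it correctly.
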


As might be expected, all the discussion here can be extended in a natural way to an arbitrary number of variables, producing in this way Loomis-Whitney-type estimates (mixed or not) for multilinear operators of arbitrary complexity (of course, under the assumption that $0 \leq k<\frac{n+1}{2}$, and within the range mentioned in Theorem \ref{thm-part2}).

Another example, which should be thought of as a singular integral version of \cite{Finner-Brascamp-Lieb} and which represents a step in the direction of more general singular Brascamp-Lieb inequalities deals with orthogonal projections onto various lower dimensional subspaces of $\rr R^d$. More exactly, we fix $n \geq 1$ and for all $1 \leq j \leq n+1$ we consider maps $ \sigma_j \colon \{ x_1, \ldots, x_d   \} \to \{ x_1, \ldots, x_d \}$ so that the image of $\sigma_j$ has $d_j$ elements (with $1 \leq d_j \leq d$). To these we can associate projections
\[
P_{\sigma_j}: \rr R^d \to \rr R^{d_j}
\]
which consist in forgetting the variables which are not in the image  $\sigma_j (\{ x_1, \ldots, x_d\})$, and we can ask the question regarding the boundedness of the multilinear form acting on functions $f_j \circ P_{\sigma_j}$. We have the following:

\begin{corollary}
\label{thm:Finner:Brascamp:Lieb}
Let $n, k, d \geq 1$ be positive integers with $0 \leq k < \frac{n+1}{2}$, and let $T_k$ be an $n$-linear operator as in \eqref{eq:def:use:kernel}. For all $1 \leq j \leq n+1$, we consider projections $P_{\sigma_j}: \rr R^d \to \rr R^{d_j}$ as described above, and $d_j$-tuples $P_j=(p_j^1, \ldots, p_j^{d_j})$, to which we associate the ordered $d$-tuple $\tilde P_j=(\tilde p_j^1, \ldots, \tilde p_j^d)$ in which $\infty$ is inserted in place of an index not in the image of $\sigma_j$. Then the $(n+1)$-linear form associated to $T_k$ satisfies 
\begin{equation*}
\label{eq:Finner:BL:conclusion}
\big| \int_{\rr R^d}  \int_{\rr R^{d (n-k)}} f_1 \circ P_{\sigma_1} (x+\gamma_1(t)) \cdot \ldots \cdot f_{n} \circ P_{\sigma_n}(x+\gamma_n(t)) f_{n+1} \circ P_{\sigma_{n+1}}(x) \, K(t) dt dx  \big| \lesssim \prod_{j=1}^{n+1} \|  f_j \|_{L^{P_j}(\rr R^{d_j})}
\end{equation*}
for all corresponding tuples of Lebesgue exponents $P_1, \ldots, P_n, P_{n+1}$ so that the associated $d$-tuples $\tilde P_1, \ldots, \tilde P_n, \tilde P_{n+1}$ satisfy component-wise a H\"older condition in each of the variables $x_i$, i.e. such that
\begin{equation}
\label{eq:Finner:BL:condition}
\sum_{\substack{1 \leq j \leq n+1 \\ x_i \in \sigma_j (\{ x_1, \ldots, x_d\})}} \frac{1}{\tilde p_j^i}=1 \qquad  \text{for every $1 \leq i \leq d$},
\end{equation}
and for which $(\tilde P_1, \ldots, \tilde P_n, \tilde P_{n+1})$ is (componentwise) $(1-\alpha_1, \ldots, 1-\alpha_n, 1-\alpha_{n+1})$-local for some $(\alpha_1, \ldots, \alpha_{n+1}) \in \Xi_{n, k}$.

\begin{proof}
The proof is a direct consequence of Theorem \ref{thm-part2}, for mixed-norm Lebesgue spaces $L^{\tilde P_1}(\rr R^d)$, $\ldots$, $L^{\tilde P_{n+1}}(\rr R^d)$. Any function $f_j \in L^{P_j}(\rr R^{d_j})$ which only depends on the $d_j$ variables $\sigma_j (\{ x_1, \ldots, x_d\})$ can be seen as a function in $L^{\tilde P_j}(\rr R^{d})$, where we associate $L^\infty$ norms in the variables not belonging to $\sigma_j (\{ x_1, \ldots, x_d\})$. Moreover, we have
\[
\|f_j\|_{L^{P_j}(\rr R^{d_j})}= \|f_j\|_{L^{\tilde P_j}(\rr R^{d})},
\]
which implies the conclusion, thanks to Theorem \ref{thm-part2}.
\end{proof}
\end{corollary}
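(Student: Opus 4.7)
The plan is to reduce this to a direct application of Theorem \ref{thm-part2}. The key observation is that each $f_j \in L^{P_j}(\mathbb{R}^{d_j})$, regarded via composition with $P_{\sigma_j}$ as a function $f_j \circ P_{\sigma_j}$ on $\mathbb{R}^d$, is constant in each variable $x_i \notin \sigma_j(\{x_1,\ldots,x_d\})$. Consequently, the $L^\infty$ norm in any of those missing variables of $f_j \circ P_{\sigma_j}$ is just the pointwise value $|f_j|$. Therefore, forming the mixed-norm space $L^{\tilde P_j}(\mathbb{R}^d)$ by inserting an exponent equal to $\infty$ in every position whose corresponding variable is missing from $\sigma_j$, we obtain the isometric identification
\[
\|f_j \circ P_{\sigma_j}\|_{L^{\tilde P_j}(\mathbb{R}^d)} = \|f_j\|_{L^{P_j}(\mathbb{R}^{d_j})}.
\]

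With this identification in hand, the next step is to check that the assembled mixed-norm tuple $(\tilde P_1, \ldots, \tilde P_n, \tilde P_{n+1})$ meets the hypotheses of Theorem \ref{thm-part2}. Reading condition \eqref{eq:Finner:BL:condition} componentwise in the variable $x_i$, the sum is taken only over those indices $j$ for which $x_i \in \sigma_j(\{x_1,\ldots,x_d\})$, since the contribution $1/\tilde p_j^i$ vanishes otherwise (because $\tilde p_j^i = \infty$ in that case). Thus \eqref{eq:Finner:BL:condition} is exactly the componentwise H\"older condition $\sum_{j=1}^{n+1} 1/\tilde p_j^i = 1$ required of a H\"older tuple. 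The hypothesis that $(\tilde P_1, \ldots, \tilde P_{n+1})$ is componentwise $(1-\alpha_1,\ldots,1-\alpha_{n+1})$-local for some $(\alpha_1,\ldots,\alpha_{n+1}) \in \Xi_{n,k}$ is assumed directly.

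Having verified these conditions, Theorem \ref{thm-part2} (in its scalar, non-vector-valued form) applies to the $n$-linear operator $T_k$ acting on the functions $f_j \circ P_{\sigma_j} \in L^{\tilde P_j}(\mathbb{R}^d)$, giving
\[
\big\| T_k(f_1 \circ P_{\sigma_1}, \ldots, f_n \circ P_{\sigma_n}) \big\|_{L^{(\tilde P_{n+1})'}(\mathbb{R}^d)} \lesssim \prod_{j=1}^n \|f_j \circ P_{\sigma_j}\|_{L^{\tilde P_j}(\mathbb{R}^d)} = \prod_{j=1}^n \|f_j\|_{L^{P_j}(\mathbb{R}^{d_j})}.
\]
Dualizing by pairing against $f_{n+1} \circ P_{\sigma_{n+1}} \in L^{\tilde P_{n+1}}(\mathbb{R}^d)$ (again using the isometric identification for the $(n+1)$th function) then yields the $(n+1)$-linear form estimate claimed in \eqref{eq:Finner:BL:conclusion}.

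There is no substantive obstacle beyond bookkeeping: the entire content lies in recognizing that functions depending on fewer variables sit isometrically in the mixed-norm space with $\infty$ in the missing slots, which converts a Brascamp-Lieb-type (lower dimensional) statement into a purely mixed-norm statement on $\mathbb{R}^d$ of precisely the form already established. The only thing worth double-checking is that the locality hypothesis is stated on the full $d$-tuples $\tilde P_j$ (which may have coordinates equal to $\infty$ and hence automatically satisfy $1/\tilde p_j^i = 0 < 1-\alpha_j$ in those slots), so the condition really only constrains the finite coordinates coming from $P_j$ itself.
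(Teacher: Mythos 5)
Your proof is correct and follows exactly the same route as the paper's: lift each $f_j$ to $\mathbb{R}^d$ via $f_j \circ P_{\sigma_j}$, identify the $L^{P_j}(\mathbb{R}^{d_j})$ norm isometrically with the mixed norm $L^{\tilde P_j}(\mathbb{R}^d)$ by placing $\infty$ in the unused coordinates, check the componentwise H\"older and locality hypotheses, and invoke Theorem \ref{thm-part2}. The extra care you take in spelling out the isometric identification, noting that the locality condition is vacuous in the $\infty$-slots, and making the final dualization explicit is welcome but not a different method.
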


\item \textbf{Brascamp-Lieb inequalities: some particular cases} ~\\
\enskip
The previous discussion about Loomis-Whitney inequalities can be extended in a natural way: instead of the projections $P_j : \rr R^4 \to \rr R^3$, consider linear maps $L_j: \rr R^4 \to \rr R^3$ for $1\leq j \leq 4$ and for now assume that they are non-degenerate, in the sense that they are all surjective and that the four one-dimensional vectors that span their kernels are linearly independent. Then, exactly as before, any $4$-linear operator $T$ (which is a $T_k$ for some $0 \leq k \leq 2$) defines a $4$-linear operator $ \dbtilde{T}$   which this time maps functions in $\rr R^3$ into functions in $\rr R^4$ by the formula
\begin{equation}
\label{eq:BL-nondegen} \tag{BL $1$}
\dbtilde{T}(f_1, f_2, f_3, f_4)(x)= T(f_1 \circ L_1, f_2 \circ L_2, f_3 \circ L_3, f_4 \circ L_4)(x) \quad \text{for all    } x \in \rr R^4.
\end{equation}
Via elementary linear algebra arguments it is possible to find, for all $1 \leq j \leq 4$, linear, invertible maps $R_j : \rr R^3 \to \rr R^3$ and $U : \rr R^4 \to \rr R^4$ such that
\begin{equation}
\label{eq:lineear-nondegen-maps}
L_j(x)=R_j(P_j(Ux)) \quad \text{for all   }1 \leq j \leq 4, \quad \text{and every    } x \in \rr R^4.
\end{equation}

In order to state the main result, we need one more definition. If $v_1, v_2, v_3$ are three linearly independent vectors in $\rr R^3$, by $\ds \|F\|_{L_{v_1}^pL_{v_2}^qL_{v_3}^r}$ we mean the mixed-norm of the function $F$ along the directions given by $v_1, v_2$ and $v_3$ respectively, i.e.
\[
\|F\|_{L_{v_1}^p L_{v_2}^q L_{v_3}^r}:=  \big( \int_{\rr R} \big( \int_{\rr R}  \big( \int_{\rr R}  \big| F(x_1 v_1+x_2 v_2+x_3 v_3 )  \big|^r d \, x_3 \big)^{\frac{q}{r}} d \, x_2 \big)^\frac{p}{q}  d \, x_1 \big)^\frac{1}{p}.
\]
\enskip
The following corollary can be proved exactly as in the previous case:
\begin{corollary}
\label{cor:Brascamp-Lieb}
Let $L_j : \rr R^4 \to \rr R^3$ for $1\leq j \leq 4$ be non-degenerate linear maps as above. Then for every $k=0, 1, 2$ and for every $p>\frac{1}{2}$, the operators $\dbtilde T_k$ defined in \eqref{eq:BL-nondegen} map
\begin{equation}
\label{eq:BL-particular-non-degen}
L^{3p}_{\rr R^3} \times L^{3p}_{\rr R^3} \times  L^{3p}_{\rr R^3} \times L^{3p}_{\rr R^3} \to L^p_{\rr R^4}
\end{equation}
and also
\[
L^{2p}_{R_1(e_1)}L^{2p}_{R_1(e_2)}L^{\infty}_{R_1(e_3)} \times L^{\infty}_{R_2(e_1)}L^{2p}_{R_2(e_2)}L^{2p}_{R_2(e_3)}\times L^{2p}_{R_3(e_1)}L^{\infty}_{R_3(e_2)}L^{2p}_{R_3(e_3)} \times L^{2p}_{R_4(e_1)}L^{2p}_{R_4(e_2)}L^{\infty}_{R_4(e_3)} \to L^p_{\rr R^4}
\]
where $e_1, e_2, e_3$ is the standard basis in $\rr R^3$.
\end{corollary}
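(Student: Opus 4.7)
My plan is to reduce the statement to the Loomis--Whitney Corollary~\ref{eq:cor:LW} via the explicit factorization \eqref{eq:lineear-nondegen-maps}, which provides invertible linear maps $R_j:\rr R^3\to\rr R^3$ (for $1\leq j\leq 4$) and $U:\rr R^4\to\rr R^4$ with $L_j = R_j\circ P_j\circ U$. Setting $g_j := f_j\circ R_j$, one immediately rewrites
\[
\dbtilde T_k(f_1,\ldots,f_4)(x)\;=\;T_k\bigl((g_1\circ P_1)\circ U,\ldots,(g_4\circ P_4)\circ U\bigr)(x),
\]
so that the $R_j$ have been absorbed entirely into the new inputs $g_j$, which are defined on $\rr R^3$.

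Next I would dispose of the outer composition with $U$ by a change of variable $y = Ux$ on the target. Starting from the kernel representation \eqref{eq:def:use:kernel}, $T_k(h_j\circ U)(x) = \int \prod_j h_j\bigl(U(x+\gamma_j(t))\bigr)K(t)\,dt$; writing $y=Ux$ and $\widehat\gamma_j := U\circ\gamma_j$ turns this into
\[
\dbtilde T_k(f_1,\ldots,f_4)(U^{-1}y)\;=\;\widehat T_k(g_1\circ P_1,\ldots,g_4\circ P_4)(y),
\]
where $\widehat T_k$ is the $4$-linear operator built from the same Calder\'on--Zygmund kernel $K$ and the new generic linear transformations $\widehat\gamma_j$. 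The singular subspace $\widehat\Gamma\subset\rr R^{16}$ associated with $\widehat T_k$ is the image of $\Gamma$ under a block-diagonal linear isomorphism on the frequency side (induced by $U^{-T}$); it therefore retains dimension $kd$ and preserves the ``graph over any $k$ of the $n+1$ frequency variables'' condition from Definition~\ref{def:rank-k}, so $\widehat T_k$ is itself a legitimate $4$-linear rank-$k$ operator. A Jacobian yields $\|\dbtilde T_k(\vec f)\,\|_{L^p(\rr R^4)} = |\det U|^{1/p}\,\|\widehat T_k(g_1\circ P_1,\ldots,g_4\circ P_4)\|_{L^p(\rr R^4)}$.

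With this reduction, the $L^{3p}$ estimate \eqref{eq:BL-particular-non-degen} follows directly from Corollary~\ref{eq:cor:LW} applied to $\widehat T_k$, together with $\|g_j\|_{L^{3p}(\rr R^3)} = |\det R_j|^{-1/(3p)}\|f_j\|_{L^{3p}(\rr R^3)}$. For the mixed-norm statement, I would unfold the definition of $\|\cdot\|_{L^p_{v_1}L^q_{v_2}L^r_{v_3}}$ and record the identity
\[
\|g_j\|_{L^{2p}_{e_1}L^{2p}_{e_2}L^\infty_{e_3}}\;=\;\|f_j\|_{L^{2p}_{R_j(e_1)}L^{2p}_{R_j(e_2)}L^\infty_{R_j(e_3)}},
\]
with no Jacobian factor -- because the directional mixed norm parametrises lines through the origin rather than integrating against the full Lebesgue measure -- and the three cyclic variants are identical; the mixed-norm half of Corollary~\ref{eq:cor:LW} applied to $\widehat T_k$ then closes the proof.

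The only technical point that requires genuine verification -- and which I view as the principal obstacle -- is the preservation of the rank-$k$ structure under diagonal linear conjugation on the spatial variables, i.e.\ that the new transformations $\widehat\gamma_j = U\circ\gamma_j$ still describe a subspace $\widehat\Gamma$ satisfying the non-degeneracy of Definition~\ref{def:rank-k}. This reduces to observing that a diagonal application of the invertible map $U^{-T}$ across the $n+1$ frequency slots carries graphs over any $k$-coordinate projection to graphs over the same $k$-coordinate projection; it is elementary linear algebra, but it is the step that legitimises the whole reduction, after which no additional time-frequency analysis is needed and the result follows from the already-established Loomis--Whitney corollary.
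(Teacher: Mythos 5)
Your proposal is correct and follows the same route the paper intends: the paper's own proof is just the sentence ``apply Corollary \ref{cor:applicationT2:olnyPs} carefully, using formula \eqref{eq:lineear-nondegen-maps},'' and you have carried out exactly that reduction --- absorbing the invertible $R_j$ into the inputs $g_j=f_j\circ R_j$, changing variables to transfer $U$ from the arguments to the target, checking that the conjugated kernel operator $\widehat T_k$ (with $\widehat\gamma_j=U\circ\gamma_j$) is again a legitimate rank-$k$ operator because $\widehat\Gamma$ is the block-diagonal image $(U^{-T}\oplus\cdots\oplus U^{-T})(\Gamma)$ and hence inherits the ``graph over any $k$ blocks'' condition, and then feeding the result into the Loomis--Whitney Corollary \ref{eq:cor:LW}. (Minor cosmetic point: the Jacobian factor should be $|\det U|^{-1/p}$ rather than $|\det U|^{1/p}$, but as a fixed constant this does not affect the conclusion.)
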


To prove this result, one just has to apply Corollary \ref{cor:applicationT2:olnyPs} of the previous section carefully, using formula \eqref{eq:lineear-nondegen-maps}. As before, the above reasoning can be naturally extended to an arbitrary number of dimensions. When $T$ is just the product operator, the estimates implied by \eqref{eq:BL-particular-non-degen} are particular cases of the more general Brascamp-Lieb inequality.

\item \textbf{purely mixed-norm estimates}
\vspace{.2 cm}

\noindent another estimate, similar to the ones implied by \eqref{eq:LW:particular-1} or \eqref{eq:LW:particular-2} is the following:
\begin{align}
\label{eq:Holder-mixed-2}
\big| \int_{\rr R^3} \big( \int_{\rr R} | f_1(x_2, x_3, x_4) \, & f_2(x_1, x_3, x_4) f_3(x_1, x_2, x_4)\, f_4(x_1,x_2, x_3, x_4 )|^{p} d x_4 \big)^\frac{1}{p} d x_3 dx_2 d x_1   \big|\nonumber \\
 &\lesssim \|f_1\|_{L^3_{x_2}L^3_{x_3} L^{p_1}_{x_4}} \, \|f_2\|_{L^3_{x_1}L^3_{x_3} L^{p_2}_{x_4}} \, \|f_3\|_{L^3_{x_1}L^3_{x_2} L^{p_3}_{x_4}} \|f_4\|_{L^3_{x_1}L^3_{x_2}L^3_{x_3} L^{p_4}_{x_4}}, 
\end{align}
valid as long as $\ds \frac{1}{p_1}+\frac{1}{p_2}+\frac{1}{p_3}+\frac{1}{p_4}=\frac{1}{p}$. It is clearly a particular case of the previous mixed-norm estimate \eqref{eq:Holder-mix}. Notice that the linear map applied to functions $f_1, f_2, f_3$ in $\rr R^3$ and to the function $f_4$ in $\rr R^4$, defined by 
\[
(f_1, f_2, f_3, f_4) \mapsto f_1(x_2, x_3, x_4) \cdot f_2(x_1, x_3, x_4) \cdot f_3(x_1, x_2, x_4) \cdot f_4(x_1, x_2, x_3, x_4)
\]
(whose $L^1_{x_1}L^1_{x_2}L^{1}_{x_3}L^p_{x_4}$ norm is estimated in \eqref{eq:Holder-mixed-2}) cannot satisfy any ``classical" $L^p$ estimates since the homogeneity in the variables $x_1, x_2, x_3$ is different from the H\"older homogeneity in $x_4$. Here it is important that the ``forgetful" projections $P_j$ from before only act on the first three functions, while the last one remains unaltered.

\iffalse{For $1 \leq j\leq 3$, we denote by $Q_j : \rr R^3 \to \rr R^2$ the analogous projections that forget the $x_j$ variable, and we tensorize them with the identity in the $x_4$ variable:
\[
Q_j \otimes Id: \rr R^3 \times \rr R \to \rr R^2 \times \rr R \qquad \text{for    } 1 \leq j \leq 3.
\]\fi

Consider now any $4$-linear operator $T$ that maps (four) functions in $\rr R^4$ into a function in $\rr R^4$. It defines, as before, another ``degenerate" $4$-linear operator $\dbtilde{T}$ which maps three functions in $\rr R^3$ and one function in $\rr R^4$ into a function in $\rr R^4$ by the formula
\begin{equation}
\label{eq:3-lin:degen}
\dbtilde{T}(f_1, f_2, f_3, f_4)(x)=T(f_1 \circ P_1, f_2 \circ P_2, f_3 \circ P_3, f_4)(x),
\end{equation}
for every $x \in \rr R^4$. Exactly as before, the following corollary is a consequence of corresponding mixed-norm estimates for $T$.

\begin{corollary}
The operator $\dbtilde{T}$ defined by \eqref{eq:3-lin:degen} (with $T$ denoting any $T_k$ operator for $k=0, 1, 2$), maps $$L^{3q}_{\rr R^2}L^{4p}_{\rr R} \times L^{3q}_{\rr R^2}L^{4p}_{\rr R} \times L^{3q}_{\rr R^2}L^{4p}_{\rr R} \times L^{3q}_{\rr R^3}L^{4p}_{\rr R} \to L^q_{\rr R^3}L^{p}_{\rr R}$$ as long as $\ds \frac{1}{2}<q <\infty$ and $\ds \frac{2}{5}<p<\infty$.
\end{corollary}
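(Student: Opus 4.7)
The plan is to lift the problem from $\rr R^3$ to $\rr R^4$ and apply Theorem \ref{thm-part2} directly, in the same spirit as the Loomis-Whitney-type reductions presented just above in Section \ref{sec:applications:involved}. For $j=1,2,3$ the composition $f_j\circ P_j$ is constant in the variable $x_j$, so inserting an $L^\infty$ factor in that coordinate does not change the norm:
\[
\|f_j\circ P_j\|_{L^{\infty}_{x_j}L^{3q}_{\rr R^2}L^{4p}_{\rr R}(\rr R^4)} \;=\; \|f_j\|_{L^{3q}_{\rr R^2}L^{4p}_{\rr R}(\rr R^3)},
\]
and $f_4$ already lives on $\rr R^4$ with its $L^{3q}_{\rr R^3}L^{4p}_{\rr R}$ norm unchanged. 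First I would record these identifications, which reduce the statement about $\dbtilde T_k$ to a mixed-norm bound for $T_k$ on $\rr R^4$ applied to $(f_1\circ P_1,\,f_2\circ P_2,\,f_3\circ P_3,\,f_4)$.

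Next I would feed into Theorem \ref{thm-part2} the four-dimensional tuples
\begin{align*}
P_1 &=(\infty,3q,3q,4p), & P_2 &=(3q,\infty,3q,4p), \\
P_3 &=(3q,3q,\infty,4p), & P_4 &=(3q,3q,3q,4p),
\end{align*}
with target $P'_5=(q,q,q,p)$, i.e.\ $P_5=(q',q',q',p')$. A componentwise check of H\"older's identity is immediate: each of the first three coordinates gives $3\cdot\tfrac{1}{3q}+\tfrac{1}{q'}=1$, and the fourth gives $4\cdot\tfrac{1}{4p}+\tfrac{1}{p'}=1$. The estimate follows provided one can produce a tuple $\vec\alpha=(\alpha_1,\ldots,\alpha_5)\in\Xi_{4,k}$ such that every componentwise tuple is $(1-\alpha_1,\ldots,1-\alpha_5)$-local; this condition unravels into
\[
\alpha_j<\min\bigl(1-\tfrac{1}{3q},\,1-\tfrac{1}{4p}\bigr)\quad(1\le j\le 4),\qquad \alpha_5<\min\bigl(\tfrac{1}{q},\tfrac{1}{p}\bigr),
\]
in addition to the intrinsic requirements $\vec\alpha\in(0,\tfrac{1}{2})^{5}$ and $\sum_j\alpha_j=k$.

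The production of such an $\vec\alpha$ is where the real content lies and is the main obstacle. The case $k=0$ is trivial, while for $k=1$ the uniform choice $\alpha_j=1/5$ is admissible throughout the claimed range $q>1/2$, $p>2/5$. The delicate case is $k=2$, where the total mass is forced to be $2$: here the natural candidate is the uniform choice $\alpha_j=2/5$, realised by interpolation coefficients $\theta_{i_1,i_2}=1/10$ for each of the $\binom{5}{2}=10$ pairs, which is admissible in an open region around $(p,q)=(1,1)$. As $(p,q)$ moves toward the endpoints of the claimed range one has to tilt the choice by pushing $\alpha_5$ up toward $\min(1/2,1/q,1/p)$ and shrinking $\alpha_1,\ldots,\alpha_4$ accordingly, all the while keeping $\vec\alpha$ inside $\Xi_{4,2}$. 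The careful balancing of the two constraints $\alpha_j<1-\tfrac{1}{3q}$ and $\alpha_j<1-\tfrac{1}{4p}$ against $\sum_j\alpha_j=2$ as one approaches $q=1/2$ or $p=2/5$ is the technical heart of the proof; once it is performed, Theorem \ref{thm-part2} closes the argument, and the same reduction, run with arbitrary auxiliary $m$-tuples $R_j$, yields the multiple vector-valued refinement of the corollary at no additional cost.
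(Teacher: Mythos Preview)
Your approach is correct and matches the paper's own: the corollary is stated there simply as ``a consequence of corresponding mixed-norm estimates for $T$'', and your lifting of $f_j$ to $f_j\circ P_j$ on $\rr R^4$ followed by a direct application of Theorem~\ref{thm-part2} with the tuples $P_1=(\infty,3q,3q,4p),\ldots,P_4=(3q,3q,3q,4p)$, $P_5'=(q,q,q,p)$ is exactly what is intended. Your unpacking of the $(1-\alpha_1,\ldots,1-\alpha_5)$-locality constraints and the discussion of how to select $\vec\alpha\in\Xi_{4,k}$ is in fact more explicit than anything in the paper, which leaves that verification to the reader.
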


This shows that there are very natural multilinear operators of arbitrary rank (since clearly the above results generalize in that sense, as a consequence of Theorem \ref{thm-part2}) which do not satisfy any $L^p$ estimates, but only purely mixed-norm estimates, such as the ones above.

\item \textbf{multipliers with rational symbols}~\\
Consider the following $4$-linear map in $\rr R^4$ defined by the formula
\begin{equation}
\label{eq:def:rational:op}
T_k^{rational}(f_1, f_2, f_3, f_4)(x):= \int_{\rr R^{16}} \frac{m_k(\xi, \eta, \gamma, \zeta)}{(\xi_1+\eta_2) (\gamma_3+\zeta_4)} \hat{f}_1(\xi) \, \hat{f}_2(\eta)\, \hat{f}_3(\gamma)\, \hat{f}_4(\zeta) e^{2 \pi i x \cdot (\xi+\eta+\gamma+\zeta)} d \xi d \eta d \gamma d \zeta,
\end{equation}
where the symbol $m_k(\xi, \eta, \gamma, \zeta)$ defines a $4$-linear, rank-$k$ operator. As before, the condition $0 \leq k< \frac{n+1}{2}$ from \eqref{thm-part2} constrains $k$ to the values $0, 1$ or $2$; one can of course examine more general $n$-linear operators of rank $k$, as long as $0 \leq k< \frac{n+1}{2}$.

Such operators with rational symbols appear naturally in PDE; we are grateful to Mihaela Ifrim and Daniel T\u{a}taru for explaining to us the signification and computations that led to the study of operators similar to \eqref{eq:def:rational:op}. As one would expect, the question is to understand the type of estimates that these multipliers with rational symbols satisfy.

Using the distributional identities 
\[
\frac{1}{\xi_1+\eta_2} = i\, \widehat{ \, \sgn \, }(\xi_1+\eta_2) \quad \text{and} \quad \frac{1}{\gamma_3+\zeta_4} = i\, \widehat{\, \sgn \,}(\gamma_3+\zeta_4)
\]
respectively, it is not difficult to see that \eqref{eq:def:rational:op} can be written as 
\begin{equation}
\label{eq:signum-vv}
-\int_{\rr R^2} \sgn(t) \, \sgn(s) T_k (\tau_t^1f_1, \tau_t^2 f_2, \tau_s^3 f_3, \tau_s^4 f_4)(x) dt ds,
\end{equation}
where $T_k$ is the $4$-linear operator defined by the symbol $m_k$, while $\tau_a^j$ is the translation operator with $a$ units in the $x_j$ variable, for $1 \leq j \leq 4$. More precisely, 
\[
\tau_a^1 f(x_1,x_2, x_3, x_4):= f(x_1-a, x_2, x_3, x_4), \ldots, \tau_a^4 f(x_1,x_2, x_3, x_4):= f(x_1, x_2, x_3, x_4-a).
\]

We denote by $F_1, F_2, F_3, F_4$ the functions given by
\begin{align*}
& F_1(x_1, x_2, x_3, x_4, s, t):= f_1(x_1-t, x_2, x_3, x_4), \qquad F_2(x_1, x_2, x_3, x_4, s, t):= f_2(x_1, x_2-t, x_3, x_4),\\
& F_3(x_1, x_2, x_3, x_4, s, t):= f_3(x_1, x_2, x_3-s, x_4), \qquad F_4(x_1, x_2, x_3, x_4, s, t):= f_2(x_1, x_2, x_3, x_4-s).
\end{align*}
Then the the expression \eqref{eq:signum-vv}, in absolute value, is pointwise smaller than 
\[
\| T_k(F_1, F_2, F_3, F_4)(x)   \|_{L^1_sL^1_t}.
\]

Because of this, any $L^p$ estimates for \eqref{eq:def:rational:op} (mixed or not), can be reduced to double vector-valued, mixed-norm estimates for $T_k$. We point out that $L^\infty$ vector-valued estimates are necessary in this situation, since two of the functions $F_j$ do not depend on $t$ and two other do not depend on $s$.

Likewise, when a certain $L^p$ norm in the variable $t$ acts on the function $F_1$ for instance, the $x_1$ variable disappears as well (integration in the $t$ variable is the same as integration in the $x_1-t$ variable) and then, again, the only chance is to use $L^\infty$ in the $x_1$ variable; hence $L^\infty$ appears once more in the corresponding mixed-norm estimate.

There are of course many mixed-norm estimates that can be obtained by applying Theorem \ref{thm-part2} to the $T_k$ operators. Starting from the inequality
\[
\big\|T_k^{rational}(f_1, f_2, f_3, f_4) \big\|_{L^p_{x_1}L^q_{x_2}L^u_{x_3}L^v_{x_4}} \leq \big\|T_k (\tau_t^1f_1, \tau_t^2 f_2, \tau_s^3 f_3, \tau_s^4 f_4))\big\|_{L^p_{x_1}L^q_{x_2}L^u_{x_3}L^v_{x_4}L^1_s L^1_t}, 
\]
we obtain 
\begin{align*}
\big\|T_k^{rational}(f_1, f_2, f_3, f_4) \big\|_{L^p_{x_1}L^q_{x_2}L^u_{x_3}L^v_{x_4}} &\lesssim \|\tau_t^1f_1\|_{L^{p_1}_{x_1}L^{q_1}_{x_2}L^{u_1}_{x_3}L^{v_1}_{x_4}L^{s_1}_s L^{t_1}_t} \|\tau_t^2f_2\|_{L^{p_2}_{x_1}L^{q_2}_{x_2}L^{u_2}_{x_3}L^{v_2}_{x_4}L^{s_2}_s L^{t_2}_t} \\ 
& \|\tau_s^3 f_3\|_{L^{p_3}_{x_1}L^{q_3}_{x_2}L^{u_3}_{x_3}L^{v_3}_{x_4}L^{s_3}_s L^{t_3}_t} \, \|\tau_s^4 f_4\|_{L^{p_4}_{x_1}L^{q_4}_{x_2}L^{u_4}_{x_3}L^{v_4}_{x_4}L^{s_4}_s L^{t_4}_t}
\end{align*}
for Lebesgue indices verifying the hypotheses of Theorem \ref{thm-part2}. As mentioned before, we can see right away that $s_1=s_2=\infty$, $t_3=t_4=\infty$, and in consequence $1<t_1, t_2, s_3, s_4<\infty$ with
\[
\frac{1}{t_1}+\frac{1}{t_2}=1, \qquad \frac{1}{s_3}+\frac{1}{s_4}=1.
\]

Then the integration in $t$ of $\ds \|\tau_t^1f_1\|_{L^{p_1}_{x_1}L^{q_1}_{x_2}L^{u_1}_{x_3}L^{v_1}_{x_4}L^{t_1}_t}$ forces $p_1=\infty$, obtaining in this way $\ds \|f_1\|_{L^{q_1}_{x_2}L^{u_1}_{x_3}L^{v_1}_{x_4} L^{t_1}_{x_1}}$. This produces a permutation in the mixed-norm order for the function $f_1$. The same thing occurs for the remaining functions. 

We record the following example in which all the Lebesgue indices are in the local $L^2$ range (in which case Theorem \ref{thm-part2} holds without other constraints):

\begin{corollary}
For any $k=0, 1, 2$ the operator $T_k^{rational}$ maps $$ L_{x_2}^\infty L^2_{x_3} L^2_{x_4} L^2_{x_1} \times L^\infty_{x_1}L^2_{x_3}L^2_{x_4}L^2_{x_2} \times L^2_{x_1} L^2_{x_2} L^\infty_{x_4}L^2_{x_3} \times L^2_{x_1}  L^2_{x_2} L^\infty_{x_3} L^2_{x_4} \to L^1_{\rr R^4}.$$
\end{corollary}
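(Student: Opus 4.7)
The plan is to combine the signum decomposition \eqref{eq:signum-vv} with the compound mixed-norm, multiple vector-valued estimate of Theorem \ref{thm-part3} at the closed local $L^2$ endpoint, where the $L^\infty$ endpoint is admissible. The starting point is the pointwise bound $|T_k^{rational}(f_1,\ldots,f_4)(x)|\leq \|T_k(F_1,F_2,F_3,F_4)(x,\cdot,\cdot)\|_{L^1_s L^1_t}$ with the translated lifts $F_j$ defined just before \eqref{eq:signum-vv}. Integrating in $x\in\rr R^4$ and invoking Fubini (all relevant exponents being $L^1$) reduces the desired inequality to the control of
\[
\big\|T_k(F_1,F_2,F_3,F_4)\big\|_{L^1_{x_1}L^1_{x_2}L^1_{x_3}L^1_{x_4}L^1_s L^1_t}.
\]

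For this I would apply Theorem \ref{thm-part3} in the six-fold ordering $(x_1,x_2,x_3,x_4,s,t)$, viewing the base space as $\rr R^4$ and the vector-valued parameter space as $\rr R_s\times\rr R_t$, with the H\"older tuples
\begin{align*}
& P_1=P_2=(\infty,\infty,2,2),\quad P_3=P_4=(2,2,\infty,\infty),\quad P_5'=(1,1,1,1),\\
& R_1=R_2=(\infty,2),\quad R_3=R_4=(2,\infty),\quad R_5'=(1,1).
\end{align*}
Direct inspection confirms that all six componentwise H\"older identities $\sum_{j=1}^{4}1/p_j^i=1$ and $\sum_{j=1}^{4}1/r_j^i=1$ hold, and that every exponent lies in $\{2,\infty\}$, so every tuple belongs to the closed local $L^2$ range. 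For $k=0$ this is automatic; for $k=1,2$ the choice $\alpha_j=k/5$ for $1\le j\le 5$ gives an element of $\Xi_{4,k}$ (realized by the uniform distribution $\theta\equiv\binom{5}{k}^{-1}$ on the $k$-subsets of $\{1,\ldots,5\}$), and $(1-\alpha_j)=1-k/5>1/2$ certifies that each tuple is $(1-\alpha_1,\ldots,1-\alpha_5)$-local. Hence Theorem \ref{thm-part3} yields
\[
\big\|T_k(F_1,F_2,F_3,F_4)\big\|_{L^1_{x_1}L^1_{x_2}L^1_{x_3}L^1_{x_4}L^1_sL^1_t}\lesssim \prod_{j=1}^{4}\|F_j\|_{L^{P_j^1}_{x_1}L^{P_j^2}_{x_2}L^{P_j^3}_{x_3}L^{P_j^4}_{x_4}L^{R_j^s}_sL^{R_j^t}_t}.
\]

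The final step would be to recognize each mixed norm of $F_j$ as a mixed norm of $f_j$. Consider $F_1(x,s,t)=f_1(x_1-t,x_2,x_3,x_4)$ and read its norm from the innermost layer outward: the $L^2_t$ norm equals $\|f_1(\cdot,x_2,x_3,x_4)\|_{L^2}$ by translation invariance and is therefore independent of both $x_1$ and $s$, so the outer $L^\infty_s$ and $L^\infty_{x_1}$ are harmless; the three $L^2$ slots in $x_4,x_3$ and in the relabelled variable $x_1$ (originating from $t$) commute via Fubini, and the outermost $L^\infty_{x_2}$ then delivers exactly $\|f_1\|_{L^\infty_{x_2}L^2_{x_3}L^2_{x_4}L^2_{x_1}}$. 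The identical bookkeeping for $F_2, F_3, F_4$ produces the remaining three norms in the statement, concluding the proof.

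The only genuine subtlety is that the translations $\tau_t^j,\tau_s^j$ force the spatial variable acted upon to be paired with $L^\infty$ (else that norm diverges), and then H\"older scaling in each direction compels the companion slots of the other functions to be $L^2$; every index is rigidly determined to be $2$ or $\infty$. It is precisely the $L^\infty$-admissibility of the mixed-norm, vector-valued extension provided by Theorem \ref{thm-part3}---the novel feature of the helicoidal method relative to classical time-frequency approaches---that makes this degeneracy tractable and yields the claimed estimate.
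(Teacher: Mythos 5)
Your proof is correct and follows essentially the same route as the paper: reduce via the signum decomposition to a mixed-norm, vector-valued estimate for $T_k$ on the translated lifts $F_j$, pick all exponents in $\{2,\infty\}$ so the tuples sit in the closed local $L^2$ range, and then unwind the translations via Fubini. The only cosmetic difference is that you invoke Theorem \ref{thm-part3}, whereas the paper cites Theorem \ref{thm-part2}; since you take the base space to be all of $\rr R^4$ with the vector-valued parameter space $\rr R_s\times\rr R_t$ sitting entirely inside, the structure used is exactly that of Theorem \ref{thm-part2} and the more general compound Theorem \ref{thm-part3} is not actually needed.
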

\enskip
\noindent It is interesting to observe that the mixed-norm estimates are ``twisted", as a result of the particular shape of the functions $F_j$, for $j=1, 4$. Also, the Lebesgue exponents do not satisfy the classical H\"older condition, which can be seen after rescaling the functions. Lastly, there are certain examples of rational multipliers which only satisfy mixed-norm estimates, and no classical $L^p$ estimates. 
\end{enumerate}

\vspace{.2 cm}

\subsection{Non-mixed-norm estimates for generic singular  Brascamp-Lieb inequalities}~\\
\label{sec:gen:Brascamp-Lieb}

If we renounce the mixed-norms, we can relatively easily prove a generic Brascamp-Lieb inequality for $T_k$ operators: the result in Theorem \ref{thm:BL:Lq}.

\iffalse
\begin{theorem}
\label{thm:BL:n+!:lin:form}
Let $d \geq 1$, $0 \leq k < \frac{n+1}{2}$, $\gamma_1, \ldots, \gamma_n : \rr R^{d(n-k)} \to \rr R^d$ be generic linear transformations, and $K$ a classical Cald\'eron-Zygmind kernel in $\rr R^{d(n-k)}$, as in \eqref{eq:def:use:kernel}. Let $L_1, \ldots, L_n, L_{n+1}$ be surjective linear maps, with $L_j : \rr R^d \to \rr R^{d_j}$ for any $1 \leq j \leq n+1$.
Then
\begin{equation}
\label{eq:BL:n+1:lin:form}
\big| \int_{\rr R^d}  \int_{\rr R^{d (n-k)}} f_1 \circ L_1 (x+\gamma_1(t)) \cdot \ldots \cdot f_{n} \circ L_n(x+\gamma_n(t)) f_{n+1} \circ L_{n+1}(x) \, K(t) dt dx  \big| \lesssim \prod_{j=1}^{n+1} \|  f_j \|_{L^{p_j}(\rr R^{d_j})},
\end{equation}
for any Lebesgue exponents  $(p_1, \ldots, p_{n+1})$  satisfying the usual Brascamp-Lieb conditions:
\begin{enumerate}[label=(\roman*) ]
\item \label{cond:BL:scaling} $\ds d =\sum_{j=1}^{n+1} \frac{d_j}{p_j} \qquad$ (the Brascamp-Lieb scaling) 
\item \label{cond:BL:subspace} $\ds \dim V \leq \sum_{j=1}^{n+1} \frac{\dim (\ell_j(V))}{p_j}$ for any subspace $V \subseteq \rr R^d$.
\end{enumerate}
\fi

\begin{proof}[Proof of Theorem \ref{thm:BL:Lq}]
The Brascamp-Lieb inequality for $T_k$ operators follows as a result of the ``classical''  Fefferman-Stein inequality for $T_k$ operators, which was first formulated in \cite{sparse-hel}, in the case $d=1$. In short, we pick $(\alpha_1, \ldots, \alpha_{n+1}) \in \Xi_{n, k}$ (see Definition \ref{def:Xi_n, k}) and $(s_1, \ldots, s_{n+1})$ a $(1-\alpha_1, \ldots, 1-\alpha_{n+1})$-local tuple of Lebesgue exponents\footnote{They satisfy the condition $\frac{1}{s_1}+\ldots+\frac{1}{s_{n+1}}<n+1-\frac{k}{2}$.}. As a consequence of the Fefferman-Stein inequality \eqref{eq:qB:FS}, we have
\begin{equation}
\label{eq:cons:scalar:FS}
\big|  \int_{\rr R^d} T_k(f_1 \circ L_1, \ldots, f_n \circ L_n)(x) f_{n+1} \circ L_{n+1}(x) dx  \big| \lesssim \int_{\rr R^d} \prod_{j=1}^{n+1} M_{s_j}(f_j \circ L_j)(x)dx.
\end{equation}

For our current analysis, we need to emphasize certain aspects related to the dimensions of the spaces these operators are acting on, as well as restrictions to subspaces. We recall that $M_{s_j}=M_{s_j}^d$ above acts on functions on $\rr R^d$, and it is in fact defined\footnote{In order to simplify certain computations later on, $\ci_Q(y)$ is replaced by $\big( 1+\frac{|y-c_Q|^2}{\ell(Q)^2}  \big)^{-M}$.} by
\[
M_{s_j}^d f(x):= \Big( \sup_{\substack{ x \in Q \\ Q \text{  cube in } \rr R^d}} \frac{1}{|Q|} \int_{\rr R^d} |f(y)|^{s_j} \cdot \big( 1+\frac{|y-c_Q|^2}{\ell(Q)^2}  \big)^{-M} dy \Big)^{1/{s_j}},
\]
where $\ell(Q)$ represents the sidelength of the cube $Q$, $c_Q$ denotes its center, and $M>0$ is a large enough constant. Our claim now is that 
\begin{equation}
\label{eq:commute:max:surjective:maps}
M_{s_j}^d( f_j \circ L_j) (x) \lesssim M_{s_j}^{d_j} f_j (L_j x),
\end{equation}
where $M_{s_j}^{d_j}$ acts on functions on $\rr R^{d_j}$.

If we accept for a moment this claim, the right hand side of \eqref{eq:cons:scalar:FS} will be bounded above by
\[
\int_{\rr R^d} \prod_{j=1}^{n+1} (M^{d_j}_{s_j} f_j)(L_j x) dx.
\]
A direct application of the classical Brascamp-Lieb inequality, which holds under assumptions
\begin{enumerate}[label=(\roman*) ]
\item \label{cond:BL:scaling} $\ds d =\sum_{j=1}^{n+1} \frac{d_j}{p_j} \qquad$ (the Brascamp-Lieb scaling) 
\item \label{cond:BL:subspace} $\ds \dim V \leq \sum_{j=1}^{n+1} \frac{\dim (L_j(V))}{p_j}$ for any subspace $V \subseteq \rr R^d$,
\end{enumerate}
implies that the above expression is mojorized by
\[
 \prod_{j=1}^{n+1} \| M^{d_j}_{s_j} f_j \|_{L^p_j(\rr R^{d_j})}.
\]
Then we invoke the $L^{p_j} \mapsto L^{p_j}$ boundedness of $M^{d_j}_{s_j}$, for $s_j< p_j \leq \infty$ to conclude \eqref{eq:BL:n+1:lin:form}, exactly for $(p_1, \ldots, p_{n+1})$ being a $(1-\alpha_1, \ldots, 1-\alpha_{n+1})$-local Brascamp-Lieb tuple (i.e. satisfying the Brascam-Lieb conditions \ref{cond:BL:scaling} and \ref{cond:BL:subspace}, associated to the maps $L_1, \ldots, L_{n+1}$).

Now we take a closer look at \eqref{eq:commute:max:surjective:maps}, for $s_j=1$. Let $L_j: \rr R^d \to \rr R^{d_j}$ be a surjective linear map with associated matrix $A_j \in \mathcal{M}_{d_j \times d}$. We consider $\mathcal B_j=\{ \vec v_1, \ldots, \vec v_d \}$ an orthonormal basis in $\rr R^d$ so that $\{ \vec v_1, \ldots, \vec v_{d_j} \}$ span the complement of $ker A_j$, and $\{  \vec v_{d_j+1}, \ldots, \vec v_d \}$ span $ker A_j$. We let $B_j$ be the $d \times d$ matrix whose columns are precisely the vectors $\vec v_1, \ldots, \vec v_d$. Then for any $y \in \rr R^d$, $B_j^{-1} y$ provides a representation of $y$ with respect to the basis $\mathcal B_j$.

Let $Q \subset \rr R^d$ be a cube containing $x$; we would like to show that 
\begin{equation}
\label{eq:comp:no:sup:LHS}
 \frac{1}{\ell(Q)^d} \int_{\rr R^d} |f( A_j y)| \cdot \big( 1+\frac{|y-c_Q|^2}{\ell(Q)^2}  \big)^{-M} dy \lesssim M^{d_j} f_j(A_j x),
\end{equation}
which will imply \eqref{eq:commute:max:surjective:maps}.

For this, we make the change of variable $y=B_j w$, so that 
\begin{align*}
 \frac{1}{\ell(Q)^d} \int_{\rr R^d} |f( A_j y)| \cdot \big( 1+\frac{|y-c_Q|^2}{\ell(Q)^2}  \big)^{-M} dy & = \frac{1}{\ell(Q)^d} \int_{\rr R^d} |f( A_j B_j w)| \cdot \big( 1+\frac{|B_j w-c_Q|^2}{\ell(Q)^2}  \big)^{-M} d w \\
 &=\frac{1}{\ell(Q)^d} \int_{\rr R^d} |f( A_j B_j w)| \cdot \big( 1+\frac{|w- B_j^{-1} c_Q|^2}{\ell(Q)^2}  \big)^{-M} d w.
\end{align*}

The particular choice of the matrix $B_j$ has an important consequence: only the first $d_j$ coordinates of $A_jB_j w$ are non-zero. In fact, we have 
\[
A_j B_j (w_1, \ldots, w_{d_j}, w_{d_j+1}, \ldots, w_{d})^T=(A_j \vec v_1 \ldots A_j \vec v_{d_j})(w_1, \ldots, w_{d_j})^T:=\tilde A_j (w_1, \ldots, w_{d_j})^T,
\]
and $\tilde A_j$, as defined above, is a $d_j \times d_j$ invertible matrix. If we denote $\Pi_{\rr R^{d_j}}$ the projection onto the first $d_j$ coordinates, and $\tilde w:= \Pi_{\rr R^{d_j}} w$, then the expression above becomes
\begin{align*}
\frac{1}{\ell(Q)^d} \int_{\rr R^d} |f( \tilde A_j \tilde w)| \cdot \Big( 1+\frac{|\tilde w- \Pi_{\rr R^{d_j}} B_j^{-1} c_Q|^2+ |(w_{d_j+1}, \ldots, w_d)-  \Pi^{ \perp}_{\rr R^{d_j}} B_j^{-1} c_Q |^2 }{\ell(Q)^2}  \Big)^{-M} d \tilde w d w_{d_j+1} \ldots d w_d.
\end{align*}
Integration in the variables $w_{d_j+1}, \ldots, w_d$ will not affect the expression $f( A_j B_j w)=f(\tilde A_j \tilde w)$, and hence we obtain
\begin{align*}
\frac{1}{\ell(Q)^{d_j}} \int_{\rr R^{d_j}} |f( \tilde A_j \tilde w)| \cdot \big( 1+\frac{|\tilde w- \Pi_{\rr R^{d_j}} B_j^{-1} c_Q|^2 }{\ell(Q)^2}  \big)^{-\tilde M} d \tilde w,
\end{align*}
for $\tilde M=M-(d-d_j)$ a large enough constant. We again make a change of variables - but this time in $\rr R^{d_j}$ - to rewrite this as
\begin{align*}
&C_{A_j, B_j} \frac{1}{\ell(Q)^{d_j}} \int_{\rr R^{d_j}} |f( u)| \cdot \big( 1+\frac{|\tilde A_j ^{-1}  (u-  \tilde A_j \Pi_{\rr R^{d_j}} B_j^{-1} c_Q)|^2 }{\ell(Q)^2}  \big)^{-\tilde M} d u \\
& \lesssim \tilde C_{A_j, B_j}   \frac{1}{\ell(Q)^{d_j}} \int_{\rr R^{d_j}} |f( u)| \cdot \big( 1+\frac{|u-  \tilde A_j \Pi_{\rr R^{d_j}} B_j^{-1} c_Q|^2 }{\ell(Q)^2}  \big)^{-\tilde M} d u.
\end{align*}

It just remains to notice that the initial assumption $x \in Q$ implies that $A_j x \in \tilde A_j \Pi_{\rr R^{d_j}} B_j^{-1} Q$; since the image of any cube through the linear transformations $\tilde A_j, \Pi_{\rr R^{d_j}} $ or $B_j$ is contained inside an appropriately dimensional cube of sidelength comparable to the initial cube's sidelength, we deduce that $\tilde A_j \Pi_{\rr R^{d_j}} B_j^{-1} Q$ is contained in a non-degenerate $d_j$-dimensional cube of sidelength comparable to $\ell(Q)$\footnote{The implicit constants depend only on the matrices $A_j$ and $B_j$.}. Thus we obtain \eqref{eq:comp:no:sup:LHS}, and \eqref{eq:commute:max:surjective:maps}.
\end{proof}

We present briefly the modifications needed for the vector-valued Brascamp-Lieb inequality.

\begin{proof}[Proof of Theorem \ref{thm:BL:vv}]

In this case, the Fefferman-Stein inequality \eqref{eq:cons:scalar:FS} will be replaced by its vector-value counterpart. Such an estimate is implicit in \eqref{eq:qB:FS}, and in the case $d=1$ were also proved in \cite{sparse-hel}. Then we have
\begin{align*}
&\int_{\rr R^d}   \int_{\ii W} \big|  \int_{\rr R^{d (n-k)}} f_1 ( L_1 (x+\gamma_1(t)), w) \cdot \ldots \cdot f_{n} (L_n(x+\gamma_n(t)), w) f_{n+1}( L_{n+1}(x), w) \, K(t) dt \big|  dw  dx \\ &\lesssim  \int_{\rr R^d} \prod_{j=1}^{n+1} M_{s_j}( \| f_j (  L_j \cdot , \cdot ) \|_{L^{R_j}_{\ii W}})(x)dx:=  \int_{\rr R^d} \prod_{j=1}^{n+1} M_{s_j}( F_j \circ L_j)(x)dx,
\end{align*}
where we denote
\begin{equation}
\label{eq:notation:vv:BL}
F_j(x):= \|f_j(x, \cdot )\|_{L^{R_j}_{\ii W}}.
\end{equation}

In other words, each term in the expression above consists of 
\begin{align*}
M_{s_j}^d ( \| f_j (  L_j \cdot , \cdot ) \|_{L^{R_j}_{\ii W}})(x)&= \Big( \sup_{\substack{ x \in Q \\ Q \text{  cube in } \rr R^d}} \frac{1}{|Q|} \int_{\rr R^d} \|f(L_j y, \cdot )\|_{L^{R_j}_{\ii W}}^{s_j} \cdot \big( 1+\frac{|y-c_Q|}{\ell(Q)}  \big)^{-M} dy \Big)^{1/{s_j}} \\
&=\Big( \sup_{\substack{ x \in Q \\ Q \text{  cube in } \rr R^d}} \frac{1}{|Q|} \int_{\rr R^d} |F_j(L_j y)|^{s_j} \cdot \big( 1+\frac{|y-c_Q|}{\ell(Q)}  \big)^{-M} dy \Big)^{1/{s_j}}. 
\end{align*}

From here on, things follow in the same manner, and we obtain that the initial expression 
\[
\int_{\rr R^d}   \int_{\ii W} \big|  \int_{\rr R^{d (n-k)}} f_1 ( L_1 (x+\gamma_1(t)), w) \cdot \ldots \cdot f_{n} (L_n(x+\gamma_n(t)), w) f_{n+1}( L_{n+1}(x), w) \, K(t) dt \big|  dw  dx 
\]
is indeed bounded by
\[
\prod_{j=1}^{n+1}  \big\|F_j \big \|_{L^{p_j}(\rr R^{d_j})}= \prod_{j=1}^{n+1} \big\| \| f_j \|_{L^{R_j}_{\ii W}} \big \|_{L^{p_j}(\rr R^{d_j})}.
\]

\end{proof}
\bibliographystyle{alpha}

\bibliography{TheHelicoidalMethod-MixedNormEstimates.bbl}

%\bibliography{../../../../work/harmonic.bib}

\end{document}